\setlist[itemize]{noitemsep}
\setlist[enumerate]{noitemsep,label=\textup{(\roman*)}}
\pgfplotsset{compat=1.15}
\newcommand{\liftmat}{\mathcal{M}_q^\gamma(M)}
\newcommand{\dimm}[2]{\dim_#1^#2}
\newcounter{cases}
\newcounter{subcases}
\newcounter{subsubcases}
\newenvironment{mycases}
  {%
    \setcounter{cases}{0}%
    \def\case
      {%
        \par\noindent
        \refstepcounter{cases}%
        \textbf{Case \thecases.}
      }%
  }
  {%
    \par
  }
\newcommand\restr[2]{{
  \left.\kern-\nulldelimiterspace 
  #1 
  \vphantom{|} 
  \right|_{#2} 
  }}
\newcommand{\CC}{\mathbb{C}}
\newcommand{\VMLIFT}{V_{M}^{\text{lift}}}
\newcommand{\rk}{\text{rk}}
\newcommand{\CCC}{\mathcal{C}}
\newcommand{\Lift}{\textup{Lift}}
\newcommand{\VCM}{V_{\mathcal{C}(M)}}
\newcommand{\VCN}{V_{\mathcal{C}(N)}}
\newtheorem{theorem}{Theorem}[section]
\newtheorem{corollary}[theorem]{Corollary}
\newtheorem{lemma}[theorem]{Lemma}
\theoremstyle{definition}
\newtheorem{definition}[theorem]{Definition}
\newtheorem{proposition}[theorem]{Proposition}
\newtheorem{example}[theorem]{Example}
\newtheorem{construction}[theorem]{Construction}
\newtheorem{notation}[theorem]{Notation}
\newtheorem{remark}[theorem]{Remark}
\newcommand{\gluing}[4]{#1 \amalg_{#3,#4} #2}
\newtheorem{theoremA}{Theorem}
\newtheorem{theoremB}{Theorem}
\newtheorem{strategy}{Strategy}
\numberwithin{equation}{section}
\theoremstyle{plain}
\newtheorem{question}[theorem]{Question}
\def\mathcenterto#1#2{\mathclap{\phantom{#1}\mathclap{#2}}\phantom{#1}}
\let\old@widetilde\widetilde
\def\widetildeto#1#2{\mathcenterto{#2}{\old@widetilde{\mathcenterto{#1}{#2\,}}}}
\let\old@widehat\widehat
\def\widehatto#1#2{\mathcenterto{#2}{\old@widehat{\mathcenterto{#1}{#2\,}}}}
\newcommand{\ip}[2]{\langle  #1,#2 \rangle} 
\newcommand{\size}[1]{\left| #1 \right|} 
\newcommand{\pare}[1]{\left( #1 \right)} 
\newcommand{\set}[1]{{\left\{ #1 \right\}}} 
\newcommand{\corch}[1]{\left[ #1 \right]} 
\newcommand*\closure[1]{\overline{#1}}
\DeclareMathOperator{\rank}{rk}
\def\dim{\operatorname{dim}}
\DeclareMathOperator{\sspan}{span}
\DeclareMathOperator{\RR}{\mathbb{R}}
\setlist[itemize]{noitemsep}
\title{
Algebraic Geometry of Cactus, Pascal, and Pappus Matroids}
\author{Emiliano Liwski, Fatemeh Mohammadi, and Lisa Vandebrouck}
\date{}
\newenvironment{equation1}
  {\begin{equation}}
  {\end{equation}}
\newenvironment{equation2}
  {\begin{equation}}
  {\end{equation}}
\numberwithin{equation}{section}
\begin{document}
\pagenumbering{roman}
\includepdf[pages = {-}]{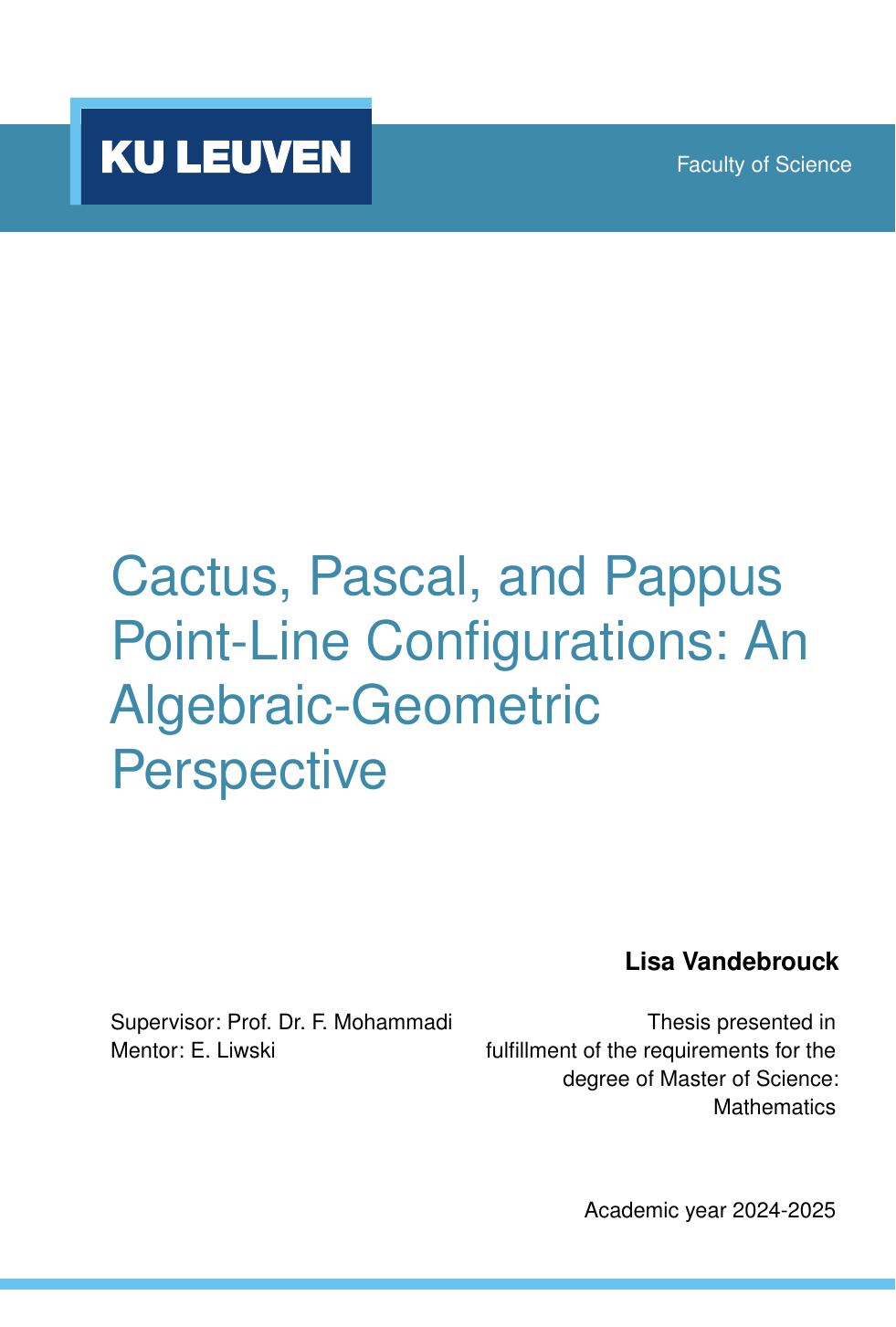}
\chapter*{Preface}\label{ch:preface}

The findings and results presented here are written in the paper \cite{vandebrouckliwskimohammadi}. Fatemeh Mohammadi and Emiliano Liwski introduced me to the topic and offered valuable support with the content. Moreover, they gave suggestions to improve the language and presentation of some of my results to make them more suitable for the paper, especially in the sections \ref{sec: related work}, \ref{sec: cactus} and \ref{sec: pascal & pappus}.
The ideas in Remark \ref{remark: emiliano} are based on a proof by Emiliano Liwski, and Lemma \ref{lemma e} is proven by Emiliano Liwski as well.

\medskip

While working on this thesis, I had the privilege to collaborate with some wonderful people. First and foremost, I would like to sincerely thank my supervisor Professor Mohammadi, whose invaluable guidance made this thesis possible. I greatly appreciate the introduction to the intriguing theory of matroids and your constructive feedback. I am especially grateful for the trust and opportunity to turn this thesis into a paper.

\medskip

I would also like to thank my daily supervisor, Emiliano, for always being available to answer my questions. Due to your many insights and support, I never lost my enthusiasm for this thesis. You were an incredible supervisor; I feel truly fortunate to have worked with someone as dedicated as you.

\medskip

Furthermore, I also want to express my gratitude towards Prof. Dr. Jannik Matuschke and Dr. Rémi Prébet for reading and evaluating this thesis. Your valuable questions further advanced the development of this thesis. Moreover, without the implementation of the algorithm to identify minimal matroids by courtesy of Rémi Prébet, many results in this thesis would not have been possible.

\medskip

Finally, I am deeply grateful for my family and friends.
\cleardoublepage

\chapter*{Abstract}                                 \label{ch:abstract}
In this thesis, we study point-line configurations and their associated matroid and circuit varieties. We aim to find a finite set of defining equations for matroid varieties and an irreducible decomposition for circuit varieties.

To solve the former problem, we use some classical techniques from algebraic geometry, including the Grassmann-Cayley algebra and the liftability technique. From these techniques, we can respectively derive the Grassmann-Cayley ideal, introduced by Sidman, Traves and Wheeler, and the lifting ideal, introduced by Liwski, Mohammadi, Clarke and Masiero. Since the circuit ideal, the Grassmann-Cayley ideal and the lifting ideal are contained in the matroid ideal and explicit generators are known for them, it is a natural question to identify point-line configurations for which a generating set of the matroid ideal is formed by the circuit polynomials, Grassmann-Cayley polynomials and lifting polynomials. For these point-line configurations, we obtain an explicit and finite description of the matroid variety. In this thesis, we prove that the matroid ideal of cactus configurations, the Pascal configuration and the Pappus configuration can be generated by these three types of polynomials. 
\medskip

To find an irreducible decomposition for the circuit varieties of point-line configurations, we use the decomposition strategy developed by Clarke, Grace, Mohammadi and Motwani. If the point-line configuration has some points lying on at most two lines, we develop a shorter alternative as well. For some point-line configurations, we give two different proofs for the decomposition of their circuit varieties, one using the decomposition strategy, one using our new method, so that we can compare them.
We find such a decomposition for
cactus configurations, up to irredundancy. Moreover, we find an irreducible decomposition for the Pascal configuration and the third configuration $9_3$, which is a point-line configuration with nine points and nine lines, such that every point is on three lines and every line contains three points. 
\cleardoublepage
\chapter*{List of Symbols}              \label{List of Symbols} 
Unless otherwise specified, $M$ is a matroid on the ground set $[d]$.
\newline
\begin{longtable}{@{}ll@{}}
$ \mathbb{C}$ & The complex numbers. \\   \addlinespace
$\mathbb{P}^2$ & The complex projective plane. \\   \addlinespace
$\mathcal{P}(A)$ & The power set of a set $A$. \\   \addlinespace
$\sqrt{I}$ & The radical of an ideal $I$. \\
  \addlinespace
  $I:J^\infty$ & The saturation of an ideal $I$ with respect to another ideal $J$. \\
  \addlinespace
  $\top$ & The transpose of a vector or matrix. \\
  \addlinespace
  $\overline{V}$ & The Zariski closure of a set $V$. \\
  \addlinespace
  $\wedge$ & The meet in the Grassmann–Cayley Algebra. \\
  \addlinespace
  $\vee$ & The join in the Grassmann–Cayley Algebra. \\
  \addlinespace
  $\mathcal{D}(M)$ & The dependent sets of $M$. \\
  \addlinespace
  $\mathcal{I}(M)$ & The independent sets of $M$. \\ \addlinespace
  $\mathcal{C}(M)$ & The circuits of $M$. \\ \addlinespace
   $\mathcal{L}_{M}$ & The lines of a point-line configuration $M$. \\ \addlinespace
   $\mathcal{L}_{p}$ & The lines of a point-line configuration containing a point $p$. \\ \addlinespace
  $I_M$ & The matroid ideal of $M$. \\
  \addlinespace
  $G_M$ & The Grassmann–Cayley ideal of $M$. \\
  \addlinespace
  $I_M^{\textup{lift}}$ & The lifting ideal of $M$. \\
  \addlinespace
  $I_{\mathcal{C}(M)}$ & The circuit ideal of $M$. \\
  \addlinespace
  $\Gamma_M$ & The realization space of $M$. \\
  \addlinespace
  $V_M$ & The matroid variety of $M$. \\
  \addlinespace
  $V(G_M)$ & The Grassmann–Cayley variety of $M$. \\
  \addlinespace
  $V(I_M^{\mathrm{lift}})$ & The lifting variety of $M$. \\
  \addlinespace
  $V_{C(M)}$ & The circuit variety of $M$. \\
  \addlinespace
  $\mathcal{M}_q(N)$ & The liftability matrix of $M$ from $q$. \\
  \addlinespace
  $S_M$ & The points of $[d]$ lying on at least two lines of $M$. \\
  \addlinespace
  $Q_M$ & The points of $[d]$ lying on at least three lines of $M$. \\
  \addlinespace
  $M(i)$ & The matroid derived from $M$ by setting the point $i \in [d]$ to be a loop.
  \\
  \addlinespace
  $\pi_M^i$ & The point-line configuration derived from the point-line configuration \\ & $M$ on $[d]$ by placing all the points except $i$ on a line, and identifying \\ & points which are on a  common line of $M$ with $i$.
  \\ \addlinespace
  $N \amalg_{p,q} M$ & The free gluing of matroids $N$ and $M$ which identifies the points $p$ and $q$. \\
  \addlinespace
  $\Lift_{M,q}(\gamma)$ & For a matroid $M$ on $[d]$ and a collection of vectors $\{\gamma_1, \ldots, \gamma_d\} \in \VCM$, \\ & this is the subspace with all vectors $(z_{1},\ldots,z_{d}) \in \CC^d$ such that \\ & $\widetilde{\gamma}=\{\gamma_{i}+z_{i}q: i \in [d]\}$ belongs to $V_{\CCC(M)}$.
\end{longtable}
\cleardoublepage
\chapter*{Summary}
Throughout this summary, we will assume that $M$ is a point-line configuration on the ground set $[d]$, as defined in Definition~\ref{point-line}. We first recall some important definitions, combining Definition~\ref{realization space} and~\ref{cir}.
\begin{definition} For a point-line configuration $M$ on $[d]$, we define:
\begin{itemize}
\item A realization of $M$ is a set of vectors $\gamma=\{\gamma_{1},\ldots,\gamma_{d}\}\subset \CC^{3}$ such that
\[\{i_{1},\ldots,i_{p}\} \ \text{is a dependent set of $M$} \Longleftrightarrow \{\gamma_{i_{1}},\ldots,\gamma_{i_{p}}\}\ \text{is linearly dependent.}\]
The set
$\Gamma_{M}=\{\gamma\subset \CC^{3}: \gamma \ \text{is a realization of $M$}\}$ is called the {\em realization space} of $M$.
The {\em matroid variety} $V_M$ of $M$ is defined as $\closure{\Gamma_M}$, the Zariski closure of $\Gamma_M$. The matroid ideal $I_{M}=I(V_{M})$ is defined as the associated ideal.
\smallskip 
\item We say that a collection of vectors $\gamma=\{\gamma_{1},\ldots,\gamma_{d}\}\subset \CC^{3}$ includes the dependencies of $M$ if it satisfies:
\[\set{i_{1},\ldots,i_{p}}\  \text{is a dependent set of $M$} \Longrightarrow \set{\gamma_{i_{1}},\ldots,\gamma_{i_{p}}}\ \text{is linearly dependent}. \] 
The {\em circuit variety} of $M$ is defined as \[V_{\mathcal{C}(M)}=\{\gamma:\text{$\gamma$ includes the dependencies of $M$}\}.\]
The ideal of $V_{\mathcal{C}(M)}$ is denoted by $I_{\CCC(M)}$.
\end{itemize}
\end{definition}
One of the main problems that this thesis addresses concerns the circuit variety:
\begin{question} \label{question decompo}
    Find an irreducible decomposition for $\VCM$.
\end{question} 
This problem is already studied, for instance in \cite{liwskimohammadialgorithmforminimalmatroids}. For many results, we first present a proof in line with the decomposition strategy as outlined in that paper, and then we provide a shorter and more elegant alternative when possible. Another question in this thesis concerns the matroid ideal. 
\begin{question} \label{question gen}
    Find a set of generators for $I_M$, up to radical.
\end{question}
It is already challenging to find explicit non-trivial polynomials within $I_M$, constructing a complete set of generators up to radical even more so. Since $I_{\CCC(M)} \subset I_M$, we can already include these circuit polynomials in the generating set. To find other generating polynomials, we consider two other ideals which lie in $I_M$. Both are constructed using classical geometrical techniques:
\begin{itemize}
    \item The Grassmann-Cayley ideal, $G_M$, is introduced in \cite{Sidman}. Its generating polynomials can be obtained using the Grassmann-Cayley algebra by considering triples of concurrent lines in the configuration. This is further discussed in Subsection~\ref{Grassmann Cayley ideal}.
    \item The lifting ideal, $I_M^{\textup{lift}}$, is introduced in \cite{clarke2024liftablepointlineconfigurationsdefining}. It is constructed using a geometric liftability technique, as further outlined in Subsection \ref{liftability}.
\end{itemize}

As complete generating sets are known for $I_{\CCC(M)}, G_M$ and $I_M^{\text{lift}}$, it makes sense to explore for which families of matroids the sum of these three ideals generates $I_M$, up to radical. So instead of finding a generating set for matroid ideals in full generality, we will focus on the following question. 
\begin{question} \label{quest: identify}
    Which point-line configurations $M$ satisfy the following equality:
\begin{equation1} \label{eq: intro generators}
    I_M = \sqrt{I_{\CCC(M)}+ G_M+I_M^{\text{lift}}}?
\end{equation1}
\end{question}
There is no step-by-step plan or algorithmic way to prove that a certain point-line configuration satisfies Equation~\eqref{eq: intro generators}, but we present some general framework. This perturbation argument is discussed in more detail in Subsection \ref{subsection: perturbation strategy}.
\begin{strategy}
\leavevmode\par
\begin{itemize} 
\item To prove that Equation~\eqref{eq: I_M gen} holds for a certain point-line configuration $M$, we will prove the statement \begin{equation2} \label{eq: intersec V_M intro}
V_M = \VCM \cap V(G_M) \cap V(I_{M}^{\textup{lift}}).\end{equation2} Only the inclusion $\supseteq$ in Equation~\eqref{eq: intersec V_M intro} is non-trivial. So consider an arbitrary $\gamma \in \VCM \cap V(G_M) \cap V(I_M^{\textup{lift}}).$
\item Decompose the circuit variety of $M$ into its  irreducible components. Then $\gamma$ should be contained in one of them.
\item Perturb $\gamma$ to a collection of vectors in $\Gamma_M$. This proves that $\gamma$ is in the Euclidean closure of $\Gamma_M$, thus it must be in the Zariski closure of $\Gamma_M$, and therefore in $V_M$ as well.
\end{itemize}
\end{strategy}

\noindent In this thesis, we resolve Questions~\ref{question decompo} and~\ref{quest: identify} for three specific families of point-line configurations: cactus configurations, the Pascal configuration, and the Pappus configuration; see Definition~\ref{cact irr}, Figure~\ref{fig:pascal 1} (Left), and Figure~\ref{fig:Pappus, Pappus, I_i, J_i} (Left). We obtain the following main results:

\begin{theoremA}

The following point-line configurations satisfy Equation~\eqref{eq: intro generators}:
\begin{enumerate}[itemsep=0pt, parsep=0pt]
\item Cactus configurations\hfill\textup{(Theorem~\ref{thm: main theorem cactus matroid ideal})}
\item Pappus configuration\hfill\textup{(Theorem~\ref{thm:Pappus})}
\item Pascal configuration.\hfill\textup{(Theorem~\ref{pascal generators})}
\end{enumerate}
\end{theoremA}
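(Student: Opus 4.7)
The plan is to apply, for each of the three families in turn, the perturbation strategy sketched in the Summary. In every case the inclusion $V_M \subseteq \VCM \cap V(G_M) \cap V(I_M^{\textup{lift}})$ is automatic since each of the three ideals on the right is contained in $I_M$, so it suffices to prove the reverse set-theoretic inclusion; the desired equality of ideals up to radical then follows from Hilbert's Nullstellensatz. To establish the reverse inclusion I would fix a point $\gamma$ in the right-hand side and use the irreducible decomposition of $\VCM$ (whose construction is the content of the circuit-variety half of the thesis) to place $\gamma$ in a specific component, then argue that $\gamma$ lies in the Euclidean, hence Zariski, closure of $\Gamma_M$ by an explicit perturbation.

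For cactus configurations I would argue by induction along the gluing structure of a cactus, using the free gluing $N \amalg_{p,q} M$ from the List of Symbols: a cactus is built from lines by successively identifying two points. At each inductive step the restrictions of $\gamma$ to the two factors inherit the circuit, Grassmann-Cayley and lifting vanishing from $\gamma$ itself (modulo relabelling the glued point), so the inductive hypothesis yields realisations of the factors; I would then glue these realisations, adjusting the common point by a small perturbation so that the combined collection realises $M$. For the Pascal and Pappus configurations, which do not decompose as a free gluing, the strategy is instead case-analytic: I would list the irreducible components of $\VCM$, identify $V_M$ as the unique component containing a generic realisation, and then, for each remaining more degenerate component, show that the additional constraints imposed by the Grassmann-Cayley polynomials (which encode the classical concurrency relations of the configuration) together with those coming from $I_M^{\textup{lift}}$ force either enough extra incidences that $\gamma$ is already in $V_M$, or enough structure on $\gamma$ that it can be perturbed into $\Gamma_M$.

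The main obstacle I anticipate is precisely this last step. On spurious components of $\VCM$ the Grassmann-Cayley and lifting polynomials do not generically vanish, so one has to show carefully, component by component, that they cut out only configurations that are limits of genuine realisations of $M$. For cactus configurations this is comparatively mild, since the inductive step requires only a one-parameter perturbation at a single glued point, but for Pascal and Pappus the perturbation must simultaneously respect several classical projective relations, and each degenerate component requires its own tailored deformation. The liftability matrix $\liftmatrix$ from the List of Symbols is the natural tool to certify such perturbations: showing that its image contains prescribed directions reduces the perturbation step to a rank computation, and it is there that the technical work in Theorems~\ref{thm: main theorem cactus matroid ideal}, \ref{thm:Pappus} and \ref{pascal generators} must be carried out in each case.
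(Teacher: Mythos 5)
Your plan for Pascal and Pappus is essentially the paper's: prove the equality of varieties $V_M = \VCM \cap V(G_M) \cap V(I_M^{\textup{lift}})$ by decomposing $\VCM$ into irreducible components, placing $\gamma$ in a component, and showing that the Grassmann--Cayley and lifting constraints on that component force a small perturbation of $\gamma$ into $\Gamma_M$. That is exactly the structure of the proofs of Theorems~\ref{pascal generators} and~\ref{thm:Pappus}.

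Your plan for cactus configurations has a genuine gap. A small point first: cacti are built by freely gluing both lines \emph{and cycles}, not just lines (gluing only lines gives forest configurations). The more serious issue is that your inductive gluing step does not address loops at the glued point $P$. Restricting $\gamma$ to the two factors and invoking the inductive hypothesis is fine when $\gamma_P \neq 0$, but when $\gamma_P = 0$ you cannot realize the factors independently and then ``adjust the common point by a small perturbation'': the new $\gamma_P$ must simultaneously lie on several lines in each factor, and neither the lifting ideal (which produces liftings from a point off the hyperplane, not corrections at a single degenerate point) nor a naive one-parameter perturbation certifies this. The paper resolves this with Lemma~\ref{Generalization of Lemma 4.21 iii}: when $\gamma_p = 0$ for some $p\in Q_M$ and at least two of the lines through it have rank two, the Grassmann--Cayley relations (Proposition~\ref{gm toegevoegd}) let one redefine $\gamma_p$ as the meet of those lines while staying in $\VCM\cap V(G_M)$; the replacement process is then shown to terminate using Lemma~\ref{Generalization of Lemma 4.21 i}, which requires the hypothesis that the points of $Q_M$ contain no cycle. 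This hypothesis actually appears in the paper's Theorem~\ref{thm: main theorem cactus matroid ideal}, and Example~\ref{example f} exhibits a cactus with a cycle inside $Q_M$ for which $\sqrt{I_{\CCC(M)}+G_M}\neq I_M$, so the hypothesis is not cosmetic. Your proposal neither mentions this hypothesis nor supplies a mechanism for removing loops at glued points, and an inductive gluing argument as you describe it would apparently apply to every cactus — which is not what the paper proves. Finally, note that the paper proves the cactus case using only $I_{\CCC(M)}+G_M$; the lifting ideal is not needed there, so the liftability matrix is not ``the natural tool'' for this family.
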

\noindent Theorem (A) results in an explicit set of generators for the Pascal and Pappus configurations. The following table provides an overview of the number of each type of generating polynomials.
\begin{table}[h!]
\centering
\begin{tabular}{|l|c|c|c|}
\hline
\textbf{Configuration} & \textbf{Circuit} & \textbf{Grassmann-Cayley} & \textbf{Lifting} \\
& {\bf Polynomials} & {\bf Polynomials} & {\bf Polynomials} \\
\hline
Pascal & 7 & 7 & 708588 \\ 
Pappus & 9 & 9 & 2361960 \\
\hline
\end{tabular}
\end{table}
\newline
\noindent Moreover, we also derive the following results. The third configuration is shown in Figure \ref{fig:Third Config A_1 B}. This point-line configuration has nine points and nine lines, such that every point is on three lines and every line contains three points.

\begin{theoremB}
Let $M$ denote any cactus configuration, let $N$ be the Pascal configuration and $K$ the third configuration $9_3$. The following statements hold:
\begin{enumerate}[itemsep=0pt, parsep=0pt]
\item $M$ is realizable and $V_{M}$ is irreducible. \hfill{(Theorem~\ref{cact irr})}
\item Denote $Q_M$ the subset of points in $M$ that lie on more than two lines. Then $\VCM$ has at most $2^{\size{Q_{M}}}$ irredundant, irreducible components, obtained from $M$ by setting a subset of $Q_{M}$ to be loops. \hfill{(Theorem~\ref{thm: decomposition of circuit variety of a cactus})} 
\item The circuit variety of the Pascal configuration has the following irreducible decomposition:
\[V_{\CCC(N)}=V_{N}\cup V_{U_{2,9}}\cup_{i=7}^{9}V_{N(i)},\tag{Theorem~\ref{proposition: decomposition pascal configuration}}\]
where $U_{2,9}$ is the uniform matroid of rank two on $[9]$ and $N(i)$ denotes the matroid obtained from $N$ by setting $i$ to be a loop.
\item The decomposition of $V_{\CCC(K)}$ has 46 irreducible components. \hfill{(Theorem~\ref{thm decompo third 9_3})}
\end{enumerate}
\end{theoremB}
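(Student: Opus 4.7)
Theorem~(B) bundles four results which we approach by exploiting the combinatorial structure of the configurations involved. Parts~(i) and~(ii) rest on the recursive gluing description of cacti, whereas parts~(iii) and~(iv) apply the perturbation strategy from the summary to two concrete configurations.

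For part~(i), the plan is to induct on the number of components of the cactus. A cactus configuration is built by iteratively applying the free gluing operation $N \amalg_{p,q} M$ along a single common point, starting from lines, which are trivially realizable with irreducible realization space. The inductive step would show that if $V_N$ and $V_M$ are each realizable and irreducible, then the same holds for the free gluing at a single point: a realization of the gluing is determined by compatible realizations of each piece identified along the two chosen points, parametrized by a nonempty irreducible constructible set whose Zariski closure is irreducible. The hard part of this step is controlling the fibre dimension so that the projection onto each factor remains dominant, which is where the single-point-gluing hypothesis of a cactus is essential.

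For part~(ii), the natural candidates for components of $V_{\CCC(M)}$ are the matroid varieties $V_{M(S)}$ obtained by setting a subset $S \subseteq Q_M$ of the triple points to be loops. Each such $V_{M(S)}$ is irreducible by applying part~(i) to the cactus with the loops removed, giving at most $2^{|Q_M|}$ candidates. The plan is then to show that every $\gamma \in V_{\CCC(M)}$ lies in some $V_{M(S)}$ by a perturbation argument: take $S$ to be the maximal set of triple points where $\gamma$ degenerates, and perturb the remaining coordinates to produce a genuine realization of $M(S)$ whose Euclidean limit is $\gamma$. The irredundancy statement will follow by checking that, for each $S$, there is a point of $V_{M(S)}$ lying in no $V_{M(S')}$ with $S' \neq S$; triple points play a distinguished role here because collapsing a double point can be reabsorbed into the realization of the neighbouring branch.

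For part~(iii), the Pascal configuration has three triple points, namely $7,8,9$, so the strategy of part~(ii) yields the four components $V_N$ and $V_{N(i)}$ for $i = 7,8,9$. The additional component $V_{U_{2,9}}$ arises when all triple points degenerate simultaneously and the configuration collapses onto a single line; this component must be added separately because the collapse produces a rank-two configuration rather than a matroid merely carrying loops. For part~(iv), the main obstacle is computational: the third $9_3$ configuration has $46$ components rather than the modest count expected from triple-point collapses alone, since various degenerations of non-triple points also contribute genuine irreducible components. The plan is to use Prébet's minimal-matroid algorithm to enumerate the candidate components, verify each one is irreducible by a gluing argument in the spirit of part~(i), and check exhaustiveness by applying the perturbation strategy stratum by stratum. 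The hard part will be bookkeeping: ensuring no component produced by the algorithm is redundant, and that no genuinely distinct degeneration is missed.
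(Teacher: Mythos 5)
For part~(i), you propose a direct induction on gluings, showing that free gluing along a single point preserves realizability and irreducibility of the matroid variety. The paper takes a different route: it proves (Proposition~\ref{thm: cactus-like config is nilpotent}) that every cactus configuration is \emph{nilpotent}, then cites the general result (Theorem~\ref{nil coincide}) that nilpotent point-line configurations are realizable with irreducible matroid variety. Your approach would require proving a new lemma — that gluing at one point preserves irreducibility of $V_M$ — which is plausible (realization spaces fiber nicely over the glued factors) but is exactly the ``hard part'' you flag without resolving. The paper's route factors the same inductive structure through nilpotency, where the heavy lifting is already done in earlier work. Part~(ii) matches the paper's argument: set $S$ equal to the degree-$\geq 3$ points where $\gamma$ vanishes, observe $M\setminus S$ is again a cactus, and perturb; this is precisely what Lemma~\ref{Generalization of Lemma 4.21 ii} encodes, and irreducibility of each piece follows from part~(i).

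For part~(iii), there is a genuine gap. You write that ``the strategy of part~(ii) yields the four components $V_N$ and $V_{N(i)}$,'' but the Pascal configuration is \emph{not} a cactus: the line $\{7,8,9\}$ lies in more than one cycle, so Proposition~\ref{prop: cactus-like = every line in at most 1 n-cycle} fails and the recursive-gluing machinery behind parts~(i)--(ii) (in particular Lemma~\ref{Generalization of Lemma 4.21 ii}) does not apply. The crux of the Pascal decomposition is showing that if $\gamma\in V_{\mathcal{C}(N)}$ has \emph{no} loops, then $\gamma$ lies in $V_N\cup V_{U_{2,9}}$; this is not a consequence of the cactus argument. The paper handles this with a dedicated case analysis on whether some degree-two point $p$ satisfies $\gamma_{l_1}\wedge\gamma_{l_2}\neq 0$ for $\{l_1,l_2\}=\mathcal{L}_p$: if yes, one perturbs $\gamma|_{[9]\setminus\{p\}}$ using Lemma~\ref{props} (which itself requires Lemma~\ref{auxiliary lemma} and explicit matrix computations) and then places $\gamma_p$ via the Grassmann--Cayley meet; if no, all the lines through degree-two points coincide and $\gamma$ drops to rank two. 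Your sketch also misstates where $V_{U_{2,9}}$ comes from — it is the rank-two collapse of the entire configuration, not the simultaneous degeneration of the three degree-three points (those produce the $V_{N(i)}$ components and their intersections, not $U_{2,9}$). Without this case analysis and without the Pascal-specific Lemma~\ref{props}, the argument does not close. Part~(iv) is described at a level consistent with the paper's use of Pr\'ebet's algorithm and a mix of the decomposition and perturbation strategies, though of course the actual verification occupies an entire chapter.
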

\noindent

\noindent{\bf Chapter \ref{sec: intro}} introduces the topic and discusses this work in a broader context. {\bf Chapter \ref{sec: prelim}} discusses some necessary background concerning matroids and matroid varieties, and some classical techniques. 
In {\bf Chapter \ref{sec: cactus}}, we will introduce cactus configurations. In this chapter, we will prove the first statement of Theorem (A) and the first and the second statement of Theorem (B). In {\bf Chapter \ref{sec: pascal & pappus}} we study two classical point-line configurations, the Pascal and the Pappus configuration, and we prove the last two points of Theorem (A) and the last point of Theorem (B). In {\bf Chapter \ref{Decomposition of the Circuit Variety of the third configuration 93}}, we study the configuration $9_3$, and we prove the last point of Theorem (B). In {\bf Chapter \ref{sec: decompo revisited}}, we provide an alternative proof for the decomposition of the circuit variety of the Pappus configuration with one loop and the last point of Theorem (B) using the decomposition strategy. 
\cleardoublepage
\tableofcontents
\pagenumbering{arabic}
\numberwithin{theorem}{chapter}
\cleardoublepage
\chapter{Introduction}\label{sec: intro}
\section{Motivation}
Matroids were first introduced by Hassler Whitney in \cite{Hassler1935-ko}. They are a combinatorial generalization of linear dependence among vectors: a collection of linearly dependent subsets of a finite set of vectors in a vector space determines a matroid. They are often studied in mathematics, see for instance \cite{Oxley, piff1970vector}. As discussed in \cite{tutte1966matroids}, this abstraction has led to many interesting applications, for instance in instance network-flow theory \cite{ gale1959transient,minieka1973maximal} and linear and integer programming \cite{bland1974orientability,BLAND197733}. 
\newline
\newline
One can also use this combinatorial notion to define varieties in algebraic geometry, as follows. A realization of a matroid $M$ is a finite collection of vectors for which the induced matroid is $M$. We say that a collection of vectors includes the dependencies of $M$ if their induced matroid has at least the dependencies of $M$. The realization space of a matroid $M$, denoted by $\Gamma_M$, is defined as the set containing all the realizations of $M$, and the matroid variety, denoted by $V_M$, is defined as the Zariski closure of $\Gamma_M$. The circuit variety of $M$ is the set containing each collection of vectors which includes the dependencies of $M$. The matroid and circuit varieties will be the main objects of study in this thesis. In particular, we aim to find an irreducible decomposition for the circuit variety and a finite set of defining equations for the matroid ideal, for various matroids.
\newline
\newline
Matroid varieties were introduced in~\cite{gelfand1987combinatorial} and they have since become a central theme in modern algebraic geometry, because their study allows for the combination of classical tools from algebraic geometry and techniques arising from their combinatorial structure, leading to interesting results~\cite{Vakil, Sidman, liwski2024pavingmatroidsdefiningequations, sturmfels1989matroid, feher2012equivariant, knutson2013positroid}. The behavior of these matroid varieties can be very complex by the Mn\"ev-Sturmfels universality theorem. In particular, for any singularity appearing in a semi-algebraic set, one can find a matroid variety which has the same singularity, up to stable equivalence \cite{Mnev1985, Mnev1988, sturmfels1989matroid}. The interest in both matroid and circuit varieties is not only caused by their rich geometric structure, but also by their natural presence in many other settings. They appear for instance in the study of determinantal varieties~\cite{bruns2003determinantal, clarke2022matroidstratificationshypergraphvarieties, herzog2010binomial, pfister2019primary, ene2013determinantal}, rigidity theory~\cite{jackson2024maximal, whiteley1996some, graver1993combinatorial, maximum}, and conditional independence models~\cite{Studeny05:Probabilistic_CI_structures, DrtonSturmfelsSullivant09:Algebraic_Statistics, hocsten2004ideals, clarke2022conditional, caines2022lattice, alexandr2025decomposing}. In these situations, a common challenge is to decompose conditional independence ideals or determinantal ideals into their primary components. Matroid varieties often play a central role in such decompositions, appearing as the irreducible components. Therefore, if we can find defining equations of matroid varieties, this result has many applications in the understanding of the primary decomposition of these mentioned ideals. 
\section{Main objects and results}
As it is a hard problem to find an irreducible decomposition of circuit varieties and a generating set for matroid ideals, we will focus on a specific type of matroid, called point-line configurations, as defined in Definition \ref{point-line}. We will study for instance cactus configurations, which form a family of point-line configurations generalizing the forest configurations. The latter is often studied in literature, see for instance \cite{clarke2022matroidstratificationshypergraphvarieties}. Moreover, we study the Pascal and Pappus configurations, two classical point-line configurations obtained from the corresponding theorems in incidence geometry. They are shown in Figure \ref{fig:pascal 1} (Left) and \ref{fig:Pappus, Pappus, I_i, J_i} (Left). Finally, we also study the third configuration $9_3$, as shown in Figure \ref{fig:Third Config A_1 B} (Left), which is a point-line configuration with nine points and nine lines, such that every line contains three points and every point is on three lines. This point-line configuration is obtained from a relaxation of the conditions of a Steiner system, \cite{Van_Lint2012-bn}.
\newline
\newline
A first goal of this thesis is to determine an irreducible decomposition of various circuit varieties. This problem is already studied in \cite{liwskimohammadialgorithmforminimalmatroids, liwski2025efficient} for some point-line configurations. In this thesis, we find such a decomposition for some computationally challenging cases, including:
\begin{itemize}
    \item Cactus configurations (up to irredundancy)
    \item Pascal configuration
    \item Third configuration $9_3$.
\end{itemize}
For the cactus configurations, we use a similar method as in \cite{liwski2024solvablenilpotentmatroidsrealizability}. For the Pascal configuration, we give a proof using a new method that we developed, which is applicable if a point-line configuration has \enquote{enough} points of degree two. For the third configuration $9_3$, we use a combination of the decomposition strategy, see \cite{clarke2022matroidstratificationshypergraphvarieties, liwskimohammadialgorithmforminimalmatroids, liwski2025efficient}, and our new methods. Later, we also give a proof of the decomposition of the third configuration $9_3$ following the decomposition strategy completely. One can notice that the latter proof is remarkably longer than the former. 
\newline
\newline
We will also prove that the matroid variety of cactus configurations is non-empty and irreducible, extending the results of \cite{liwski2024solvablenilpotentmatroidsrealizability}.
\newline
\newline
Another goal is to find a finite set of defining equations for the matroid variety, or equivalently, to find a complete finite set of generators for the corresponding ideal, called the matroid ideal, up to radical. Using the liftability technique and the Grassmann-Cayley algebra, we can define two new ideals, the lifting ideal $I_M^{\textup{lift}}$, as in \cite{clarke2024liftablepointlineconfigurationsdefining}, and the Grassmann-Cayley ideal $G_M$, as in \cite{Sidman}. $I_{\CCC(M)}, I_M^{\textup{lift}}, G_M$ are contained in $I_M$, and we know explicit generators for them. Therefore, if we can identify point-line configurations satisfying \begin{equation}\label{eq in main objects}
I_M = \sqrt{I_{\CCC(M)}+I_M^{\textup{lift}}+G_M},\end{equation}
then we have found a generating set for the matroid ideal of these point-line configurations. We use the perturbation strategy, as outlined in Subsection \ref{subsection: perturbation strategy}, to prove that a point-line configuration satisfies Equation \ref{eq in main objects}.
In this thesis, we find that this equation holds for: \begin{itemize}
    \item Cactus configurations
    \item Pascal configuration
    \item Pappus configuration.
\end{itemize}

To my knowledge, the Pascal and Pappus configurations are the first matroids for which Equation~\eqref{eq in main objects} has been proven where both $I_M^{\textup{lift}}$ and $G_M$ are irredundant in the generating set. 
\newline
\newline
From this result, one can derive a generating set for the matroid ideal of the Pascal and Pappus configurations. Table \ref{table} provides an overview of the number of each type of generating polynomial in the generating set, as in Remark \ref{remark pascal} and \ref{remark pappus}.
\newline
\begin{table}[h!]
\centering
\caption{Number of each type of polynomial in the generating set for $I_M$}
\label{table}
\begin{tabular}{|l|c|c|c|}
\hline
\textbf{Configuration} & \textbf{Circuit} & \textbf{Grassmann-Cayley} & \textbf{Lifting} \\
& {\bf Polynomials} & {\bf Polynomials} & {\bf Polynomials} \\
\hline
Pascal & 7 & 7 & 708588 \\ 
Pappus & 9 & 9 & 2361960 \\
\hline
\end{tabular}
\end{table}
\newline
It may seem that the results presented in this thesis are limited, since they only hold for specific families of matroids. However, it is a particularly hard problem to find an irreducible decomposition of a circuit variety and explicit generators for a matroid ideal, as noticed in \cite{pfister2019primary}, for instance since few methods are known for the construction of such generators. To date, there are only specific types of matroids for which an explicit set of generators has been found. The technique that we used as an alternative for the decomposition strategy is already applicable to many other point-line configurations and enables the efficient decomposition of circuit varieties for which the calculations using the decomposition strategy become too extensive. Similarly, we hope that the techniques and results presented in this thesis concerning the generators of a matroid ideal will stimulate further research in this direction.  

\section{Related work} \label{sec: related work}
Some research in this direction has already been done; we highlight some important work.
\begin{itemize}
\item In \cite{pfister2019primary}, the authors established an algorithm to compute defining equations for the matroid variety associated with the $3\times 4$ grid configuration. This configuration contains sixteen circuits of size three. They derived a finite generating set for the corresponding ideal containing 44 polynomials, using {\tt Singular}. Moreover, they showed that the circuit variety has two components. However, the computations push the limits of current computer algebra systems, and there is no clear combinatorial interpretation for the resulting components.
\item In \cite{clarke2024liftablepointlineconfigurationsdefining}, the geometric liftability technique was used to find a generating set for the matroid ideals of the quadrilateral set and the $3\times 4$ grid. The approach in this paper offers a geometric interpretation for the resulting polynomials.
\item In \cite{liwski2024pavingmatroidsdefiningequations}, the authors defined the lifting ideal $I_M^{\textup{lift}} \subset I_{M}$ using the geometric liftability technique. They showed that, for paving matroids without points of degree greater than two, the ideal $I_{\CCC(M)}+I_{M}^{\textup{lift}}$ defines the associated matroid variety.
\item In \cite{Sidman}, the authors used the Grassmann–Cayley algebra to define the ideal $G_{M}\subset I_{M}$. From this, the authors constructed seven new polynomials contained in the matroid ideal of the Pascal configuration.
\item In~\cite{liwski2024solvablenilpotentmatroidsrealizability}, the authors proved that for forest configurations, the radical of the ideal $I_{\CCC(M)}+G_M$ equals the matroid ideal $I_M$. Moreover, they find an irreducible decomposition for forest configurations and they discuss the decomposition strategy.
\item In 
\cite{liwskimohammadialgorithmforminimalmatroids,liwski2025efficient}, various circuit varieties are decomposed.
\item The decomposition of hypergraph varieties is studied in \cite{bruns2003determinantal}. In
\cite{herzog2010binomial, diaconis1998lattice}, they discuss the decomposition of varieties corresponding to binomial edge ideals. 
\end{itemize}

\numberwithin{theorem}{chapter}

\chapter{Preliminaries} \label{sec: prelim}

In this chapter, we discuss some essential properties of matroids and their associated varieties. For additional information, we refer the reader to \cite{Oxley, gelfand1987combinatorial, victorreinerlecturematroids} and \cite{liwski2024pavingmatroidsdefiningequations, liwski2024solvablenilpotentmatroidsrealizability, liwskimohammadialgorithmforminimalmatroids}. Throughout, we use the notation $[n] = \{1, \ldots, n\}$ and $\textstyle \binom{[d]}{n}$ to denote the set of $n$-element subsets of $[d]$. 
\section{Matroids}

We present several results on matroids. For a complete introduction to matroid theory, see \cite{Oxley, piff1970vector}. There are several equivalent ways to define matroids; we present four of them. 

\begin{definition} \label{Definition1matroids}
    A {\it matroid} $M$ is formed by a finite non-empty set $[d]$, called {\it the ground set}, and a collection of subsets of $[d]$, called the {\it independent sets} $\mathcal{I}$, such that the sets $I \in \mathcal{I}$ satisfy the following three properties:
    \begin{itemize}
        \item $\emptyset \in \mathcal{I}$.
        \item If $I_1 \in \mathcal{I}$ and $I_2 \subset I_1$, then $I_2 \in \mathcal{I}$.
        \item If $I_1, I_2 \in \mathcal{I}$ and $|I_1| < |I_2|$, then there exists $e \in I_2 \setminus I_1$ such that $I_1 \cup \{e\} \in \mathcal{I}$.
    \end{itemize}
    This last property is called the {\it exchange axiom.}
\end{definition}
We can also define matroids by their bases.
\begin{definition}\label{Definition2matroids}
    A {\it matroid} $M$ is formed by a non-empty set $[d]$, called the {\it ground set}, and a collection of subsets of $[d]$, called the {\it bases} $\mathcal{B}$, such that the sets $B \in \mathcal{B}$ satisfy the following three properties:
    \begin{itemize}
        \item $B \neq \emptyset$.
        \item $B_1, B_2 \in \mathcal{B}$ and $x \in B_1 \setminus B_2$ implies that there exists $y \in B_2 \setminus B_1$ such that $(B_1 \setminus \{x\}) \cup \{y\} \in \mathcal{B}$.
    \end{itemize}
    This last property is again called the {\it exchange axiom}.
\end{definition}
We can define a matroid as well in terms of its circuits.
\begin{definition}\label{Definition3matroids}
Given a {\it matroid} $M$ with {\it ground set} $[d]$, a collection $\mathcal{C}$ of subsets of $[d]$ is called the collection of {\it circuits} of $M$ if $\mathcal{C}$ satisfies the following three properties:
\begin{itemize}
\item $\emptyset \notin \mathcal{C}$.
\item If $C_1, C_2 \in \mathcal{C}$ and $C_1 \subset C_2$, then $C_1=C_2$.
\item If $C_1, C_2 \in \mathcal{C}$, $C_1 \neq C_2$ and $e \in C_1 \cap C_2$, then there is a $C_3 \in \mathcal{C}$ such that $C_3 \subset C_1 \cup C_2 \setminus \{e\}$.
\end{itemize} The last property is called {\it circuit elimination}.
\end{definition}
We denote by $\mathcal{C}(M), \mathcal{I}(M), \mathcal{D}(M)$ respectively the collection of circuits of $M$, the independent sets of $M$ and the dependent sets of $M$.
Finally, we present a definition of a matroid in terms of its rank function.
\begin{definition}\label{Definition4matroids}
    A matroid $M$ consists of a finite ground set $[d]$, together with a mapping: $\rk: \mathcal{P}([d]) \to \mathbb{N}$, satisfying the following three properties:
   \begin{itemize}
       \item For $X \subset [d]$, $0 \leq \rk(X) \leq |X|$.
       \item For $X \subset Y \subset [d]$, $\rk(X) \leq \rk(Y).$
       \item For $X, Y \subset [d]$, the following inequality holds:
       $$\rk(X \cup Y)+ \rk(X \cap Y) \leq \rk(X)+\rk(Y).$$
   \end{itemize}
   The last property is known as {\em submodularity} of the rank function.
\end{definition}
\begin{remark} \label{remark on definitions matroids}
The previous four definitions are equivalent if one uses the following connections between them.
Let $M$ be a matroid on the ground set $[d]$. Then:
\begin{itemize}
\item The dependent subsets are the subsets of $[d]$ containing a circuit. So the circuits are the minimal dependent subsets, with respect to inclusion.
\item The independent sets are the subsets of $[d]$ which are not dependent, in other words, the subsets which do not contain a circuit. Equivalently, the independent sets are the subsets of $[d]$ for which the rank equals the cardinality.
\item A basis is a maximal independent subset of $[d]$, with respect to inclusion. 
\item The rank of $F \subset [d]$ is the size of the largest independent set contained in $F$. Note that $\rank([d])$ is the size of any basis. We define $\rank(M)$  as $\rank([d])$.
\end{itemize}
\end{remark}

\begin{example}\normalfont\label{uniform 3}
The uniform matroid $U_{n, d}$ of rank $n$ is defined as the matroid on $[d]$ whose independent sets are precisely the subsets $S\subset [d]$ with $|S| \leq n$. 
\end{example}

Given a matroid $M$ on the ground set $[d]$, we define the operations of restriction and deletion.

\begin{definition}
\normalfont \label{subm}
Given a subset $S \subset [d]$, the {\em restriction} of $M$ to $S$, denoted by $M|S$, is the matroid 
on the ground set $S$ whose rank function is the restriction of the rank function of $M$ to $S$. Unless otherwise specified, we will assume that the subsets of $[d]$ inherit this structure and we call them \textit{submatroids} of $M$. To simplify notation, we may write $S$ instead of $M|S$ when the context allows. We define the {\em deletion} of $S$, denoted $M\setminus S$, as $M|([d]\setminus S)$.
\end{definition}
\begin{remark}
    Using Remark~\ref{remark on definitions matroids}, it is clear that for a matroid $M$ on the ground set $[d]$ and $S \subset [d]$, $M | S$ is the matroid with independent sets $$\mathcal{I}(S) = \{I \in \mathcal{I}(M) : I \subset S\},$$ and thus with dependent sets $$\mathcal{D}(S) = \{D \in \mathcal{D}(M) : D \subset S\}.$$
\end{remark}
\begin{example}
    Consider the matroid $QS$ with ground set $[d]$ and dependent sets $$\mathcal{D}(QS) = \bigl\{\{1,2,3\},\{1,5,6\}, \{2,4,6\}, \{3,4,5\}\bigr\} \cup \binom{[6]}{4}.$$ This matroid will show up again in Example \ref{three lines} as the quadrilateral set. Then $M | [4]$ is the matroid with dependent sets $$\mathcal{D}(M|[4]) = \bigl\{\{1,2,3\}, \{1,2,3,4\}\bigr\}.$$
\end{example}

Finally, we introduce the notion of loops and simple matroids.
\begin{definition}
    An element $p$ in the ground set $[d]$ of a matroid $M$ is called a \emph{loop} if $\{p\}$ is a circuit. In figures, it will be shown with a rectangle. For a matroid $M$ on the ground set $[d]$ and $S \subset [d]$, one can construct a new matroid on $[d]$ with circuits $$\min\{\mathcal{C}(M) \cup \{\{s\}:s \in S\}\},$$
    where $\min$ denotes the minimal elements with respect to the inclusion order. We say that this matroid is derived from $M$ by {\em setting the points in $S$ to be loops}, and we denote it by $M(S)$.
\end{definition}
\begin{definition}
    We say that two elements $x,y\in [d]$ are {\em parallel}, or form a {\em double point}, if $\rank(\{x,y\})=1$. A {\em simple matroid} is a matroid without loops or parallel elements.
\end{definition}

\section{Point-line configurations}
We review some definitions and introduce some notation for point-line configurations, following \cite{liwski2024pavingmatroidsdefiningequations}.
\begin{definition} \label{point-line}
    A {\it point-line configuration} is a simple matroid of rank at most three.
\end{definition}
\begin{remark}
    In \cite{liwski2024pavingmatroidsdefiningequations}, they introduce {\em paving matroids} as matroids of rank $n$ such that every circuit has size $n$ or $n+1$. It immediately follows that point-line configurations are paving matroids of rank three and the uniform matroids $U_{2,d}$.
\end{remark}
We introduce some notation for point-line configurations $M$ on the ground set $[d]$: 
\begin{itemize}
    \item A \emph{line} $l$ is a maximal dependent subset of $[d]$ with at least three elements in the ground set of $M$, such that every subset of three elements forms a circuit. The collection containing all lines of $M$ is denoted by $\mathcal{L}_M$. For point-line configurations, the dependent hyperplanes are called {\em lines}.
    \item We refer to the elements in the ground set $[d]$ as {\em points}. Given a point $p \in [d]$, $\mathcal{L}_p$ denotes the collection of dependent hyperplanes that contain $p$. The number of elements in $\mathcal{L}_p$ is called the {\em degree} of $p$. 
\end{itemize}

\begin{example}\label{three lines}
Consider the point-line configuration $M$ depicted in Figure~\ref{fig:combined} (Left). The set of points is $[7]$, and the lines are $$\mathcal{L}_M=\{   \{{1},{2},{7}\} , \{{3},{4},{7}\}  , \{{5},{6},{7}\} \}.$$ For the quadrilateral set $\text{QS}$, shown in Figure~\ref{fig:combined} (Center), the set of points is $[6]$ and the set of lines is given by $$\mathcal{L}_{\text{QS}}=\bigl\{\{1,2,3\},\{1,5,6\},\{2,4,6\},\{3,4,5\}\bigr\}.$$
\end{example}
\begin{figure}
    \centering
    \includegraphics[width=0.9\linewidth]{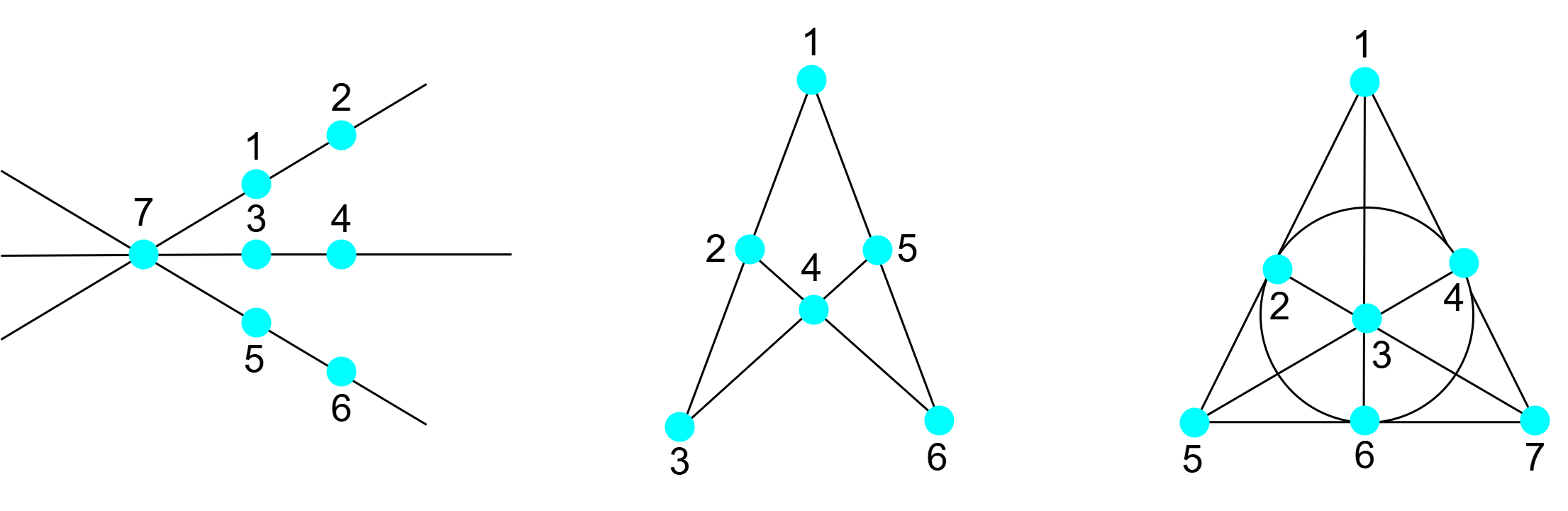}
    \caption{(Left) Three concurrent lines; (Center) Quadrilateral set; (Right) Fano Plane.}
    \label{fig:combined}
\end{figure}
\begin{figure}

    \centering
\includegraphics[width=0.9\linewidth]{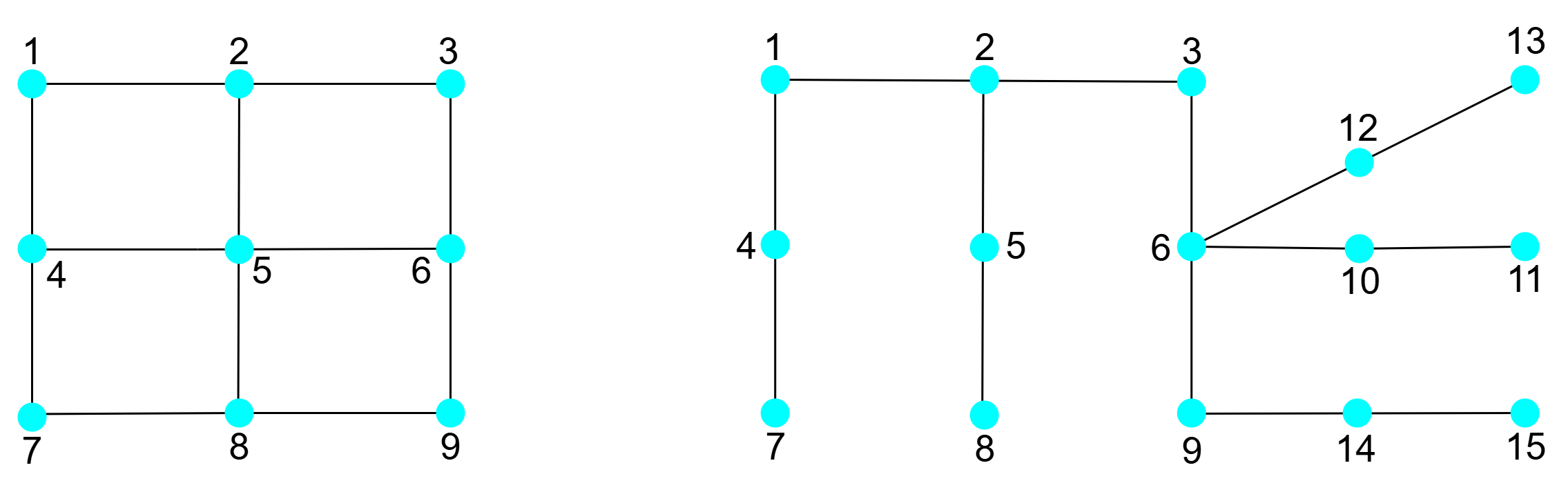}
\caption{(Left) The $3\times 3$ grid; (Right) Example of a forest configuration.}
\label{fig:preliminaries grid, fano, cactus}
\end{figure}

We define two families of point-line configurations that play a central role in this thesis, see \cite{liwski2024solvablenilpotentmatroidsrealizability}. 

\begin{definition} \label{def: solvable and nilpotent}\normalfont 
For a point-line configuration $M$ on $[d]$, we define 
\[S_{M}=\{p\in [d]: \size{\mathcal{L}_{p}}\geq 2\},\quad \text{and} \quad Q_{M}=\{p\in [d]: \size{\mathcal{L}_p} \geq n\}.\] 
We then consider the following chains of submatroids of $M$: 
\begin{equation*}
\begin{aligned}
&M_{0}=M, \ \quad M_{1}=M|S_{M},\quad \text{and }\quad M_{j+1}=M_j|S_{M_{j}} \quad \ \text{ for all $j\geq 1$.}\\
& M^{0}=M,\quad M^{1}=M|Q_{M},\quad \text{and }\quad M^{j+1}=M_j|Q_{M^{j}} \quad \text{ for all $j\geq 1$.}
\end{aligned}
\end{equation*}
We say that $M$ is {\em nilpotent} if $M_{j}=\emptyset$ for some $j$, and we call that $j$ the {\em length of the nilpotency chain}, denoted by $l_n(M)$. 

We say that $M$ is {\em solvable} if $M^{j}=\emptyset$ for some $j$. 
\end{definition}

\begin{example} \label{example nil-> sol}
    It immediately follows from Definition~\ref{def: solvable and nilpotent} that every nilpotent point-line configuration is solvable. The $3 \times 3$ grid, as in Figure \ref{fig:preliminaries grid, fano, cactus} (Left), is solvable but not nilpotent. The Fano plane, as in Figure \ref{fig:combined} (Right), is not solvable.
    
\end{example}

\section{The bracket algebra} \label{The bracket algebra}

We review the bracket algebra from \cite{InvariantMethodsinDiscreteandComputationalGeometry}. First we introduce some notation:
 \begin{notation}
 Throughout, we adopt the following conventions:
     \begin{itemize}
\item Consider an $n\times d$ matrix $X=\pare{x_{i,j}}$ of indeterminates. We denote by $\CC[X]$ the polynomial ring in the variables $x_{ij}$. 
\item For subsets $A\subset[n]$ and $B\subset[d]$ with $\size{A}=\size{B}$, we denote by $[A|B]_{X}$ the minor of $X$ determined by the rows indexed by $A$ and the columns indexed by $B$. 
\item \label{determinant} Consider the vectors $v_{1},\ldots,v_{n}\subset \CC^{n}.$ Then $[v_{1},\ldots,v_{n}]$ denotes the determinant of the matrix with columns $v_1,\ldots,v_n$.
\item \label{brackets notation} Let $M$ be a matroid of rank $n$ on $[d]$. We can associate $X$, an $n\times d$ matrix of indeterminates to $M$. Given a subset $\{i_{1},\ldots,i_{k}\}\subset [d]$ with $k\leq n$, and vectors $v_{1},\ldots,v_{n-k}\in \CC^{n}$, 
we denote by \[[i_{1},\ldots,i_{k},v_{1},\ldots,v_{n-k}]\in \CC[X]\] the determinant of the $n\times n$ matrix constructed by considering the columns of $X$ indexed by $i_{1},\ldots,i_{k}$ 
and the vectors $v_{1},\ldots,v_{n-k}$ as additional columns. Note that $[i_{1},\ldots,i_{k},v_{1},\ldots,v_{n-k}]$ is not a number, but a polynomial in $\CC[X]$. We call this determinant a {\em bracket}.
\end{itemize}
 \end{notation}

\begin{definition} 
    Considering $X$ as in the previous notation, the \emph{bracket ring} $B$ over $\CC^n$ is the subring of the polynomial ring $\CC[X]$ that is generated by all the brackets of the form $[i_1, \ldots, i_n].$
\end{definition}
We end this section with an important result of invariant theory. For a proof, we refer to Theorem~6.19 of \cite{Olver_1999}.
\begin{theorem}[The Second Fundamental Theorem of Invariant Theory]\label{thm: 2nd fundamental thm of invariant theory}
    The following relations hold in the bracket algebra. Moreover, all other relations among the brackets are a consequence of these relations.
    \begin{itemize}
    \item $[a_1, a_2, \ldots, a_n] = 0$ if any $a_j = a_k, j \ne k.$
    
    \item $[a_1, a_2, \ldots, a_n] = \operatorname{sign}(\sigma)[a_{\sigma(1)}, a_{\sigma(2)}, \ldots, a_{\sigma(n)}]$ for any permutation $\sigma$ of $\{1, 2, \ldots, n\}.$
    
    \item $[a_1, a_2, \ldots, a_n][b_1, b_2, \ldots, b_n]$
    \[
    = \sum_{j=1}^{n} [a_1, a_2, \ldots, a_{n-1}, b_j][b_1, b_2, \ldots, b_{j-1}, a_n, b_{j+1}, \ldots, b_n].
    \]
    \end{itemize}
\end{theorem}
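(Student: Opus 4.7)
My plan is to verify the three stated relations first and then tackle the completeness claim. The first two identities follow immediately from standard properties of determinants: interpreting $[a_1,\ldots,a_n]$ as the $n\times n$ minor of $X$ on the columns $a_1,\ldots,a_n$, a repeated column forces the minor to vanish, and a column transposition negates it, whence the sign formula for an arbitrary permutation by factoring $\sigma$ into transpositions. For the Plücker-type identity I would start from the fundamental observation that any $n+1$ vectors in $\mathbb{C}^n$ are linearly dependent, which gives the bracket syzygy
\[\sum_{j=1}^{n+1} (-1)^{j-1}\, [v_1,\ldots,\widehat{v}_j,\ldots,v_{n+1}]\,[v_j,w_1,\ldots,w_{n-1}] \;=\; 0\]
for any $v_1,\ldots,v_{n+1},w_1,\ldots,w_{n-1}\in \mathbb{C}^n$; this in turn can be proved by Laplace expansion or by observing that both sides define an alternating $(n+1)$-multilinear form on $\mathbb{C}^n$. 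Specializing $(v_1,\ldots,v_n)=(b_1,\ldots,b_n)$, $v_{n+1}=a_n$, and $(w_1,\ldots,w_{n-1})=(a_1,\ldots,a_{n-1})$, and then using the first two relations to move $a_n$ and each $b_j$ into the advertised positions while tracking signs carefully, one recovers exactly the third identity.

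The substantive "moreover" part — that every relation among the brackets in $\mathbb{C}[X]$ follows from these three — I would handle via the straightening algorithm of Doubilet-Rota-Stein. Call a monomial $[I_1]\cdots[I_k]$ standard if, after sorting each tuple $I_r$ increasingly (which one may do by antisymmetry), the rows $I_1,\ldots,I_k$ fit into a Young tableau whose columns are strictly increasing from top to bottom. Using the three stated relations I would show by induction that every bracket monomial can be rewritten modulo them as a $\mathbb{C}$-linear combination of standard monomials: antisymmetry sorts indices within each bracket, and iterated application of the Plücker identity exchanges indices across brackets in a way that strictly decreases a suitable monomial order on bracket-products, ensuring termination. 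Consequently any polynomial identity among the brackets admits a rewriting, modulo the three stated relations, as a linear combination of standard monomials.

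The hard part will be the final step: showing that distinct standard bracket monomials are linearly independent as polynomials in $\mathbb{C}[X]$. Granting that, any polynomial relation among brackets must reduce to $0$ under the straightening procedure, which forces it to lie in the ideal generated by the three stated relations. The cleanest route I know matches standard bracket monomials with pairs of semistandard Young tableaux of the same shape via the Robinson-Schensted-Knuth correspondence, identifying the resulting bitableaux with a well-known monomial basis of $\mathbb{C}[X]$; a representation-theoretic alternative is to match dimensions against the $\mathrm{GL}_n\times \mathrm{GL}_d$-isotypic decomposition of $\mathbb{C}[X]$. Establishing this basis property rigorously is the real work, but once it is in hand the theorem follows.
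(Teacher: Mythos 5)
The paper does not prove this theorem itself: it cites Theorem~6.19 of Olver's \emph{Classical Invariant Theory} as the source, so there is no in-paper argument to compare against. Your outline is nevertheless the standard Doubilet--Rota--Stein route and is sound as far as it goes: the first two relations are immediate determinant properties; deriving the third from the Grassmann--Pl\"ucker syzygy by the specialization $(v_1,\ldots,v_n)=(b_1,\ldots,b_n)$, $v_{n+1}=a_n$, $(w_1,\ldots,w_{n-1})=(a_1,\ldots,a_{n-1})$ does produce the stated exchange identity after tracking the cyclic sign $(-1)^{n-1}$ in $[b_j,a_1,\ldots,a_{n-1}]$ and the $(-1)^{n-j}$ from repositioning $a_n$, the signs cancelling cleanly. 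The completeness claim via the straightening algorithm and a confluence/termination argument on a suitable monomial order is the right strategy, and you are correct to identify the linear independence of standard bitableaux monomials in $\mathbb{C}[X]$ as the genuine technical core; that step is not merely hard but is the entire content of the ``second fundamental theorem,'' so as written this is an accurate roadmap rather than a proof. For a full argument one would either run the RSK-based basis argument or match graded dimensions against the Cauchy decomposition $\mathbb{C}[X]\cong\bigoplus_\lambda S_\lambda(\mathbb{C}^n)\otimes S_\lambda(\mathbb{C}^d)$, exactly as you indicate; no step you propose would fail, but neither the syzygy itself nor the independence of standard monomials is actually established in the sketch.
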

\section{Realization spaces of matroids, circuit and matroid varieties}

We start by revisiting the definitions of the realization space of a matroid, the matroid variety and the circuit variety.  
\begin{definition} \label{realization space} \normalfont
Consider a matroid $M$ of rank $n$ on the ground set $[d]$. A {\em realization} of $M$ is a set of vectors $\gamma=\{\gamma_{1},\ldots,\gamma_{d}\}\subset \CC^{n}$ for which the following equivalence holds:
\[\{i_{1},\ldots,i_{p}\} \ \text{is a dependent set of $M$}\Longleftrightarrow \{\gamma_{i_{1}},\ldots,\gamma_{i_{p}}\}\ \text{is linearly dependent.}\]
The {\em realization space} of $M$ is defined as 
$$\Gamma_{M}=\{\gamma\subset \CC^{n}: \gamma \ \text{is a realization of $M$}\}.$$
Each element of $\Gamma_{M}$ can be identified with an $n\times d$ matrix over $\CC$. Throughout, we will implicitly use this identification. A matroid is called {\em realizable} if there exists a realization of $M$. The {\em matroid variety} $V_M$ of $M$ is defined as the Zariski closure of $\Gamma_M$ in $\CC^{nd}$. The {\em matroid ideal} is defined as the associated ideal $I_{M}=I(V_{M})$.
\end{definition}
\begin{definition}\normalfont\label{cir}
Let $M$ be a matroid of rank $n$ on the ground set $[d]$.A collection of vectors $\gamma \subset \CC^{n}$ indexed by $[d]$ is said to {\it include the dependencies of $M$} if:
\[\set{i_{1},\ldots,i_{k}}\  \text{is a dependent set of $M$} \Longrightarrow \set{\gamma_{i_{1}},\ldots,\gamma_{i_{k}}}\ \text{is linearly dependent}. \] 
The {\em circuit variety} of $M$ is defined as \[V_{\mathcal{C}(M)}=\{\gamma:\text{$\gamma$ includes the dependencies of $M$}\}.\]
It is clear that $\gamma \in \VCM$ if and only if for every circuit $c = \{i_1, \ldots, i_m\}$ with $m \leq n$, the minors of the matrix $(\gamma_{i_1}, \ldots, \gamma_{i_m})$ vanish. Thus the circuit variety is indeed a closed variety in the Zariski topology. Its corresponding ideal is called the {\em circuit ideal} and is denoted by $I_{\CCC(M)}$. Clearly, the circuit ideal is generated by all the minors of the submatrices of $X$ with the circuits as columns:
    $$I_{\CCC(M)} = \langle [A|B]_X: B \in \CCC(M) \text{ and } A \subset [n] \text{ such that } |A| = |B|\rangle.$$
\end{definition}
\begin{remark}\label{generating set ICM}
Since $V_{M}\subset V_{\CCC(M)}$, it immediately follows that $I_{\CCC(M)}\subset I_M$.
\end{remark}
Having set these definitions, we can introduce some notation which will be used throughout this thesis.
\begin{notation} \label{notation thesis}
Throughout this thesis, we will freely alternate between projective and affine terminology. We adopt the following conventions:
\begin{itemize}
\item As each non-zero vector $\gamma_{i}$ determines a point in $\mathbb{P}^{2}$, we use the terms {\em vector} and {\em point} interchangeably when $\gamma_{i}\neq 0$, depending on whether $\gamma_i$ is considered in $\CC^{3}$ or $\mathbb{P}^{2}$.
    \item We say that two non-zero vectors $\gamma_1, \gamma_2 \in \CC^n$ \emph{coincide} or are the \emph{same point}, denoted by $\gamma_1 = \gamma_2$, if they are linearly dependent. This reflects that they coincide in $\mathbb{P}^2$.
    \item Consider a collection of vectors $\gamma = (\gamma_1, \ldots, \gamma_d) \in \VCM$ and a line $l$ of $M$. We denote $\gamma_l$ for the following two-dimensional subspace
\[\gamma_{l}=\sspan \{\gamma_{i}:i\in l\} \in \CC^3.\]
Moreover, for points $i,j$ in the ground set, we write \[\gamma_{ij} = \sspan \{\gamma_{i}, \gamma_j\} \in \CC^3.\]
More generally, given a collection of vectors $\{\gamma_{1},\ldots,\gamma_{d}\}\subset \CC^{3}$ and a subset $A\subset [d]$, we denote by $\gamma_{A}\subset \CC^{3}$ the linear span of the vectors $\{\gamma_{a}:a\in A\}$.
\item We will call the subspace $\gamma_{l}$ a \emph{line}, as it determines a unique line in the projective plane $\mathbb{P}^2$.
\item A zero vector is called a {\em loop}.
\item It may seem natural to work entirely in the projective setting, but this simplification is not possible because of the possible occurrence of zero vectors.
 \item Consider the transformation $\phi: \CC^n \to \CC^n: x \to AxD$ with $D$ a non-zero diagonal matrix and $A$ an invertible matrix. We will call $\phi$ a {\em linear transformation} as well.
 
\item Consider an $n \times d$-matrix $A$. If there is stated that \enquote{the collection of vectors $\gamma = \{\gamma_i:i \in [d]\}$ is given by $A$ where the columns from left to right correspond to $\{m_1, \ldots, m_d\}$}, this means that $$(\gamma_{m_1}, \gamma_{m_2}, \ldots, \gamma_{m_d}) = A,$$

where the left-hand side denotes the matrix with $\gamma_{m_1}, \ldots, \gamma_{m_d}$ as columns. We will sometimes write this as \enquote{$\gamma = A$, where the columns from left to right correspond to $\{m_1, \ldots, m_d\}.$}
\end{itemize}
\end{notation}
The following examples provide an illustration of the notions realizability, matroid varieties and circuit varieties.
\begin{example} \label{ex: fano plane not realizable}
    The Fano plane $M$, as shown in Figure \ref{fig:combined} (Right), is not realizable. Assume for contradiction that $\gamma = \{\gamma_1, \ldots, \gamma_7\}$ is a realization of $M$. Since each subset of three elements in $\{\gamma_1, \gamma_2, \gamma_3,\gamma_4\}$ is linearly independent, there is a linear transformation such that $$(\gamma_1,\gamma_2, \gamma_3,\gamma_4) = \begin{pmatrix}
            1 & 0 & 0 & 1 \\
            0 & 1 & 0 & 1 \\
            0 & 0 & 1 & 1
        \end{pmatrix}
$$    
    Since $\gamma \in \Gamma_M$, $\gamma_5$ belongs to $\sspan\{\gamma_{1}, \gamma_{2}\}$ and $\sspan\{\gamma_{3}, \gamma_{4}\}$. It follows that $$\gamma_5 = \begin{pmatrix}
        1 \\ 1 \\ 0
    \end{pmatrix}.$$ In the same way, $$\gamma_6 = \begin{pmatrix}
        1 \\ 0 \\ 1
    \end{pmatrix}.$$ $\gamma_7$ belongs to $\sspan\{\gamma_{2}, \gamma_{3}\}, \sspan\{\gamma_{1}, \gamma_{4}\}$ and $\sspan\{\gamma_{5}, \gamma_{6}\}$. These lines only intersect in the origin. But then $\gamma \notin \Gamma_M$, since $\{7\}$ is not a circuit of $M$. 
    \newline
    \newline
    On the other hand, the quadrilateral set $QS$, as in Figure \ref{fig:combined} (Center) is realizable. Consider the matrix $$\begin{pmatrix}
    1 & 0 & 0 & 1 & 1 & 1  \\
    0 & 1 & 0 & 1 & 1 & 0 \\
    0 & 0 & 1  & 1 & 0 & 1
    \end{pmatrix}$$ and let the columns correspond to $\gamma_i$, where $i$ from left to right ranges over the points $\{1,2,3,4,5,6\}$. Then the collection of vectors $\gamma = \{\gamma_i:i \in [6]\}$ clearly lies in $\Gamma_M$.
\end{example}
\begin{example}
    For the matroid $QS$ corresponding to the quadrilateral set, as in Figure \ref{fig:combined} (Center), consider the matrix
    $$A=\begin{pmatrix}
        1 & 0 & 3 & 0 & 2 & 0  \\
        0 & 1 & 0 & 0 & 0 & 0 \\
        0 & 0 & 0 & 1 & 0 & 0 
    \end{pmatrix}.$$
    Construct the collection of vectors $\gamma = \{\gamma_i:i \in [d]\}$ by letting the columns of $A$ correspond to $\{1,2,3,4,5,6\}.$
     Since $\{\gamma_1, \gamma_2, \gamma_3\},$ $\{\gamma_1, \gamma_5, \gamma_6\}, $ $\{\gamma_2, \gamma_4, \gamma_6\}, $ and $\{\gamma_3, \gamma_4, \gamma_5\}$ are linearly dependent, it follows that $\gamma \in \VCM$. However, $\gamma \notin \Gamma_M$, since $\{\gamma_1, \gamma_3\}$ is a linearly dependent set, while $\{1,3\} \in \mathcal{I}(M)$.
\end{example}
Given the central role of matroid ideals in this thesis, it is worth mentioning that there exists an explicit formula for the matroid ideal of any matroid, as outlined below. However, since this formula includes a saturation, computer algebra systems such as {\tt Macaulay2} cannot finish the calculations for $I_M$ for modest matroids such as the Pascal configuration which only has nine points. Therefore, in this thesis, we will not follow this approach.
\begin{definition}
    Let $M$ be a matroid. The {\it bases ideal} $J_M$ is defined as follows:
    $$J_M = \sqrt{\prod_{B \text{ basis of }M} \big\langle [A|B]_X:\ A\subset [d], |A| = |B|  \big\rangle}.$$
\end{definition}
We also recall saturations as in \cite{IdealsVarietiesandAlgorithms}.
\begin{definition}
    Consider two ideals $I, J$ in $\CC[x_1, \dots, x_n]$, then we define the \emph{saturation} of $I$ with respect to $J$, denoted by $I:J^{\infty}$ as:
\begin{equation*}
\left\{ f \in \CC[x_1, \dots, x_n] : \text{for all } g \in J, \text{ there exists } N \in \mathbb{N}_0 \text{ such that } f g^N \in I \right\}.
\end{equation*}
\end{definition}

\begin{proposition}[Proposition~2.1.3 in \cite{Sidman}] \label{prop: expression gamma_M}
    Consider a matroid $M$. The ideal of the matroid variety can be obtained by saturating the circuit ideal with respect to the bases ideal:
$$I_M = \sqrt{I_{\mathcal{C}(M)}:J_M^\infty}.$$
\end{proposition}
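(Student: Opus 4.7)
The plan is to pass to varieties via the Nullstellensatz. Since the right-hand side is manifestly a radical ideal, it suffices to prove $V_M = V(I_{\CCC(M)} : J_M^\infty)$; taking ideals then yields the statement because $I_M = I(V_M) = \sqrt{I_{\CCC(M)}:J_M^\infty}$.

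The first step is the standard geometric interpretation of saturation: for any two ideals $I,J \subset \CC[X]$ one has $V(I:J^\infty) = \closure{V(I) \setminus V(J)}$, so the problem reduces to identifying $V_{\CCC(M)} \setminus V(J_M)$. To this end, I would unpack $V(J_M)$: because $J_M$ is (the radical of) a product of ideals indexed by the bases of $M$, and $V$ sends products to unions,
$$V(J_M) = \bigcup_{B \text{ basis of } M} V\bigl(\langle [A|B]_X : A \subset [n],\, |A| = |B|\rangle\bigr).$$
The variety on the right consists exactly of those matrices $\gamma$ whose columns indexed by $B$ are linearly dependent (equivalently, all maximal minors vanish on those columns). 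Hence $\gamma \notin V(J_M)$ if and only if every basis of $M$ corresponds to a linearly independent set of columns in $\gamma$.

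The next step is the set-theoretic identity $V_{\CCC(M)} \setminus V(J_M) = \Gamma_M$. The inclusion $\supseteq$ is immediate from the definition of a realization. For $\subseteq$, a $\gamma$ on the left-hand side includes every dependency of $M$ (since $\gamma \in V_{\CCC(M)}$) and makes every basis of $M$ linearly independent (since $\gamma \notin V(J_M)$). Because every independent set of $M$ extends to a basis and every subset of a linearly independent tuple is linearly independent, this forces every independent set of $M$ to map to a linearly independent tuple under $\gamma$, which together with the dependency condition is precisely the defining condition of $\Gamma_M$. Taking Zariski closure then yields $V(I_{\CCC(M)}:J_M^\infty) = \closure{\Gamma_M} = V_M$, and the Nullstellensatz finishes the proof.

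Once the saturation-equals-closure identity is invoked, the argument is essentially bookkeeping: the only point that requires real care is the matroid-theoretic observation that linear independence of every basis of $M$ suffices to guarantee linear independence of every independent set of $M$. This is what bridges the circuit side (which enforces dependencies) and the matroid side (which additionally enforces independencies), and is the main reason the saturation by $J_M$ is exactly what is needed to cut out $\Gamma_M$ inside $V_{\CCC(M)}$.
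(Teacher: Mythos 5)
Your argument is correct and mirrors the paper's proof step for step: both invoke the identity $V(I:J^\infty)=\closure{V(I)\setminus V(J)}$, both reduce to the set-theoretic fact $V_{\CCC(M)}\setminus V(J_M)=\Gamma_M$, and both hinge on the same matroid-theoretic point that linear independence of every basis forces linear independence of every independent set (via extension to a basis). Your explicit unpacking of $V(J_M)$ as a union over bases is a minor presentational difference, not a different route.
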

\begin{proof}
    As in \cite{IdealsVarietiesandAlgorithms}, $$V(I_{\CCC(M)}:J_M^\infty) = \overline{\VCM \setminus V(J_M)}.$$ So it suffices to show that $\VCM \setminus V(J_M) = \Gamma_M$. 
    We introduce the following notation. We say that $\gamma$ includes the independencies of $M$ if:
    $$
\scalebox{0.95}{$
\{i_1, \ldots, i_k\} \text{ is an independent set of } M 
\implies 
\{\gamma_{i_1}, \ldots, \gamma_{i_k}\} \text{ is linearly independent.}
$}
$$
    We claim that $\gamma \notin J_M$ is equivalent with $\gamma$ including the independencies of $M$. Assume that $\gamma$ includes the independencies of $M$. Let $B = \{b_1, \ldots, b_k\}$ be a basis for $M$. Since $B$ is an independent set as well, $\{\gamma_{b_1}, \ldots, \gamma_{b_k}\}$ is linearly independent. Thus $\gamma \notin J_M$. On the other hand, assume $\gamma \notin J_M$. Let $\{i_1, \ldots, i_k\}$ be an independent set. This set is contained in a basis $B$. Since $\gamma \notin J_M$, it follows that $\{\gamma_j:j \in B\}$ is a linearly independent set, thus $\{\gamma_{i_1}, \ldots, \gamma_{i_k}\}$ is a linearly independent set as well. We conclude that $\gamma$ includes the dependencies of $M$. This proves the claim.
    \newline
    \newline
     Note that $\gamma$ is a realization of $M$ if and only if $\gamma$ includes both the dependencies and the independencies of $M$. The first statement is equivalent with $\gamma \in \VCM$, while the second statement is equivalent with $\gamma \notin J_M$ by the previous claim. Thus $\VCM \setminus V(J_M) = \Gamma_M$.
\end{proof}

We recall the following theorem from \cite{liwski2024pavingmatroidsdefiningequations, liwski2024solvablenilpotentmatroidsrealizability} on nilpotent and solvable point-line configurations, which will be used in the subsequent sections.
\begin{theorem}
[Proposition~4.30 in \cite{liwski2024pavingmatroidsdefiningequations} and Theorem~4.12, Theorem~4.15, Theorem~5.13 in \cite{liwski2024solvablenilpotentmatroidsrealizability}] \label{nil coincide} 

Let $M$ be a point-line configuration on $[d]$. 
\begin{itemize}
\item[{\rm (i)}] If $M$ is nilpotent and has no points of degree greater than two, then $V_{\mathcal{C}(M)}=V_{M}$.
\item[{\rm (ii)}] If every proper submatroid of $M$ is nilpotent and $M$ has no points of degree greater than two, then $V_{\mathcal{C}(M)}=V_{M}\cup V_{U_{2,d}}$.
\item[{\rm (iii)}] If $M$ is nilpotent, then $M$ is realizable and $V_M$ is irreducible.
\item[{\rm (iv)}] If $M$ is solvable, then $V_{M}$ is irreducible. 
\end{itemize}
\end{theorem}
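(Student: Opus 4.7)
The plan is to prove these four statements by induction on a suitable chain of submatroids of $M$, combined with a perturbation or parametrization argument in the spirit of the general strategy outlined in the introduction. The four parts are tightly linked: (iii) supplies the basic realizability and irreducibility, (i) upgrades this to the equality $V_{\CCC(M)} = V_M$ in the degree-$\leq 2$ setting, (ii) handles the non-nilpotent boundary case, and (iv) extends irreducibility to the strictly broader solvable class.

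First I would prove (iii) by induction on the nilpotency length $l_n(M)$. The base case $l_n(M) = 1$ means $S_M = \emptyset$, so every point of $M$ has degree at most one; a realization is then obtained by choosing each dependent hyperplane to be a generic two-dimensional subspace of $\CC^3$ and placing the incident vectors generically on it, and $V_M$ is irreducible as the image of an irreducible variety under a rational map with irreducible fibres. For the inductive step, we apply the induction hypothesis to $M_1 = M|S_M$: this gives a realization $\gamma^{(1)}$ of $M_1$ and irreducibility of $V_{M_1}$. We then extend $\gamma^{(1)}$ to a realization of $M$ by placing each $p \in [d] \setminus S_M$ either generically on the unique line $l \in \mathcal{L}_p$ (if $p$ has degree one) or generically in $\CC^3$ (if $p$ has degree zero). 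This presents $V_M$ as the image of an irreducible parameter space, so $V_M$ is irreducible.

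Next I would obtain (i) from (iii) plus a perturbation argument. Given any $\gamma \in V_{\CCC(M)}$, the goal is to perturb $\gamma$ into $\Gamma_M$. The hypothesis that no point has degree greater than two is crucial: it guarantees that any peripheral vector can be moved freely along its (at most one) incident line without introducing any forbidden dependency. Inductively perturbing $\gamma|_{S_M}$ first (using that $M_1$ is again nilpotent with no points of degree $> 2$) and then treating the remaining points shows $\gamma$ lies in the Euclidean closure of $\Gamma_M$, hence in $V_M$. For (ii), the only new phenomenon is that the vectors of $\gamma \in V_{\CCC(M)}$ may happen to span only a two-dimensional subspace of $\CC^3$, and this degenerate locus is precisely $V_{U_{2,d}}$. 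Otherwise, three of the vectors span $\CC^3$, and by restricting to a suitable proper nilpotent submatroid one lands in the situation of (i), yielding a realization in $V_M$.

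The main obstacle is (iv), where points of degree $\geq 3$ are allowed. I plan to induct on the length of the solvability chain $M \supset M^1 \supset M^2 \supset \cdots$ built from $Q_M$ rather than $S_M$. At each step, every point in $Q_M$ is forced to be the intersection of at least three lines determined by the lower-degree points, so there is a natural rational map from the parameter space for $V_{M \setminus Q_M}$ (together with the line data) to $V_M$ that assigns each point of $Q_M$ its prescribed intersection. The hard part will be to show this map is well defined and dominant: one must verify that, for a generic realization of the outer layers, the lines meeting at each element of $Q_M$ are distinct and their common intersection is a single well-defined point, and that no accidental coincidences occur. Once this is established, the irreducibility of $V_M$ follows from the irreducibility of the parametrizing variety (by the inductive hypothesis) together with the fact that the image of an irreducible variety under a rational map is irreducible, which closes the induction.
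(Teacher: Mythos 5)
The paper does not prove this theorem at all: it is recalled verbatim from the cited references, so there is no in-paper proof to compare against. Your proposal is therefore an attempt at an independent proof, and while the outline for parts (i)--(iii) captures the expected structure (layered induction along the nilpotency chain $M \supset M|S_M \supset \cdots$, generic placement in the base case, perturbation from $V_{\CCC(M)}$ into $\Gamma_M$ using the degree-$\leq 2$ hypothesis), the proposal has genuine gaps that prevent it from constituting a proof.

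For part (ii), the step \emph{``by restricting to a suitable proper nilpotent submatroid one lands in the situation of (i), yielding a realization in $V_M$''} is not a proof. Applying (i) to a proper submatroid $N$ only shows $\gamma|_N \in V_N$, and you still have to extend this to the conclusion $\gamma \in V_M$, which requires perturbing the full collection (including the points outside $N$) without destroying the realization already obtained and while recreating the dependencies that are in $M$ but not in $N$. Which submatroid to choose, and how the extension interacts with the lines of $M$ that meet $N$, is precisely where the content of Theorem~4.15 of the reference lies.

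For part (iv) the gap is more serious and you flag it yourself: \emph{``The hard part will be to show this map is well defined and dominant.''} This is not a side issue; it is the entire theorem. Two concrete problems. First, your parametrization runs the wrong direction for your own induction: you induct along $M \supset M^1 = M|Q_M \supset M^2 \supset \cdots$, so the inductive hypothesis gives irreducibility of $V_{M^1}$ (the inner layer of high-degree points), yet you propose a map from the parameter space of $V_{M\setminus Q_M}$ (the outer layer) into $V_M$, which is not what the induction supplies. Second, the ``image of an irreducible variety under a rational map is irreducible'' argument silently requires the map to have a nonempty domain, i.e.\ that $\Gamma_M \neq \emptyset$; but (iv) only hypothesizes solvability, not realizability, so either this nonemptiness has to be established separately, or the degenerate case $V_M = \emptyset$ has to be ruled out or accommodated. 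None of this is addressed. Until the concurrency/dominance issue is actually resolved, the proposal for (iv) is a restatement of the problem rather than a solution.
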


\begin{example}\label{ej quad}
For the point-line configuration $\text{QS}$ corresponding to the quadrilateral set illustrated in Figure~\ref{fig:combined} (Center), every proper submatroid of $M$ is nilpotent. It follows that $V_{\mathcal{C}(\text{QS})}=V_{\text{QS}}\cup V_{U_{2,6}}$. 
\end{example}

\section{Liftability} \label{liftability}

We now recall the notion of {\em liftable} point-line configurations, as in \cite{clarke2024liftablepointlineconfigurationsdefining,liwski2024pavingmatroidsdefiningequations}. In this subsection, let $M$ be a point-line configuration on $[d]$. 

\begin{definition} \label{def liftable} We introduce the following notions:
\begin{itemize}
\item Let $\gamma = \{\gamma_{i} : i\in [d]\}\subset \CC^{3}$ be a collection of vectors and $q\in \CC^{3}$ a vector. The collection of vectors $\widetilde{\gamma}=\{\widetilde{\gamma}_{i}:i\in [d]\}\subset \CC^{3}$ is a {\em lifting} of $\gamma$ from the vector $q$ if, for each $i\in [d]$, $\widetilde{\gamma}_i \neq q$ is on the line through $\gamma_i$ and $q$, or equivalently, there exists for each $i \in [d]$ a scalar $z_{i}\in \CC$ such that $\widetilde{\gamma}_{i}=\gamma_{i}+z_{i}q$. Moreover, we call the lifting {\em non-degenerate} if $\rk(\widetilde{\gamma}) = 3$.
\item Given a collection of vectors $\gamma \subset \CC^3$ and a vector $q \in \CC^3$, $\gamma$ is called {\it liftable} from $q$ if there exists a non-degenerate lifting of $\gamma$ from $q$ to a collection $\widetilde{\gamma} \in \VCM$.
\item $M$ is called \textit{liftable} if, for any collection of vectors $\gamma = \{\gamma_i : i \in [d]\}$ in $\mathbb{C}^3$ spanning a hyperplane $H$, and for any vector $q \notin H$, $\gamma$ is liftable from $q$.
\end{itemize}
\end{definition}

The following relation is established between liftable and nilpotent point-line configurations.

\begin{proposition}[Proposition~4.13 in \cite{liwski2024solvablenilpotentmatroidsrealizability}] \label{nilp lift}
Any nilpotent point-line configuration is liftable.
\end{proposition}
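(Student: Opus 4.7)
The plan is to proceed by induction on the length $\ell = l_n(M)$ of the nilpotency chain of $M$. It is convenient to work projectively: a lifting $\widetilde{\gamma}_i = \gamma_i + z_i q$ amounts to choosing, for each $i \in [d]$, a point on the projective line through $\gamma_i$ and $q$ distinct from $q$, in such a way that every line of $M$ becomes collinear in $\mathbb{P}^2$. Each collinearity $[\widetilde{\gamma}_{i_1}, \widetilde{\gamma}_{i_2}, \widetilde{\gamma}_{i_3}] = 0$ is linear in $z_{i_1}, z_{i_2}, z_{i_3}$, so the whole system is linear in the unknowns $z_i$.

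For the base case $\ell = 1$, we have $S_M = \emptyset$, so every point of $M$ has degree at most one and the elements of $\mathcal{L}_M$ are pairwise disjoint subsets of $[d]$. For each $L \in \mathcal{L}_M$, I would pick a generic projective line $\widetilde{L} \subset \mathbb{P}^2$ avoiding $q$, and for each $p \in L$ define $\widetilde{\gamma}_p$ as the intersection of $\widetilde{L}$ with the projective line through $\gamma_p$ and $q$; for points belonging to no line, choose $z_i$ arbitrarily. The disjointness of the lines makes these choices independent, and choosing each $\widetilde{L}$ distinct from the projectivization of $H$ ensures that the resulting $\widetilde{\gamma}$ spans $\CC^3$, so the lifting is non-degenerate.

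For the inductive step with $\ell \geq 2$, the submatroid $M_1 = M|S_M$ is nilpotent of length $\ell-1$. The strategy is first to apply the inductive hypothesis to $M_1$ with the restricted collection $\gamma|_{S_M}$ and the vector $q$, producing a non-degenerate lifting $\widetilde{\gamma}|_{S_M} \in V_{\CCC(M_1)}$, and then to extend to the points of $[d]\setminus S_M$, each of which has degree at most one in $M$. For every line $L \in \mathcal{L}_M$ with $|L \cap S_M| \geq 2$, the already-lifted vectors $\{\widetilde{\gamma}_i : i \in L \cap S_M\}$ are collinear (from the inductive hypothesis when $|L \cap S_M| \geq 3$, and trivially from two points when $|L \cap S_M| = 2$) and therefore determine a projective line $\widetilde{L}$; I lift each remaining $p \in L \setminus S_M$ to the intersection of $\widetilde{L}$ with $\langle \gamma_p, q\rangle$. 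For lines $L$ with $|L \cap S_M| \leq 1$ a generic $\widetilde{L}$ is chosen independently, and points lying on no line can be lifted with any $z_i$. Since every $p \in [d] \setminus S_M$ lies on at most one line of $M$, these assignments never conflict, and the resulting $\widetilde{\gamma}$ belongs to $V_{\CCC(M)}$.

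The main obstacle I anticipate is that the inductive hypothesis requires $\gamma|_{S_M}$ to span a hyperplane of $\CC^3$, which may fail, for example when the points indexed by $S_M$ all coincide or lie on a single projective line inside the projectivization of $H$. The natural remedy is a perturbation-and-limit argument: perturb $\gamma$ within $H^d$ to nearby collections $\gamma^{(n)}$ for which $\gamma^{(n)}|_{S_M}$ spans $H$, which is a Zariski open condition on $H^d$, apply the inductive hypothesis to each $\gamma^{(n)}$ to get liftings $\widetilde{\gamma}^{(n)}$, and pass to the limit, invoking that $V_{\CCC(M)}$ is Zariski closed together with a compactness argument in projective space to preserve non-degeneracy. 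A cleaner alternative is to strengthen the inductive statement to allow collections spanning lower-dimensional subspaces, at the cost of bookkeeping the compatibility between the spanned subspace and $q$; this is where most of the work will lie.
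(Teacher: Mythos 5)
Your induction on the nilpotency length is a natural plan, but the base case construction has a genuine gap. Lifting every point of a line $L$ onto a fixed projective line $\widetilde{L}$ by intersecting with $\langle \gamma_p, q\rangle$ forces coincident original points to lift to the same point, which is strictly stronger than the collinearity constraint actually requires, and this extra rigidity can yield a degenerate lift even when non-degenerate ones exist. Concretely, take $M$ with two disjoint lines $L_1 = \{1,2,3\}$, $L_2 = \{4,5,6\}$ and no free points (so $\rank(M) = 3$ and $S_M = \emptyset$), and $\gamma_1 = \gamma_2 = \gamma_3 = v$, $\gamma_4 = \gamma_5 = \gamma_6 = w$ with $v,w$ spanning $H$. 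Your construction sends all of $L_1$ to the single point $\widetilde{L}_1 \cap \langle v,q\rangle$ and all of $L_2$ to the single point $\widetilde{L}_2 \cap \langle w,q\rangle$, so the resulting collection has rank at most $2$ no matter how the $\widetilde{L}_i$ are chosen. Yet $\widetilde{\gamma}_1 = v+q$, $\widetilde{\gamma}_2 = \widetilde{\gamma}_3 = v$, $\widetilde{\gamma}_4 = \widetilde{\gamma}_5 = \widetilde{\gamma}_6 = w$ is a valid non-degenerate lift: each line's lifted points lie in the two-dimensional subspace $\langle v,q\rangle$ or $\langle w,q\rangle$, and $\{v+q,v,w\}$ spans $\CC^3$. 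The repair is to treat separately the lines $L$ for which $\gamma|_L$ spans a subspace of dimension at most one: there every lift along $\langle \gamma_p,q\rangle$ automatically keeps $\widetilde{\gamma}|_L$ collinear, so you should use that freedom to spread the lifts out rather than collapse them. The same lacuna recurs in the inductive step where you say the already-lifted points of $L \cap S_M$ ``determine a projective line $\widetilde{L}$''; they do not if they coincide or vanish, though there this only costs you a choice, not non-degeneracy.

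The second gap is the one you flag yourself: $\gamma|_{S_M}$ need not span a hyperplane. The perturbation-and-limit fix is not automatic: the lift coefficients $z_i^{(n)}$ that the inductive hypothesis produces for the perturbed collections $\gamma^{(n)}$ carry no a priori bound, so extracting a limit that stays in $V_{\CCC(M_1)}$ and remains rank $3$ is genuinely delicate. The alternative of strengthening the inductive hypothesis to cover rank $\le 2$ restrictions is more promising, but then the claim cannot literally ask for a rank-$3$ lift of $\gamma|_{S_M}$ when that collection has rank $0$ or $1$, so the statement must be reformulated carefully. Note that the thesis itself does not prove this proposition; it quotes it from the cited reference. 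Its surrounding machinery (Remark~\ref{remark: emiliano} together with the argument of Proposition~\ref{proposition: dim(ker(liftmat))}) points to a cleaner strategy: for nilpotent $M$ fix an ordering $w = (p_1,\ldots,p_d)$ of $[d]$ in which each $p_i$ has degree at most one in $M|\{p_1,\ldots,p_i\}$, and build the lift one point at a time along this order, extending along the unique line constraint when $p_i$ lies on a line of $M|\{p_1,\ldots,p_i\}$ and freely otherwise. This inside-out single pass avoids both the spanning problem for $S_M$ and the coincidence problem, since you never invoke an inductive hypothesis on a submatroid and can directly steer the free choices toward rank $3$.
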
 

In the next construction, we define the {\it liftability matrix}.
\begin{definition} \label{matrix lift}
Consider a vector $q \in \CC^n$. To construct the \textit{liftability matrix} of $M$ and $q$, denoted as $\mathcal{M}_q(M)$, index the columns by the points $[d]$ and the rows by the circuits of size three. For the row corresponding to a circuit $c = \{c_1, c_2, c_3\} \subset [d]$, define the $c_1^{\textup{th}}, c_2^{\textup{th}}, c_3^{\textup{th}}$ coordinate of the liftability matrix respectively as
$$[c_2,c_3,q] \quad -[c_1,c_3,q] \quad [c_1,c_2,q],$$
and let the other entries be zero. Notice that these entries are polynomials instead of numbers. For a collection of vectors $\gamma = \{\gamma_i:i \in [d]\}$, $\mathcal{M}_q^\gamma(M)$ denotes the matrix obtained from $\mathcal{M}_q(M)$ by substituting the three variables associated to index $i$ with the entries of $\gamma_i$, for each $i \in [d]$.
\end{definition}
The motivation for this definition is the following lemma.
\begin{lemma}[Lemma~4.31 in \cite{liwski2024solvablenilpotentmatroidsrealizability}]\label{lemma: motivation liftability} 
  For a collection of vectors $\gamma = \{\gamma_i:i \in [d]\} \subseteq \CC^{n}$, a vector $q \in \CC^{n}$ and 
  a vector $(z_1, \ldots, z_d) \in \CC^d,$ the following equivalence holds: $\gamma \in \ker(\liftmat)$ if and only if the collection of vectors $\{\gamma_i + z_i\cdot q : i \in [d]\}$ is in $\VCM$.
\end{lemma}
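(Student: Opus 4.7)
The plan is to reduce the equivalence to a direct multilinear expansion of the size-three bracket $[\widetilde{\gamma}_{c_1},\widetilde{\gamma}_{c_2},\widetilde{\gamma}_{c_3}]$ for every circuit $c=\{c_1,c_2,c_3\}$ of size three. The key structural point is that, for a rank-three matroid, the only circuits that impose nontrivial conditions for inclusion in $V_{\mathcal{C}(M)}$ are those of size three, because any four vectors in $\CC^{3}$ are automatically linearly dependent and so circuits of size four give no constraint on $\widetilde{\gamma}$.

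First I would fix a size-three circuit $c$ and substitute $\widetilde{\gamma}_{c_i}=\gamma_{c_i}+z_{c_i}q$ into $[\widetilde{\gamma}_{c_1},\widetilde{\gamma}_{c_2},\widetilde{\gamma}_{c_3}]$. Expanding by multilinearity in each of the three arguments yields eight terms, but any term containing $q$ in two or more slots vanishes since the bracket is alternating. The four surviving terms are
\[
[\gamma_{c_1},\gamma_{c_2},\gamma_{c_3}]+z_{c_1}[q,\gamma_{c_2},\gamma_{c_3}]+z_{c_2}[\gamma_{c_1},q,\gamma_{c_3}]+z_{c_3}[\gamma_{c_1},\gamma_{c_2},q].
\]
Using antisymmetry to move $q$ to the last slot converts the three $z$-linear terms into
\[
z_{c_1}[\gamma_{c_2},\gamma_{c_3},q]-z_{c_2}[\gamma_{c_1},\gamma_{c_3},q]+z_{c_3}[\gamma_{c_1},\gamma_{c_2},q],
\]
which is exactly the $c$-th coordinate of $\mathcal{M}_q^\gamma(M)(z_1,\ldots,z_d)^{\top}$ as prescribed by Definition~\ref{matrix lift}.

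Next I would note that the implicit standing hypothesis of the lemma is $\gamma\in V_{\mathcal{C}(M)}$ (which is the natural ambient setting, since the kernel condition involves $\gamma$ only through brackets matching the circuits of $M$); this forces $[\gamma_{c_1},\gamma_{c_2},\gamma_{c_3}]=0$ for every size-three circuit $c$, so the expansion above collapses to
\[
[\widetilde{\gamma}_{c_1},\widetilde{\gamma}_{c_2},\widetilde{\gamma}_{c_3}]=\bigl(\mathcal{M}_q^\gamma(M)(z_1,\ldots,z_d)^{\top}\bigr)_{c}.
\]
Ranging over all size-three circuits, the left-hand sides all vanish iff the right-hand side is the zero vector, and combined with the automatic satisfaction of size-four circuit conditions this gives $\widetilde{\gamma}\in V_{\mathcal{C}(M)}\iff(z_1,\ldots,z_d)\in\ker\mathcal{M}_q^\gamma(M)$, as claimed.

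The main obstacle is not conceptual but bookkeeping: one must check carefully that every cross-term with two or three copies of $q$ truly drops out, and match the sign pattern $(+,-,+)$ in the definition of $\mathcal{M}_q(M)$ against the signs produced by transposing $q$ to the third slot. A secondary subtlety, worth flagging, is what to do if one does not assume $\gamma\in V_{\mathcal{C}(M)}$ outright: then the inhomogeneous term $[\gamma_{c_1},\gamma_{c_2},\gamma_{c_3}]$ survives, and the equivalence becomes an affine (rather than linear) condition on $(z_1,\ldots,z_d)$; under the hypothesis built into the statement this term disappears and one recovers the clean kernel description.
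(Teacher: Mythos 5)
Your proof is correct. Since the paper only cites this lemma from an earlier reference without reproducing the argument, there is no in-paper proof to compare against, but the multilinear expansion you give is the natural route and almost certainly the one followed there. You rightly flag that the statement as printed needs two silent repairs which you make: the membership ``$\gamma\in\ker(\mathcal{M}_q^\gamma(M))$'' should read $(z_1,\ldots,z_d)\in\ker(\mathcal{M}_q^\gamma(M))$ (the kernel sits in $\CC^d$, not where the $nd$-dimensional object $\gamma$ lives), and the equivalence genuinely requires the ambient hypothesis $\gamma\in V_{\mathcal{C}(M)}$, since taking $z=0$ would otherwise force $\gamma\in V_{\mathcal{C}(M)}$ for free. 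Your expansion makes the dependence on that hypothesis explicit through the constant term $[\gamma_{c_1},\gamma_{c_2},\gamma_{c_3}]$, and the sign bookkeeping is right: transposing $q$ into the third slot yields exactly the $(+,-,+)$ pattern of Definition~\ref{matrix lift}, every cross term with a repeated $q$ dies by alternation, and circuits of size four impose nothing because any four vectors of $\CC^3$ are dependent. The only cosmetic slip is that you invoke $\CC^3$ mid-proof while the lemma is stated over $\CC^n$; since the liftability matrix is built only from size-three circuits and $3\times 3$ brackets, the intended ambient dimension is $n=3$, and it would be cleaner to fix that at the outset.
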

\begin{example}
    For the quadrilateral set $QS$ as in Figure \ref{fig:combined}, the liftability matrix for $QS$ for an arbitrary vector $q$ is:
    $$\mathcal{M}_q(QS) = \begin{pmatrix}
        [2,3,q] & -[1,3,q] & [1,2,q] & 0 & 0 & 0 \\
        [5,6,q] & 0 & 0 & 0 & -[1,6,q] & [1,5,q] \\
        0 & [4,6,q] & 0 & -[2,6,q] & 0 & [2,4,q] \\
        0 & 0 & [4,5,q] & -[3,5,q] & [3,4,q] & 0 \\
    \end{pmatrix}.$$
\end{example}

\begin{definition}
    The {\it lifting ideal}, denoted by $I_M^{\text{lift}}$, is the ideal that all the $(|N|-2)$-minors of the liftability matrices $\mathcal{M}_q(N)$ generate, where $N$ ranges over all full-rank submatroids of $M$ and $q$ varies over $\CC^3$.
\end{definition}

We have the following fundamental result for the lifting ideal from \cite{liwski2024pavingmatroidsdefiningequations}. 

\begin{theorem}[Theorem~3.9 and Lemma~3.6 in \cite{liwski2024pavingmatroidsdefiningequations}] \label{thm: gamma i V_M^lift => liftable}
Let $M$ be a point-line configuration on $[d]$.
\begin{itemize}
\item $I_M^\text{lift} \subset I_M$.
\item If $\gamma=\{\gamma_{1},\ldots,\gamma_{d}\}\subset \CC^{3}$ is a collection of vectors in $V(I_{M}^{\text{lift}})$ lying in a hyperplane $H$, then for each full-rank submatroid $N \subseteq M$ on the ground set $[d']$, $\gamma|_{[d']}$ is non-degenerately liftable from any $q\in \CC^{3}$ for which $q \notin H$. Moreover, this lifting can be made arbitrarily small.
\end{itemize}
\end{theorem}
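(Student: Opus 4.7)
The plan is to derive both statements from a single algebraic identity relating the liftability matrix to brackets. Expanding $[\gamma_{c_1}+z_{c_1}q,\gamma_{c_2}+z_{c_2}q,\gamma_{c_3}+z_{c_3}q]$ by multilinearity and discarding the terms in which $q$ appears more than once yields
\[
[\widetilde{\gamma}_{c_1},\widetilde{\gamma}_{c_2},\widetilde{\gamma}_{c_3}]
=[\gamma_{c_1},\gamma_{c_2},\gamma_{c_3}]+\bigl(\mathcal{M}_q^\gamma(N)\,z\bigr)_c,
\]
which refines Lemma~\ref{lemma: motivation liftability}. Both parts of the theorem then follow by producing large enough subspaces of $\ker\mathcal{M}_q^\gamma(N)$ and invoking this identity.

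For the containment $I_M^{\textup{lift}}\subset I_M$ I would fix $\gamma\in\Gamma_M$, a full-rank submatroid $N$ on $[d']$, and a vector $q\in\CC^3$. For every $v\in\CC^3$ the rank-one perturbation $\gamma\mapsto(I+qv^\top)\gamma$ scales every bracket by $\det(I+qv^\top)=1+v\cdot q$, so because each circuit bracket of $\gamma$ already vanishes, the perturbed collection also kills them. Writing $(I+qv^\top)\gamma_i=\gamma_i+(v\cdot\gamma_i)q$ identifies a kernel element $z_v=(v\cdot\gamma_1,\ldots,v\cdot\gamma_{d'})$. Because $N$ has full rank and $\gamma$ is a realization, $\gamma|_{[d']}$ spans $\CC^3$, so $v\mapsto z_v$ is injective and produces a $3$-dimensional subspace of $\ker\mathcal{M}_q^\gamma(N)$. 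Hence $\rank\mathcal{M}_q^\gamma(N)\le|N|-3$ and every $(|N|-2)$-minor vanishes at $\gamma$; taking the Zariski closure of $\Gamma_M$ then gives $I_M^{\textup{lift}}\subset I_M$.

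For the second part I would fix $\gamma\in V(I_M^{\textup{lift}})$ with $\gamma\subset H$, a vector $q\notin H$, and a full-rank submatroid $N$ on $[d']$. The hypothesis forces $\dim\ker\mathcal{M}_q^\gamma(N)\ge 3$. Since every bracket $[\gamma_{c_1},\gamma_{c_2},\gamma_{c_3}]$ vanishes automatically (three vectors inside the $2$-dimensional subspace $H$ are dependent), the identity above shows that any $z$ in the kernel yields a lifting $\widetilde{\gamma}$ satisfying all $3$-circuit relations of $N$; the $4$-circuit relations are automatic in $\CC^3$. Thus $\widetilde{\gamma}\in V_{\CCC(N)}$ for every $z\in\ker\mathcal{M}_q^\gamma(N)$.

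The step I expect to be the main obstacle is the non-degeneracy claim. Working in a basis $\{e_1,e_2,q\}$ with $e_1,e_2\in H$, the lifted collection is represented by a matrix whose first two rows encode $\gamma|_{[d']}$ inside $H$ and whose third row equals $z$, so $\rank\widetilde{\gamma}=3$ precisely when $z$ is not a linear combination of the first two rows. Assuming, as is implicit in the statement, that $\gamma|_{[d']}$ spans $H$, this ``bad'' locus coincides with $\{(w\cdot\gamma_1,\ldots,w\cdot\gamma_{d'}):w\in\CC^3\}$, which is a $2$-dimensional subspace of $\ker\mathcal{M}_q^\gamma(N)$ by the same rank-one perturbation argument as in the first part. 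A $3$-dimensional kernel cannot be contained in a $2$-dimensional linear subspace, so some $z\in\ker\mathcal{M}_q^\gamma(N)$ produces a non-degenerate lifting. The ``arbitrarily small'' clause follows by rescaling: if $z$ works then so does $\epsilon z$ for every $\epsilon\neq 0$, and letting $\epsilon\to 0$ drives the lifting arbitrarily close to $\gamma$ while keeping it in $V_{\CCC(N)}$ and non-degenerate.
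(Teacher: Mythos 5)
The paper does not prove this result; it is stated and cited as Theorem~3.9 and Lemma~3.6 of \cite{liwski2024pavingmatroidsdefiningequations}, so there is no internal proof to compare against. Taken on its own merits, your argument is correct and self-contained. The key identity
\[
[\widetilde{\gamma}_{c_1},\widetilde{\gamma}_{c_2},\widetilde{\gamma}_{c_3}]
=[\gamma_{c_1},\gamma_{c_2},\gamma_{c_3}]+\bigl(\mathcal{M}_q^\gamma(N)\,z\bigr)_c
\]
is exact (the higher-order terms vanish identically because a determinant with a repeated column is zero, so nothing is ``discarded''), and it is precisely the content that makes Lemma~\ref{lemma: motivation liftability} work. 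The rank-one perturbation $\gamma\mapsto(I+qv^\top)\gamma$, giving the $3$-dimensional family $z_v=(v\cdot\gamma_i)_i$ inside $\ker\mathcal{M}_q^\gamma(N)$, is a clean way to establish both the minor-vanishing on $\Gamma_M$ for the first bullet and the $2$-dimensional ``degenerate locus'' inside the kernel for the second bullet; the dimension count $3>2$ and the rescaling $z\mapsto\epsilon z$ then finish the job.

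The one point worth making explicit, which you flagged but did not resolve, concerns the hypothesis that $\gamma|_{[d']}$ spans $H$. Your non-degeneracy argument genuinely needs this: if $\gamma|_{[d']}$ spans only a line $L\subsetneq H$ (or is zero), then every lifting $\gamma_i+z_iq$ lies in the $2$-dimensional space $\langle L,q\rangle$ and no non-degenerate lifting exists, even though all minors of $\mathcal{M}_q^\gamma(N)$ still vanish. As literally stated in the thesis (``lying in a hyperplane $H$'' rather than ``spanning a hyperplane $H$'', and with no condition on the restriction $\gamma|_{[d']}$), the second bullet is not quite correct without this additional hypothesis; you are right to treat it as implicit, and it matches how the theorem is invoked later (e.g.\ Case~2.1 in the proof of Theorem~\ref{pascal generators}, where the vectors are all non-zero and span the line), but a careful restatement should include it.
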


\section{Grassmann-Cayley algebra}\normalfont
We recall the notion of the Grassmann-Cayley algebra from \cite{AlgorithmsInInvariantTheory,liwski2024solvablenilpotentmatroidsrealizability}.
\begin{definition} The Grassmann-Cayley algebra is defined as the exterior algebra $\bigwedge (\mathbb{C}^{d})$, together with two operations: the \textit{join}, denoted by $\vee$ and the \textit{meet}, denoted by $\wedge$. 
\begin{itemize}
\item If $v$ can be written as the join of $k$ vectors $v_{1}, \dots, v_{k}$, so $v=v_1 \vee \ldots \vee v_k$, then we say that $v$ is an {\em extensor}. In that case, one often omits $\vee$ in the notation and just writes: $v = v_1\ldots v_k$.
\item Consider two extensors $v = v_{1} \ldots v_{k}$ and $w = w_{1} \ldots w_{j}$, with lengths $k$ and $j$ respectively, where $j + k \geq d$. Then one can define the meet of $v$ and $w$ as
$$v\wedge w=\sum_{\sigma\in \mathcal{S}(k,j,d)}
\corch{v_{\sigma(1)}\ldots v_{\sigma(d-j)}w_{1}\ldots w_{j}}\cdot v_{\sigma(d-j+1)}\ldots v_{\sigma(k)},$$
where $ \mathcal{S}(k,j,d)$ denotes the set of all permutations of $\corch{k}$ that satisfy $\sigma(1)<\ldots <\sigma(d-j)$ and $\sigma(d-j+1)<\ldots <\sigma(d)$. When $j + k < d$, the meet is defined to be zero. 
\end{itemize}
\end{definition}

There is a connection between the extensor $v = v_{1} \ldots v_{k}$ and the subspace $\overline{v} = \langle v_{1}, \ldots, v_{k} \rangle$ it generates. The following properties hold:

\begin{lemma} [Lemma~2.12 in \cite{liwski2024solvablenilpotentmatroidsrealizability}]
\label{klj}
Consider two extensors $v=v_{1}\ldots v_{k}$ and $w=w_{1}\ldots w_{j}$ such that $j+k\geq d$. Then we have:
\begin{itemize}
\item The extensor $v$ is equal to zero if and only if the vectors $v_{1},\ldots,v_{k}$ are linearly dependent.
\item $\overline{v}$ uniquely determines the extensor $v$ (up to a scalar multiple).
\item The meet of two extensors is again an extensor.
\item By the previous point, $\overline{v \wedge w}$ is an extensor, so it is well-defined. $v\wedge w\neq 0$ if and only if $\ip{\overline{v}}{\overline{w}}=\CC^{d}$. In this case, we have $\overline{v}\cap\overline{w}=\overline{v\wedge w}.$
\end{itemize}
\end{lemma}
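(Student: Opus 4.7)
The plan is to tackle the four bullets in order, treating (i) and (ii) as standard facts about the top exterior power of a finite-dimensional space and saving the real work for (iii) and (iv), where I would pick a basis adapted to the pair $(\overline{v},\overline{w})$ and watch the shuffle formula collapse to a single surviving term.

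For (i), if $v_1,\ldots,v_k$ are linearly independent I extend them to a basis of $\CC^d$, so that $v_1\wedge\cdots\wedge v_k$ is one of the standard basis vectors of $\bigwedge^k(\CC^d)$ and is therefore nonzero. Conversely, if some $v_k=\sum_{i<k}c_i v_i$, multilinearity expands $v$ into a sum of wedges each containing a repeated factor, and antisymmetry kills every term. For (ii), once $v\neq 0$ part (i) gives $\dim \overline{v}=k$, so the one-dimensional space $\bigwedge^k(\overline{v})\subset \bigwedge^k(\CC^d)$ contains every extensor whose underlying vectors span $\overline{v}$; any two such extensors are therefore scalar multiples of one another.

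For (iii) and (iv) I set $r=\dim(\overline{v}\cap \overline{w})$, fix a basis $e_1,\ldots,e_r$ of $\overline{v}\cap\overline{w}$, and extend to bases $e_1,\ldots,e_r,f_1,\ldots,f_{k-r}$ of $\overline{v}$ and $e_1,\ldots,e_r,g_1,\ldots,g_{j-r}$ of $\overline{w}$. By (ii), up to a scalar I may replace $v$ and $w$ by $e_1\wedge\cdots\wedge e_r\wedge f_1\wedge\cdots\wedge f_{k-r}$ and $e_1\wedge\cdots\wedge e_r\wedge g_1\wedge\cdots\wedge g_{j-r}$. Any shuffle $\sigma$ that pushes some $e_i$ into the bracket produces a determinant with a repeated column (the same $e_i$ already sitting among $w_1,\ldots,w_r$) and contributes zero. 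The dimension formula gives $r=k+j-\dim\langle\overline{v},\overline{w}\rangle\geq k+j-d$. If $r>k+j-d$, every shuffle leaves at least one $e_i$ in the bracket, so $v\wedge w=0$; if $r=k+j-d$, i.e.\ $\langle\overline{v},\overline{w}\rangle=\CC^d$, a single shuffle survives, namely the one sending the $e$-block of $v$ to the extensor part and the $f$-block to the bracket, and the formula collapses to $v\wedge w=\pm[f_1,\ldots,f_{k-r},e_1,\ldots,e_r,g_1,\ldots,g_{j-r}]\cdot e_1\wedge\cdots\wedge e_r$. The bracket is nonzero precisely because its $d$ columns form a basis of $\CC^d$. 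This simultaneously proves that $v\wedge w$ is an extensor, settling (iii), and that $\overline{v\wedge w}=\sspan\{e_1,\ldots,e_r\}=\overline{v}\cap\overline{w}$.

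The main obstacle I anticipate is the sign bookkeeping in the shuffle expansion. Attacking (iii) with a generic presentation of $v$ and $w$ would force tracking the permutation sign of every surviving term and combining it with the multilinear expansion of the bracket against coordinate changes between the $(v_i)$ and the $(e_i,f_i)$. The trick that avoids all of this is to invoke (ii) \emph{before} expanding the meet: with $v$ and $w$ already in canonical basis-product form at most one shuffle contributes, and the sign is simply the sign of that one permutation of basis vectors.
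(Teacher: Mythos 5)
The paper does not contain its own proof of this lemma; it is quoted verbatim from \cite{liwski2024solvablenilpotentmatroidsrealizability}, so there is no in-paper argument to compare against. Your proof is correct and is essentially the standard basis-adapted argument. Parts (i) and (ii) are handled correctly. For (iii) and (iv), the key move — invoking (ii) to put $v$ and $w$ in canonical form $e_1\wedge\cdots\wedge e_r\wedge f_1\wedge\cdots\wedge f_{k-r}$ and $e_1\wedge\cdots\wedge e_r\wedge g_1\wedge\cdots\wedge g_{j-r}$ before expanding the meet — is exactly right, and your observation that any shuffle placing an $e_i$ into the bracket yields a repeated column is what collapses the sum to at most one term. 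The dimension count $r = k+j-\dim\langle\overline{v},\overline{w}\rangle \geq k+j-d$ correctly separates the two regimes: when $r>k+j-d$ no shuffle can place all $r$ vectors $e_i$ in the extensor part (which has only $k+j-d$ slots), so $v\wedge w=0$; when equality holds the unique survivor has bracket $[f_1,\ldots,f_{k-r},e_1,\ldots,e_r,g_1,\ldots,g_{j-r}]$, whose $d$ columns are a basis of $\CC^d$ precisely because $\langle\overline{v},\overline{w}\rangle=\CC^d$, so the bracket is nonzero and $\overline{v\wedge w}=\sspan\{e_1,\ldots,e_r\}=\overline{v}\cap\overline{w}$.

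The one thing left implicit is the degenerate case $v=0$ or $w=0$, where $\overline{v}$ or $\overline{w}$ is not $k$- (resp.\ $j$-)dimensional and the reduction via (ii) does not apply; but then $v\wedge w=0$ by multilinearity of the meet, which is still an extensor, so the statement holds trivially. You might state that explicitly. Also, in (i), the phrase \enquote{some $v_k=\sum_{i<k}c_iv_i$} should be \enquote{after reindexing, some $v_m=\sum_{i\neq m}c_iv_i$}; the reindexing only changes $v$ by a sign, so this is cosmetic.
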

\begin{example} \label{example: 3 concurrent lines}
    In $\CC^3$, one can apply this lemma to establish the following correspondence  between algebra and geometry: 
    \begin{itemize}
        \item Three vectors $v_1, v_2, v_3 \in \CC^3$ span a space of rank two if and only if $\det(v_1, v_2, v_3) = 0$. This is, of course, the well-known result that three vectors in $\mathbb{C}^3$ are linearly dependent if and only if their determinant vanishes.
        \item If the point $v_5$ lies both in $\sspan\{v_1, v_2\}$ and  $\sspan\{v_3,v_4\}$, then $$v_5 = v_1v_2 \wedge v_3v_4 =  \det(v_1,v_2,v_3)v_4-\det(v_1,v_2,v_4)v_3.$$
    \end{itemize}
\end{example}
\subsection{Grassmann-Cayley ideal}\label{Grassmann Cayley ideal} 
We define the Grassmann-Cayley ideal using a construction from \cite{liwski2024solvablenilpotentmatroidsrealizability}. 
First we briefly review Noetherian modules, as in \cite{atiyah}.
\begin{definition}
    Consider a ring $R$ and an ascending chain of ideals in $R$, with respect to the inclusion. The ring $R$ is {\em Noetherian} if every such chain stabilizes, or equivalently, if every ideal of $R$ is finitely generated.
\end{definition}
The following classical result is proven in \cite{Hilbert1890}. For a modern proof of that result, we refer to Theorem 7.5 in \cite{atiyah}.
\begin{theorem}[Hilbert's Basis Theorem, Theorem I in \cite{Hilbert1890}]\label{thm: hilberts basis}
    If $A$ is a Noetherian ring, then the polynomial ring $A[x]$ is Noetherian.
\end{theorem}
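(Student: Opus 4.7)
The plan is to prove the contrapositive formulation: show that every ideal $I$ of $A[x]$ is finitely generated, given that every ideal of $A$ is finitely generated. The strategy is to extract information from $I$ via leading coefficients, use the Noetherian hypothesis on $A$ twice, and then recover generators of $I$ by a degree-reduction argument.

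First I would introduce, for each integer $n \geq 0$, the set $L_n \subset A$ consisting of $0$ together with all leading coefficients of polynomials in $I$ of degree exactly $n$. A short check shows $L_n$ is an ideal of $A$ (closure under addition uses that if two polynomials of degree $n$ have leading coefficients summing to something nonzero, the sum still has degree $n$; otherwise the coefficient is $0$, which we included). Moreover, multiplying a polynomial in $I$ by $x$ shows $L_n \subseteq L_{n+1}$. Let $L = \bigcup_{n \geq 0} L_n$, which is again an ideal of $A$.

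Next I would invoke the Noetherian hypothesis in two ways. Since $A$ is Noetherian, the ideal $L$ is finitely generated, say by elements $a_1, \ldots, a_k \in A$, each of which can be assumed to come from some $L_{n_i}$; pick polynomials $f_1, \ldots, f_k \in I$ with $\deg f_i = n_i$ and leading coefficient $a_i$. Set $N = \max_i n_i$. Again by Noetherianity, the ascending chain $L_0 \subseteq L_1 \subseteq \cdots \subseteq L_N$ consists of finitely generated ideals, so for each $n < N$ I pick finitely many polynomials $g_{n,1}, \ldots, g_{n,m_n} \in I$ of degree $n$ whose leading coefficients generate $L_n$. The claim is that the finite set $\{f_i\} \cup \{g_{n,j}\}$ generates $I$.

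To verify this, I would argue by strong induction on $\deg f$ for $f \in I$. If $\deg f = d \geq N$, the leading coefficient of $f$ lies in $L_d \subseteq L$, so it can be written as an $A$-linear combination of $a_1, \ldots, a_k$; multiplying each $f_i$ by a suitable monomial $x^{d-n_i}$ and combining produces a polynomial in the ideal generated by the $f_i$ whose leading term matches that of $f$, and subtracting yields a polynomial in $I$ of strictly smaller degree. If $\deg f = d < N$, I instead use $L_d$ being generated by the leading coefficients of $g_{d,1}, \ldots, g_{d,m_d}$ to perform the same cancellation. Iterating, we eventually reduce to a polynomial of degree $0$, which lies in $L_0$ and is thus in the $A$-span of the $g_{0,j}$. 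The main obstacle is purely bookkeeping: ensuring that the degree-matching in the subtraction step is always legal (which is why one treats $d \geq N$ and $d < N$ separately, so that either the $f_i$ can be boosted up to degree $d$ by multiplication by $x^{d-n_i}$, or the $g_{d,j}$ already have the right degree). Once this is in place, the induction closes and $I$ is generated by the chosen finite set.
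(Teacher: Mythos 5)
The paper does not prove this theorem: it states it as a classical result and refers the reader to Theorem~7.5 of Atiyah--Macdonald for a proof, so there is no in-paper argument to compare against. Your proof is correct and is essentially the standard argument found in that reference: form the ideal of leading coefficients, use Noetherianity of $A$ to extract finitely many polynomials $f_i$ controlling high degrees, handle the remaining low degrees separately, and close with a degree-reduction induction. The one place you diverge slightly from Atiyah--Macdonald is in how you handle degrees below $N$: they observe that polynomials of degree $<N$ form a finitely generated $A$-module, hence a Noetherian module, and intersect with $I$; you instead directly pick finite generating sets for each leading-coefficient ideal $L_n$ with $n<N$ and lift them to polynomials $g_{n,j}$. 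Your version is a bit more elementary (it avoids Noetherian modules and works only with ideals of $A$), at the cost of a little more bookkeeping, but both reach the same conclusion by the same mechanism.
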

Since $\CC$ is a field, $\CC$ is Noetherian, and so is $\CC[x]$.
\newline
\newline
We now give the construction of the ideal Grassmann-Cayley ideal.

\begin{construction}\label{GM}
Let $M$ be a point-line configuration. 
\begin{itemize}
\item Let $X_0$ be the set with the generators for $I_{\mathcal{C}(M)}$, as in Definition~\ref{cir}. 
\item Assume $x \in [d]$, $l_1 \neq l_2 \in \mathcal{L}_x$ and $p_1, p_2 \in l_1$, $p_3, p_4 \in l_2$. Choose a polynomial $P \in I_M$. We can use Example \ref{example: 3 concurrent lines}, since $x \in l_1 \cap l_2$. Thus for any $\gamma \in \Gamma_M$, we have that $$\gamma_x = [\gamma_{p_1}, \gamma_{p_2}, \gamma_{p_3}] \gamma_{p_4} - [\gamma_{p_1}, \gamma_{p_2}, \gamma_{p_4}] \gamma_{p_3}.$$
Denote $$y = [p_1, p_2, p_3] p_4 - [p_1, p_2, p_4] p_3,$$ and let $P'$ be the polynomial constructed from $P$ where the variable $x$ is replaced by $y$. Notice that $P' \in I_M$.
\item For $j \geq 1$, construct the set $X_j$ as the union of the set $X_{j-1}$ and the polynomials $P'$ for $P \in X_{j-1}$ for some variables $x$. 
\item Define $I_j$ as the ideal generated by the polynomials in $X_j$. Obviously, $X_0 \subset X_1 \subset \ldots$, so this implies that $I_0 \subset I_1 \subset \ldots$. Using Theorem~\ref{thm: hilberts basis}, this chain of ideals stabilizes. Denote the stabilized ideal as $G_M$. 
\end{itemize}
\begin{definition}
$G_M$ is called the {\it Grassmann-Cayley ideal}.
\end{definition}
\end{construction}

\begin{proposition}\label{inc G}
For any point-line configuration $M$, we have $G_{M}\subset I_{M}.$ 
\end{proposition}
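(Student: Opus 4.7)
The plan is to prove the inclusion by induction on $j$, establishing that $I_j \subset I_M$ for every $j$ in Construction~\ref{GM}. Since the chain $I_0 \subset I_1 \subset \cdots$ stabilizes at $G_M$ by Theorem~\ref{thm: hilberts basis}, this immediately gives $G_M \subset I_M$. The base case is Remark~\ref{generating set ICM}, which says $I_0 = I_{\CCC(M)} \subset I_M$. For the inductive step, assuming $I_{j-1} \subset I_M$, it is enough to check that whenever $P \in I_M$ and $P'$ is obtained from $P$ as in the construction --- by substituting the three scalar variables of column $x$ with the three entries of the vector-valued expression $y = [p_1,p_2,p_3]p_4 - [p_1,p_2,p_4]p_3$ --- one still has $P' \in I_M$.

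To verify this I would pick an arbitrary realization $\gamma \in \Gamma_M$ and show that $P'(\gamma) = 0$; since $V_M = \overline{\Gamma_M}$, this pointwise vanishing on $\Gamma_M$ forces $P' \in I(V_M) = I_M$. The key geometric observation is that the two matroid lines $l_1 \ni x, p_1, p_2$ and $l_2 \ni x, p_3, p_4$ are distinct, and in any realization the two-dimensional subspaces $\gamma_{l_1}$ and $\gamma_{l_2}$ must then also be distinct: if they coincided, the triple $\{p_1, p_3, x\}$ would be linearly dependent in $\CC^3$, hence a circuit of $M$ contained in a line different from $l_1$, violating the fact that the two points $p_1, x$ of a simple matroid determine a unique line. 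By Lemma~\ref{klj} applied to the extensors $\gamma_{p_1}\gamma_{p_2}$ and $\gamma_{p_3}\gamma_{p_4}$, together with Example~\ref{example: 3 concurrent lines}, the value $y(\gamma)$ is a nonzero generator of $\gamma_{l_1} \cap \gamma_{l_2}$, and since $\gamma_x$ is also a nonzero vector in this one-dimensional intersection, there exists a nonzero scalar $\lambda(\gamma) \in \CC$ with $y(\gamma) = \lambda(\gamma)\,\gamma_x$. Finally, $V_M$ is closed under rescaling any single column --- obvious on $\Gamma_M$ since rescaling a nonzero vector does not alter the induced matroid, and extending to $V_M$ by Zariski closure --- so
\[
P'(\gamma) \;=\; P\bigl(\gamma_1,\ldots,y(\gamma),\ldots,\gamma_d\bigr) \;=\; P\bigl(\gamma_1,\ldots,\lambda(\gamma)\gamma_x,\ldots,\gamma_d\bigr) \;=\; 0,
\]
using $P \in I_M$. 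This closes the induction.

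The main obstacle I anticipate is the subtle fact that $y(\gamma)$ is only a $\gamma$-dependent scalar multiple of $\gamma_x$ rather than literally equal to it; one cannot legitimately treat $y$ and $x$ as interchangeable inside $P$ at the polynomial level. The correct workaround is the scaling invariance of $V_M$ sketched above, which absorbs the scalar $\lambda(\gamma)$ back into the $x$-column without leaving the variety. A small but necessary bookkeeping step along the way is to verify $\gamma_{l_1} \neq \gamma_{l_2}$ on realizations, which follows from simplicity of $M$ and the uniqueness of the line through two points.
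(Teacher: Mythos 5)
Your proof follows the same inductive structure as the paper's (base case $I_0 = I_{\CCC(M)} \subset I_M$, inductive step via Construction~\ref{GM}), and is correct. The paper's own argument is terser and simply defers to "Notice that $P' \in I_M$" inside the construction, implicitly invoking the convention of Notation~\ref{notation thesis} that equality of nonzero vectors means equality in $\mathbb{P}^2$; your explicit handling of the scalar $\lambda(\gamma)$ via invariance of $V_M$ under column rescaling, and your verification that $y(\gamma)\neq 0$ from the simplicity of $M$, make precise exactly the step the paper leaves to the reader.
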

\begin{proof}
It suffices to prove that $I_j \subset I_M$ for each $j \in \mathbb{N}$. Since $I_0 = I_{\CCC(M)} \subset I_M,$ the basis step holds. For the induction step, assume that $I_j \subset I_M$. Using Construction \ref{GM}, we can conclude that $I_{j+1} \subset I_M.$
\end{proof}
\begin{example}\label{gc3}
Let $M$ be the point-line configuration shown on the left of Figure~\ref{fig:combined} (Left). In any realization, the intersection of the lines $\gamma_{12}, \gamma_{34}$ and $\gamma_{56}$ is non-trivial, so $\gamma_{12} \cap \gamma_{34}\subset \gamma_{56}$. Thus by Example \ref{example: 3 concurrent lines}:
$$(34)\wedge (12)\vee 56=(\corch{3,1,2}4-\corch{4,1,2}3)\vee 56=\corch{1,2,3}\corch{4,5,6}-\corch{1,2,4}\corch{3,5,6} = 0$$ for any $\gamma \in \Gamma_M$. This is a polynomial in $G_M$.
In a similar way, one can construct the polynomials $$\{[1,2,5][6,3,4]-[1,2,6][5,3,4], [3,4,5][6,1,2]-[3,4,6][5,1,2]\}$$ in $G_M$. Using Theorem~\ref{thm: 2nd fundamental thm of invariant theory}, these polynomials all coincide, so $$G_M = I_{\CCC(M)} + \langle\corch{1,2,3}\corch{4,5,6}-\corch{1,2,4}\corch{3,5,6} \rangle$$ in this case.
\end{example}
One can deduce the following proposition from these observations.
\begin{proposition} \label{gm toegevoegd}
    Consider lines $l_{1},l_{2},l_{3}$ of $M$ which contain a common point $x\in [d]$.  
For any $\gamma\in V_{\CCC(M)}\cap V(G_M)$, we have $(\gamma_{l_{1}}\wedge \gamma_{l_{2}})\vee\gamma_{l_{3}}=0$, implying that the lines $\gamma_{l_{1}},\gamma_{l_{2}},\gamma_{l_{3}}$ are concurrent in $\mathbb{P}^{2}$. Thus, if $\gamma_{x}=0$, one may redefine $\gamma_{x}$ to be the point in $\mathbb{P}^{2}$ lying in the intersection of the three lines $\gamma_{l_{1}},\gamma_{l_{2}}$ and $\gamma_{l_{3}}$.
\end{proposition}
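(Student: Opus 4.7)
The plan is to exhibit a concrete element of $G_M$ whose evaluation at $\gamma$ equals the Grassmann--Cayley expression $(\gamma_{l_1}\wedge\gamma_{l_2})\vee\gamma_{l_3}$. Since $M$ is a point-line configuration, every line carries at least three points, so I may fix $p_1,p_2\in l_1\setminus\{x\}$, $p_3,p_4\in l_2\setminus\{x\}$, and $p_5,p_6\in l_3\setminus\{x\}$. The triple $\{p_5,p_6,x\}\subset l_3$ is a circuit of $M$, hence the bracket $[p_5,p_6,x]$ belongs to $I_{\mathcal{C}(M)}\subset G_M$. As $x\in l_1\cap l_2$, Construction~\ref{GM} permits one application of the substitution $x\mapsto y=[p_1,p_2,p_3]p_4-[p_1,p_2,p_4]p_3$, which produces
\[P'=[p_1,p_2,p_3][p_5,p_6,p_4]-[p_1,p_2,p_4][p_5,p_6,p_3]\in G_M.\]

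A routine expansion of the meet using Example~\ref{example: 3 concurrent lines} shows that $P'(\gamma)$ coincides with the coordinate expansion of $(\gamma_{l_1}\wedge\gamma_{l_2})\vee\gamma_{l_3}$ relative to the chosen representatives $\gamma_{p_1},\ldots,\gamma_{p_6}$. Hence for any $\gamma\in V_{\mathcal{C}(M)}\cap V(G_M)$ we obtain $(\gamma_{l_1}\wedge\gamma_{l_2})\vee\gamma_{l_3}=0$. Invoking the last item of Lemma~\ref{klj}, this vanishing forces $\overline{\gamma_{l_1}\wedge\gamma_{l_2}}\subseteq\gamma_{l_3}$ whenever the two-dimensional subspaces $\gamma_{l_1}$ and $\gamma_{l_2}$ span $\CC^3$; geometrically, the three projective lines $\gamma_{l_1},\gamma_{l_2},\gamma_{l_3}$ meet in a common point.

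The redefinition of $\gamma_x$ in the zero case is then immediate: set $\tilde\gamma_x$ to be any nonzero vector in $\gamma_{l_1}\cap\gamma_{l_2}\cap\gamma_{l_3}$. Since $\tilde\gamma_x$ lies in each $\gamma_{l_i}$, the new collection continues to satisfy every dependency of $M$ involving the point $x$, which by the structure of a point-line configuration can only place $x$ on a line through $x$.

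I expect the main subtlety to be verifying that the argument is insensitive to the choice of the auxiliary points $p_1,\ldots,p_6$: a different selection yields a different-looking bracket polynomial, but all such polynomials agree modulo $I_{\mathcal{C}(M)}$ via the Pl\"ucker-type relations recorded in Theorem~\ref{thm: 2nd fundamental thm of invariant theory}. A secondary concern is the genuinely degenerate regime in which some $\gamma_{l_i}$ has rank less than two (for instance when the representatives of the points on $l_i$ all vanish or coincide); in such cases the projective line $\gamma_{l_i}$ is not well defined and the concurrency statement must be read charitably, but the redefinition of $\gamma_x$ remains possible, so the second assertion of the proposition is unaffected.
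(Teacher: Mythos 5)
Your proof is correct and fills in precisely the deduction the paper gestures at after Example~\ref{gc3} (the paper simply asserts "one can deduce the following proposition from these observations" without spelling it out). The bracket polynomial $P'=[p_1,p_2,p_3][p_5,p_6,p_4]-[p_1,p_2,p_4][p_5,p_6,p_3]$ you construct is exactly the polynomial produced in that example, the identification with $(\gamma_{l_1}\wedge\gamma_{l_2})\vee\gamma_{l_3}$ is right (with the sign check handled by noting $[p_5,p_6,p_4]=[p_4,p_5,p_6]$ via a $3$-cycle), and the appeal to Lemma~\ref{klj} correctly translates the vanishing into concurrency when the two ambient planes span $\CC^3$. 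Your flagged caveats are also the right ones: the vanishing of $P'(\gamma)$ is only informative when $\gamma_{p_1}\gamma_{p_2}$ and $\gamma_{p_3}\gamma_{p_4}$ actually represent the two-dimensional subspaces $\gamma_{l_1}$, $\gamma_{l_2}$, which forces a judicious choice of the auxiliary points; and when some $\gamma_{l_i}$ has rank $<2$ the concurrency claim degenerates but the re-definition of $\gamma_x$ still goes through (one places $\gamma_x$ on whichever $\gamma_{l_i}$ are genuine lines, exactly as the paper does later in the Case-1 argument of Lemma~\ref{Generalization of Lemma 4.21 iii}).
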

\begin{example}
    Consider the Fano plane as in Figure \ref{fig:combined} (Right). The point 5 lies on the lines $\bigl\{\{1,2,5\},\{3,4,5\},\{5,6,7\}\bigr\}$, implying that the polynomial $$[1,2,3][4,6,7]-[1,2,4][3,6,7]$$ is a polynomial within $I_1$. The point $3$ lies on the lines $\{3,4,5\}$ and $\{1,3,6\}$, therefore
     $$[1,2,6][4,5,1][4,6,7]+[1,2,4][1,6,7][4,5,6] \in I_2.$$
\end{example}

\section{Forest configurations} \label{subsec: forest configurations}
In this section, we review a specific family of point-line configurations, called the forest configurations as described in \cite{clarke2022matroidstratificationshypergraphvarieties, liwski2024solvablenilpotentmatroidsrealizability}. These configurations will motivate the introduction of  cactus configurations later on. The definition of forest configurations is based on that of forest graphs. We will first define how to associate a graph to a point-line configuration.
\begin{definition}
Consider a point-line configuration $M$ with an ordering on the ground set $[d]$. $\phi: [d] \to [d]$ maps a point in the ground set to its ordering number. The {\it corresponding graph} $G(M)$ has vertex set $[d]$ and edges $\{i,j\}$, such that:
    \begin{itemize}
        \item $i,j$ lie on the same line $l$ of $M$.
        \item There is no point $k \in l$ with $\phi(i) < \phi(k) < \phi(j)$ or $\phi(j) < \phi(k) < \phi(i)$.
    \end{itemize}
\end{definition}
This construction turns points of the configuration into vertices and paths into edges. It can be used to define a forest configuration.
\begin{definition} \label{def: forest}
    A {\em forest} is a graph without cycles. A {\em forest configuration} is a point-line configuration $M$ such that $G(M)$ is a forest.
\end{definition}
Lemma~5.2 of \cite{clarke2022matroidstratificationshypergraphvarieties} proves that this definition does not depend on the choice of the map $\phi$. 
\begin{example}
    The point-line configuration associated to three concurrent lines in Figure \ref{fig:combined} (Left) is a forest configuration. Consider $\phi$ to be the identity map on $[7]$. $G(M)$ is the graph with vertex set $[7]$ and the edges are $\bigl\{\{1,2\}, \{3,4\}, \{5,6\}, \{2,7\}, \{4,7\}, \{6,7\}\bigr\}$. Since this is a forest graph, the matroid with three concurrent lines is a forest configuration.
\end{example}
One can also introduce the notion of a cycle for point-line configurations, to state an equivalent characterization of forest configurations, as in \cite{Connectednessandcombinatorialinterplayinthemodulispaceoflinearrangements}.
\begin{definition}\label{definition n-cycle}
Suppose $M$ has $n$ lines. $M$ is called a {\em cycle} if there exists a subset of points $\{p_{1},\ldots,p_{n}\}\subset [d]$ together with an ordering $\{l_{1},\ldots,l_{n}\}$ of its lines, such that:
\begin{itemize}
\item Every point $p_{i}$	
  is incident to precisely two lines, namely $\{l_{i},l_{i+1}\}$, for each $i\in [n]$, where we identify $l_{n+1}$ with $l_{1}$.
\item Every other point $p\in [d]\setminus \{p_{1},\ldots,p_{n}\}$ is incident to exactly one line, so it has degree one.
\end{itemize}

\end{definition}
From Definition~\ref{def: forest}, one can derive the following lemma:
\begin{lemma} \label{lemma: forest config}
   Let $M$ be a point-line configuration. $M$ is a forest configuration if and only if $M$ has no cycles.
\end{lemma}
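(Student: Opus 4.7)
The plan is to prove the equivalence by contraposition in both directions, directly matching graph cycles in $G(M)$ with closed sequences of lines in $M$ of the type described in Definition~\ref{definition n-cycle}.

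For the implication "$M$ contains a cycle $\Rightarrow$ $G(M)$ is not a forest", suppose $p_1,\ldots,p_n\in [d]$ and distinct lines $l_1,\ldots,l_n\in\mathcal{L}_M$ satisfy the incidence pattern of the definition, with $p_i\in l_i\cap l_{i+1}$. The first step is to observe that for each line $l_i$, the points of $l_i$, listed in the order given by $\phi$, form a path in $G(M)$; in particular, the two points $p_{i-1}$ and $p_i$ on $l_i$ are connected by a sub-path $P_i$ whose edges lie entirely within this $l_i$-path. Concatenating $P_1 P_2\cdots P_n$ produces a closed walk in $G(M)$ passing through $p_1,\ldots,p_n$ in cyclic order. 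It then remains to verify that this walk is actually a simple cycle; this uses the rank-three simplicity of $M$, namely that any two distinct lines share at most one point. As a consequence, an internal vertex of $P_i$ lies on $l_i$ but not on any other $l_j$ with $j\neq i$, so internal vertices of distinct sub-paths cannot coincide, and no internal vertex of $P_i$ can equal a corner $p_k$ (else $l_i\in\{l_k,l_{k+1}\}$, contradicting distinctness of the chosen lines). The corners themselves are distinct by hypothesis, so the closed walk is a genuine simple cycle in $G(M)$.

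For the reverse implication "$G(M)$ is not a forest $\Rightarrow$ $M$ contains a cycle", I would start with a shortest graph cycle $C=v_1 v_2\cdots v_k v_1$ in $G(M)$. Each edge $\{v_j,v_{j+1}\}$ lies on a unique line $L_j\in\mathcal{L}_M$, since $M$ is simple and any two points therefore determine at most one line. Grouping together consecutive edges of $C$ lying on the same line yields a cyclic sequence of lines $\lambda_1,\ldots,\lambda_m$ with corresponding transition vertices $q_1,\ldots,q_m$, where $q_i$ is the vertex at which the walk passes from $\lambda_i$ to $\lambda_{i+1}$; hence $q_i\in\lambda_i\cap\lambda_{i+1}$. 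The minimality of $C$ prevents two of the $\lambda_i$ from coinciding, since any repeated line would let one replace a segment of $C$ by a shorter path along that single line and shortcut the cycle. A similar shortcutting argument shows that each $q_i$ lies on exactly the two lines $\lambda_i,\lambda_{i+1}$ among the chosen lines, so that the data $(q_1,\ldots,q_m;\lambda_1,\ldots,\lambda_m)$ realize the incidence pattern of Definition~\ref{definition n-cycle} and witness that $M$ contains a cycle.

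The main obstacle will be the reverse direction: turning a bare graph cycle into a full cycle subconfiguration as specified in Definition~\ref{definition n-cycle}. The subtle point is to verify that the transition vertices have the required degree property, or else to pass to an appropriate subconfiguration; one must rigorously exclude repeated $\lambda_i$, coincidences among the transition vertices, and extra incidences that could prevent the chosen data from forming a genuine cycle. Here the combination of the rank-three simplicity of $M$ together with the minimality of $C$ should suffice, but each of these degeneracies requires a separate shortcutting argument on $C$ in $G(M)$ and this is where most of the work of the proof will concentrate.
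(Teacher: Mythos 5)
The paper does not actually prove this lemma — it is stated immediately after Definition~\ref{definition n-cycle} as a consequence one ``can derive'' from Definition~\ref{def: forest} — so there is nothing in the paper to compare against, and your proposal must be judged on its own terms. As written it is a plan with genuine gaps in both directions.

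In the forward direction, the claim that simplicity alone forces an internal vertex of $P_i$ off every other $l_j$ is incorrect. Simplicity only bounds $|l_i\cap l_j|\le 1$; it does not prevent that single intersection point from lying in the interior of $P_i$. What \emph{would} exclude it is the degree-one condition on non-corner points in Definition~\ref{definition n-cycle}, but that condition governs the ground set $S$ of the submatroid $N=M|S$ realizing the cycle and says nothing about a point $v\in l_i\setminus S$ that happens to sit $\phi$-between $p_{i-1}$ and $p_i$ on $l_i$. Such a $v$ can equal $l_i\cap l_j$ for a non-adjacent $j$ and can lie on the $\phi$-arc used by $P_j$ as well, in which case the concatenated closed walk revisits $v$ and is not a simple cycle. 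The conclusion is still rescuable: within $P_i$ consecutive edges are distinct (it is a sub-path of the line-path), and at each corner the two adjacent edges lie on the distinct lines $l_i$ and $l_{i+1}$, hence are distinct; thus the closed walk never immediately backtracks, and a non-backtracking closed walk always contains a simple cycle. That is the argument you should give, rather than asserting the walk is itself simple.

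In the reverse direction you openly defer all the real work, and the one tool you name — minimizing the length of $C$ in $G(M)$ — does not obviously do the job. If $\lambda_i=\lambda_j$, the ``direct path'' along $G(M)|_{\lambda_i}$ between the two segments may be longer than the arc of $C$ it would replace and may revisit vertices of $C$, so you do not automatically obtain a shorter simple cycle, and the same difficulty recurs when excluding a non-corner point on two of the $\lambda_i$. Minimizing instead over all cyclic sequences of distinct lines and distinct transition points (minimize the number $m$ of lines) makes the chord/shortcut arguments honest, since a repeated line or a spurious intersection point on two $\lambda_i$ directly yields a strictly shorter such sequence. Finally, even after the $\lambda_i$ and $q_i$ are pinned down, you must produce an explicit $S$ for which $M|S$ is a cycle in the precise sense of Definition~\ref{definition n-cycle}; in particular $M|S$ must have \emph{exactly} the lines $\lambda_1,\dots,\lambda_m$, so $S$ has to be chosen not to import an additional dependent hyperplane. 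The proposal never addresses this step.
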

\begin{example}
    Using Lemma~\ref{lemma: forest config}, it follows that the quadrilateral set, as in Figure~\ref{fig:combined} (Center), is not a forest configuration, since there is a cycle formed by the points $\{1,2,4,5\}$ and lines $\bigl\{\{1,2,3\}, \{2,4,6\}, \{3,4,5\}, \{1,5,6\}\bigr\}.$
\end{example}
There is another characterization of forest-configurations, which shows that it can be constructed by an inductive process. First, we introduce the notion of a free gluing.
\begin{definition} \label{def: free gluing}
Consider the point-line configurations $M$ and $N$ with ground sets $[d_{1}]$ and $[d_{2}]$, respectively,  and let $p\in [d_{1}]$ and $q\in [d_{2}]$. The {\em free gluing} of $M$ and $N$ at $p$ and $q$ is defined as the point-line configuration on the ground set 
\[([d_{1}]\setminus \{p\})\amalg ([d_{2}]\setminus \{q\})\cup \{P\},\]
such that the points $p$ and $q$ are identified in $P$. The set of lines of $M \amalg_{p,q} N$ is given by
\[
\mathcal{L}_{M \amalg_{p,q} N}= \{(l\setminus \{p\})\cup \{P\}:l\in \mathcal{L}_{p}\}\cup
\{(l\setminus \{q\})\cup \{P\}:l\in \mathcal{L}_{q}\}
\cup (\mathcal{L}_{M}\setminus \mathcal{L}_{p})\cup (\mathcal{L}_{N}\setminus \mathcal{L}_{q}).
\]
We denote this free gluing by $M \amalg_{p,q} N$.
\end{definition}
The free gluing $M \amalg_{p,q} N$ is in some sense the point-line configuration constructed by identifying the configurations $M$ and $N$ at the points $p,q$ in the most \enquote{free} or independent way, since it preserves all dependencies of $M$ and $N$ without introducing new ones. 

There is a close connection between the notion of a free gluing and the classical concept of {\em amalgamation} of matroids, as for instance in \cite{poljak1984amalgamation,hochstattler2017sticky}. For matroids $M$ and $N$ on ground sets $[d_{1}]$ and $[d_{2}]$, an amalgamation is a matroid whose restrictions to $[d_{1}]$ and $[d_{2}]$ agree with $M$ and $N$, respectively.

We illustrate the definition with two examples. 

\begin{example}
Consider the point-line configurations associated to the $3 \times 3$ grid and to the Fano plane, as depicted in Figure~\ref{fig:combined} (Right) and Figure~\ref{fig:preliminaries grid, fano, cactus} (Left), respectively. Figure~\ref{example free gluing} (Left) shows their free gluing at the points $7$ and $7'$. Similarly, consider the point-line configuration associated to three concurrent lines, shown in Figure~\ref{fig:combined} (Left), and to the quadrilateral set, shown in Figure~\ref{fig:combined} (Center). Their free gluing along the points $3$ and $3'$ is shown in Figure~\ref{example free gluing} (Right).
\end{example}

\begin{figure}[ht]
    \centering
    \includegraphics[width=0.9\linewidth]{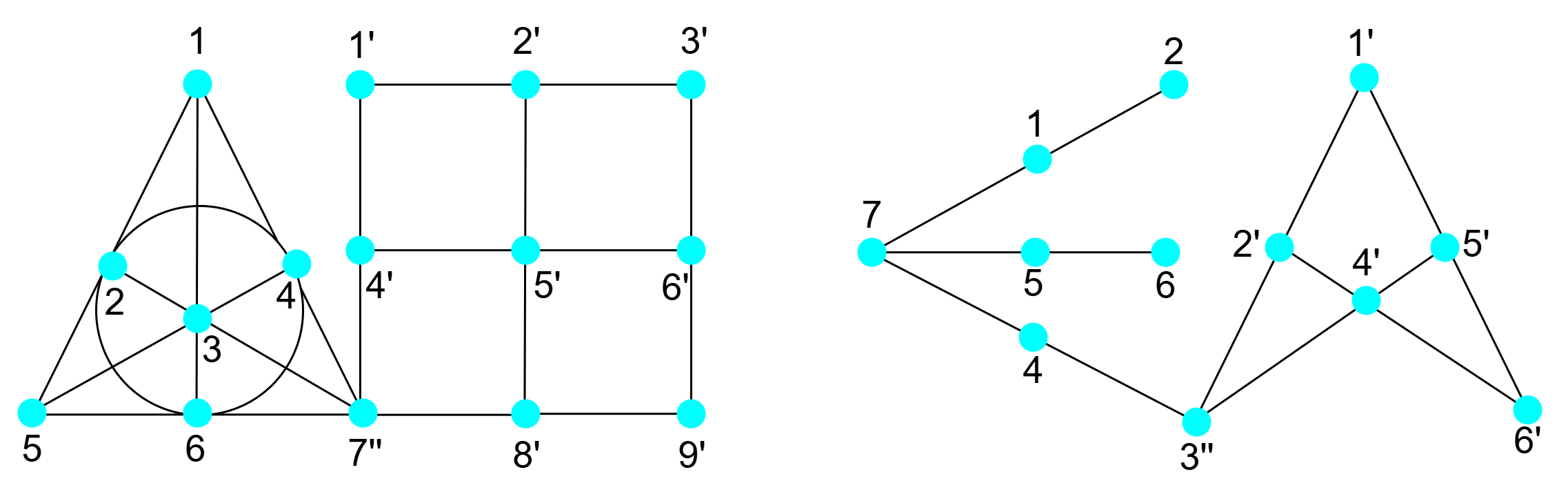}
    \caption{(Left) Free gluing of the point-line configurations associated to the Fano plane and $3 \times 3$ grid; (Right) Free gluing of the point-line configurations associated to the three concurrent lines and the quadrilateral set.}
    \label{example free gluing}
\end{figure}

\begin{definition}
    A {\em connected forest configuration} is a point-line configuration which is formed by inductively freely gluing lines.
\end{definition}
Using Proposition~4.7 of \cite{liwski2024solvablenilpotentmatroidsrealizability}, we can deduce the following equivalent characterization for forest configurations.
\begin{lemma}
    A point-line configuration is a forest configuration if and only if its connected components are connected forest configurations.
\end{lemma}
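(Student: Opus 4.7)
The statement reduces the general characterization to the already-established case of connected point-line configurations, so the plan is to isolate the connectedness aspect and then import Proposition~4.7 of \cite{liwski2024solvablenilpotentmatroidsrealizability} componentwise. First I would fix the natural notion of a connected component of a point-line configuration $M$: declare two points $p,q\in[d]$ equivalent if there is a sequence of lines $l_{1},\ldots,l_{k}\in\mathcal{L}_{M}$ with $p\in l_{1}$, $q\in l_{k}$, and $l_{i}\cap l_{i+1}\neq\emptyset$, and let the equivalence classes (together with the lines contained in them) be the connected components $C_{1},\ldots,C_{r}$. Points of degree zero form singleton components. I would then observe the two structural facts that drive the whole argument:
\begin{enumerate}
\item Each line of $M$ lies entirely in one component $C_{j}$, hence $\mathcal{L}_{M}=\mathcal{L}_{C_{1}}\sqcup\cdots\sqcup\mathcal{L}_{C_{r}}$.
\item Any cycle of $M$ in the sense of Definition~\ref{definition n-cycle} involves a linked sequence of lines, so it is contained in a single component $C_{j}$.
\end{enumerate}

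For the forward direction, suppose $M$ is a forest configuration. By Lemma~\ref{lemma: forest config}, $M$ contains no cycles. By observation (2), no $C_{j}$ contains a cycle either, so each $C_{j}$ is itself a forest configuration. Since $C_{j}$ is also connected by construction, I would invoke Proposition~4.7 of \cite{liwski2024solvablenilpotentmatroidsrealizability}, which provides the equivalent characterization that a connected forest configuration is obtained by inductively freely gluing lines, to conclude that $C_{j}$ is a connected forest configuration in the sense of Definition~\ref{def: free gluing} and the paragraph following it.

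For the backward direction, assume every connected component $C_{j}$ is a connected forest configuration, built up from single lines by iterated free gluings along Definition~\ref{def: free gluing}. Again by Proposition~4.7 of \cite{liwski2024solvablenilpotentmatroidsrealizability}, each $C_{j}$ has no cycles. Combining this with observation (2), a hypothetical cycle of $M$ would have to live in some single $C_{j}$, contradicting the cycle-freeness of $C_{j}$. Hence $M$ has no cycles, and Lemma~\ref{lemma: forest config} lets me conclude that $M$ is a forest configuration.

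I expect the only genuinely delicate point to be observation (2), namely verifying that the notion of cycle from Definition~\ref{definition n-cycle} is \emph{connected} in the sense that the lines $l_{1},\ldots,l_{n}$ realizing the cycle form a linked sequence; this follows immediately from the incidence pattern at the points $p_{1},\ldots,p_{n}$, where $p_{i}\in l_{i}\cap l_{i+1}$, so neighbouring cycle-lines share a point and the whole cycle sits inside one component. Everything else is a clean application of the connected case, so the proof should be short once this bookkeeping is in place.
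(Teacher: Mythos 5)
The paper does not actually supply a proof of this lemma; it simply asserts that the statement ``can be deduced'' from Proposition~4.7 of \cite{liwski2024solvablenilpotentmatroidsrealizability}, so there is no argument in the text to compare yours against. Your reconstruction is the natural way to unpack that deduction and, as far as I can verify without consulting the cited reference, it is correct: the two structural observations (lines and cycles each live inside a single component) are exactly right, and combining them with Lemma~\ref{lemma: forest config} and the assumed content of Proposition~4.7 does yield both directions cleanly.

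Two small points worth flagging. First, the paper never defines ``connected component'' of a point-line configuration, so you are right to spell it out; your line-incidence equivalence is the notion consistent with the corresponding graph $G(M)$ from Definition~\ref{def: forest} and with the free-gluing picture. Second, you note that degree-zero points form singleton components, but then implicitly treat a singleton as a ``connected forest configuration.'' Under the paper's definition (``formed by inductively freely gluing lines''), a one-point configuration with no lines is a degenerate instance of that construction, so this is a genuine corner case: either the reader must accept the empty gluing sequence as a base case, or degree-zero points should be set aside explicitly. This does not break your argument but deserves a sentence. Your remaining worry about observation~(2) is in fact the least delicate part: consecutive cycle lines share the point $p_{i}$, so they sit in the same component by definition of your equivalence relation.
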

\begin{example}
    The point-line configuration in Figure \ref{fig:preliminaries grid, fano, cactus} (Right) can be constructed by the free gluing of the lines $\{1,4,7\}$ and $\{1,2,3\}$ at $1$, and then successively gluing the following lines to the point-line configuration of the previous step: the line $\{2,5,8\}$ at 2, the line $\{3,6,9\}$ at 3, $\{6,10, 11\}$ at $6$, $\{6,12,13\}$ at 6 and finally $\{9,14,15\}$ at 9. Thus this point-line configuration is a forest.
\end{example}
One can prove the following result for forest configurations.
\begin{theorem}[Theorem~4.22, \cite{liwski2024solvablenilpotentmatroidsrealizability}]
A forest configuration is nilpotent.
\end{theorem}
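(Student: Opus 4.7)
The plan is to argue by strong induction on the number of points $d$ in the ground set of the forest configuration $M$. The base case $d = 0$ is trivial, since then $M = \emptyset$ and so $M_0 = \emptyset$. For the inductive step, I will split into two cases: if $M$ has no lines, then every point has degree zero, so $S_M = \emptyset$ and $M_1 = \emptyset$, proving $M$ is nilpotent. Otherwise $M$ has at least one line, and the goal is to reduce to a smaller forest configuration to which the inductive hypothesis applies.

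The key structural step is the following claim: if $M$ is a forest configuration with at least one line, then $M$ has a point of degree one, so $S_M \subsetneq [d]$, hence $M_1 = M|S_M$ has strictly fewer points than $M$. To prove this, I would take a connected component $C$ of $M$ that contains a line, and use the definition of connected forest configurations as iterated free gluings of lines. The line $L$ added last in this construction is a \emph{leaf line}: its $|L| - 1 \geq 2$ non-gluing points are incident only to $L$, so they have degree one in $C$ and therefore in $M$. Hence $[d]\setminus S_M$ is non-empty.

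The second key step is to verify that $M_1 = M|S_M$ is again a forest configuration. Since restriction of a simple matroid of rank at most three is a simple matroid of rank at most three, $M_1$ is a point-line configuration, and by Lemma~\ref{lemma: forest config} it suffices to show $M_1$ contains no cycle in the sense of Definition~\ref{definition n-cycle}. Suppose for contradiction that there were points $p_1,\ldots,p_n \in S_M$ and lines $l_1',\ldots,l_n'$ of $M_1$ forming a cycle; then each $l_i' = l_i \cap S_M$ for some line $l_i$ of $M$ with $|l_i'|\geq 3$, and the same alternating incidences hold in $M$, giving a cycle in $M$ and contradicting the hypothesis that $M$ is a forest. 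With these two claims in hand, the inductive hypothesis applied to $M_1$ yields $(M_1)_j = \emptyset$ for some $j \geq 0$, and since $M_{j+1} = (M_1)_j$, this shows $M$ is nilpotent.

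The main obstacle is the leaf-line claim: formally extracting the degree-one point from the free-gluing characterization requires a small amount of bookkeeping to confirm that no subsequent gluing in the inductive construction of a connected forest configuration can raise the degree of a non-gluing point of the last line added. Once this is carefully stated, both the reduction $|S_M| < d$ and the preservation of the forest property under restriction become essentially combinatorial.
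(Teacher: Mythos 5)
Your proof is correct, and it takes a genuinely different route from the one the paper uses for the analogous (and more general) statement, Proposition~\ref{thm: cactus-like config is nilpotent} on cactus configurations. The paper inducts on the number of free gluings in the construction of $M$ and, at each stage, tracks where $S_M$ sits: it shows $S_M$ (or a short iterate $S_{S_M}$) is contained in the previously constructed piece $M'$ except possibly for the gluing point, and then invokes that submatroids of nilpotent matroids are nilpotent. You instead induct on the size of the ground set, with two structural sub-claims: a ``leaf line'' argument extracting a degree-one point from the free-gluing construction (so $S_M \subsetneq [d]$), and a lifting argument showing a cycle in $M|S_M$ would produce a cycle in $M$ (so $M|S_M$ is again a forest). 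Both sub-claims check out; the observation you use that $M_{j+1} = (M_1)_j$ correctly glues the inductive hypothesis back into the original nilpotency chain. The trade-off is that your argument must appeal to both characterizations of forest configurations (the inductive gluing definition for the leaf, and the cycle-free characterization of Lemma~\ref{lemma: forest config} for the restriction step), whereas the paper's proof works entirely in the gluing picture; on the other hand, your route makes the strict decrease of the chain $M_0 \supsetneq M_1 \supsetneq \cdots$ explicit rather than mediated through a submatroid containment. Note also that your Claim~1 could be obtained without reference to the gluing structure at all, via Lemma~\ref{Generalization of Lemma 4.21 i} applied to $X=[d]$, which yields a point of degree at most one directly from cycle-freeness; this would make the proof rest on a single characterization.
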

Using Theorem~\ref{nil coincide}, one can deduce that a forest configuration is realizable and the corresponding matroid variety is irreducible.
In \cite{clarke2022matroidstratificationshypergraphvarieties}, the irreducible decomposition of a forest configuration is studied. Recall that for a matroid $M$ on the ground set $[d], Q_M$ denotes the set of points of degree at least three, and $M(S)$ denotes the matroid obtained from $M$ by setting the points in $S$ to be loops.
\begin{theorem}[Theorem~5.14 in \cite{clarke2022matroidstratificationshypergraphvarieties}]\label{thm: decomposition circuit varieties forest} 
For a forest configuration $M$ on the ground set $[d]$, the circuit variety $\VCM$ has at most $2^{|Q_M|}$ irreducible components, each one obtained from $M$ by setting a subset of $Q_M$ to be loops:  $$\VCM = V_M \cup_{S \subset Q_M} V_{M(S)}.$$
\end{theorem}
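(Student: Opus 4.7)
The plan is to establish the set equality $V_{\CCC(M)} = V_M \cup \bigcup_{S \subset Q_M} V_{M(S)}$; the bound of $2^{|Q_M|}$ irreducible components then follows because each $V_{M(S)}$ is irreducible (since $M(S)$ is still a forest configuration, hence nilpotent by the cited Theorem~4.22 of \cite{liwski2024solvablenilpotentmatroidsrealizability}, so $V_{M(S)}$ is irreducible by Theorem~\ref{nil coincide}(iii)), and there are exactly $2^{|Q_M|}$ subsets of $Q_M$, several of which may produce redundant components.

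For the easy inclusion $\supseteq$, note that for any $S \subset Q_M$ the matroid $M(S)$ has, by construction, every dependent set of $M$ among its dependent sets (with the singletons $\{p\}$, $p\in S$, added as circuits). Therefore any realization of $M(S)$ includes the dependencies of $M$, and taking Zariski closures gives $V_{M(S)} \subseteq V_{\CCC(M)}$; combined with $V_M \subseteq V_{\CCC(M)}$ (Remark~\ref{generating set ICM}), this yields the inclusion.

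For the nontrivial inclusion $\subseteq$, I would apply the perturbation strategy from the summary. Given $\gamma \in V_{\CCC(M)}$, set $S := \{p \in Q_M : \gamma_p = 0\}$ and show that $\gamma \in V_{M(S)}$ by exhibiting a sequence of realizations of $M(S)$ converging to $\gamma$ in the Euclidean (and hence Zariski) topology. The inductive structure of forest configurations as iterated free gluings (Definition~\ref{def: free gluing}) makes this natural: write $M = M' \amalg_{p,q} l$ where $l$ is a leaf line, and let $S' = S \cap [M']$. By induction on the number of lines, $\gamma|_{M'} \in V_{M'(S')}$ can be approximated arbitrarily closely by realizations $\gamma'$ of $M'(S')$. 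We then extend each $\gamma'$ to a realization of $M(S)$ by choosing the points of $l \setminus \{q\}$ in sufficiently generic position on a line through $\gamma'_p$ (or, if $p \in S$, through the origin, turning $q$ into a loop). The definition of $S$ ensures that every $p \in Q_M \setminus S$ has $\gamma_p \neq 0$, so the approximating realizations can be chosen with $\gamma'_p \neq 0$ as well, and the extension to $l$ is well defined.

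The main obstacle will be handling the gluing point $p$ coherently across the induction: if $p$ is the image of several high-degree gluings, all lines through it must be realized as concurrent in the perturbation, and if $\gamma_p = 0$ we must arrange the perturbation so that $\gamma'_p = 0$ is preserved (so that $p$ really is a loop in the approximating realization). This requires choosing the perturbation direction transverse to the loci forcing concurrency, which is possible because each free gluing introduces at most one genuine constraint at $p$. Once this is verified, the remaining bookkeeping—placing points of degree two at the intersection of their two lines after perturbation, and propagating the approximation through the tree of gluings—is routine for nilpotent configurations, where lines can be realized one at a time in a compatible order.
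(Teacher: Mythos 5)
The paper recalls this result without proof (it is cited from Theorem~5.14 of \cite{clarke2022matroidstratificationshypergraphvarieties}); the argument actually carried out in this thesis is the cactus generalization, Theorem~\ref{thm: decomposition of circuit variety of a cactus}, which subsumes the forest case. Your high-level strategy — for $\gamma \in \VCM$, set $S = \{p \in Q_M : \gamma_p = 0\}$, show $\gamma \in V_{M(S)}$ by perturbation along the free-gluing tree, then count irreducible components via nilpotency — matches that proof, with the perturbation step isolated there as Lemma~\ref{Generalization of Lemma 4.21 ii}.

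There is, however, a genuine error in your handling of the gluing point. You write that when $\gamma_p = 0$ ``we must arrange the perturbation so that $\gamma'_p = 0$ is preserved (so that $p$ really is a loop in the approximating realization).'' This is only correct when $p \in Q_M$. If $p$ has degree at most two and $\gamma_p = 0$, then $p \notin S$ by definition, so $p$ is \emph{not} a loop in $M(S)$, and any realization of $M(S)$ must assign a nonzero vector to $p$. Keeping $\gamma'_p = 0$ therefore never yields a realization of $M(S)$, and the approach fails to establish $\gamma \in V_{M(S)}$ for such $\gamma$. The correct move, carried out explicitly at the start of the induction step in the proof of Lemma~\ref{Generalization of Lemma 4.21 ii}, is the opposite: perturb $\gamma_p$ \emph{away} from zero, onto the intersection of the (at most two) rank-two lines of $\gamma$ through $p$ if they are distinct, onto the unique such line if there is one, or to an arbitrary nonzero vector otherwise. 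With that fix in place, the rest of your inductive extension across leaf lines — placing new points on $l$ both close to $\gamma|_l$ and generically to avoid extra dependencies — is essentially Case~1 in that same lemma and is sound.
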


\begin{example} \label{ex: decomposition circuit variety 3 concurrent lines}
    For the point-line configuration with three concurrent lines as in Figure \ref{fig:combined} (Left), one can use Theorem~\ref{thm: decomposition circuit varieties forest} to show that:
    $$\VCM = V_M \cup V_{M(7)}.$$ For the point-line configuration in Figure \ref{fig:preliminaries grid, fano, cactus} (Right), we have $\VCM = V_M \cup V_{M(6)}.$
\end{example}
\section{Methods}
In this section, we discuss some results and we derive concrete methods to find an irreducible decomposition of circuit varieties and to find a generating set for the matroid ideal.
\subsection{Decomposition strategy} \label{subsec: decomposition strategy}
We outline the methods to decompose a circuit variety into irreducible components. We first introduce minimal matroids.

\begin{definition}\label{dependency}
Consider the matroids $N_{1}$ and $N_{2}$ on $[d]$. We write $N_{1}\leq N_{2}$ if $\mathcal{D}(N_{1})\subset \mathcal{D}(N_{2})$. This determines a partial order on matroids, which we will call the \emph{dependency order}. 
\end{definition}

\begin{definition}\label{def min}
The collection of {\em minimal matroids} for a matroid $M$ is defined as: \[\min(M)=\min\{N:N>M\}.\] 
\end{definition}

\begin{proposition}[Proposition~4.1 in \cite{liwskimohammadialgorithmforminimalmatroids}]\label{deco circ}
Let $M$ be a matroid. Then
$$V_{\mathcal{C}(M)}=\bigcup_{N\in \min(M)}V_{\mathcal{C}(N)}\ \cup \ V_{M}.$$
\end{proposition}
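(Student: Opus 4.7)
The plan is to establish both inclusions directly from the definitions, with the key idea being to associate to every $\gamma \in V_{\mathcal{C}(M)}$ its \emph{realized matroid} $M_\gamma$, that is, the matroid on $[d]$ whose dependent sets are exactly the linearly dependent subsets of $\{\gamma_1,\ldots,\gamma_d\}$, and then compare $M_\gamma$ to $M$ in the dependency order from Definition~\ref{dependency}.

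First I would dispatch the easy inclusion $\supseteq$. Every realization of $M$ trivially includes the dependencies of $M$, so $\Gamma_M \subseteq V_{\mathcal{C}(M)}$; since $V_{\mathcal{C}(M)}$ is Zariski closed by Definition~\ref{cir}, passing to the closure gives $V_M \subseteq V_{\mathcal{C}(M)}$. For any $N \in \min(M)$ one has $N \geq M$, i.e.\ $\mathcal{D}(M) \subseteq \mathcal{D}(N)$, so any collection of vectors including the dependencies of $N$ also includes those of $M$, giving $V_{\mathcal{C}(N)} \subseteq V_{\mathcal{C}(M)}$.

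The content of the statement is the forward inclusion. Fix $\gamma \in V_{\mathcal{C}(M)}$ and consider $M_\gamma$ as above. Since $\gamma$ includes the dependencies of $M$, every dependent set of $M$ is a dependent set of $M_\gamma$, so $M \leq M_\gamma$. I would then split into two cases. If $M_\gamma = M$, the matroid induced by $\gamma$ is exactly $M$, hence $\gamma \in \Gamma_M \subseteq V_M$ and we land in the last term of the union. If $M_\gamma > M$, then the set $\{N : M < N \leq M_\gamma\}$ is a nonempty subposet of the finite poset of matroids on $[d]$, so it admits a minimal element $N_0$. By minimality, no matroid strictly between $M$ and $N_0$ exists, so $N_0 \in \min(M)$. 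Since $N_0 \leq M_\gamma$ we have $\mathcal{D}(N_0) \subseteq \mathcal{D}(M_\gamma)$, so $\gamma$ includes the dependencies of $N_0$ and therefore lies in $V_{\mathcal{C}(N_0)}$.

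There is no genuinely hard step here: the argument is essentially a definitional unpacking, and the only thing to check carefully is that a minimal element of the restricted poset $\{N : M < N \leq M_\gamma\}$ is automatically a minimal element of $\{N : N > M\}$, which follows because any matroid strictly between $M$ and $N_0$ in the full poset would also satisfy $N \leq N_0 \leq M_\gamma$ and violate minimality in the restricted one. The only mild care needed is in forming $M_\gamma$ when some $\gamma_i$ are zero, but zero vectors simply correspond to loops of $M_\gamma$ and cause no issue with the above dependency comparisons.
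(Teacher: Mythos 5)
Your proof is correct. The paper itself does not supply a proof of this proposition — it simply cites Proposition~4.1 of \cite{liwskimohammadialgorithmforminimalmatroids} — but your argument is the natural one and it holds up. The key move, passing from $\gamma$ to its realized matroid $M_\gamma$ and then observing that either $M_\gamma = M$ (giving $\gamma \in \Gamma_M \subseteq V_M$) or $M_\gamma > M$ (in which case any minimal element of the nonempty finite poset $\{N : M < N \leq M_\gamma\}$ is automatically minimal in $\{N : N > M\}$, hence lies in $\min(M)$, and its dependencies are contained in those of $M_\gamma$), is exactly the right dichotomy and you execute it cleanly. Your check that minimality in the restricted interval implies minimality in the full poset is the correct thing to worry about, and your justification is right. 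The remark about loops is also apt: since the dependency order and $\min(M)$ range over all matroids on $[d]$ without any rank restriction, and any $N > M$ necessarily has rank $\leq \rank(M)$, the loop case causes no trouble.
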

To decompose a circuit variety of a point-line configuration $N$ into irreducible components, we can use the following strategy, as in \cite{liwskimohammadialgorithmforminimalmatroids}. 
\newline
\newline
{\bf Decomposition Strategy:}
\begin{itemize}
 \item Determine the minimal matroids over $N$ by applying Algorithm~3.31 from \cite{liwskimohammadialgorithmforminimalmatroids}. For this, we can use the code implemented by Rémi Prébet, available at the following link: \text{bit.ly/49GYXxC}.
 \item Decompose the circuit variety using Proposition~\ref{deco circ}.
 \item Carry out the previous two steps for all circuit varieties in the equation above. Repeat this
 process iteratively for any new circuit variety that shows up, until each circuit variety satisfies
 one of the following conditions:
 \begin{itemize}
 \item Condition 1: If $N'$ is a nilpotent point-line configuration with no points of degree greater than two, then $V_{\mathcal{C}(N')} = V_{N'}$, by Theorem~\ref{nil coincide}.
 \item Condition 2: If $N'$ is a point-line configuration with no points of degree greater than two, such that every proper submatroid of $N'$ is nilpotent, then $V_{\mathcal{C}(N')} = V_{N'} \cup V_{U_{2,d}}$, by Theorem~\ref{nil coincide}. 

 \end{itemize}
 When we encounter a non-simple rank-three matroid $N$, we
instead consider implicitly its reduced version $N_{\text{red}}$, obtained from $N$ by removing loops and identifying double
points. $N_{\text{red}}$ is a point-line configuration. This approach is valid since the minimal matroids of $N$ correspond to the minimal matroids of $N_{\text{red}}$ up to removing double points and loops.
To reconstruct the minimal matroids of $N$ again, we have to reintroduce these loops and double points
again.

After this step, we obtain a decomposition of $\VCM$ as a union of matroid varieties.
 \item For each matroid variety obtained at the end of the previous step, we can check its irreducibility by considering
 the solvable matroids involved, using Theorem~\ref{nil coincide}. 
 \item Remove any redundant components from the decomposition. After completing this steps, we have found the irreducible components of $\VCM$.
\end{itemize}

\subsection{Perturbation strategy} \label{subsection: perturbation strategy}
In the next chapters, we will find a generating set for the matroid ideal of various point-line configurations, up to radical. We will outline the strategy to prove that the matroid ideal is generated by circuit polynomials, lifting polynomials and Grassmann-Cayley polynomials. We first state Hilbert's Nullstellensatz, as in \cite{Hartshorne}.
\begin{theorem}[Hilbert's Nullstellensatz] \label{thm: hilbert nullstellensatz}
    Denote by $\CC[X]$ the polynomial ring in $n$ variables. There is a one-to-one correspondence between the closed subvarieties of $\CC^n$ and the radical ideals in $\CC[X]$: the map sending a closed subvariety $Y$ to $I(Y)$ is bijective with as inverse map the one sending a radical ideal $I$ to $V(I)$.
\end{theorem}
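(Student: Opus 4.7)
The plan is to reduce the statement to the \emph{strong Nullstellensatz}: for every ideal $I \subset \CC[X]$, $I(V(I)) = \sqrt{I}$. Once this is established, the bijection is easy: for a closed subvariety $Y = V(J)$ we get $V(I(Y)) = V(\sqrt{J}) = V(J) = Y$, and for a radical ideal $I$ we get $I(V(I)) = \sqrt{I} = I$, so the two assignments $Y \mapsto I(Y)$ and $I \mapsto V(I)$ are mutually inverse on the specified classes. Both maps are visibly well defined, since $I(Y)$ is always a radical ideal and $V(I)$ is by definition a closed subvariety.

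The first step I would carry out is the \emph{weak Nullstellensatz}: every maximal ideal $\mathfrak{m} \subset \CC[x_1,\ldots,x_n]$ has the form $(x_1 - a_1, \ldots, x_n - a_n)$ for some $(a_1,\ldots,a_n) \in \CC^n$. The standard route is via Zariski's lemma: if a field $L$ is finitely generated as a $K$-algebra, then $L/K$ is a finite algebraic extension. Applied to $L = \CC[X]/\mathfrak{m}$ over $K = \CC$ and combined with the fact that $\CC$ is algebraically closed, this forces $\CC[X]/\mathfrak{m} = \CC$, and the structure of $\mathfrak{m}$ follows by considering the images of the $x_i$. Equivalently, this gives $V(I) = \emptyset$ iff $I = \CC[X]$, since otherwise $I$ sits inside some maximal ideal corresponding to a point of $V(I)$.

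Next I would apply the Rabinowitsch trick to upgrade the weak version to the strong one. Given $f \in I(V(I))$, introduce a new variable $t$ and form the ideal $J = I \cdot \CC[X,t] + (1 - tf) \subset \CC[X,t]$. Any common zero of $J$ in $\CC^{n+1}$ would project to a point of $V(I)$ at which $f$ vanishes, while $1 - tf$ forces $f$ to be nonzero there; hence $V(J) = \emptyset$. By the weak Nullstellensatz, $1 \in J$, so one can write
\[
1 = \sum_i g_i(X,t)\, h_i(X) + g_0(X,t)\,(1 - tf)
\]
with $h_i \in I$. Substituting $t = 1/f$ in the field of fractions and clearing by a sufficiently high power of $f$ yields $f^N \in I$ for some $N$, i.e.\ $f \in \sqrt{I}$. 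The reverse inclusion $\sqrt{I} \subset I(V(I))$ is immediate since any element vanishing on $V(I)$ to some power already vanishes on $V(I)$, so $I(V(I)) = \sqrt{I}$.

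The hard part is unambiguously the weak Nullstellensatz, or rather Zariski's lemma behind it; this is the only point where the hypothesis that $\CC$ is algebraically closed enters, and it is the genuine piece of commutative algebra that the theorem rests on. The remainder is then bookkeeping: the Rabinowitsch trick is purely formal, and the inclusion-reversing bijection between closed subvarieties and radical ideals drops out of the equality $I(V(I)) = \sqrt{I}$ together with the tautology $V(I(Y)) = Y$ for $Y$ closed.
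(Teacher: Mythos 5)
The paper does not prove this statement; it is stated as a classical fact and cited to Hartshorne, as one would expect for a result that is entirely standard. Your proof is correct and follows the canonical route — Zariski's lemma gives the weak Nullstellensatz, the Rabinowitsch trick upgrades it to $I(V(I)) = \sqrt{I}$, and the bijection between closed subvarieties and radical ideals then falls out of $I(V(I)) = \sqrt{I}$ together with $V(I(Y)) = Y$ for $Y$ closed — so there is nothing to reconcile with the paper's treatment. One very small wording slip: in the last sentence justifying $\sqrt{I} \subset I(V(I))$, the reason is that if $f^m \in I$ then $f^m$ vanishes on $V(I)$ and hence $f$ does, not ``any element vanishing on $V(I)$ to some power already vanishes on $V(I)$''; the intended content is the same but the phrasing inverts the logic.
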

We introduce the notion of a {\em perturbation}, which is a motion that can be made arbitrarily small. This will play a central role in the perturbation strategy. 
\begin{remark}
    $\lVert.\rVert$ denotes the Euclidean norm of a vector in $\CC^n$.
\end{remark}

\begin{definition}
Consider a finite collection of vectors $\gamma = \{\gamma_1,\ldots, \gamma_d\} \in \CC^n$ and a specific property $X$. We say that we can find a {\em perturbation} of $\gamma$ to obtain a new collection of vectors satisfying $X$, if the following holds: for an arbitrary $\epsilon > 0$ and for each $i \in [d]$, we can find a vector $\widetilde{\gamma}_i$ satisfying $\lVert \gamma_i - \widetilde{\gamma}_i \rVert < \epsilon$, such that the property $X$ holds for the new collection $\widetilde{\gamma}.$ Moreover, we can apply a perturbation to a $k$-dimensional subspace $S \subset \CC^n$, by considering a basis $\{v_1, \ldots, v_k\}$ of it and perturbing the vectors of this basis to find a new $k$-dimensional subspace.
\end{definition}
\noindent We remark that this definition of perturbation is also used in \cite{hocsten2004ideals}. 
\newline
Having set this important notion, we can outline the perturbation strategy to prove that \begin{equation} \label{eq: I_M gen}
    I_M = \sqrt{I_{\CCC(M)}+G_M+I_M^{\textup{lift}}}
\end{equation} for a given point-line configuration $M$.

\medskip

 \noindent {\bf Perturbation Strategy:}
\begin{itemize}
\item To prove that Equation~\eqref{eq: I_M gen} holds, we will prove the statement \begin{equation} \label{eq: intersec V_M}
V_M = \VCM \cap V_{G_M} \cap V(I_{M}^{\textup{lift}}),\end{equation}
which is equivalent by Theorem~\ref{thm: hilbert nullstellensatz}. By Remark~\ref{generating set ICM}, Theorem~\ref{thm: gamma i V_M^lift => liftable} and Proposition~\ref{inc G}, the inclusion $\supset$ is obvious in Equation~\eqref{eq: I_M gen}, so we only have to prove the inclusion $\subset$ in Equation~\eqref{eq: intersec V_M}.
\item We decompose the circuit variety of $M$ into its irreducible components, using the decomposition strategy as outlined in Subsection \ref{subsec: decomposition strategy}.
\item Consider an arbitrary $\gamma \in \VCM \cap V_{G_M} \cap V(I_M^{\textup{lift}}).$ Since $\gamma \in \VCM$, $\gamma$ should be in one of its irreducible components.
\item Perturb $\gamma$ to a collection of vectors $\Tilde{\gamma} \in \Gamma_M.$ To achieve this, we may use the properties stated in Theorem \ref{thm: gamma i V_M^lift => liftable} and Proposition \ref{gm toegevoegd}. 
\item Since there is a perturbation of $\gamma$ to the realization space of $M$, we can construct a sequence in $\Gamma_M$ converging to $\gamma$ in the Euclidean topology. Therefore, $\gamma$ is in the Zariski closure of $\Gamma_M$, and since the Zariski topology is coarser than the Euclidean topology, $\gamma$ is in the Zariski closure of $\Gamma_M$ which
is by definition $V_M$.
\end{itemize}
When perturbing in a proof, we implicitly fix an arbitrary $\epsilon > 0$ at the beginning, and then we prove that for a collection of vectors $\gamma = \{\gamma_1, \ldots, \gamma_d\}$, we can find a collection of vectors $\Tilde{\gamma}$ which satisfies $X$ and for which \begin{equation} \label{eq:pert strat hulp}\lVert \gamma_i - \widetilde{\gamma}_i \rVert < \epsilon.\end{equation} It will be often useful that we can take $\epsilon > 0$ small enough such that no additional linear dependencies are created in the new collection $\widetilde{\gamma}$. We will prove this in the following lemma, first we introduce the obvious notion of a distance between two collections of vectors and the induced distance on matrices.
\begin{definition}
    For a collection of $d$ vectors, $\gamma = \{\gamma_1, \ldots, \gamma_d\}$, we define its norm as $$\lVert\gamma\rVert = \sum_{i=1}^n \lVert\gamma_i\rVert.$$
    Similarly, for a $n \times d$-matrix $N$, we define its norm $N$ as $$\lVert N\rVert = \sum_{j=1}^d\sqrt{\sum_{i=1}^nN_{ij}^2}.$$ 
\end{definition}

\begin{lemma}\label{lemma:finite epsilon to not create dependencies}
    For every collection of $d$ vectors $\gamma \subset \CC^n$, there is an $\epsilon > 0$, such that for every collection of vectors $\widetilde{\gamma}$ with $\lVert\gamma - \widetilde{\gamma}\rVert < \epsilon$, $\widetilde{\gamma}$ does not have more dependencies than $\gamma$. 
\end{lemma}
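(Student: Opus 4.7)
The plan is to exploit that linear independence is an open condition, so small perturbations preserve it. Interpreting ``$\widetilde{\gamma}$ does not have more dependencies than $\gamma$'' as saying that every linearly independent subset of $\gamma$ remains linearly independent in $\widetilde{\gamma}$, the statement reduces to showing that this openness holds uniformly over the finitely many subsets of $[d]$.

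First I would fix a subset $S = \{i_1, \ldots, i_k\} \subset [d]$ such that $\{\gamma_{i_1}, \ldots, \gamma_{i_k}\}$ is linearly independent. Then the $n \times k$ matrix $A_S$ whose columns are these vectors has rank $k$, so at least one $k \times k$ minor $[R|S]_{A_S}$, with $R \subset [n]$ and $|R| = k$, is nonzero. Since this minor is a polynomial in the entries of $A_S$, and polynomials are continuous, there exists $\epsilon_S > 0$ such that whenever the perturbed matrix $\widetilde{A}_S$ (with columns $\widetilde{\gamma}_{i_1}, \ldots, \widetilde{\gamma}_{i_k}$) satisfies $\lVert A_S - \widetilde{A}_S \rVert < \epsilon_S$, the corresponding minor of $\widetilde{A}_S$ is still nonzero. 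In particular, $\widetilde{A}_S$ has rank $k$, so the vectors $\widetilde{\gamma}_{i_1}, \ldots, \widetilde{\gamma}_{i_k}$ remain linearly independent. Note that since $\lVert A_S - \widetilde{A}_S \rVert \leq \lVert \gamma - \widetilde{\gamma} \rVert$ by the definition of the matrix norm, requiring $\lVert \gamma - \widetilde{\gamma} \rVert < \epsilon_S$ suffices.

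Next I would take the minimum over all such independent subsets. Let $\mathcal{I}(\gamma)$ denote the collection of linearly independent subsets of $\gamma$; since $\mathcal{I}(\gamma) \subset \mathcal{P}([d])$ is finite, setting
\[
\epsilon = \min \{\epsilon_S : S \in \mathcal{I}(\gamma)\} > 0
\]
gives a single positive bound that works simultaneously for every independent subset. Hence, for any $\widetilde{\gamma}$ with $\lVert \gamma - \widetilde{\gamma} \rVert < \epsilon$, every independent subset of $\gamma$ remains independent in $\widetilde{\gamma}$, which is equivalent to saying $\widetilde{\gamma}$ has no additional dependencies.

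There is no real obstacle here beyond unpacking the definitions and invoking continuity of the determinant; the only point that deserves care is checking that the norm on collections of vectors introduced just above does control the norm of each induced submatrix $A_S$, so that a single $\epsilon$ for $\lVert \gamma - \widetilde{\gamma} \rVert$ does indeed bound the perturbation of each $A_S$. Finiteness of $\mathcal{P}([d])$ is what allows the minimum to be strictly positive, which is the crucial step.
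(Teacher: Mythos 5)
Your proof is correct and follows essentially the same approach as the paper: continuity of determinantal minors guarantees that each linearly independent subset stays independent under sufficiently small perturbation, and finiteness of $\mathcal{P}([d])$ lets you take a positive minimum over all such thresholds. Your version is a touch cleaner in that you fix a single nonvanishing $k\times k$ minor per independent subset and explicitly check $\lVert A_S - \widetilde{A}_S \rVert \leq \lVert \gamma - \widetilde{\gamma} \rVert$, rather than bundling all minors into one continuous map as the paper does.
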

For a proof, we refer to the appendix. So if one only considers the values of $\widetilde{\epsilon}$ in Equation \eqref{eq:pert strat hulp} for which $\widetilde{\epsilon} < \frac{\epsilon}{d},$ there are no additional dependencies created.
\setcounter{MaxMatrixCols}{20}

\numberwithin{theorem}{chapter}

\chapter{Cactus configurations} \label{sec: cactus}
In this chapter, we introduce the notion of cactus point-line configurations. We examine the defining equations of their associated matroid varieties and the decomposition of the corresponding circuit varieties. Note that every forest configuration is a cactus configuration. We examine whether the results of Section \ref{subsec: forest configurations} generalize to cactus configurations. This will be indeed the case. The main results of this chapter are Theorem~\ref{cact irr}, where we prove that each cactus configuration is realizable and its matroid variety is irreducible, Theorem~\ref{thm: main theorem cactus matroid ideal}, in which we find a finite set of generators for the matroid ideal, up to radical, and Theorem~\ref{thm: decomposition of circuit variety of a cactus}, in which we find a possibly redundant yet irreducible decomposition of $\VCM$.
\section{Irreducibility of circuit varieties of cactus configurations}
In this section, we introduce several equivalent characterizations of cactus point-line configurations. Moreover, we prove that their associated matroid varieties are irreducible.

\medskip

\noindent
Recall Definitions \ref{def: free gluing} and \ref{definition n-cycle}, defining the notions of a free gluing and a cycle.
\begin{definition}\label{cactus}
A point-line configuration $M$ is called a \emph{connected cactus} configuration if it can be formed by inductively gluing lines and cycles. Thus we can construct a sequence of point-line configurations $N_{1},\ldots,N_{k}$ such that:
\begin{itemize}
\item $N_{1}$ is either a line or a cycle.
\item For each $i\in [k-1]$, $N_{i+1}$ is the free gluing of $N_{i}$ with a line or a cycle.
\item $N_{k}=M$.
\end{itemize}
More generally, a {\em cactus configuration} is a point-line configuration such that each of its connected components is a connected cactus configuration. Note that \enquote{connected component} and \enquote{component} have a completely different meaning.

\end{definition}
\begin{figure}
    \centering
    \includegraphics[width=0.9\linewidth]{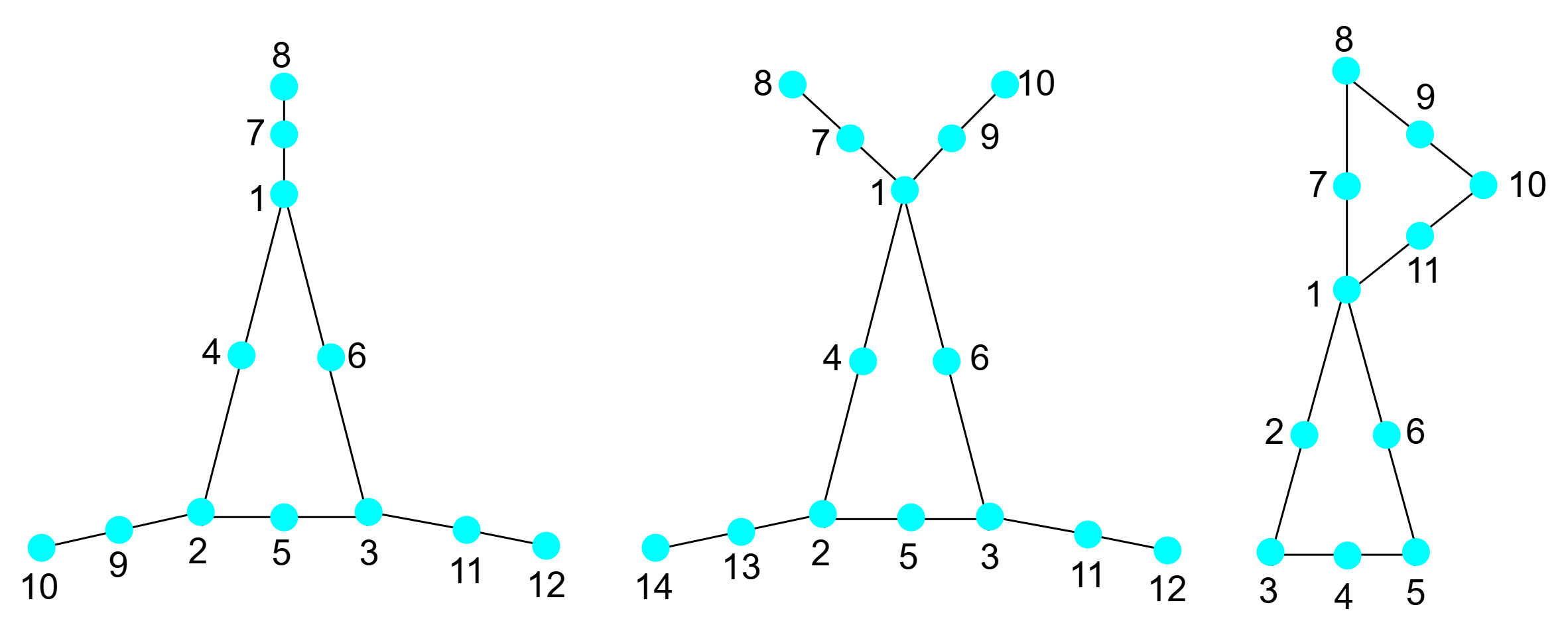}
    \caption{Examples of cactus configurations.}
    \label{fig:2 cactus}
\end{figure}
\begin{example} \label{example cactus decompo}
    One can construct the point-line configuration in Figure \ref{fig:2 cactus} (Left) starting from the cycle with lines $\{1,2,4\}, \{2,3,5\}$ and $\{1,3,6\}$, and successively gluing the lines $\{1,7,8\}$ at 1, $\{2,9,10\}$ at $2$ and $\{3,11,12\}$ at 3. Hence, the resulting point-line configuration is a cactus configuration.
\end{example}

We prove an equivalent characterization of cactus configurations, which is useful to check directly if a configuration is a cactus. First we introduce a lemma and some notation.
\begin{lemma}\label{lemma: 2 cycles points in common}
    Let $M$ be a point-line configuration on $[d]$. Let $K$ and $H$ be cycles, such that $K$ and $H$ are submatroids of $M$ and $K \neq H$. Assume every line of $M$ is in at most one cycle. Then $K$ and $H$ do not intersect in more than one point.
\end{lemma}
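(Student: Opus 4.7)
I will argue by contradiction. Assume that $K$ and $H$ share two distinct points $p,q$, and aim to produce a third cycle in $M$ that shares at least one line with $K$ or $H$, violating the hypothesis.

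First I would dispose of the case where $p$ and $q$ lie on a common line. Suppose $p,q$ both lie on a line $l$ of $K$. Since $M$ is a point-line configuration, the line of $M$ through $p,q$ is unique; call it $\bar l$. Because $p,q$ also lie in the ground set of $H$, and $H$ is a cycle, each of $p,q$ lies on some line of $H$. If $p,q$ share a line of $H$, then by uniqueness that line coincides with $\bar l$, so $\bar l$ belongs to both cycles, contradicting the hypothesis. Otherwise $p$ lies on a line $m_p$ of $H$ and $q$ on a line $m_q$ of $H$ with $m_p\neq m_q$; I would then form a new cycle $C$ whose lines are $\bar l$ together with an arc of $H$ from $p$ to $q$ (through consecutive lines of $H$ sharing special points). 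To check that $C$ is a cycle in the sense of Definition~\ref{definition n-cycle}, I would verify that any extra incidence (a special or free point on the arc lying on $\bar l$, or a point of $\bar l$ lying on one of the arc lines) forces $\{p,q\}$ to lie on a common line of $H$, which was excluded. The line $\bar l$ then lies in both $K$ and $C$, a contradiction. By symmetry, $p,q$ cannot lie on a common line of $H$ either.

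Next I would handle the remaining case, where $p$ and $q$ lie on no common line of $K$ nor $H$. The points $p$ and $q$ cut the cyclic sequence of lines of $K$ into two arcs of lines; let $A$ be one of them, consisting of lines $a_1,\ldots,a_r$ with $p\in a_1$, $q\in a_r$, and consecutive lines meeting at special points of $K$. Pick similarly an arc $B$ of $H$ with lines $b_1,\ldots,b_s$. Concatenating gives a cyclic sequence $a_1,\ldots,a_r,b_s,\ldots,b_1$ in which consecutive lines meet: $a_i$ and $a_{i+1}$ at a special point of $K$, $a_r$ and $b_s$ at $q$, the $b_j$'s at special points of $H$, and $b_1$ and $a_1$ at $p$. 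I would argue that the submatroid of $M$ on the appropriate ground set (the special points together with one free point from each line) realises this as a cycle in the sense of Definition~\ref{definition n-cycle}, and then observe that the lines of $A$ belong to $K$ while the resulting cycle uses lines of $H$ too, so it is distinct from $K$; this gives a line of $K$ lying in two cycles.

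The main obstacle I expect is the verification, in the second case, that $A\cup B$ really assembles into a valid cycle: one must rule out extra incidences of the form ``a special point of $K$ on the arc $A$ also lying on a line of $B$'' (such a point would then lie in $K\cap H$ as well). To address this cleanly, I would choose $p,q$ minimally: among all pairs of shared points, pick $p,q$ consecutive in the cyclic order on $K$, so that the arc $A$ contains no other point of $K\cap H$ in its interior; and then choose $B$ as the arc of $H$ between $p$ and $q$ of minimal length. This minimality should ensure that any unexpected coincidence either reduces to the first (common-line) case or produces a still-shorter offending configuration, and iterating this shrinking is the step that will require the most care.
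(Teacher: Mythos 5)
Your proposal takes essentially the same route as the paper's: pick the two shared points $p,q$, take an arc of $K$ and an arc of $H$ joining them, note that (since every line of $M$ lies in at most one cycle) $\mathcal{L}_K\cap\mathcal{L}_H=\emptyset$ so the arcs use disjoint sets of lines, and then argue that the concatenation of the two arcs is a new cycle $C$ sharing a line with $K$, contradicting the hypothesis. The paper's published proof does this in four lines and simply \emph{asserts} that the lines of the two arcs ``form a cycle $C$.'' You are right that this step is not automatic: a special point $x_i$ on the $K$-arc could lie on a line $m_j$ of the $H$-arc (or coincide with a special point $y_j$ of the $H$-arc), in which case the concatenation has a point of degree $\geq 3$ and fails Definition~\ref{definition n-cycle}. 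Identifying this as the genuine obstruction is a real improvement in precision over the paper's terse argument, and your first case (where $p,q$ lie on a common line of $K$, hence on $\bar l$, and you observe they then cannot also lie on a common line of $H$ without forcing $\bar l\in\mathcal{L}_K\cap\mathcal{L}_H$) is carried out correctly.

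However, as you yourself flag, the crucial second half is left as a plan rather than a proof. The sentence ``this minimality should ensure that any unexpected coincidence either reduces to the first case or produces a still-shorter offending configuration, and iterating this shrinking is the step that will require the most care'' is exactly where the work lies, and it is not done. In particular: choosing $p,q$ consecutive on $K$ collapses the $K$-arc to a single line and puts you in Case~1 anyway, so that choice alone does not address the general second case; choosing $B$ of minimal length does not obviously preclude an interior special point of $B$ from lying on a line of $A$ when $A$ has length $\geq 2$; and it is not immediate that a shortcut configuration is again of the form ``two cycles sharing two points'' so that a descent can be set up. To close the gap you would want something like: among all pairs of distinct points of $S_K\cap S_H$ and all choices of $K$-arc and $H$-arc joining them, pick a pair minimizing the total number of lines used; then show that any unwanted incidence (a repeated special point, or a special point landing on a non-adjacent line) yields a strictly shorter such configuration, contradicting minimality. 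Until that descent is actually verified (checking both the ``repeated special point'' and ``special point on a non-adjacent line'' coincidences, and also that the resulting ground set makes $C$ a bona fide submatroid of $M$ that is a cycle with each line retaining $\geq 3$ points), the proposal remains a plausible plan rather than a complete proof.
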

\begin{proof}
    Assume $p,q \in [d]$ lie in both $K$ and $H$, with $p \neq q$. Let $l_1, \ldots, l_a$ be the different lines connecting $p$ and $q$ in $K$ and $m_1, \ldots, m_b$ the distinct lines connecting $p$ and $q$ in $H$. By assumption $\mathcal{L}_K \cap \mathcal{L}_H = \emptyset$, so the lines $l_1, \ldots, l_a, m_1 \ldots, m_b$ are all distinct and form a cycle $C$. $C \neq K$ since $m_1 \in \mathcal{L}_C \setminus \mathcal{L}_K$. $l_1 \in \mathcal{L}_K \cap \mathcal{L}_M$, so this is a contradiction.
\end{proof}
\begin{notation} \label{notation: components}
    For a cactus configuration $M$ on $[d]$ with component $K$, $M \setminus K$ denotes the point-line configuration $$M \setminus \{p \in [d]: p \text{ is on at least one line of } \mathcal{L}_K \text{ and } P \text{ is not on a line in } \mathcal{L}_M \setminus \mathcal{L}_K\}.$$
\end{notation}
\begin{proposition} \label{prop: cactus-like = every line in at most 1 n-cycle}
    A point-line configuration $M$ is a cactus configuration if and only if every line $l \in \mathcal{L}_M$ is in at most 1 cycle.
\end{proposition}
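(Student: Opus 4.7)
The two directions are quite different in flavor, and I would treat them separately.

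For the forward implication, assume $M$ is a cactus configuration, so there is a construction sequence $N_1, \ldots, N_k = M$ in which each $N_{i+1}$ is the free gluing of $N_i$ with a line or cycle $C_{i+1}$ at a single identified point $P_{i+1}$. I would proceed by induction on $k$, the key lemma being that every cycle of $N_{i+1}$ is either entirely contained in $N_i$ or entirely contained in $C_{i+1}$. The argument is: if a cycle of $N_{i+1}$ used lines from both, then following the cycle around, one must cross between $N_i$ and $C_{i+1}$; since the only common element is $P_{i+1}$, the cycle would revisit $P_{i+1}$, contradicting the distinctness of consecutive intersection points in Definition~\ref{definition n-cycle}. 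Granted the lemma, the induction step is immediate: a line of $N_{i+1}$ lives in $N_i$ or in $C_{i+1}$, and in each case the cycles containing it all lie in the same piece, so the bound of at most one follows from the inductive hypothesis (applied to $N_i$ or to the simple structure of a single line/cycle $C_{i+1}$).

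For the backward implication, assume every line of $M$ lies in at most one cycle; by Lemma~\ref{lemma: 2 cycles points in common} any two cycles share at most one point. I may reduce to the connected case by applying the statement componentwise. Define a \emph{block} of $M$ to be either a cycle of $M$ or a line lying in no cycle, and a \emph{cut point} to be a point incident to lines of two distinct blocks. I would induct on the number of blocks, noting that with one block there is nothing to prove. For the inductive step the goal is to locate a \emph{leaf block}, i.e., a block $B$ meeting the rest of $M$ at exactly one cut point $p$; then $M = (M \setminus B) \amalg_{p, p} B$ in the sense of Definition~\ref{def: free gluing} and Notation~\ref{notation: components}, and the inductive hypothesis applied to $M \setminus B$ (whose lines still lie in at most one cycle each, since no new cycles are created by removing $B$) finishes the proof.

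The main obstacle is showing that a leaf block exists. I would form the bipartite block-cut graph $T(M)$ whose vertices are the blocks and the cut points of $M$, with an edge joining a cut point $p$ to a block $B$ whenever $p$ lies on some line of $B$. Connectedness of $M$ forces connectedness of $T(M)$, and I claim $T(M)$ is a tree. If $T(M)$ contained a cycle through distinct blocks $B_1, \ldots, B_r$ and distinct cut points $p_1, \ldots, p_r$ with $p_i \in B_i \cap B_{i+1}$, then within each $B_i$ I could choose a minimal path of lines from $p_{i-1}$ to $p_i$ (trivial if $B_i$ is a line, otherwise going around the cycle $B_i$). Concatenating these paths produces a closed walk in $M$ from which one extracts a cycle of $M$ using lines from at least two different blocks, contradicting the assumption that each line is in at most one cycle of $M$. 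Since $T(M)$ is a finite tree with at least two vertices, it has a leaf; a leaf vertex corresponding to a block is the desired $B$ (and a leaf corresponding to a cut point is impossible as such a point by definition sits in at least two blocks, forcing degree $\geq 2$ in $T(M)$).
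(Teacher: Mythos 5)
Your proposal is correct and follows essentially the same strategy as the paper: forward by induction on the gluing sequence, backward by defining blocks/components, locating a leaf block, and peeling it off as a free gluing, with a cycle-based contradiction to rule out the alternative. Two small improvements worth noting over the paper's version: you make explicit the key lemma that a cycle of $N_{i+1}$ cannot straddle the gluing point (the paper uses this implicitly when it asserts "so $l$ lies in at most 1 cycle by the induction hypothesis"), and you package the paper's ad hoc sequence-of-components contradiction into the cleaner and more standard claim that the block-cut graph $T(M)$ is a tree.
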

\begin{proof}
    First assume $M$ is a connected cactus configuration. Then we can use induction on the number of lines and cycles $n$ which are freely glued.
    \newline
    \newline
    {\bf Basis step}: If $n=0$, then $M$ is a cycle or a line, so clearly every line is in at most 1 cycle.
    \newline
    \newline
    {\bf Induction step}: Assume that the statement holds for point-line configurations with at most $n$ lines and cycles, let $M$ be a cactus configuration with $n+1$ lines and cycles. Then $M = \gluing{M'}{K}{p_1}{p_2}$, where $M$ consists of $n$ lines and cycles and $K$ is a cycle or a line. Let $l \in \mathcal{L}_M$ arbitrary. Since $M$ is the free gluing of $M'$ and $K$, $l \in \mathcal{L}_{M'} \setminus \mathcal{L}_K$ or $l \in \mathcal{L}_K \setminus \mathcal{L}_{M'}$, so $l$ lies in at most 1 cycle by the induction hypothesis.
    Since the implication holds for connected cactus configurations, it is clear that it holds for cactus configurations.
    \newline
    \newline
    To prove the other direction, assume that every line in a point-line configuration $M$ is in at most one cycle. We can define components on $M$, such that every line $l \in \mathcal{L}_M$ is in a unique component:
    \begin{itemize}
        \item Choose $l \in \mathcal{L}_M$.
        \item If $l$ is in a cycle, define that cycle to be a component. Note that by assumption $l$ is in at most one cycle.
        \item If $l$ is not in a cycle, then let the component be $l$ itself.
    \end{itemize}
    We use induction on the number of components $d$.
    \newline
    \newline
    {\bf Basis step:} If there is no component, the statement holds trivially.
    \newline
    \newline
    {\bf Induction step:} Assume the theorem holds for configurations with $d$ components. Let $M$ be a configuration with $d+1$ components. If we assume that there is a component $K$ such that it intersects $M\setminus K$ in at most one point, then we can use the inductive hypothesis on $M \setminus K$ to conclude that it is a cactus. $M$ is the free gluing or union of $M \setminus K$ and $K$, so this shows that $M$ is a cactus. 
    \newline
    \newline
    We end the proof by showing that there is indeed a component $K$ that intersects $M\setminus {K}$ in at most one point. Assume not, then consider an arbitrary $K_1$. We can find another $K_2$ such that $K_1$ and $K_2$ have a point $p_1$ in common. We can find by assumption another $K_3$, such that $K_3$ and $K_2$ intersect in $p_2$ such that $p_1 \neq p_2$. Repeating this procedure, we can find a sequence $p_1 p_2 \ldots$, such that $p_l \neq p_{l+1}$ and $p_l, p_{l+1} \in K_l$, $K_l \neq K_{l+1}$. Moreover, if at the $l+1^{\textup{th}}$ step, there exist a component $K_j$ for $j < l$ which intersects $K_{l}$ in a point different from $p_{l}$, then define $K_{l+1}$ to be that $K_j$ and $p_{l+1}$ to be the intersection point.
    
    Since there are only finitely many components, there is a component which recurs. By taking the smallest such sequence, we can assume without loss of generality that $K_1, \ldots, K_{j+1}$, with $K_1 = K_{j+1}$ and corresponding points $p_1, \ldots, p_{j}$ is a sequence such that $p_i, p_{i+1} \in K_i$, $p_i \neq p_{i+1}$.
    Moreover, due to its construction, two non-consecutive points are not in a common component, except for $p_1$ and $p_j$. 
    \newline
    \newline
    Define for every $k \in \{1, \ldots, j-1\}$ the set of lines $$A_k = \{l :l \text{ is on the path in } K_k \text{ between }p_k \text{ and } p_{k+1}\},$$
    and define 
    $$A_{j} = \{l :l \text{ is on the path in } K_1 \text{ between }p_1 \text{ and } p_{j+1}\}.$$ Let $$A= \cup_{k=1}^j A_k.$$
    Moreover, define the set of points $$B_k = \{p:p \text{ is on a line in } A\}.$$
    
    The points $B$ and lines $A$ form a cycle $D$. 
    One can observe that all the components $K_1, \ldots, K_j$ are cycles: if $K_l$, $l \in \{i, \ldots,j\}$ is a line, then by construction of the components and $D$, its component should be the cycle $D$, which is a contradiction. 
    \newline
    \newline
    By Lemma \ref{lemma: 2 cycles points in common}, $j > 2$. Since $K_2 \neq K_3$, we can assume without loss of generality that $K_2 \neq D$. Since $p_1 \neq p_2$ and $p_1,p_2 \in K_1 \cap D$, this is a contradiction with Lemma \ref{lemma: 2 cycles points in common}.

    \end{proof}

\begin{example}
    The point-line configuration corresponding to the $3 \times 3$ grid in Figure \ref{fig:preliminaries grid, fano, cactus} is not a cactus, since the line $\{2,5,8\}$ forms a cycle together with the lines $\{1,2,3\}, \{1,4,7\}$ and $\{7,8,9\}$, and another cycle with the lines  $\{1,2,3\}, \{3,6,9\}$ and $\{7,8,9\}$.
\end{example}
There is a close connection between our definition of cactus configurations and the concept of cactus graphs in graph theory, a well-studied family of graphs in the literature, see \cite{Husimi1950-ph}. This link is further outlined below. We first introduce the definition of cactus graphs.

\begin{definition}
A connected graph in which any two simple cycles share at most one vertex is called a {\em cactus}. One can equivalently define it as a connected graph where each edge belongs to at most one simple cycle.
\end{definition}

We recall the following well-known property for {\em cactus graphs}.

\begin{lemma}[Theorem 5 in \cite{Janczewski2016}]\label{property}
Every cactus $G$ contains a subgraph $H$, which is either an edge or a cycle, such that at most one vertex of $H$ is adjacent to a vertex in $G\backslash H$.
\end{lemma}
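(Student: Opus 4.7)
The plan is to use the block decomposition of a connected graph. Recall that a \emph{block} of a connected graph $G$ is a maximal connected subgraph with no cut vertex, and that every edge of $G$ lies in exactly one block. The first step is to observe that, because in a cactus every edge lies in at most one simple cycle, each block of $G$ is either a single edge (a bridge) or a simple cycle. Indeed, a 2-connected graph on at least three vertices that is not a cycle contains two distinct simple cycles sharing an edge, which would violate the cactus property. So the blocks of $G$ are precisely edges and simple cycles.

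Next I would invoke the classical fact that the block-cut tree $T(G)$, whose vertices are the blocks and cut vertices of $G$ and whose edges record incidence, is a tree. If $G$ has only one block, then $G$ itself is an edge or a cycle and we may simply take $H = G$, since then $G\setminus H$ is empty and the condition is vacuous. Otherwise $T(G)$ has at least two block-vertices, so it has at least two leaves of block type; such a leaf corresponds to a block $H$ that contains exactly one cut vertex $v$ of $G$.

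The key geometric consequence to verify is that for such a leaf block $H$, the only vertex of $H$ having a neighbor in $G\setminus H$ is $v$. This is because any other vertex $u\in V(H)\setminus\{v\}$ is an internal vertex of $H$: every edge of $G$ incident to $u$ would lie in a block containing $u$, and since $u$ is not a cut vertex, only the block $H$ can contain $u$. Hence all edges at $u$ lie inside $H$, and $u$ has no neighbor outside $H$. This establishes the required property, with $H$ either an edge or a cycle by the first step.

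The only real obstacle is making the passage from leaf block to the statement precise in the two edge cases: when the leaf block is a single edge $\{u,v\}$ with $v$ the cut vertex, the vertex $u$ may or may not have other neighbors, and one must check it does not (which again follows from $u$ not being a cut vertex and the edge $uv$ being its unique incident block). Once this is handled, the argument is complete. Since the statement is cited from \cite{Janczewski2016} and used as a black box in the rest of the thesis, I would likely just sketch the block-tree argument above rather than write out the standard proofs of the facts about block-cut trees.
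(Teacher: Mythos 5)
The paper does not contain a proof of this statement: it is recorded as a citation to Theorem~5 of the referenced work on cactus graphs and is used as a black box in the proof of the subsequent lemma. So there is no internal argument to compare against; the right question is simply whether your sketch is sound, and it is. The block decomposition is the standard route: in a cactus every block is an edge or a simple cycle (a $2$-connected graph that is not a cycle has an ear decomposition beginning with a theta-subgraph, hence two cycles sharing an edge, contradicting the cactus property), the block-cut tree of a connected graph with at least two blocks has a leaf that is necessarily a block-vertex, and every non-cut vertex of that leaf block lies in no other block, so all its incident edges stay inside $H$. You correctly flag and resolve the two boundary cases (a single-block $G$, where $G\setminus H$ is empty, and a bridge leaf block, where the non-cut endpoint again has no neighbors outside $H$). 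One minor point worth saying explicitly if you wrote this out: the paper's definition makes cacti connected, and the place where the lemma is invoked (the inductive step of Lemma~\ref{lemma e}) already disposes of the one-vertex, edgeless graph separately, so the ``$H$ is an edge or a cycle'' conclusion never has to cope with that degenerate case.
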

The following lemma provides the connection between cactus graphs and cactus configurations.

\begin{lemma} \label{lemma e}
We can associate a simple graph $G(M)$ to each point-line configuration $M$ in the following way:
\begin{itemize}
\item The vertices of $G(M)$ correspond to the points in $M$ of degree at least two.
\item Two vertices of $G(M)$ are joined by an edge if and only if there is a line in $\mathcal{L}_M$ containing the corresponding points in $M$. 
\end{itemize}
Then $M$ is a cactus configuration if and only if $G(M)$ is a cactus graph.
\end{lemma}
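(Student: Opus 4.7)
The strategy is to translate the cactus condition on $M$ (every line lies in at most one point-line cycle, by Proposition~\ref{prop: cactus-like = every line in at most 1 n-cycle}) into the characterization of cactus graphs (every edge lies in at most one simple cycle). First, I would set up a correspondence between point-line cycles of $M$ in the sense of Definition~\ref{definition n-cycle} and simple cycles of $G(M)$. The distinguished points $p_1, \ldots, p_n$ of a point-line $n$-cycle all have degree at least two in $M$, and consecutive $p_i, p_{i+1}$ share a line of $M$, so $p_1 - p_2 - \cdots - p_n - p_1$ is a simple cycle in $G(M)$. Conversely, any simple cycle $v_1 - \cdots - v_m - v_1$ in $G(M)$ yields a sequence of lines $l_1, \ldots, l_m$ of $M$ with $v_i, v_{i+1} \in l_i$, and after removing duplicated lines and identifying the resulting coincidences, one recovers a point-line cycle of $M$.

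For the forward direction, assume $M$ is a cactus configuration and suppose toward contradiction that two distinct simple cycles of $G(M)$ share an edge $\{p,q\}$. Pulling back through the correspondence, we obtain two distinct point-line cycles of $M$ both containing the line of $M$ through $p$ and $q$, contradicting Proposition~\ref{prop: cactus-like = every line in at most 1 n-cycle}. The connectedness requirement in the definition of a cactus graph I would handle component-wise: since by Definition~\ref{cactus} every cactus configuration is a disjoint union of connected cactus configurations, this decomposition passes to $G(M)$, and it suffices to verify that each connected component of $G(M)$ is a cactus.

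For the reverse direction, I would apply the correspondence in the opposite direction: if two distinct cycles of $M$ shared a line $l$, then choosing distinguished points $p,q \in l$ that belong to both cycles, we would obtain two distinct simple cycles of $G(M)$ through the edge $\{p,q\}$, contradicting the cactus property of $G(M)$. Proposition~\ref{prop: cactus-like = every line in at most 1 n-cycle} then yields that $M$ is a cactus configuration. A second, inductive approach is available via Lemma~\ref{property}: peel off a leaf edge or leaf cycle of $G(M)$, identify the corresponding leaf line or leaf cycle of $M$, and realize $M$ as the free gluing of the remainder with this leaf piece, thereby matching Definition~\ref{cactus} by induction on the number of atoms.

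The main obstacle I anticipate is handling lines of $M$ that carry three or more points of degree at least two. Such a line produces a clique of size at least three in $G(M)$, and the extra triangles inside this clique are not images of point-line cycles of $M$, so the naive bijection above must be refined. The cleanest way around this is to argue separately that in a cactus configuration these clique triangles share no edge with any other simple cycle of $G(M)$, so their presence does not violate the cactus graph condition; a careful case analysis along these lines is where I expect the bulk of the technical work, and this may justify an implicit reduction to the case where every line carries at most two points of degree greater than one.
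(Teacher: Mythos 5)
Your approach is genuinely different from the paper's. The paper proves ``$G(M)$ cactus $\Rightarrow$ $M$ cactus'' by induction on the number of vertices of $G(M)$, using Lemma~\ref{property} to peel off a leaf edge or leaf cycle, and proves ``$M$ cactus $\Rightarrow$ $G(M)$ cactus'' by induction on the number of gluings in the construction of $M$. You instead try to build a direct dictionary between point-line cycles of $M$ and simple cycles of $G(M)$, and then invoke Proposition~\ref{prop: cactus-like = every line in at most 1 n-cycle} together with the shared-edge characterization of cactus graphs. You are right to flag the clique obstacle as the crux, and that awareness is the most valuable part of your proposal.

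The workaround you sketch, however---that the triangles inside a clique of $G(M)$ share no edge with any other simple cycle---does not hold, and the obstacle is more than a technical wrinkle. Take a single line $l$ carrying four points $a,b,c,d$ of degree at least two, each also incident to a private second line containing no further high-degree point. This $M$ is a cactus configuration (it is built by freely gluing four three-point lines to $l$), yet $G(M)$ restricted to $\{a,b,c,d\}$ is $K_4$, which is not a cactus graph: the triangles $abc$ and $abd$ already share the edge $\{a,b\}$. So the triangles coming from a single clique share edges with \emph{each other}, and your dictionary between point-line cycles and simple graph cycles breaks irreparably for such $M$. Your closing remark---that the lemma should implicitly assume each line carries at most two points of degree at least two---is probably the right reading, but it is a genuine hypothesis that must be stated, not silently assumed; without it the cycle correspondence is not a bijection and the argument collapses. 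The paper's inductive route is also delicate at exactly this point: when a line is freely glued to $M'$ at a point $p$, the degree of $p$ increases, so $p$ enters $G(M)$ as a \emph{new} vertex joined to every other high-degree point on each line through $p$, and that growth is not captured by describing $G(M)$ as two subgraphs of the pieces sharing one vertex. So either route needs the degree restriction made explicit before the induction can close.
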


\begin{proof}
We may assume that $M$ is connected, since the argument applies to each connected component separately. 

We first suppose that $G(M)$ is a cactus. We use induction on the number of vertices of $G(M)$ to prove that $M$ is a cactus configuration.

\medskip
{\bf Base case}: If $G(M)$ has only one vertex, then the statement is trivial.

\medskip
{\bf Inductive step:} There is a subgraph $H$, which is either an edge or a cycle, such that at most one of its vertices is adjacent to a vertex in $G(M)\backslash H$, by Lemma~\ref{property}. Denote this vertex by $x$. We then obtain that $M$ is of the form
\[M=M\rq \amalg_{p,q} N,\]
where \[G(M')=H, \qquad  \text{and} \qquad G(N)=(G(M)\setminus H)\cup \{x\},\] 
and the gluing is performed at the point corresponding to $\{x\}$. $M'$ is a line or cycle configuration, because $H$ is an edge or a cycle in $G(M)$. Moreover, since $(G(M)\setminus H)\cup \{x\}$ is a cactus graph, it follows from the inductive hypothesis that $N$ is a cactus configuration. We thus conclude that $M$ is a cactus configuration.

\medskip

Conversely, suppose that $M$ is a cactus configuration. By Definition~\ref{cactus}, $M$ is constructed through a sequence of gluings of lines or cycles. We prove the statement by induction on the number $n$ of such gluings.

\medskip
{\bf Base case}: If $n = 0$, then $M$ consists of a single line or cycle, in which case the statement is clear.

\medskip
{\bf Inductive step}: $M$ is of the form
\[M=M' \amalg_{p,q} N,\]
where $M'$ is a line or a cycle and $N$ is a cactus configuration. Passing to the associated graphs, we see that $G(M')$ and $G(N)$ are subgraphs of $G(M)$ sharing a unique vertex $x$ which corresponds to the point of gluing. Moreover, we can deduce by the inductive hypothesis that $G(M')$ is an edge or a cycle of $G(M)$ and $G(N)$ is a cactus graph. Moreover, because of the gluing condition, there is no edge in $G(M)$ connecting a vertex in  $G(M')\setminus \{x\}$ to a vertex in $G(N)\setminus \{x\}$. Hence, $G(M)$ is also a cactus graph. This completes the proof.
\end{proof}

We will now establish that any cactus configuration is nilpotent.

\begin{proposition} \label{thm: cactus-like config is nilpotent}
Every cactus configuration is nilpotent.
\end{proposition}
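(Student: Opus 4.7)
The plan is to prove the proposition by induction on the number $d$ of points of $M$. The base case $d=0$ is the empty configuration, which is nilpotent by definition. For the inductive step, assume every cactus configuration with fewer than $d$ points is nilpotent, let $M$ be a cactus on $[d]$, and consider $M_{1}:=M|S_{M}$. It suffices to establish the two facts
\textup{(a)} $M_{1}$ is again a cactus configuration, and
\textup{(b)} $|S_{M}|<d$;
for then the inductive hypothesis applied to $M_{1}$ gives that $M_{1}$ is nilpotent, and hence so is $M$.

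For \textup{(a)} I would use the graph-theoretic characterization of cactus configurations from Lemma~\ref{lemma e} and argue that $G(M_{1})$ is a subgraph of $G(M)$. Every vertex of $G(M_{1})$ is a point of $M_{1}$ of degree at least two in $M_{1}$, hence in particular a point of $S_{M}=V(G(M))$. Every edge $\{u,v\}$ of $G(M_{1})$ reflects $u$ and $v$ lying on a common line of $M_{1}$; since every line of $M_{1}$ is the restriction of a line of $M$ to $S_{M}$, the points $u,v$ lie on a common line of $M$ as well, so $\{u,v\}$ is an edge of $G(M)$. Because the property that every edge is contained in at most one simple cycle is clearly inherited by subgraphs, each connected component of $G(M_{1})$ is a cactus graph, and Lemma~\ref{lemma e} applied to each connected component of $M_{1}$ yields that $M_{1}$ is a cactus configuration.

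For \textup{(b)} I would use the inductive construction from Definition~\ref{cactus}. Passing to any connected component of $M$ and writing it as a sequence of free gluings $N_{1},\ldots,N_{k}$, focus on the last block $L$, which is a single line or a single cycle. Because $L$ is glued at a single point $p$ and no further gluing is performed on it afterwards, every point of $L$ distinct from $p$ retains in $M$ exactly the degree it had in $L$ alone. If $L$ is a line, its two points other than $p$ are of degree $1$; if $L$ is a cycle, every midpoint of every line of $L$ is of degree $1$ in $M$, and at least one such midpoint exists since each line of a cycle contains at least three points. In either case $M$ contains a point of degree at most one, which lies outside $S_{M}$, so $|S_{M}|<d$.

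The main obstacle is \textup{(a)}: the restriction $M \mapsto M|S_{M}$ simultaneously deletes all degree-one points and so may destroy many lines of $M$ and turn former cycles into sets of isolated points; checking the cactus structure directly on the configuration level would require a tedious case analysis on which lines survive. The graph-theoretic translation via Lemma~\ref{lemma e} dissolves this difficulty, since it reduces the step to the elementary observation that cactus graphs are closed under taking subgraphs.
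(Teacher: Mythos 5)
Your argument takes a genuinely different route from the paper's: the paper inducts on the number of free-gluing steps in Definition~\ref{cactus} and analyzes the degree structure of the last glued line or cycle directly (showing that $S_M$, or some further restriction, falls inside the penultimate configuration $M'$), whereas you induct on the number of points and reduce to showing that $M_1 = M|S_M$ is itself a smaller cactus, relying on the graph characterization of Lemma~\ref{lemma e}.

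There is, however, a gap in step (a). The ``only if'' direction of Lemma~\ref{lemma e}, which you use to conclude that $G(M)$ is a cactus graph, does not hold for the graph as defined there: since any two degree-$\geq 2$ points lying on a common line are joined by an edge, a line carrying $k$ points of $S_M$ contributes a clique $K_k$ to $G(M)$, and $K_k$ is not a cactus graph once $k\geq 4$. Concretely, let $M$ be a single line $\{a,b,c,d,e\}$ with one pendant line attached at each of $a,b,c,d$; this is a cactus (indeed forest) configuration, yet $G(M)\cong K_4$, in which the edge $\{a,b\}$ lies in both triangles $abc$ and $abd$. So the premise ``$G(M)$ is a cactus graph'' can fail, and with it the whole subgraph argument for (a). Replacing $G(M)$ by the graph that only joins \emph{consecutive} degree-$\geq 2$ points on each line does not rescue the argument either, because then the inclusion $G(M_1)\subseteq G(M)$ can fail: two points of $S_M$ that are non-adjacent in $G(M)$ become adjacent in $G(M_1)$ once the intervening point drops out of $S_{M_1}$. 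You would need a direct argument that restriction to $S_M$ preserves the characterization of Proposition~\ref{prop: cactus-like = every line in at most 1 n-cycle}. Separately, you should also handle the case where $M|S_M$ has no lines at all (which happens, e.g., when $M$ is two cycles glued at a single point), since Definition~\ref{cactus} does not obviously declare a configuration of isolated points a cactus; this needs to be added as a trivial extra base case.
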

\begin{proof}
Consider a cactus configuration $M$. We will assume that $M$ is connected, as the argument applies separately to each connected component of a cactus configuration. By Definition~\ref{cactus}, $M$ is constructed by successively gluing lines or cycles. We establish the result by induction on the number $n$ of such gluings.

\medskip
{\bf Basis step}: If $n = 0$, then $M$ consists of a single line or cycle, in which case the statement is clear. 

\medskip
{\bf Induction step}: Assume that any cactus configuration constructed from at most $n-1$ gluings is nilpotent, and consider a configuration $M$ obtained through $n$ gluings. At the $n^{\text{th}}$-step, $M$ takes the form
\[M=M\rq \amalg_{p,q} N,\]
where $M\rq$ is constructed by $n-1$ successively gluings of lines and cycles, and $N$ is either a line or a cycle. Denote the ground sets of $M\rq$ and $N$ by $[d_{1}]$ and $[d_{2}]$, respectively. The ground set of $M\rq \amalg_{p,q} N$ is then given by
\[([d_{1}]\setminus p)\amalg ([d_{2}]\setminus q)\cup \{P\},\]
where $P$ is the point that identifies both $p$ and $q$.

\medskip
{\bf Case~1.} First assume that $N$ is a line, or equivalently $N=U_{2,d_{2}}$. Then the points in $[d_{1}]\setminus p$ have degree one in $M\rq \amalg_{p,q} N$, which implies that $S_{M}\subset M\rq$ as a submatroid. It follows by induction that $M\rq$ is nilpotent, and since any submatroid of a nilpotent matroid is also nilpotent, we deduce that $S_{M}$ is nilpotent as well. Consequently, the nilpotent chain of $S_{M}$, as defined in Definition~\ref{def: solvable and nilpotent}, eventually reaches the empty set, and the same must hold for $M$.

\medskip
{\bf Case~2.}
Now assume that $N$ is a cycle. If $q$ has degree two in $N$, then for every line $l$ of $N$, we have $\size{S_{M}\cap l}=2$. Since a line should have at least three points, $S_{M}\cap [d_{1}]$ contains no lines. This implies that $S_{S_{M}}\cap [d_{1}]\subset \{P\}$, and thus $S_{S_{M}}$ is a submatroid of $M\rq$. We can conclude that $M$ is nilpotent by applying the same argument as in the previous case. 

If $q$ has degree one in $N$, then there is a line $l$ of $N$ for which $\size{S_{M} \cap l} = 3$, and for all other lines $l \in \mathcal{L}_N$, $\size{S_{M} \cap l} = 2$. Then for each line $l$ of $N$, $\size{S_{S_M} \cap l} \leq 2$. Thus $S_{S_M} \cap \corch{d_1}$ contains no lines, and using notation as in Definition~\ref{def: solvable and nilpotent}, $M_3 \cap [d_1] \subset \{P\}.$ The result again follows by applying the same argument as in the previous case. This completes the proof.
\end{proof}

We now prove the irreducibility of the matroid varieties associated with cactus configurations. 

\begin{theorem}\label{cact irr}
Let $M$ be a cactus configuration. Then $M$ is realizable and the variety $V_{M}$ is irreducible.
\end{theorem}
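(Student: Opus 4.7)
The plan is to reduce the statement directly to previously established results. By Proposition~\ref{thm: cactus-like config is nilpotent}, which was just proven immediately above, every cactus configuration $M$ is nilpotent. Then I invoke Theorem~\ref{nil coincide}~(iii), which asserts exactly that any nilpotent point-line configuration is realizable and has irreducible matroid variety. Chaining these two implications yields the theorem in a single line.

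Concretely, the proof would read: \emph{Let $M$ be a cactus configuration. By Proposition~\ref{thm: cactus-like config is nilpotent}, $M$ is nilpotent. The conclusion then follows from Theorem~\ref{nil coincide}~(iii).} There are no cases to analyze and no genuine obstacle, since both of the cited inputs have already been justified in the preceding material. In particular, the inductive gluing structure of cactus configurations (lines and cycles) was used upstream to produce the nilpotency statement, so here we merely need to invoke it.

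The only thing one might want to double-check while writing the proof is that the hypothesis \enquote{point-line configuration} required by Theorem~\ref{nil coincide} is satisfied: this is immediate from Definition~\ref{cactus}, which defines cactus configurations as a subclass of point-line configurations. If the authors wish to be slightly more explicit, they could note that the result extends to disconnected cactus configurations component-wise, since realizability and irreducibility of $V_M$ are preserved under taking products of matroid varieties of the connected components. But no new argument is needed beyond the two cited results, so the hard part, the nilpotency statement, has already been handled in Proposition~\ref{thm: cactus-like config is nilpotent}.
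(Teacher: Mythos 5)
Your proposal is correct and takes essentially the same route as the paper: nilpotency via Proposition~\ref{thm: cactus-like config is nilpotent}, then Theorem~\ref{nil coincide}. If anything your version is slightly cleaner, since you cite part~(iii) directly, whereas the paper also detours through solvability (via Example~\ref{example nil-> sol}) even though (iii) alone already yields both realizability and irreducibility.
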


\begin{proof}
Since $M$ is nilpotent by Proposition~\ref{thm: cactus-like config is nilpotent}, $M$ is solvable as well by Example \ref{example nil-> sol}. The result is then implied by Theorem~\ref{nil coincide}.
\end{proof}

\section{Matroid ideal of cactus configurations}

In this section, we establish a complete set of defining equations for the matroid varieties associated to cactus configurations, or equivalently, a generating set for their matroid ideals up to radical. We first establish a series of lemmas, which are comparable to Lemma~4.23 in \cite{liwski2024solvablenilpotentmatroidsrealizability}. Especially Lemma \ref{Generalization of Lemma 4.21 i} and the first part of Lemma \ref{Generalization of Lemma 4.21 iii} are very similar.

\begin{lemma}\label{Generalization of Lemma 4.21 i}
Let $M$ be point-line configuration on $[d]$ and let $X \subset [d]$ be a subset of points that does not contain a cycle. Then, there exists a point $x \in X$ such that 
\[|\{l \in \mathcal{L}_x: l \cap (X \setminus \{x\}) \neq \emptyset\}| \leq 1.\]
\end{lemma}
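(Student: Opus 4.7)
My plan is to recast the statement graph-theoretically and then apply the standard fact that every non-empty finite forest has a vertex of low degree. First I would associate to $X$ the bipartite incidence graph $G$ whose vertices are (i) the points of $X$ and (ii) the lines $l\in\mathcal{L}_M$ satisfying $|l\cap X|\ge 2$, and which has an edge between a point $p\in X$ and a line $l$ exactly when $p\in l$. By construction, every line-vertex of $G$ has degree at least two. I would then argue that a shortest cycle of $G$, in the graph-theoretic sense, produces $n$ distinct points $p_1,\dots,p_n\in X$ and $n$ distinct lines $l_1,\dots,l_n\in\mathcal{L}_M$ with $p_i\in l_i\cap l_{i+1}$ cyclically, which is exactly a cycle of the point-line configuration restricted to $\{p_1,\dots,p_n\}\subseteq X$ in the sense of Definition~\ref{definition n-cycle}. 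Hence the hypothesis that $X$ contains no cycle forces $G$ to be a forest.

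Since $X$ is non-empty, $G$ has at least one vertex, so some vertex of $G$ has degree at most one. Because every line-vertex has degree at least two, such a vertex must be a point-vertex $x\in X$. I would then verify that this $x$ is the point claimed by the lemma. If $x$ is isolated in $G$, then every line of $\mathcal{L}_x$ satisfies $|l\cap X|\le 1$, so no line through $x$ meets $X\setminus\{x\}$ and the conclusion holds trivially. If $x$ is a leaf of $G$ adjacent to a unique line-vertex $l$, then any other line $l'\in \mathcal{L}_x$ is absent from $G$, which forces $|l'\cap X|=1$ and therefore $l'\cap (X\setminus\{x\})=\emptyset$; thus $l$ is the unique line in $\mathcal{L}_x$ meeting $X\setminus\{x\}$.

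The one delicate point is the equivalence between graph-theoretic cycles in $G$ and cycles of the configuration in the sense of Definition~\ref{definition n-cycle}; this is handled by passing to a shortest cycle of $G$, which automatically guarantees the distinctness of both the points $p_i$ and the lines $l_i$ involved. Everything else is the familiar leaf-existence argument for finite forests, so no serious obstacle is expected.
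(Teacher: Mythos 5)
Your proof is correct, and it packages the same underlying contrapositive as the paper's --- no low-degree point in $X$ forces a cycle --- but via a cleaner route. The paper builds an explicit alternating walk $x_1, l_1, x_2, l_2, \ldots$ (possible exactly when every point of $X$ has at least two lines meeting $X\setminus\{x\}$), notes that finiteness forces a repeated point, and concludes ``forming a cycle''; that last step is left informal, since a repeated point only gives a closed walk from which one must still extract a cycle with distinct points and distinct lines. Your bipartite incidence graph $G$, together with the standard fact that a non-empty finite forest has a vertex of degree at most one, handles that extraction automatically and lets you read off the desired leaf or isolated point $x$ directly, so your route is tighter. One small imprecision in your write-up: a shortest cycle of $G$ does give distinct $p_1,\dots,p_n\in X$ and distinct lines $l_1,\dots,l_n$ with $p_i\in l_i\cap l_{i+1}$, but this is not literally a cycle of $M|\{p_1,\dots,p_n\}$ in the sense of Definition~\ref{definition n-cycle}, since those $l_i$ need not survive as lines of the restriction (a line needs at least three points in the ground set). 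The paper is equally informal about what ``$X$ contains a cycle'' means; the intended reading is precisely the alternating cyclic sequence of distinct points of $X$ and distinct lines of $M$ that your $G$-cycle produces, so nothing actually breaks, but it is better to state that interpretation explicitly than to invoke Definition~\ref{definition n-cycle} for the restricted matroid.
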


\begin{proof}
We prove this lemma by contradiction. Choose an arbitrary point $x_{1} \in X$. We have assumed that there is a line $l_{1} \in \mathcal{L}_{x_{1}}$ containing another point $x_{2} \neq x_{1} \in X$. Invoking the hypothesis again, we can find a distinct line $l_{2} \neq l_{1}$ that contains $x_{2}$ and another point $x_{3} \neq x_{2}$. Iterating this procedure, we construct a sequence of points $x_{1}, x_{2}, \ldots$ and a sequence of lines $l_{1}, l_{2}, \ldots$ such that $x_{k} \neq x_{k+1}$, $l_{k} \neq l_{k+1}$, and $x_{k}, x_{k+1} \in l_{k}$. Since $X$ is finite, some point must appear multiple times in the sequence, forming a cycle. This is not possible, since we assumed that $X$ does not contain a cycle. From this contradiction, we conclude that the statement should hold.
\end{proof}

Recall from Definition~\ref{def: solvable and nilpotent} that $Q_{M}$ denotes the set of points in $M$ with degree at least three.

\begin{lemma}\label{Generalization of Lemma 4.21 ii}
Let $M$ be a cactus configuration, and let $\gamma \in V_{\mathcal{C}(M)}$ such that $\gamma_p \neq 0$ for all $p \in Q_M$. Then, we can perturb $\gamma$ to obtain $\tau \in \Gamma_M$.
\end{lemma}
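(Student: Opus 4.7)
The plan is to induct on the number of pieces in the gluing construction of $M$ from Definition~\ref{cactus}. For the base case, $M$ is a single line or a single cycle, hence nilpotent by Proposition~\ref{thm: cactus-like config is nilpotent} with every point of degree at most two. Theorem~\ref{nil coincide}(i) then yields $V_{\mathcal{C}(M)} = V_M = \overline{\Gamma_M}$, and since the Zariski closure of a constructible set equals its Euclidean closure, $\gamma$ lies in the Euclidean closure of $\Gamma_M$, so the required perturbation exists. The hypothesis on $Q_M$ is vacuous here, because $Q_M = \emptyset$ for any single line or single cycle.

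For the inductive step, write $M = \gluing{M'}{N}{p}{q}$ with $N$ a line or a cycle and $M'$ a cactus built from strictly fewer pieces. Restricting $\gamma$ to $M'$ gives a collection in $V_{\mathcal{C}(M')}$, and the nonvanishing condition is preserved: any point of degree at least three in $M'$ other than $p$ has the same degree in $M$, while if $p \in Q_{M'}$ then $P \in Q_M$, because the free gluing can only increase the degree of the identified point. The inductive hypothesis therefore furnishes a perturbation $\tau'$ of $\gamma|_{M'}$ lying in $\Gamma_{M'}$, and in particular $\tau'_P$ is close to $\gamma_P$ and nonzero, since $M'$ is a simple matroid.

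To finish, I would extend $\tau'$ to the points of $N \setminus \{P\}$. Applying the base case to $\gamma|_N \in V_{\mathcal{C}(N)}$ produces a perturbation $\widetilde{\tau}$ of $\gamma|_N$ in $\Gamma_N$, and then one chooses a small invertible linear transformation $A$ with $A\widetilde{\tau}_q = \tau'_P$; since $\Gamma_N$ is invariant under invertible linear maps, $A\widetilde{\tau}$ is still a realization of $N$ and remains close to $\gamma|_N$. Combining $\tau'$ on $M'$ with $A\widetilde{\tau}$ on $N$ along $P$ produces a candidate $\tau$ satisfying every circuit of $M$; because the gluing is free, no dependencies are imposed between $M' \setminus \{P\}$ and $N \setminus \{P\}$, so any accidental collinearity arising in the combination lies in a proper closed subvariety which a further generic perturbation of the points of $N \setminus \{P\}$, none of which belong to $Q_M$, can avoid without violating the realizations on either side.

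The main obstacle I anticipate is the compatibility of the two partial realizations in the degenerate case $\gamma_P = 0$. In that situation, both $\tau'_P$ and $\widetilde{\tau}_q$ are small nonzero vectors close to the origin but their directions need not be close, so aligning them by a transformation $A$ close to the identity is delicate. A cleaner route in that case is to construct the realization of $N$ directly with $\tau_q$ prescribed to equal $\tau'_P$, exploiting the continuity and non-emptiness of the fibres of the evaluation map $\Gamma_N \to \mathbb{C}^3$ at $q$; this freedom is guaranteed because the matroid variety of a line or cycle is irreducible by Theorem~\ref{nil coincide}(iv), and the remaining points of $N$ can be repositioned close to $\gamma|_N$ while preserving the cycle or line incidences.
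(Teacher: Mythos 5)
Your overall strategy is the same as the paper's: induct on the number of pieces in the gluing decomposition, apply the inductive hypothesis to $M'$, realize $N$ separately via the nilpotent base case, align the two partial realizations at the gluing point, and then use a further small motion to kill accidental incidences. The gap you anticipate at the end — the case $\gamma_P = 0$ — is the genuine difficulty, and your proposed "cleaner route" does not resolve it. You suggest constructing the realization of $N$ directly with $\widetilde{\tau}_q = \tau'_P$ and appeal to "continuity and non-emptiness of the fibres" and to irreducibility of $V_N$. But this is an assertion, not an argument: if $q$ has degree two in $N$ (for instance $q$ is a vertex of the cycle), then the $q$-coordinate of any realization of $N$ near $\gamma|_N$ is forced to lie near the intersection of two specific perturbed planes through the origin, and $\tau'_P$ was produced by a completely separate inductive call to $M'$, so there is no reason its direction matches. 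The fibre over $\tau'_P$ is non-empty (act by a linear transformation), but nothing you have said guarantees that fibre meets an arbitrarily small neighbourhood of $\gamma|_N$.

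The paper closes this gap \emph{before} the case split by perturbing $\gamma_P$ away from zero while staying inside $V_{\mathcal{C}(M)}$: since $P \notin Q_M$ when $\gamma_P = 0$, $P$ lies on at most two lines, and one sets the new $\gamma_P$ to be a small nonzero vector on the intersection of the (at most two) two-dimensional spans $\gamma_l$ with $l \in \mathcal{L}_P$ and $\rk(\gamma_l) = 2$, or arbitrarily if no such span exists. After this preliminary step $\gamma_P \neq 0$, so in the gluing step both $\tau'_P$ and $\widetilde{\tau}_q$ are close to the same nonzero vector, the aligning linear map is close to the identity, and the rest of your argument (which is the paper's: rotate $\beta$ around $\tau'_P$ to avoid stray collinearities) goes through. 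Your induction set-up, the verification that the hypothesis on $Q_{M'}$ is inherited, and the treatment of accidental collinearities by a generic motion are all essentially correct; the missing piece is exactly this preliminary reduction to $\gamma_P \neq 0$, which you should insert before proceeding to the gluing rather than trying to patch the degenerate case afterward.
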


\begin{proof}
We may assume that $M$ is connected, since the argument can then be applied separately to each connected component. By Definition~\ref{cactus}, $M$ is constructed by successively gluing lines and cycles. We use induction on the number $m$ of such gluings to prove the result.

\medskip
{\bf Basis step:} If $m = 0$, then $M$ is either a line or a cycle. By Proposition~\ref{thm: cactus-like config is nilpotent} and Theorem~\ref{nil coincide} (i), it follows that $\VCM = V_M$, so we can perturb $\gamma$ to obtain $\tau \in \Gamma_M$.

\medskip
{\bf Induction step}: Assume that the statement holds for any cactus configuration constructed by at most $m-1$ gluings, and consider a cactus configuration $M$ obtained through $m$ gluings. At the $n^{\text{th}}$-step, $M$ can be written as
\[M=M\rq \amalg_{p,q} N,\]
where $M\rq$ is formed by $n-1$ gluings, and $N$ is either a line or a cycle. Denote the ground sets of $M\rq$ and $N$ by $[d_{1}]$ and $[d_{2}]$, respectively. The ground set of $M\rq \amalg_{p,q} N$ is then given by
\[([d_{1}]\setminus p)\amalg ([d_{2}]\setminus q)\cup \{P\},\]
where $P$ is the point that identifies both $p$ and $q$.

\medskip
We first show that we may assume $\gamma_{P}\neq 0$. 
\begin{itemize}
\item If $P\in Q_{M}$, then $\gamma_{P}\neq 0$ by assumption. 
\item If $|\mathcal{L}_P| \leq 2$ and $\gamma_{P}=0$, define the set $R =\{\gamma_l:l \in \mathcal{L}_i \textup{ and } \rk(\gamma_l)=2\}$. Since $i$ is on at most two lines, it follows that $|R| \leq 2.$ If $|R| = 0$, then perturb $\gamma$ arbitrarily away from the origin. If $|R|=1$, then perturb $\gamma$ on the unique line in $R$, away from the origin. If $|R|=2$, then perturb $\gamma$ on the intersection of the two distinct lines in $R$. In each case, the resulting configuration lies in $\Gamma_M$, as desired.
\end{itemize}
{\bf Case~1:} Suppose that $N$ is a line. By applying the induction hypothesis, we can perturb the vectors $\{\gamma_r:r\in [d_{1}]\}$ to a collection $\{\tau_r:r\in [d_{1}]\}$ in $\Gamma_{M\rq}$. 
Since $\gamma_{P}\neq 0$, we can further extend $\tau$ to the points in $[d_{2}] \setminus \{q\}$ by applying a perturbation to the vectors $\{\gamma_{r}:r\in [d_{2}]\setminus q\}$, ensuring that these vectors remain on the same line as $\tau_P$ and on no other line. This results in a collection of vectors $\tau \in \Gamma_{M}$, as desired.

\medskip
{\bf Case~2:} Suppose that $N$ is a cycle. Invoking the induction hypothesis, we can perturb the vectors $\{\gamma_r:r\in [d_{1}]\}$ to obtain a collection $\{\tau_r:r\in [d_{1}]\}$ in $\Gamma_{M\rq}$.
By Theorem \ref{nil coincide} (i), we can perturb the vectors $\{\gamma_r:r\in [d_{2}]\}$ to obtain a collection $\{\beta_r:r\in [d_{2}]\}$ in $\Gamma_{N}$.  
Furthermore, since $\gamma_p \neq 0$, we can apply a small rotation from the origin to the vectors in $\beta$ such that $\beta_{q}=\tau_{p}$.
Next, by rotating $\beta$ infinitesimally around $\tau_p$, and noting that each $\beta_r \neq 0$ since $\beta \in \Gamma_N$, we can obtain a collection $\beta$ such that the vectors $\beta_r$ do not lie on any lines of the configuration other than those associated with $N$, for all $r \in [d_{2}]$. 
This results in a collection of vectors in $\Gamma_{M}$, as desired.
\end{proof}
\begin{lemma}\label{Generalization of Lemma 4.21 iii}
Let $M$ be a cactus configuration, such that the points of $Q_M$ do not contain a cycle. If $\gamma \in V_{\mathcal{C}(M)} \cap V(G_{M})$, then there exists $\tau \in V_{\mathcal{C}(M)}$ such that 
\begin{itemize}
 \item If $p \in Q_{M}$, then $\tau_{p} \neq 0$.
\end{itemize}
In particular, $\tau$ can be chosen as a perturbation of $\gamma$.
\end{lemma}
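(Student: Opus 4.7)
I propose an induction on the cardinality of $X := \{p \in Q_M : \gamma_p = 0\}$. The base case $X = \emptyset$ is immediate by taking $\tau := \gamma$. For the inductive step, since $X \subseteq Q_M$ and $Q_M$ contains no cycle by hypothesis, $X$ is also cycle-free, so Lemma~\ref{Generalization of Lemma 4.21 i} supplies a point $x \in X$ such that at most one line $l^{\ast} \in \mathcal{L}_x$ meets $X \setminus \{x\}$. Because $x \in Q_M$ satisfies $|\mathcal{L}_x| \geq 3$, we can choose three lines $l_1, l_2, l_3 \in \mathcal{L}_x$ with $l_1$ and $l_2$ disjoint from $X \setminus \{x\}$; hence every point of $(l_1 \cup l_2) \setminus \{x\}$ either lies outside $Q_M$ (and so has degree at most two) or already carries a non-zero $\gamma$-vector.

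The core geometric move is to invoke Proposition~\ref{gm toegevoegd}: since $\gamma \in V(G_M)$, the subspaces $\gamma_{l_1}, \gamma_{l_2}, \gamma_{l_3}$ are concurrent in $\mathbb{P}^2$, so their common intersection in $\CC^3$ contains a one-dimensional subspace $W$. I would redefine $\tau_x := \varepsilon v$ for a unit vector $v \in W$ and $\varepsilon > 0$ arbitrarily small, keeping $\tau_p := \gamma_p$ for $p \neq x$. To verify $\tau \in \VCM$, I check the rank condition on each line through $x$: if $\rank(\gamma_l) = 2$ then $\tau_x \in W \subseteq \gamma_l$ by concurrency; if $\rank(\gamma_l) \leq 1$ then adjoining the single vector $\tau_x$ to $\{\gamma_p : p \in l,\ p \neq x\}$ keeps the rank at most two. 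The resulting configuration satisfies $\{p \in Q_M : \tau_p = 0\} \subsetneq X$, enabling the induction.

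The two principal obstacles are as follows. First, some $\gamma_{l_i}$ might fail to be rank two, in which case the concurrency point $W$ is degenerate; I would dispose of this case by first perturbing the zero-valued degree-at-most-two points on $l_1$ and $l_2$ to raise $\rank(\gamma_{l_1})$ and $\rank(\gamma_{l_2})$ to two, a step that is feasible because such auxiliary points are incident to at most one further line of $M$ and can be safely displaced on it. Second---and more delicate---the modified $\tau$ need not lie in $V(G_M)$, so the inductive hypothesis cannot be re-applied verbatim to $\tau$. I would circumvent this by performing all the redefinitions in a single pass: for every $p \in X$, set $\tau_p$ to a small non-zero vector in $W_p := \bigcap_{l \in \mathcal{L}_p,\, \rank(\gamma_l) = 2} \gamma_l$ computed from the \emph{original} $\gamma$. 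The hypothesis that $Q_M$ contains no cycle is precisely what allows one to argue line by line that on any line $l$ of $M$ the collective assignment still yields $\rank(\{\tau_p : p \in l\}) \leq 2$, producing the desired $\tau \in \VCM$ as a perturbation of $\gamma$ with $\tau_p \neq 0$ for all $p \in Q_M$.
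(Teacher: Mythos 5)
There is a genuine gap. Your single-pass assignment $\tau_p \in W_p := \bigcap_{l \in R_p}\gamma_l$ is not guaranteed to preserve the rank constraint on lines $l$ with $\rank(\gamma_l) \le 1$ that contain two or more points of $X$. Indeed, if $\rank(\gamma_l) \le 1$ then $l \notin R_p$ for every $p \in l$, so the subspaces $W_p, W_q$ attached to two points $p, q \in X \cap l$ are computed entirely from \emph{other} lines and carry no compatibility with $\gamma_l$; if, moreover, some $\gamma_a \neq 0$ with $a \in l$, then $\{\gamma_a, \tau_p, \tau_q\}$ can span $\CC^3$, so $\tau \notin \VCM$. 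Nothing in $V(G_M)$ prevents this, because the concurrency constraint of Proposition~\ref{gm toegevoegd} is vacuous whenever one of the three lines has rank $\le 1$ (its Grassmann--Cayley extensor is zero). And the cycle-free hypothesis on $Q_M$ does not exclude two points of $X$ lying on a common line (that is not a cycle); it only bars longer circuits. So the claim in your final sentence that cycle-freeness "allows one to argue line by line" is unsubstantiated and false in this situation. Your earlier idea of raising $\rank(\gamma_{l_1}), \rank(\gamma_{l_2})$ by perturbing degree-two points addresses a non-problem (if $|R_p|\ge 2$ the intersection $W_p$ is automatically at least one-dimensional by the concurrency in $V(G_M)$) while leaving the genuine obstruction untouched.

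You correctly identify the real difficulty—that naively re-applying the inductive hypothesis would need $\tau \in V(G_M)$—but the fix you offer does not work. The paper's proof resolves this by a two-phase argument you are missing. First, it iterates a "Case 1" move: whenever there exist $l_1, l_2 \in R_p$ with $\gamma_{l_1} \neq \gamma_{l_2}$, set $\tau_p := \gamma_{l_1} \cap \gamma_{l_2}$ and show, using the recursive definition of $G_M$ (Construction~\ref{GM}: substituting the GCA expression for the variable $p$ yields another generator of $G_M$), that this $\tau$ \emph{does} remain in $V(G_M)$, so the move can be repeated. Second, once Case 1 is exhausted, every remaining $p\in X$ satisfies that all lines in $R_p$ coincide (so the constraint subspace is two-dimensional, not one-dimensional), and this weaker "Case 2 condition" replaces $V(G_M)$ in the inductive hypothesis. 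The Case 2 induction removes a point $p$ from the matroid (taking $M\setminus\{p\}$, not merely shrinking $X$), chooses $p$ via Lemma~\ref{Generalization of Lemma 4.21 i} so that at most one line $l_1\in\mathcal{L}_p$ meets $X\setminus\{p\}$, and exploits the two-dimensional freedom to set $\tau_p \in \tau_{l_1}\cap s$; for every other line $l$ through $p$, $p$ is then the \emph{only} $X$-point on $l$, so adjoining the single vector $\tau_p$ to the rank-$\le 1$ space $\gamma_l$ keeps the rank at most two—precisely the check your simultaneous assignment cannot perform.
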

\begin{proof}
Denote $R_{p}$ the set $\{l\in \mathcal{L}_{p}:\dim(\gamma_{l})=2\}$ for each point $p \in [d]$ and consider two cases:
\newline

{\bf Case 1:} 
There is a point $p \in Q_{M}$ with $\gamma_{p} = 0$ and lines $l_{1}, l_{2} \in R_{p}$ such that $\gamma_{l_{1}} \neq \gamma_{l_{2}}$. 
\newline
Then we define the collection of vectors $\tau$ indexed by $p \in [d]$ as follows: $\tau_p = \gamma_{l_1} \cap \gamma_{l_2}$ and $\tau_q = \gamma_q$ if $q \neq p$. 
We will prove that $\tau \in \VCM \cap V(G_M)$. 
\newline
First we show $\tau \in \VCM$. For this we need to show that for all $l \in \mathcal{L}$, $\dim(l) \leq 2$. We have to consider several cases:
\begin{itemize}
    \item $l \in R_p$: because $\gamma \in V(G_M)$, $\tau_p$ lies on all lines $\{\gamma_{l} : l \in R_{p}\}$. 
    \item $l \notin R_p$, $p \in l$: this implies that $\dim(\gamma_l) \leq 1$, so $\dim(\tau_l) \leq 2$.
    \item $l \notin R_p$, $p \notin l$: then $\tau_l = \gamma_l$, so $\dim(\tau_l) = \dim(\gamma_l) \leq 2$.
\end{itemize}
Now we will prove that $\tau \in V(G_M)$. Choose an arbitrary polynomial $r \in G_M$. We have to show that $r(\tau) = 0$. Let $p_1, p_2 \in l_1$ and $p_3, p_4 \in l_2$. Using Construction \ref{GM}, the following equality holds: \begin{equation}
\tau_{p}=\gamma_{l_{1}}\wedge \gamma_{l_{2}}=[{\gamma_{p_{1}},\gamma_{p_{2}},\gamma_{p_{3}}}]\gamma_{p_{4}}- [{\gamma_{p_{1}},\gamma_{p_{2}},\gamma_{p_{4}}}]\gamma_{p_{3}}.
\end{equation}
Define $\overline{r}$ as the polynomial obtained from $r$ by replacing the variable $p$ with:
\[
[{p_{1},p_{2},p_{3}}]p_{4}-[{p_{1},p_{2},p_{4}}]p_{3}.
\]
Then $\overline{r} \in G_{M}$ by definition of $G_M$. By assumption, $\gamma \in V(G_{M})$, so we have $\overline{r}(\gamma) = 0$. Thus $r(\tau) = \overline{r}(\gamma) = 0$. 
After carrying out this procedure iteratively, we obtain a collection such that for every point $p \in Q_M$ with $\gamma_p = 0$, there are no lines $l_1, l_2 \in R_p$ for which $\gamma_{l_1} \neq \gamma_{l_2}$.
\medskip

{\bf Case 2}: For every $p \in Q_{M}$ with $\gamma_{p} = 0$, if $l_{1}, l_{2}$ are lines in $R_{p}$, then $\gamma_{l_{1}} = \gamma_{l_{2}}$.
\newline
Denote $X \subset Q_M$ the set of points $p$ for which $\gamma_p = 0$. We use the principle of induction on $|X|$ to prove that we can perturb the collection of vectors $\gamma$ to $\tau$ such that $\tau_q = \gamma_q$ if $q \notin X$, and $\tau \in \VCM$. 
\newline
\newline
{\bf Basis step}: If $X = 0$, then the statement holds trivially.
\newline
\newline
{\bf Induction step}: Suppose that the statement is true for point-line configurations with $|X| \leq n$ and assume that $|X| = n+1$. Since $X$ does not contain a cycle, we can use Lemma \ref{Generalization of Lemma 4.21 i} to find a point $p \in X$, such that $|\{l \in \mathcal{L}_p: l \cap (X \setminus \{p\}) \neq \emptyset\}| \leq 1$.
Denote the line in this set as $l_1$. If this set is empty, we can adapt the proof by choosing an arbitrary line in $\CC^3$ through $p$ instead of $\tau_{l_1}$ which we will construct. There is a line $s$ in $\CC^3$, such that $\gamma_l = s$ for every line $l$ in $R_p$. We can apply the inductive hypothesis to the submatroid $M\setminus \{p\}$ to find vectors $\tau$ such that $\tau_q = \gamma_q$ if $q \notin X$, $\tau \in V_{\mathcal{C}(M \setminus \{p\})}$ and for all $q \in M \setminus \{p\}$: $\tau_q \neq 0$. Now extend $\tau$ by defining $\tau_p$ as the non-zero vector in $\tau_{l_1} \cap s$. We need to show that $\tau \in \VCM$. As in Case 1, it suffices to show that for all $l \in \mathcal{L}$, $\dim(l) \leq 2$. We distinguish some cases:
\begin{itemize}
\item $l\notin \mathcal{L}_{p}$: $\dim(\tau_{l})\leq 2$ because $\restr{\tau}{M\backslash \{p\}}\in V_{\mathcal{C}(M\backslash \{p\})}$ by the induction hypothesis.
\item $l=l_{1}$: $\dim(\tau_{l})\leq 2$ because $\tau_{p}\in \tau_{l_{1}}$.
\item $l\in R_{p}, l \neq l_1$: $\dim(\tau_{l})\leq 2$ because $\tau_l = s$ and $\tau_{p} \in s$.
\item $l \in \mathcal{L}_{p} \setminus (R_{p} \cup {l_{1}})$: since $l$ is not in $R_{p}$, it holds that $\dim(\gamma_{l}) \leq 1$. Now $p$ is the only point on $l$ in $X$, so $\dim(\tau_{l}) \leq 2$ in this case.
\end{itemize}
The resulting collection has no loops at 
points in $Q_{M}$, so this completes the proof.
\end{proof}

We now establish the main result of this section.

\begin{theorem} \label{thm: main theorem cactus matroid ideal}
   Let $M$ be a cactus configuration, such that the points of $Q_M$ do not contain a cycle. Then $I_M = \sqrt{I_{\mathcal{C}(M)} +G_M}$. 
\end{theorem}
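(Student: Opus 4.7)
The plan is to apply the perturbation strategy from Subsection~\ref{subsection: perturbation strategy}, using the two preceding lemmas as the main tools. By Theorem~\ref{thm: hilbert nullstellensatz}, the equality $I_M = \sqrt{I_{\mathcal{C}(M)} + G_M}$ is equivalent to the varietal equality
\[V_M = V_{\mathcal{C}(M)} \cap V(G_M).\]
The inclusion $\subset$ follows immediately from Remark~\ref{generating set ICM} and Proposition~\ref{inc G}, so the content of the theorem is the reverse inclusion.

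First, I would fix an arbitrary $\gamma \in V_{\mathcal{C}(M)} \cap V(G_M)$ and fix an $\varepsilon > 0$. The goal is to produce a collection $\widetilde{\gamma} \in \Gamma_M$ with $\lVert \gamma - \widetilde{\gamma}\rVert < \varepsilon$; since $\varepsilon$ is arbitrary, this exhibits $\gamma$ as a limit of points of $\Gamma_M$ in the Euclidean topology, hence in the Zariski closure $V_M$. The two-step perturbation is essentially already assembled in the preceding lemmas: I would apply Lemma~\ref{Generalization of Lemma 4.21 iii}, which uses precisely the hypothesis that $Q_M$ contains no cycle together with $\gamma \in V(G_M)$, to obtain a perturbation $\tau \in V_{\mathcal{C}(M)}$ of $\gamma$ such that $\tau_p \neq 0$ for every $p \in Q_M$. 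Then I would apply Lemma~\ref{Generalization of Lemma 4.21 ii} to $\tau$ to obtain a further perturbation $\widetilde{\gamma} \in \Gamma_M$.

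To make sure the successive perturbations stay within the prescribed $\varepsilon$, I would invoke each lemma with tolerance $\varepsilon/2$; combining with the triangle inequality gives $\lVert \gamma - \widetilde{\gamma} \rVert < \varepsilon$. I would also invoke Lemma~\ref{lemma:finite epsilon to not create dependencies} to ensure that the intermediate perturbations do not introduce new dependencies that would violate the hypotheses of the second lemma. This concludes the proof that $\gamma \in \overline{\Gamma_M}^{\mathrm{Eucl}} \subset \overline{\Gamma_M}^{\mathrm{Zar}} = V_M$.

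The main conceptual obstacle is already absorbed into the two lemmas: Lemma~\ref{Generalization of Lemma 4.21 iii} is where the Grassmann--Cayley hypothesis $\gamma \in V(G_M)$ is genuinely used, because the only way to turn a zero vector at a point $p \in Q_M$ into a nonzero one consistently is to place it at the common intersection of the lines through $p$, which is exactly what $G_M$ guarantees when all such lines have rank two, and the acyclicity of $Q_M$ is what allows the inductive removal of these zero vectors one at a time via Lemma~\ref{Generalization of Lemma 4.21 i}. After that, Lemma~\ref{Generalization of Lemma 4.21 ii} leverages nilpotency of cactus configurations (Proposition~\ref{thm: cactus-like config is nilpotent}) and the free-gluing decomposition to lift everything to a genuine realization. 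Thus the proof itself is essentially a clean two-step perturbation argument, with all the combinatorial work already handled upstream.
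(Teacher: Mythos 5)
Your proposal is correct and follows essentially the same route as the paper's own proof: reduce to the varietal inclusion via the Nullstellensatz, then compose the perturbation of Lemma~\ref{Generalization of Lemma 4.21 iii} (which uses the $G_M$ condition and acyclicity of $Q_M$ to clear the loops at high-degree points) with the perturbation of Lemma~\ref{Generalization of Lemma 4.21 ii} (which lands in $\Gamma_M$). The explicit $\varepsilon/2$ triangle-inequality bookkeeping and the appeal to Lemma~\ref{lemma:finite epsilon to not create dependencies} are sensible elaborations of what the paper leaves implicit.
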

\begin{proof}
Denote by $I$ the ideal on the right-hand side, which is clearly contained in $I_M$, by Remark~\ref{generating set ICM}, Theorem~\ref{thm: gamma i V_M^lift => liftable} and Proposition~\ref{inc G}. To show the reverse inclusion, we will show $V(I) \subset V_M$, using Theorem~\ref{thm: hilbert nullstellensatz}. Let $\gamma$ be a collection of vectors such that $\gamma\in \VCM \cap V(G_M)$. By Lemma~\ref{Generalization of Lemma 4.21 iii}, there is a perturbation of the collection of vectors $\gamma$ to a collection of vectors $\tau \in \VCM$ such that $\tau_p \neq 0$ for all $p \in Q_M$. Applying Lemma~\ref{Generalization of Lemma 4.21 ii}, we obtain a collection of vectors $\tau'\in \Gamma_{M}$ which is a perturbation of $\tau$. This proves that $\gamma \in V_{M}$, as desired.
\end{proof}
\begin{example} 
Consider the cactus configuration as in Figure \ref{fig:2 cactus} (Center). One can calculate that $$G_M = \{[1,2,3],[3,4,5],[1,5,6],[1,7,8],[8,9,10],[10,11, 1],$$ $$[2,3,5][6,7,8]-[2,3,6][5,7,8],[2,3,5][6,10,11]-[2,3,6][5,10,11], $$ $$[2,3,7][8,10,11]-[2,3,8][7,10,11],[5,6,7][8,10,11]-[5,6,8][7,10,11]\}.$$ Using Theorem~\ref{thm: main theorem cactus matroid ideal}, it is clear that $I_M = \sqrt{G_M}.$
\end{example}
\begin{example} \label{example f} We will show that we cannot omit the assumption that the points of $Q_M$ do not contain a cycle to establish the result in Theorem~\ref{thm: main theorem cactus matroid ideal}. 
Let $M$ be the matroid depicted in Figure~\ref{fig:2 cactus} (Center). 
    Let 
\[
\gamma = \left(
\begin{array}{cccccccccccccc}
1 & 0 & 0 & 1 & 1 & 1 & 1 & 1 & 1 & 1 & 1 & 0 & 0 & 0 \\
0 & 1 & 0 & 1 & 2 & 4 & 5 & 6 & 7 & 9 & 8 & 0 & 0 & 0 \\
0 & 0 & 1 & 1 & 3 & 8 & 7 & 10 & 12 & 21 & 17 & 0 & 0 & 0
\end{array}
\right),
\]

where the columns correspond to the points $$\{4,5,6, 7,8,9,10,11,12,13,14,1,2,3\},$$ respectively. Note that $\gamma \in \VCM \cap V(G_M)$. We will show that $\gamma\not \in V_{M}$, which implies that the matroid ideal is not generated by the circuit ideal and the Grassmann-Cayley ideal, up to radical.

Suppose from the contrary that $\gamma \in V_{M}=\closure{\Gamma_{M}}$. As proven in Section \ref{appendix:proof of lemma pappus}, the Zariski and Euclidean closure coincide for $\Gamma_{M}$, implying that for every $\epsilon$ we can select an element $\gamma_{\epsilon}\in \Gamma_{M}$ with $\lvert \gamma-\gamma_{\epsilon}\rvert <\epsilon$. After a perturbation, we may assume $\gamma_{\epsilon}$ takes the form

{\footnotesize
\[
\begin{pmatrix}
1 & 0 & 0 & 1 & 1 & 1 & 1 & 1 & 1 & 1 & 1 & x_1 & x_2 & x_3 \\
0 & 1 & 0 & 1 & 2 + \epsilon_1 & 4 + \epsilon_3 & 5 + \epsilon_5 & 6 + \epsilon_7 & 7 + \epsilon_9 & 9 + \epsilon_{11} & 8 + \epsilon_{13} & y_1 & y_2 & y_3 \\
0 & 0 & 1 & 1 & 3 + \epsilon_2 & 8 + \epsilon_4 & 7 + \epsilon_6 & 10 + \epsilon_8 & 12 + \epsilon_{10} & 21 + \epsilon_{12} & 17 + \epsilon_{14} & z_1 & z_2 & z_3
\end{pmatrix},
\]
}
where the columns again correspond to $\{4,5,6, 7,8,9,10,11,12,13,14,1,2,3\}$ and $\epsilon(i)\in \CC$ satisfies $\lvert \epsilon(i)\rvert <\epsilon$ for all $i\in [14]$.

Since $\{1,9,10\}$ and $\{1,7,8\}$ are lines in $M$, the point ${\gamma_{\epsilon}}_{1}$ lies in the intersection of the lines spanned by $\{{\gamma_{\epsilon}}_{9},{\gamma_{\epsilon}}_{10}\}$ and $\{{\gamma_{\epsilon}}_{7},{\gamma_{\epsilon}}_{8}\}$. Therefore, 

\[{\gamma_{\epsilon}}_{1}={\gamma_{\epsilon}}_{9}{\gamma_{\epsilon}}_{10}\wedge {\gamma_{\epsilon}}_{7}{\gamma_{\epsilon}}_{8}.\]

Taking the limit $\epsilon \to 0$, we obtain

\[\lim_{\epsilon \to 0} {\gamma_{\epsilon}}_{1}=\gamma_{9}\gamma_{10}\wedge \gamma_7 \gamma_8=(1,13/3,23/3)^{\top}. \]

Similarly, $\{3,1,6\}$ and $\{3,11,12\}$ are lines of $M$, so ${\gamma_{\epsilon}}_{3}$ lies in the intersection of the lines spanned by $\{{\gamma_{\epsilon}}_{1},{\gamma_{\epsilon}}_{6}\}$ and $\{{\gamma_{\epsilon}}_{11},{\gamma_{\epsilon}}_{12}\}$. It follows that

\[{\gamma_{\epsilon}}_{3}={\gamma_{\epsilon}}_{1}{\gamma_{\epsilon}}_{6}\wedge {\gamma_{\epsilon}}_{11}{\gamma_{\epsilon}}_{12}.\]

Taking the limit $\epsilon \to 0$, we obtain
\[\lim_{\epsilon \to 0} {\gamma_{\epsilon}}_{3}=\lim_{\epsilon \to 0}{\gamma_{\epsilon}}_{1}{\gamma_{\epsilon}}_{6}\wedge {\gamma_{\epsilon}}_{11}{\gamma_{\epsilon}}_{12}=(1,13/3,23/3)^{\top}\gamma_{6}\wedge \gamma_{11} \gamma_{12}=(1,13/3,20/3)^{\top}. \]
Similarly, $\{2,3,5\}$ and $\{2,13,14\}$ are lines of $M$. Therefore, we have that the triples of points $\{{\gamma_{\epsilon}}_{2},{\gamma_{\epsilon}}_{3},{\gamma_{\epsilon}}_{5}\}$ and $\{{\gamma_{\epsilon}}_{2},{\gamma_{\epsilon}}_{13},{\gamma_{\epsilon}}_{14}\}$ are collinear. Consequently, we have 

\[{\gamma_{\epsilon}}_{2}={\gamma_{\epsilon}}_{3}{\gamma_{\epsilon}}_{5}\wedge {\gamma_{\epsilon}}_{13}{\gamma_{\epsilon}}_{14}.\]

If we again take the limit $\epsilon \to 0$, we obtain
\[\lim_{\epsilon \to 0} {\gamma_{\epsilon}}_{2}=\lim_{\epsilon \to 0}{\gamma_{\epsilon}}_{3}{\gamma_{\epsilon}}_{5}\wedge {\gamma_{\epsilon}}_{13}{\gamma_{\epsilon}}_{14}=(1,13/3,20/3)^{\top}\gamma_{5}\wedge \gamma_{13} \gamma_{14}=(1,65/12,80/12)^{\top}. \]

Finally, since $\{1,2,4\}$ is a line of $M$, and $\gamma_{\epsilon}\in \Gamma_{M}$, it must hold that \[\det({\gamma_{\epsilon}}_{1},{\gamma_{\epsilon}}_{2},{\gamma_{\epsilon}}_{4})=0.\]
However, evaluating the limits give
\begin{equation*}
\begin{aligned}
\lim_{\epsilon \to 0}\det({\gamma_{\epsilon}}_{1},{\gamma_{\epsilon}}_{2},{\gamma_{\epsilon}}_{4})&=\det(\lim_{\epsilon \to 0}{\gamma_{\epsilon}}_{1},\lim_{\epsilon \to 0}{\gamma_{\epsilon}}_{2},\gamma_{4})\\
&=\det((1,13/3,23/3)^{\top},(1,65/12,80/12)^{\top},\gamma_{4})\\
&=-455\neq 0.
\end{aligned}
\end{equation*}

This is a contradiction. We conclude that $\gamma \notin V_{M}$, completing the proof.
\end{example}
\section{\texorpdfstring{Bound on the number of irreducible components of $\VCM$}{Bound on the number of irreducible components of VCM}}
We now provide a bound on the number of irreducible components of the circuit variety of a cactus configuration.
\begin{theorem} \label{thm: decomposition of circuit variety of a cactus}
    Let $M$ be a cactus configuration on the ground set $[d]$. Then $\VCM$ has at most $2^{\size{Q_{M}}}$ irreducible components, each obtained from $M$ by setting a subset of $Q_{M}$ to be loops. 
\end{theorem}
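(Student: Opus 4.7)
The plan is to proceed by induction on $|Q_M|$. For the base case $|Q_M| = 0$, every point of $M$ has degree at most two. By Proposition~\ref{thm: cactus-like config is nilpotent}, $M$ is nilpotent, so Theorem~\ref{nil coincide}~(i) yields $\VCM = V_M$; by Theorem~\ref{cact irr}, $V_M$ is irreducible, giving the single component $V_{M(\emptyset)} = V_M$, as required.

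For the inductive step, assume the statement holds for every cactus with strictly smaller $|Q|$, and let $\gamma \in \VCM$ be arbitrary. Define $S = \{p \in Q_M : \gamma_p = 0\}$. If $S = \emptyset$, then Lemma~\ref{Generalization of Lemma 4.21 ii} shows that $\gamma$ can be perturbed into $\Gamma_M$, so $\gamma \in V_M = V_{M(\emptyset)}$. If $S \neq \emptyset$, set $M' = M \setminus S$. Two observations drive the reduction: first, $M'$ is again a cactus, since by Proposition~\ref{prop: cactus-like = every line in at most 1 n-cycle} it suffices that every line (and every cycle) of $M'$ is a line (resp.\ cycle) of $M$, so the "line in at most one cycle" property is inherited; second, $Q_{M'} \subseteq Q_M \setminus S$, because any line through a point of $M'$ is a line through that point in $M$, yielding $|Q_{M'}| < |Q_M|$. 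Since every circuit of $M'$ is a circuit of $M$ disjoint from $S$, the restricted tuple $\gamma|_{[d] \setminus S}$ lies in $V_{\mathcal{C}(M')}$. By the inductive hypothesis, there exists $T \subseteq Q_{M'} \subseteq Q_M \setminus S$ with $\gamma|_{[d] \setminus S} \in V_{M'(T)}$, which identifies $\gamma$ with an element of $V_{M(S \cup T)}$ where $S \cup T \subseteq Q_M$.

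For each $S \subseteq Q_M$, the variety $V_{M(S)}$ may be identified with $V_{M \setminus S} \times \{0\}^S$, and since $M \setminus S$ is a cactus, Theorem~\ref{cact irr} gives its irreducibility. The reverse inclusion $V_{M(S)} \subseteq \VCM$ is immediate: any $\delta \in V_{M(S)}$ satisfies all circuit dependencies of $M$, since circuits contained in $[d] \setminus S$ are circuits of $M \setminus S$ (hence linearly dependent in $\delta|_{[d]\setminus S}$), while circuits meeting $S$ are automatically linearly dependent because $\delta$ vanishes at $S$. Combining these yields $\VCM = \bigcup_{S \subseteq Q_M} V_{M(S)}$ as a union of at most $2^{|Q_M|}$ irreducible subvarieties.

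I expect the principal technical obstacle to be the verification that restriction preserves the cactus property together with the bookkeeping of $Q_{M'}$ under deletion. Once one works with the characterization via "every line is in at most one cycle" rather than the inductive gluing definition, both claims become essentially formal. A secondary subtlety is that the resulting decomposition may contain redundant pieces; this is permitted since the theorem asserts only the upper bound $2^{|Q_M|}$, not a count of distinct irreducible components.
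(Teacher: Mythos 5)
Your argument is correct and, at its core, takes the same route as the paper: the key identity $\VCM=\bigcup_{J\subseteq Q_M}V_{M(J)}$ with each piece irreducible by Theorem~\ref{cact irr}, and the central observation that removing the $Q_M$-loops of a given $\gamma$ leaves a collection satisfying the hypotheses of Lemma~\ref{Generalization of Lemma 4.21 ii}. The one stylistic difference is that you wrap the argument in an induction on $|Q_M|$: when $S\neq\emptyset$ you pass to $M'=M\setminus S$ and invoke the inductive hypothesis, whereas the paper simply observes directly that $\gamma|_{[d]\setminus S}\in V_{\mathcal{C}(M\setminus S)}$ has no loops at points of $Q_{M\setminus S}$ and applies Lemma~\ref{Generalization of Lemma 4.21 ii} to $M\setminus S$ in one step. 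Your induction is valid but redundant --- note that since $T\subseteq Q_{M'}\subseteq Q_M\setminus S$ and $\gamma_p=0$ forces $p\in S$, the set $T$ produced by the inductive hypothesis is necessarily empty, so the recursion always terminates in one step. The supporting observations you make along the way (that deletion preserves the cactus property via Proposition~\ref{prop: cactus-like = every line in at most 1 n-cycle}, that $Q_{M'}\subseteq Q_M\setminus S$, and the identification $V_{M(S)}\cong V_{M\setminus S}\times\{0\}^S$) are all correct, and the last of these is used implicitly but not spelled out in the paper.
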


\begin{proof}
Let $[d]$ be the ground set of $M$. For each subset $J\subset Q_{M}$, let $M(J)$ denote the matroid obtained from $M$ by setting the points of $J$ to be loops.

\medskip
{\bf Claim.} The following equality holds:
\begin{equation}\label{vcm}
V_{\mathcal{C}(M)}=\bigcup_{J\subset Q_{M}}V_{M(J)}.
\end{equation}
\medskip
The inclusion $\supset$ immediately holds since the matroids $M(J)$ have more dependencies than $M$. Therefore, $\Gamma_{M(J)} \subseteq \VCM$, thus $V_{M(J)} \subseteq \VCM$, since $V_{M(J)}$ is the smallest Zariski closed subset containing $\Gamma_{M(J)}.$ To prove the reverse inclusion, consider an arbitrary $\gamma \in \VCM$. We have to find a matroid variety on the right-hand side of Equation~\eqref{vcm} containing $\gamma$.

Let $X$ be the set
\[\{x \in Q_M : \gamma_x = 0\}.\]
Note that the submatroid $M\setminus X$ is still a cactus configuration. Since $\gamma_{p}\neq 0$ for all $p\in [d]\setminus X$, it follows that $\gamma_{p}\neq 0$ for all $p\in Q_{M\setminus X}$. Therefore, the conditions of Lemma \ref{Generalization of Lemma 4.21 ii} are satisfied, which implies that $\restr{\gamma}{[d]\setminus X}\in V_{M\setminus X}$. So we can conclude that $\gamma\in V_{M(X)}$. 

\medskip
By Theorem~\ref{cact irr}, each variety on the right-hand side of Equation~\eqref{vcm} is irreducible. It follows that $V_{\mathcal{C}(M)}$ has at most $2^{\size{Q_{M}}}$ irreducible components.
\end{proof}
\begin{example} \label{ex: cactus} 
Let us consider the matroid $M$ as in Figure \ref{fig:2 cactus} (Left). Using Theorem~\ref{thm: decomposition of circuit variety of a cactus}, one can immediately conclude that \begin{equation} \label{eq: example of cactus 3 lines 1 cycle} \VCM = V_M \cup_{i=1}^3 V_{M(i)} \cup_{1 \leq i< j \leq 3} V_{M(i,j)} \cup V_{M(1,2,3)}.\end{equation} Notice that if we would use the decomposition strategy as explained in Subsection \ref{subsec: decomposition strategy}, the first step of the strategy already results in 132 minimal matroids over $M$. Therefore, the decomposition strategy is not suitable in this case.
\newline
\newline
We cannot conclude immediately from Theorem~\ref{thm: main theorem cactus matroid ideal} that $I_M = \sqrt{I_{\CCC(M)} + G_M}$, since the points $1,2,3 \in Q_M$ form a cycle. However, we can still prove this result using Equation~\eqref{eq: example of cactus 3 lines 1 cycle}. As outlined in the perturbation strategy, see Subsection \ref{subsection: perturbation strategy}, we prove an equivalent result for the corresponding varieties: we show that $V_M = \VCM \cap V(G_M).$ The inclusion $\subset$ is obvious from Remark \ref{generating set ICM} and Proposition~\ref{inc G}. For the other inclusion, using Equation~\eqref{eq: example of cactus 3 lines 1 cycle}, we have to prove that $$(V_M \cap V(G_M)) \cup_{i=1}^3 (V_{M(i)} \cap V(G_M)) \cup_{i=1}^3\cup_{j\neq i=1}^3 (V_{M(i,j)} \cap V(G_M)) $$ $$\cup 
(V_{M(1,2,3)} \cap V(G_M))\subset V_M.$$
For all components in the union except $(V_{M(1,2,3)} \cap V(G_M))$, this is obvious from the proof of Lemma \ref{Generalization of Lemma 4.21 iii} and Lemma \ref{Generalization of Lemma 4.21 ii}. For the final component $V_{M(1,2,3)} \cap V(G_M)$, one can notice that the point-line configuration $M(1,2,3)$ consists of 3 loops, 7 points and no lines. Let $\gamma$ be an arbitrary collection of vectors in $\CC^3$ where the linearly dependent sets are exactly the sets with at most four elements. Assume that we can perturb $\gamma$ such that there exist vectors $\gamma_1, \gamma_2,\gamma_3 \in \CC^3 \setminus \{0\}$ for which:
\begin{equation} \label{eq: system for example}
\left\{
    \begin{array}{lll}
        \left[1,7,8\right] &=& 0  \\
        \left[1,6,3\right] &=& 0 \\
        \left[3,11,12\right] &=& 0 \\
        \left[2,5,3\right] &=& 0 \\
        \left[2,9,10\right] &=& 0 \\
        \left[2,4,1\right] &=& 0.
    \end{array}
\right.
\end{equation}
Then we can use Lemma \ref{Generalization of Lemma 4.21 ii} to perturb this collection of vectors to $\gamma \in V_M$, which shows that $V_{M(1,2,3)} \cap V(G_M) \subset V_M.$ This concludes the proof.

We will end this example by proving the assumption. After a linear transformation, we can assume that our original $\gamma$ can be written as follows:
$$
\begin{pmatrix}
    1 & 0 & 0 & 1 & 0 & 0 & 0 & 1 & 1 & 1 & 1 & 1 & 1 \\
    0 & 1 & 0 & 1 & 0 & 0 & 0 & y_{7} & y_{8} & y_{9} & y_{10} & y_{11} & y_{12} \\
    0 & 0 & 1 & 1 & 0 & 0 & 0 & z_{7}& z_{8} & z_{9} & z_{10} & z_{11} & z_{12} 
\end{pmatrix},
$$
where the columns respectively correspond to $\{4,5,6,7,1,2,3,8,9,10,11,12\}$. Since $1$ is on a line with $7$ and $8$, define $$\gamma_1 = \begin{pmatrix}
    1 +l_1 \\
    1 + l_1 y_{8} \\
   1 + l_1 z_{8}
\end{pmatrix}$$ for a variable $l_1$. Define $\gamma_3 = \gamma_1 \gamma_6 \wedge \gamma_{11} \gamma_{12}.$ This is an expression for $\gamma_3$ which has degree one in the only variable $l_1$. In the same way, one can define $\gamma_2 = \gamma_3 \gamma_5 \wedge \gamma_{9} \gamma_{10}$. This is again an expression of degree one in $l_1$. For the explicit expressions of $\gamma_3$ and $\gamma_2$, please refer to the appendix. If $[1,2,4]= 0$, then $\gamma$ satisfies all conditions of Equation \eqref{eq: system for example}. This final condition results in a quadratic polynomial in $l_1$:
$Al_1^2+Bl_1+C$, where the explicit expressions for $A,B$ and $C$ can be found in Appendix \ref{appendix: example}. This quadratic equation has at least one non-zero solution if the discriminant $B^2-4AC \neq 0$ and $A \neq 0$. These are two complements of hypersurfaces in $\CC^{10}$, with variables $y_{8}, z_{8},y_{9}, z_{9},y_{10}, z_{10},y_{11}, z_{11}, y_{12}, z_{12}.$ The intersection of these complements is dense in $\CC^{10}$, as in \cite{Hartshorne}. This implies that there is a perturbation of the given $y_{8}, \ldots, z_{12}$ for which $B^2-4AC \neq 0$ and $A \neq 0$. Thus we have found a perturbation of $\gamma$ such that we can define additional vectors $\gamma_1, \gamma_2, \gamma_3$ satisfying Equation \eqref{eq: system for example}.
\end{example}
    \numberwithin{theorem}{chapter}

\chapter{Matroid ideals of Pascal and Pappus configurations} \label{sec: pascal & pappus}
In this chapter, we determine a complete set of defining equations, up to radical, for the matroid ideals associated to the Pascal and Pappus configurations. First we present a section which bundles some useful lemmas, including a necessary criterion for liftability. In the next section, we find an irreducible decomposition for the circuit variety of the Pascal configuration as in Theorem~\ref{proposition: decomposition pascal configuration}. Moreover, we find a finite generating set for the matroid ideal of the Pascal configuration, as in Theorem~\ref{pascal generators}. In Theorem~\ref{thm:Pappus}, we find a finite set of generators for the matroid ideal of the Pappus configuration.
\section{Some technical results for the decomposition of circuit varieties}
We start by listing some lemmas which will be often used later on in the thesis. Then we prove a result providing a sufficient criterion for liftability.
\begin{lemma} \label{lemma: third config loop matroid variety}
    Let $M$ be a matroid, and $i$ a point of $[d]$ such that $i$ has degree at most 2 in $M$. Then $V_{M(i)} \subset V_M$.
\end{lemma}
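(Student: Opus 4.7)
The plan is to prove the stronger containment $\Gamma_{M(i)}\subset V_M$ and then take Zariski closures, since $V_M$ is closed and $V_{M(i)}=\overline{\Gamma_{M(i)}}$. So I fix $\gamma'\in\Gamma_{M(i)}$: by definition $\gamma'_i=0$, and since $M(i)|([d]\setminus\{i\})=M|([d]\setminus\{i\})$, the restriction $\gamma'|_{[d]\setminus\{i\}}$ realizes $M|([d]\setminus\{i\})$. I will construct a one-parameter family $\gamma^{(t)}\in\Gamma_M$ with $\gamma^{(t)}\to\gamma'$ as $t\to 0$, which places $\gamma'$ in $V_M$.

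The construction first selects a vector $v\in\CC^3$ such that setting $\gamma'_i:=v$ yields a realization of $M$; the family is then defined by $\gamma^{(t)}_i=tv$ and $\gamma^{(t)}_j=\gamma'_j$ for $j\neq i$, which lies in $\Gamma_M$ for every $t\neq 0$ because scaling a single vector by a nonzero scalar preserves linear (in)dependence. The nontrivial circuits of $M$ through $i$ are exactly the collinear triples $\{i,a,b\}$ with $a,b\in l\setminus\{i\}$ for some line $l\ni i$, so the constraints on $v$ reduce to $v\in H_l:=\gamma'_{l\setminus\{i\}}$ for every line $l\ni i$: degree $0$ imposes no constraint, degree $1$ confines $v$ to the $2$-dimensional subspace $H_{l_1}$, and degree $2$ confines $v$ to $H_{l_1}\cap H_{l_2}$.

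The key fact in the degree-$2$ case is that $H_{l_1}\cap H_{l_2}$ has dimension exactly $1$ for every $\gamma'\in\Gamma_{M(i)}$: since $M$ is a simple rank-$3$ matroid, the distinct lines $l_1$ and $l_2$ meet only at $i$, so $l_1\cup l_2\setminus\{i\}$ has rank $3$ in $M|([d]\setminus\{i\})$ (a basis can be built from two points of $l_1\setminus\{i\}$ together with one point of $l_2\setminus\{i\}$). Consequently $H_{l_1}+H_{l_2}=\CC^3$, forcing $\dim(H_{l_1}\cap H_{l_2})=2+2-3=1$, so a nonzero $v$ in this line always exists.

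The main obstacle will be ruling out accidental dependencies created by $v$: the chosen $v$ should not be parallel to any $\gamma'_j$, nor lie in any plane $\langle\gamma'_j,\gamma'_k\rangle$ whenever $\{i,j,k\}$ is not a circuit of $M$, and similarly for the larger circuits of $M$ through $i$. Each such coincidence imposes a proper closed condition on $\gamma'|_{[d]\setminus\{i\}}$ inside $\Gamma_{M|([d]\setminus\{i\})}$, so they can all be avoided by first perturbing $\gamma'|_{[d]\setminus\{i\}}$ to a nearby $\widetilde{\gamma}\in\Gamma_{M|([d]\setminus\{i\})}$ on which $H_{l_1}\cap H_{l_2}$ meets none of the forbidden subspaces. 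Combining this initial perturbation with the rescaling $t\to 0$ produces a sequence in $\Gamma_M$ approaching $\gamma'$, so $\gamma'\in V_M$; taking Zariski closures yields $V_{M(i)}\subset V_M$.
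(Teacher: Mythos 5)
Your approach --- replace the loop $\gamma'_i=0$ by a small nonzero vector $v$ lying on all lines of $M$ through $i$, then rescale $v\to 0$ --- is exactly the paper's. You are more careful at one point than the paper is: in the degree-$2$ case $v$ is forced up to scalar (the unique line $H_{l_1}\cap H_{l_2}$), so one must actually rule out that this forced direction accidentally creates dependencies not present in $M$. The paper's proof simply asserts that \enquote{the resulting configuration lies in $\Gamma_M$}, whereas you isolate this issue and try to dispose of it by first perturbing $\gamma'|_{[d]\setminus\{i\}}$ inside $\Gamma_{M\setminus\{i\}}$ so that the forced $v$ avoids the forbidden subspaces. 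That refinement is the right instinct, and your use of rescaling (rather than invoking Lemma~\ref{lemma:finite epsilon to not create dependencies}) to send $v\to 0$ is a clean variant.

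The remaining gap is the assertion that each forbidden coincidence cuts out a \emph{proper} closed subset of $\Gamma_{M\setminus\{i\}}$. In the degree-$2$ case this is not automatic: properness is equivalent to the existence of some realization of $M\setminus\{i\}$ whose forced $v$ is good, i.e.\ to $M$ itself being realizable, which the lemma does not assume. Concretely, take $M$ to be the Pappus configuration on $[9]$ with the circuit $\{7,8,9\}$ removed (so $\{7,8,9\}$ is declared a basis and only the other eight collinearities remain). Then $9$ has degree $2$, $M\setminus\{9\}$ is nilpotent hence realizable, so $V_{M(9)}\neq\emptyset$; but Pappus's theorem forces $7,8,9$ collinear in every realization, so $\Gamma_M=\emptyset$ and $V_M=\emptyset$, defeating the conclusion. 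The paper's own proof is silent on this, and both arguments implicitly assume $M$ realizable (as it is in every application of the lemma in the paper). For degree $\leq 1$ your argument is unconditionally fine, since there $v$ has genuine freedom inside $H_{l_1}$ (or all of $\CC^3$) and the forbidden subspaces meet it properly; it is only the forced-$v$ degree-$2$ case that needs the extra hypothesis.
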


\begin{proof}
    We show that any collection $\gamma \in \Gamma_{M(i)}$ can be perturbed to a collection in $\Gamma_{M}$, thereby establishing that $\gamma\in V_{M}$. Let $\gamma \in \VCM$ and consider the set $R =\{\gamma_l:l \in \mathcal{L}_i \textup{ and } \rk(\gamma_l)=2\}$. Since $i$ has degree at most two, it follows that $|R| \leq 2.$ If $|R| = 0$, then perturb $\gamma$ arbitrarily away from the origin. If $|R|=1$, then perturb $\gamma$ on the unique line in $R$, away from the origin. If $|R|=2$, then perturb $\gamma$ on the intersection of the two distinct lines in $R$. In each case, the resulting configuration lies in $\Gamma_M$, as desired.
\end{proof}
\begin{lemma} \label{lemma: if gamma_i gamma_j wedge gamma_k gamma_l =0}
Let $M$ be a point-line configuration on the ground set $[d]$, 5 a point with $\mathcal{L}_5 = \{\{1,2,5\}, \{3,4,5\}\}$, $\gamma \in \VCM$ and $v \in \CC^3$. If $\gamma_1 \gamma_2 \wedge \gamma_3 \gamma_4= 0$ and $\gamma_5 = v$, then there is a perturbation of $\gamma_5$ to a vector different from $v$, such that the resulting configuration is still in $\VCM$.
  \end{lemma}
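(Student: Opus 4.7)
The plan is to identify precisely which dependencies of $M$ constrain the vector at point $5$, show that the set of admissible perturbations is a linear subspace of dimension at least two, and then construct $\gamma_5'$ inside it.

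First I would reduce to the only non-trivial circuit conditions. Since $M$ is a point-line configuration, every circuit has size $3$ or $4$, and as we work in $\CC^3$ any four vectors are automatically linearly dependent, so $4$-circuits impose no condition on realizations in $\VCM$. Hence the only constraints involving $\gamma_5$ come from $3$-circuits through $5$, and these are precisely the lines through $5$. By hypothesis $\mathcal{L}_5=\{\{1,2,5\},\{3,4,5\}\}$, so we must only ensure that $\{\gamma_1,\gamma_2,\gamma_5'\}$ and $\{\gamma_3,\gamma_4,\gamma_5'\}$ are linearly dependent; every other circuit condition of $M$ is preserved since we modify only the fifth vector.

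Next I would describe the admissible set
\[
S \;=\; \bigl\{w\in\CC^3 : \{\gamma_1,\gamma_2,w\}\ \text{and}\ \{\gamma_3,\gamma_4,w\}\ \text{are linearly dependent}\bigr\},
\]
and show $\dim S\geq 2$. Write $U_{12}=\sspan\{\gamma_1,\gamma_2\}$ and $U_{34}=\sspan\{\gamma_3,\gamma_4\}$. If $\dim U_{12}\leq 1$ the first condition is automatic, and likewise for $U_{34}$; otherwise the condition forces $w\in U_{ij}$. The key input is Lemma~\ref{klj}: the hypothesis $\gamma_1\gamma_2\wedge\gamma_3\gamma_4=0$ is equivalent to $U_{12}+U_{34}\neq\CC^3$. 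A short case analysis then gives the three possibilities: if at least one of $U_{12},U_{34}$ has dimension $\leq 1$, then $S$ equals $\CC^3$ or the other subspace; if both are two-dimensional, they must coincide and $S=U_{12}=U_{34}$. In every case $S$ is a linear subspace of dimension at least $2$.

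Finally, since $\gamma_5=v$ is already in $\VCM$, it lies in $S$. Choosing any nonzero $u\in S$ (possible because $\dim S\geq 2$) and setting $\gamma_5'=v+\delta u$ for small $\delta\neq 0$ gives $\gamma_5'\in S\setminus\{v\}$ at arbitrary distance from $v$. Replacing $\gamma_5$ by $\gamma_5'$ and leaving all other vectors unchanged yields a collection still in $V_{\mathcal{C}(M)}$. I do not foresee any real obstacle here: the only nontrivial point is the translation of the meet hypothesis into the statement $U_{12}+U_{34}\neq\CC^3$, which is immediate from Lemma~\ref{klj} and forces the admissible set to be large enough to contain a genuine perturbation of $v$.
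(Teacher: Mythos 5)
Your proof is correct and follows essentially the same approach as the paper: both use Lemma~\ref{klj} to turn the vanishing meet into the statement that the admissible locus for $\gamma_5$ is a linear subspace of dimension at least two, and then perturb within it. The paper phrases this as a case split on whether zero or one of $\gamma_{12},\gamma_{34}$ has rank two (coincidence being forced when both do), while you package the same information as the subspace $S$, which is a clean reformulation. One point to tighten in your final step: under Notation~\ref{notation thesis}, ``different from $v$'' means not a scalar multiple of $v$, so ``any nonzero $u\in S$'' is not quite enough --- if $u$ is parallel to $v$ then $v+\delta u$ is still the same projective point. This is exactly where $\dim S\geq 2$ is actually needed: it guarantees $S$ contains some $u$ not parallel to $v$, and with that choice $v+\delta u$ is a genuinely different point for all small $\delta\neq 0$ (the paper makes this explicit by saying one perturbs $\gamma_5$ ``away from $v$'' on the line).
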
  
\begin{proof}
Since $\gamma_1 \gamma_2 \wedge \gamma_3 \gamma_4= 0$, there is at most one line of rank two in the set $\{\gamma_{12}, \gamma_{34}\}$. If there is exactly one line in that set, one can perturb $\gamma_5$ away from $v$ on that line. If there is no line in that set, one can perturb $\gamma_5$ to an arbitrary point. In both cases, the subspaces $\gamma_{\{1,2,5\}}$ and $\gamma_{\{3,4,5\}}$ have rank at most two, thus $\gamma$ is still in $\VCM$.
\end{proof}
\begin{lemma} \label{lemma: meet of 2 extensors arbitrary close}
                 Let $v_1, v_2, v_3, v_4 \in \mathbb{C}^3$, such that $v_1 v_2 \wedge v_3 v_4 \neq 0$, and consider $\widetilde{v}_1, \widetilde{v}_2, \widetilde{v}_3, \widetilde{v}_4$ perturbations of respectively $v_1, v_2, v_3, v_4$. Fix $v_5$ on $\langle v_1, v_2\rangle \cap \langle v_3, v_4\rangle$. Then we can choose $\widetilde{v_5}$ on $\langle \widetilde{v}_1, \widetilde{v}_2\rangle \cap \langle \widetilde{v}_3, \widetilde{v}_4\rangle$, such that it is arbitrarily close to $v_5$.
\end{lemma}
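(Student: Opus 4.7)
The plan is to exploit the explicit polynomial formula for the meet in the Grassmann--Cayley algebra to produce a continuous selection of a generator of the intersection line, and then rescale it to approximate $v_5$. From Example~\ref{example: 3 concurrent lines} we have the identity
$$v_1 v_2 \wedge v_3 v_4 \;=\; [v_1,v_2,v_3]\, v_4 \;-\; [v_1,v_2,v_4]\, v_3,$$
so the vector $w := v_1 v_2 \wedge v_3 v_4$ is a polynomial, hence continuous, function of the entries of $v_1,v_2,v_3,v_4$. Since $w \neq 0$ by hypothesis, Lemma~\ref{klj} tells us that $\langle v_1,v_2\rangle + \langle v_3,v_4\rangle = \mathbb{C}^3$ and that $\langle w\rangle$ coincides with the one-dimensional intersection $\langle v_1,v_2\rangle \cap \langle v_3,v_4\rangle$. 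In particular, there is a unique scalar $\lambda \in \mathbb{C}$ with $v_5 = \lambda\, w$.

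For perturbations $\widetilde{v}_i$ sufficiently close to $v_i$, continuity of the meet forces $\widetilde{w} := \widetilde{v}_1 \widetilde{v}_2 \wedge \widetilde{v}_3 \widetilde{v}_4$ to remain close to $w$ and, in particular, nonzero. Applying Lemma~\ref{klj} again, $\widetilde{w}$ spans $\langle \widetilde{v}_1,\widetilde{v}_2\rangle \cap \langle \widetilde{v}_3,\widetilde{v}_4\rangle$. I would then simply define
$$\widetilde{v}_5 \;:=\; \lambda\, \widetilde{w}.$$
This point lies on the required intersection by construction, and $\widetilde{v}_5 \to v_5$ as the perturbations shrink; so given any $\varepsilon > 0$, choosing the perturbations of the $v_i$ small enough forces $\|\widetilde{v}_5 - v_5\| < \varepsilon$.

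The argument contains no genuine difficulty: the two key ingredients are the polynomial formula for the meet and the openness of the condition $\widetilde{w}\neq 0$, both of which are immediate from continuity and Lemma~\ref{klj}. The only minor subtlety is verifying that $\widetilde{w}$ actually generates the intersection of the two perturbed planes, rather than merely lying in each of them, but this is exactly the content of the last bullet of Lemma~\ref{klj} applied after the perturbation.
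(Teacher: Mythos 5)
Your argument is correct and follows the same path as the paper's proof: both invoke Lemma~\ref{klj} to identify $\langle v_1,v_2\rangle \cap \langle v_3,v_4\rangle$ with the span of the meet $v_1v_2 \wedge v_3v_4$, and then use continuity of the (determinantal) meet formula to conclude that the perturbed meet stays close and nonzero. Your additional step of writing $v_5 = \lambda w$ and explicitly setting $\widetilde{v}_5 := \lambda\widetilde{w}$ just makes concrete the choice of representative that the paper leaves implicit.
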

\begin{proof}
Since $v_1 v_2 \wedge v_3 v_4 \neq 0$, we can use the formula $\langle v_1, v_2\rangle \cap \langle v_3, v_4\rangle = \overline{v_1v_2 \wedge v_3 v_4}$ by Lemma \ref{klj}. Since the determinant is a continuous function, $\widetilde{v}_1\widetilde{v}_2 \wedge \widetilde{v}_3 \widetilde{v}_4$ is arbitrary close to $v_1v_2 \wedge v_3 v_4$, so there exists a vector $\widetilde{v}_5$ on $\overline{\widetilde{v}_1\widetilde{v}_2 \wedge \widetilde{v}_3 \widetilde{v}_4}$ arbitrarily close to $v_5$.
\end{proof}
\subsection{A sufficient criterion for liftability} \label{subsec: a sufficient criterion for liftability}

In this subsection, our aim is to prove Proposition~\ref{prop: nilpotent add point liftable}, which provides a sufficient condition for an affirmative answer to the following question:

\begin{question}\normalfont\label{question}
Consider a point-line configuration $M$ on $[d]$ and a collection of points $\{\gamma_{1},\ldots,\gamma_{d-1}\}\subset \mathbb{\CC}^{3}$ lying on a common line $l\subset \mathbb{\CC}^{3}$ which is liftable in a non-degenerate way from a point $q\in \mathbb{\CC}^{3}$ to a collection in $V_{\CCC(M \setminus \{d\})}$. 
Is there a point $\gamma_{d}\in l$ such that the extended collection $\{\gamma_{1},\ldots,\gamma_{d}\}$ is non-degenerately liftable from $q$ to a collection in $V_{\CCC(M)}$?  
\end{question}
In the following chapters, we will apply Proposition~\ref{prop: nilpotent add point liftable} to compute the matroid ideals associated to the Pascal and Pappus configurations. We begin by introducing a definition.
\begin{definition}\label{definition dim lift}
    Let $M$ be a point-line configuration on $[d]$ and let $\gamma$ be a collection of vectors in $V_{\CCC(M)}$ and $q\in \CC^{3}$ a vector in general position with respect to $\gamma$. In accordance with Definition~\ref{def liftable}, we denote by $\Lift_{M,q}(\gamma)\subset \CC^{d}$ the subspace consisting of all vectors $(z_{1},\ldots,z_{d})$ such that the collection of vectors given by $\widetilde{\gamma}_{i}=\gamma_{i}+z_{i}q$ belongs to $V_{\CCC(M)}$. We also introduce the notation
    \[\dimm{q}{\gamma}(M)=\dim(\Lift_{M,q}(\gamma)),\]
for the dimension of the space of liftings of $\gamma$. Using Lemma~\ref{lemma: motivation liftability}, we can deduce that this number is equal to
\[\dim(\ker(\liftmat)),\]
where $\liftmat$ is the matrix defined in Definition~\ref{matrix lift}.
For any submatroid $M|S$, we denote the number $\dim(\Lift_{M|S,q}(\restr{\gamma}{S}))$ by $\dimm{q}{\gamma}(M|S)$. 
\end{definition}
Moreover, we introduce the following notation.
\begin{notation}
Recall the notation $S_M$ from Definition~\ref{def: solvable and nilpotent}. For a line $l \in \mathcal{L}_M$, we introduce the notation $S_l = S_M \cap l$. Furthermore, we use the notation $M \setminus l$ as defined in Notation~\ref{notation: components}.
\end{notation}
\begin{proposition}\label{proposition: dim(ker(liftmat))}
    Let $M$ be a point-line configuration with ground set $[d]$, $\gamma \in \CC^{3}$ be a collection of vectors on a line $l$, without loops or parallel points, and $q$ a point not on that line in general position with respect to $\gamma$. In this case \begin{equation}\label{eq:dim_q(gamma) is cte}\dimm{q}{\gamma}(M) = \dimm{q}{\gamma}(M|S_M) + \sum_{l \in \mathcal{L}_M}(2 - \rk(S_l))+ |\{p \in [d]: \mathcal{L}_p = \emptyset\}|.\end{equation}
\end{proposition}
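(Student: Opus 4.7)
The approach is to reduce both sides of the identity to a concrete description involving only the line $L$ on which $\gamma$ lies (I rename the ambient line to $L$ to avoid clash with the dummy $l \in \mathcal{L}_M$ in the formula), and then compare dimensions via a coordinate projection. Fixing a basis $\{u_1, u_2\}$ of $L$, I will write $\gamma_p = a_p u_1 + b_p u_2$. Since $\gamma$ has no parallel points on $L$, the pairs $(a_p, b_p)$ are pairwise non-proportional, and the general-position assumption on $q$ gives $[u_1, u_2, q] \neq 0$. For every triple $\{p_1, p_2, p_3\}$ lying in some line $\ell \in \mathcal{L}_M$, the bracket $[\gamma_{p_1}, \gamma_{p_2}, \gamma_{p_3}]$ vanishes, so Lemma~\ref{lemma: motivation liftability} reduces the corresponding row equation of $\liftmat$ to the collinearity of the three vectors $(a_{p_i}, b_{p_i}, z_{p_i})$. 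Collecting these conditions line by line, and using the absence of parallel points, they are jointly equivalent to the single linear condition
\[
z|_\ell \in W_\ell := \sspan\bigl\{(a_p)_{p \in \ell},\, (b_p)_{p \in \ell}\bigr\} \subset \CC^\ell,
\]
with $\dim W_\ell = 2$. Hence $\Lift_{M,q}(\gamma)$ is cut out inside $\CC^{[d]}$ by the constraints $z|_\ell \in W_\ell$ for $\ell \in \mathcal{L}_M$, and analogously $\Lift_{M|S_M,q}(\gamma|_{S_M}) = \{w \in \CC^{S_M} : w|_{S_\ell} \in \pi_\ell(W_\ell) \text{ for every } \ell \text{ with } |S_\ell| \geq 3\}$, where $\pi_\ell : \CC^\ell \to \CC^{S_\ell}$ is the coordinate restriction.

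Next I will consider the projection $\pi : \CC^{[d]} \to \CC^{S_M}$, $z \mapsto z|_{S_M}$, and prove (a) that $\pi$ maps $\Lift_{M,q}(\gamma)$ surjectively onto $\Lift_{M|S_M,q}(\gamma|_{S_M})$, and (b) that every fiber has dimension $\sum_{\ell \in \mathcal{L}_M}(2 - \rk(S_\ell)) + |\{p \in [d] : \mathcal{L}_p = \emptyset\}|$; the rank-nullity theorem then yields the formula. The inclusion $\pi(\Lift_{M,q}(\gamma)) \subseteq \Lift_{M|S_M,q}(\gamma|_{S_M})$ is immediate from the description above. For surjectivity and the fiber count, the key observation is that every point outside $S_M$ has degree at most one, so the sets $\{\ell \setminus S_\ell\}_{\ell \in \mathcal{L}_M}$ together with $\{p : \mathcal{L}_p = \emptyset\}$ form a partition of $[d] \setminus S_M$. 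Consequently, extending a given $w \in \Lift_{M|S_M,q}(\gamma|_{S_M})$ to a preimage of $\pi$ decouples into independent problems, one per line and one per isolated point.

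For a fixed line $\ell$, parameterize $W_\ell$ via the evaluation map $E_\ell : \CC^2 \to \CC^\ell$, $(\alpha, \beta) \mapsto (\alpha a_p + \beta b_p)_{p \in \ell}$, so the extension of $w|_{S_\ell}$ amounts to solving the $|S_\ell| \times 2$ system $\alpha a_p + \beta b_p = w_p$ for $p \in S_\ell$ and reading off the remaining coordinates. This system has rank $\rk(S_\ell) \in \{0, 1, 2\}$ and is consistent: trivially when $|S_\ell| \leq 2$, and by the assumption $w|_{S_\ell} \in \pi_\ell(W_\ell) = \im(E_\ell|_{S_\ell})$ when $|S_\ell| \geq 3$. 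Its solution set is therefore an affine space of dimension $2 - \rk(S_\ell)$, and each element extends uniquely to $z|_\ell \in W_\ell$. Isolated points contribute one free coordinate each, so the total fiber dimension is $\sum_{\ell \in \mathcal{L}_M}(2 - \rk(S_\ell)) + |\{p : \mathcal{L}_p = \emptyset\}|$, and surjectivity of $\pi$ follows. The main subtlety, handled by the description of $E_\ell$ above, is verifying when $|S_\ell| \geq 3$ that the $M|S_M$-constraint on $w|_{S_\ell}$ coincides with $\pi_\ell(W_\ell)$; both equal the image of the rank-two map $E_\ell|_{S_\ell}$, which is exactly what makes the consistency step go through.
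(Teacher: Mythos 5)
Your proof is correct, and it takes a genuinely different route from the paper's. The paper proceeds incrementally: starting from $M|S_M$, it adjoins the points of $[d]\setminus S_M$ one at a time and observes that a new point lying on a line through at least two points already present contributes no new degree of freedom, whereas a point on no current line contributes exactly one; the formula then follows by classifying the degree-one points of $M$ according to the size of $S_\ell$ on their unique line $\ell$. Your version instead packages the liftability conditions globally. After fixing a basis $\{u_1,u_2\}$ of $L$ and using $[u_1,u_2,q]\neq 0$, you identify $\Lift_{M,q}(\gamma)$ with the linear subspace $\{z\in\CC^{[d]} : z|_\ell \in W_\ell \text{ for every } \ell\in\mathcal{L}_M\}$, where each $W_\ell\subset\CC^\ell$ is the explicit two-dimensional row space carrying the collinearity constraint, and then compute $\dimm{q}{\gamma}(M)-\dimm{q}{\gamma}(M|S_M)$ as the kernel dimension of the coordinate projection $\pi\colon\CC^{[d]}\to\CC^{S_M}$ restricted to this subspace, after checking surjectivity onto $\Lift_{M|S_M,q}(\gamma|_{S_M})$ so that rank--nullity applies. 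What your approach buys is a transparent linear-algebraic picture of the lift space as an intersection of line-wise conditions: the decoupling of the kernel computation across lines drops out directly from the fact that degree-one points lie on a unique line, and surjectivity is automatic when $|S_\ell|\leq 2$ and is literally the hypothesis $w|_{S_\ell}\in\pi_\ell(W_\ell)$ when $|S_\ell|\geq 3$. What the paper's incremental argument buys is economy of setup (no coordinates on $L$) and an immediate compatibility with iterating the recursion down the chain $S_M\supset S_{S_M}\supset\cdots$, which is exactly how Theorem~\ref{thm independent} is obtained from it.
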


\begin{proof}
    By Lemma \ref{lemma: motivation liftability}, $\widetilde{\gamma} = \{\gamma_i+\lambda_iq: i \in [d]\} \in \VCM$ if and only $\lambda = (\lambda_{1}, \ldots, \lambda_{d})^\top \in \Lift_{M,q}(\gamma)$. 
    Assume that for a vector $\gamma$, we have already found a lifting $\widetilde{\gamma}|_{[b]}$ for a submatroid
    $[b] \subset [d]$. The following remarks hold.
    \newline
    \newline
    {\bf Remark 1}: If $l$ is a line containing at least two points $p_1$ and $p_2$ of ${[b]}$ and a point $b+1 \in [d] \setminus [b]$, then $\widetilde{\gamma}|_{b+1} = \widetilde{\gamma}_l \cap \langle \gamma_{b+1}, q\rangle,$ since $\gamma_p$ is not a loop and all $\gamma_i$ are different. So in that case, $\lambda_{b+1}$ is completely determined by $\lambda_{p_1}$ and $\lambda_{p_2}$, thus $$\dimm{q}{\gamma}(M|[b+1]) = \dimm{q}{\gamma}(M|[b]).$$ 
    {\bf Remark 2}:
    If $p$ is not on any line of $M|[b]
    $, then there is no constraint on $\widetilde{\gamma}_{b+1}$ except for being on the line between $q$ and $\gamma_p$, so $$(\lambda_1, \ldots, \lambda_{b}, 0), (\lambda_1, \ldots, \lambda_{b}, 1) \in \Lift_{M|[b+1],q}(\gamma).$$
    Therefore: 
    $$\dimm{q}{\gamma}(M|[b+1]) = \dimm{q}{\gamma}(M|[b])+1.$$
    
    {\bf Remark 3:} The points in $M \setminus S_M$, are exactly the points of degree zero and one. Therefore, we have to consider precisely the points of degree zero and the points on one line for which $\rk(S_l)=0$, $\rk(S_l) = 1$ or $\rk(S_l) = 2$. 
    \newline
    \newline
    First consider the points in $M$ of degree one on a line with $|S_l| = 0$. Fix such a line $l$. The first point $p_1$ is on no line of $M|S_M$, and the second point is on no line of $M|(S_M \cup \{p_1\})$ as well.
    For the other points, they are on a line of $M|(S_M \cup \{p_1,p_2\})$, so by Remark 1 and 2, $$\dimm{\gamma}{q} (M |(S_M \cup \{p \in M : p \in l\})) = \dimm{\gamma}{q}(M|S_M) +2.$$
    We can repeat this argument for each line in $\mathcal{L}_M$ with $|S_l| =0$. 
    \newline
    \newline
    Now assume $$N = S_M \cup \{p \in [d] :p \text{ is on exactly one line } l \text{ for which } |S_l| = 0\}.$$
    We will consider the points in $M$ of degree one on a line with $|S_l| = 1$. Let $l \in \mathcal{L}_M$ be such a line. Since a line contains at least three points, $l$ should contain at least one point of degree one. By adding one point $p_{1} \in [d] \setminus S_M$ of $l$:
    $$\dimm{q}{\gamma}(M|(N \cup \{p_{1}\})) =\dimm{q}{\gamma}(M|N) +1,$$
    since $p_1$ is on no line of $M|N$. Adding the other points of the line does not increase the dimension since they are on a line in $M|(N \cup \{p_1\}).$
    Analogously, every point of degree 1 on a line with $|S_l| \geq 2$ does not increase the dimension of the kernel of the liftability matrix.
    \newline
    \newline
    Finally, every point $p$ which is not on any line increases the dimension of the liftability matrix with one. 
    Thus using remark 3, we can conclude that \begin{align*}
\dim_q^\gamma(M) 
&= \dim_q^\gamma(M|S_M) 
+ \sum_{l \in \mathcal{L}_M,\, |S_l| = 0} 2 
+ \sum_{l \in \mathcal{L}_M,\, |S_l| = 1} 1 \\
&\quad + \sum_{l \in \mathcal{L}_M,\, |S_l| = 0} 0 
+ |\{p \in [d] : \mathcal{L}_p = \emptyset\}| \\
&= \dim_q^\gamma(M|S_M) 
+ \sum_{l \in \mathcal{L}_M} (2 - \rk(S_l)) 
+ |\{p \in [d] : \mathcal{L}_p = \emptyset\}|.
\end{align*}
    \end{proof}
\begin{theorem} \label{thm independent}
    If $M$ is a nilpotent point-line configuration, $\gamma \in \CC^3$ with no coinciding points or loops, then $\dimm{q}{\gamma}(M)$ is independent of $q$ and $\gamma$.
\end{theorem}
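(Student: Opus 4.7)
The plan is to induct on the length $l_n(M)$ of the nilpotency chain from Definition~\ref{def: solvable and nilpotent}, using Proposition~\ref{proposition: dim(ker(liftmat))} as the engine. The key observation is that the splitting formula in that proposition isolates all dependence on $q$ and $\gamma$ inside the single term $\dim_q^\gamma(M|S_M)$; every other summand is purely combinatorial in $M$. So if one can control the $M|S_M$ term inductively, independence follows for free.

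For the base case, I would take $l_n(M) = 1$, so that $M|S_M = \emptyset$ and every point of $M$ has degree at most one. Then $S_l = \emptyset$ for every line $l$, and Proposition~\ref{proposition: dim(ker(liftmat))} collapses to
\[\dim_q^\gamma(M) = 0 + \sum_{l \in \mathcal{L}_M}(2 - \rk(S_l)) + |\{p \in [d]: \mathcal{L}_p = \emptyset\}| = 2|\mathcal{L}_M| + |\{p : \mathcal{L}_p = \emptyset\}|,\]
which depends only on the combinatorics of $M$.

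For the inductive step, let $l_n(M) \geq 2$ and set $N := M|S_M$. Then $N$ is a submatroid of a nilpotent point-line configuration, hence itself nilpotent with $l_n(N) = l_n(M) - 1$. The restricted collection $\gamma|_{S_M}$ still lies on the same line, inherits the absence of loops and coinciding points, and $q$ remains in general position with respect to it, so the inductive hypothesis applies: $\dim_q^{\gamma|_{S_M}}(N)$ is independent of $q$ and $\gamma$. Substituting into Proposition~\ref{proposition: dim(ker(liftmat))},
\[\dim_q^\gamma(M) = \dim_q^{\gamma|_{S_M}}(N) + \sum_{l \in \mathcal{L}_M}(2 - \rk(S_l)) + |\{p \in [d]: \mathcal{L}_p = \emptyset\}|,\]
where the last two summands depend only on $M$. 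Hence $\dim_q^\gamma(M)$ is independent of $q$ and $\gamma$, closing the induction.

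The only delicate point is verifying that the hypotheses of Proposition~\ref{proposition: dim(ker(liftmat))} continue to hold on the restriction $N$ — that $\gamma|_{S_M}$ still has no loops or coinciding points and that $q$ remains generic — but all three are inherited from $M$, so no extra argument is required. In effect, all the real work has been pushed into Proposition~\ref{proposition: dim(ker(liftmat))}, and once that splitting formula is available the theorem becomes a short induction whose only nontrivial ingredient is the nilpotency hypothesis ensuring that the chain $M \supset M|S_M \supset M|S_{M|S_M} \supset \cdots$ terminates.
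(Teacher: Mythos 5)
Your proof is correct and follows essentially the same strategy as the paper: induct on $l_n(M)$ and invoke Proposition~\ref{proposition: dim(ker(liftmat))} to reduce to $M|S_M$, observing that the remaining summands are purely combinatorial. The only cosmetic difference is that the paper's base case is $l_n(M)=0$ (i.e., $M=\emptyset$, giving $\dim_q^\gamma(M)=0$ directly) rather than your $l_n(M)=1$; otherwise the arguments coincide.
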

\begin{proof}
    We use induction on the length $l_n(M)$ of the nilpotency chain $$M_0 = M \supseteq M_1 = S_M \supseteq M_2 \supseteq \ldots \supseteq M_k = \emptyset.$$
    {\bf Basis step}: If $l_n(M) = 0$, then $M = \emptyset$. In that case, $\dimm{q}{\gamma}(M) = 0$ is clearly independent of $q$ and $\gamma$.
    \newline
    \newline
    {\bf Induction step}: Assume that the theorem holds for matroids with $l_n(M) = n$, and let $l_n(M) = n+1$. By Proposition~\ref{proposition: dim(ker(liftmat))},
    $$\dimm{q}{\gamma}(M) = \dimm{q}{\gamma}(M|S_M) + \sum_{l \in \mathcal{L}}(2 - \rk(S_l))+ |\{p \in [d]: \mathcal{L}_p = \emptyset\}|.$$
    $\dimm{q}{\gamma}(M|S_M)$ is independent of $q$ and $\gamma$ because of the induction hypothesis, since $l_n(S_M) = n$. The other terms are independent of $q$ and $\gamma$ as well. So by the principle of induction, $\dim_q^\gamma(M)$ is independent of $q$ and $\gamma$.
    \end{proof}
If $\dim_\gamma^q(M)$ is independent of $q$ and $\gamma$, we denote it as $\dim(M).$
If $M$ is nilpotent, we can restate Proposition~\ref{proposition: dim(ker(liftmat))} in a more elegant way, as explained in the following remark.

\begin{remark} \label{remark: emiliano}
    Consider a nilpotent point-line configuration $M$ on the ground set $[d]$ and an ordering $w=(p_{1},\ldots,p_{d})$ of its elements. Denote by $M_{w,i}$ the restriction $M\mid \{p_{1},\ldots,p_{i}\}$, and define $w_{i}$ to be the degree of $p_{i}$ in $M_{w,i}$.
If $M$ is nilpotent, there is an ordering $w$ satisfying the following inequality:
\begin{equation}\label{max}\text{max}\{w_{i}:i\in [d]\}\leq 1.\end{equation}
One can construct for instance the following ordering: let $$M = M_0 \supset M_1 \ldots \supset M^l = \emptyset$$ be the chain as in Definition~\ref{def: solvable and nilpotent}, such that $M^{l-1} \neq \emptyset$. For a point-line configuration $N$, denote the number of points in the ground set by $|N|$. Define $p_1, \ldots, p_{|M^{l-1}|}$ to be all the distinct points in $M^{l-1}$, with an arbitrary ordering. Clearly, Equation~\eqref{max} holds in this case. Analogously, define $p_{|M^{l-1}|+1}, \ldots, p_{|M^{l-2}|}$ to be all the points in $M^{l-2} \setminus M^{l-1}$. Continuing this process, one finds an ordering $w = (p_1, \ldots, p_d)$ satisfying Equation~\eqref{max}.
If $M$ is nilpotent, we can adapt the proof of Proposition~\ref{proposition: dim(ker(liftmat))} in the following way. Using remark 1 and 2 of that proof for $\{p_1, \ldots, p_b\} \subseteq [d]$ iteratively for $b$ ranging from 1 to $d$, it is clear that \begin{equation} \dim(\liftmat) = \#\{i\in [d]:w_i = 0\} = \sum_{i \in [d]} d-w_i. \end{equation} From this reformulation, one can immediately deduce that $\dim(\liftmat)$ is independent of $q$ and $\gamma$ if $M$ is a nilpotent point-line configuration.
\end{remark}

\begin{proposition} \label{prop: degree < 3 add point liftable}
   Let $M$ be a point-line configuration on $[d]$. Then,  $\deg(d)\leq 2$ is a sufficient condition for an affirmative answer to Question~\ref{question}. 
\end{proposition}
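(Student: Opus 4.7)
The plan is to convert Question~\ref{question} into a finite system of linear conditions on the lifted vector $\widetilde{\gamma}_d := \gamma_d + z_d q \in \CC^3$, and then exploit $\deg(d)\leq 2$ to solve it by elementary linear algebra in $\CC^3$. First I fix a non-degenerate lifting $\widetilde{\gamma}|_{[d-1]}\in V_{\CCC(M\setminus\{d\})}$ provided by the hypothesis. Because a point-line configuration is a rank-$3$ paving matroid, the circuits of $M$ containing $d$ are either of size $3$ (triples on a common line of $M$ through $d$) or of size $4$ (quadruples no three of whose elements are collinear). Any four vectors in $\CC^3$ are automatically linearly dependent, so the size-$4$ circuits impose no condition on $\widetilde{\gamma}_d$. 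Therefore, the only constraints are that, for each line $l_i\in \mathcal{L}_d$, the triples $\{d,p,r\}\subset l_i$ become linearly dependent in the lifting; this is equivalent to $\widetilde{\gamma}_d \in H_i := \sspan\{\widetilde{\gamma}_p : p\in l_i\setminus\{d\}\}\subset \CC^3$.

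The crucial observation is that $\dim H_i \leq 2$ for each $i$. Indeed, if $|l_i\setminus\{d\}|\geq 3$, then $l_i\setminus\{d\}$ is a line of $M\setminus \{d\}$ and the triples in it are circuits, forcing $\widetilde{\gamma}|_{l_i\setminus\{d\}}$ to be coplanar; if $|l_i\setminus\{d\}| = 2$, the bound is trivial. Now, $\deg(d)\leq 2$ gives us at most two such subspaces $H_1, H_2\subset \CC^3$. A standard dimension count yields $\dim(H_1\cap H_2)\geq \dim H_1 + \dim H_2 - 3 \geq 1$ whenever both $H_i$ are $2$-dimensional, and in all remaining subcases (including $\deg(d)\in\{0,1\}$) the intersection still contains the zero vector. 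I then choose any $\widetilde{\gamma}_d \in H_1\cap H_2$ distinct from $q$ (always possible: the option $\widetilde{\gamma}_d = 0$ is available since $q\neq 0$).

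Since the original lifting is non-degenerate, $q\notin l$, which yields the direct-sum decomposition $\CC^3 = l \oplus \langle q\rangle$, and hence we write uniquely $\widetilde{\gamma}_d = \gamma_d + z_d q$ with $\gamma_d \in l$ and $z_d\in\CC$. By construction, the extended collection $\{\widetilde{\gamma}_1,\ldots,\widetilde{\gamma}_d\}$ satisfies every circuit of $M$, hence lies in $V_{\CCC(M)}$, and non-degeneracy is automatic because $\widetilde{\gamma}|_{[d-1]}$ already spans $\CC^3$. The main (but rather mild) delicate point is the degenerate sub-case in the degree-$2$ regime when both $H_i$ collapse to dimension one and are distinct; there $H_1\cap H_2 = \{0\}$ forces the trivial choice $\gamma_d = 0$, $z_d = 0$. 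This still constitutes a valid affirmative answer to Question~\ref{question}, since a zero entry trivially fits every linear dependency in $V_{\CCC(M)}$.
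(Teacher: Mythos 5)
Your proof is correct and takes essentially the same approach as the paper's. Both arguments reduce the problem to choosing a lifted vector $\widetilde{\gamma}_d$ in the intersection of the (at most two) planes $H_i$ spanned by the lifted vectors on lines of $M$ through $d$, then projecting along $\langle q\rangle$ back to the line $l$, using $q\notin l$ to make the decomposition $\CC^3 = l \oplus \langle q\rangle$. The paper organizes the argument by a case analysis on the number of rank-$2$ planes ($|R|\in\{0,1,2\}$), while you intersect the subspaces $H_i$ directly without splitting out the low-rank cases; this makes your statement ``the constraint is equivalent to $\widetilde{\gamma}_d\in H_i$'' slightly over-restrictive when $\dim H_i\leq 1$ (there the constraint is vacuous, not $\widetilde{\gamma}_d\in H_i$), but since $0\in H_1\cap H_2$ always, that only imposes an unnecessary but still satisfiable condition, and the argument goes through unchanged.
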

\begin{proof}
Denote $\widetilde{\tau}$ a lifting of $\tau$ in $V_{\CCC(M \setminus \{d\})}$ and $R = \{\widetilde{\tau}_l:l \in \mathcal{L}_p, \rk(\widetilde{\tau}_l) = 2\}.$

\medskip

If $|R|=0$, let ${\tau}_d$ be an arbitrary point in $\CC^3$. The resulting collection is liftable from $q$ since any lifting that preserves the points $\{\tau_{1},\ldots,\tau_{d-1}\}$ and lifts the point $\tau_{d}$ represents a lifting to a collection in $V_{\CCC(M)}$.

\medskip

If $|R|=1$, then again any arbitrary point $\tau_{d}\in l$ results in a collection which is liftable from $q$. To see this, consider the lifting of the points $\{\tau_{1},\ldots,\tau_{d-1}\}$ from $q$ to a collection $\widetilde{\tau}$ in $V_{\CCC(M\setminus d)}$. If $\widetilde{\tau}_l \in R$, lift the point $\tau_{d}$ from $q$ to a point in $\widetilde{\tau}_{l}$, resulting in a non-degenerate collection in $V_{\CCC(M)}$. 

\medskip

If $|R|=2$, let $l_1$ and $l_2$ be the lines containing $p$. Then define $\widetilde{\tau}_d$ to be on $\widetilde{\tau}_{l_1} \cap \widetilde{\tau}_{l_2}$, arbitrarily close to the origin. In each case, define $\tau_d$ to be the projection of $\widetilde{\tau}_d$ from $q$ on $l$. The extended $\tau$ can be lifted non-degenerately from $q$ to $\widetilde{\tau}$.

\end{proof}
Before continuing to the final property of this section, we first recall Proposition~I.7.1 of \cite{Hartshorne}: 
\begin{proposition} \label{prop: hartsorne}
    Consider an algebraically closed field $k$, and two varieties $Y,Z$ of dimensions $r,s$ respectively. Then every irreducible component $W$ of $Y\cap Z$ has dimension $\geq r+s-n.$
\end{proposition}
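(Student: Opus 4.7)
The plan is to reduce the statement to an iterated hypersurface section and then invoke Krull's Hauptidealsatz in its geometric form. First I would pass to the affine setting (the projective case follows by covering with standard affine charts and using that dimension is local). Consider the product variety $Y \times Z \subset \mathbb{A}^n \times \mathbb{A}^n = \mathbb{A}^{2n}$, which has dimension $r+s$ (dimension is additive under products of varieties over an algebraically closed field). The diagonal $\Delta = \{(x,x) : x \in \mathbb{A}^n\}$ is cut out inside $\mathbb{A}^{2n}$ by the $n$ linear equations $x_i - y_i = 0$ for $i = 1,\ldots,n$, and the natural projection $(x,x) \mapsto x$ gives an isomorphism $(Y\times Z) \cap \Delta \cong Y \cap Z$. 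So it suffices to show that every irreducible component of $(Y\times Z) \cap \Delta$ has dimension at least $(r+s) - n$.

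The key ingredient is the following geometric version of Krull's principal ideal theorem: if $V \subseteq \mathbb{A}^N$ is an irreducible variety and $f$ is a regular function on $V$ which is not identically zero, then every irreducible component of $V \cap V(f)$ has dimension exactly $\dim V - 1$; if $f$ vanishes on $V$, then $V \cap V(f) = V$. The plan is to apply this lemma iteratively. Set $V_0 = Y \times Z$ and, for each $i = 1,\ldots,n$, let $V_i$ be the intersection of $V_{i-1}$ with the hyperplane $H_i = V(x_i - y_i)$, component by component. At each step the dimension of every irreducible component drops by at most one. After $n$ such steps we obtain $V_n = (Y \times Z) \cap \Delta$, and each of its irreducible components has dimension at least $(r+s) - n$, as required.

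The main obstacle, and the only place that requires genuine care, is bookkeeping the inductive step when a component of $V_{i-1}$ happens to lie entirely inside $H_i$: in that case the dimension does not drop at all, which is still consistent with the desired lower bound but must be handled separately from the generic case, where one really invokes the Hauptidealsatz to conclude that each component of the intersection has codimension exactly one inside its ambient component. Once that dichotomy is treated cleanly, together with the standard facts $\dim(Y\times Z) = \dim Y + \dim Z$ and the isomorphism $(Y\times Z)\cap \Delta \cong Y \cap Z$, everything else is formal and the bound $\dim W \geq r+s-n$ for every irreducible component $W$ follows by induction on $n$.
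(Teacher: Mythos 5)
The paper does not prove this proposition at all; it simply cites it as Proposition~I.7.1 of Hartshorne. Your argument is correct and is essentially the proof Hartshorne himself gives there: reduce to the affine case, replace $Y\cap Z$ by the isomorphic diagonal section $(Y\times Z)\cap\Delta$ of a product of dimension $r+s$, and apply the geometric form of Krull's Hauptidealsatz once for each of the $n$ hyperplanes cutting out $\Delta$, noting at each step that an irreducible component either drops in dimension by exactly one or lies entirely inside the hyperplane and keeps its dimension. So your proposal matches the cited source rather than diverging from it, and the dichotomy you flag as the ``only place that requires genuine care'' is indeed handled correctly.
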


\begin{proposition}\label{prop: nilpotent add point liftable}
Let $M$ be a point-line configuration on $[d]$. Then, within the framework of Question~\ref{question}, the answer is affirmative under the following assumptions:
\begin{enumerate}
\item $M \setminus \{d\}$ is nilpotent and satisfies $\dim(M \setminus \{d\}) \geq 1+\deg(d).$
\item $\gamma$ has no coinciding points.
\end{enumerate} 
\end{proposition}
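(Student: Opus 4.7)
The plan is to pick any $\gamma_d\in l$ distinct from $\gamma_1,\ldots,\gamma_{d-1}$ (possible by assumption~(ii)), denote the extended collection by $\gamma$, and then execute a dimension count inside the lifting space of $M$. Note first that $q\notin l$: if $q$ were in $l$ then every expression $\widetilde{\gamma}_i=\gamma_i+z_iq$ would lie in $l$, contradicting the non-degeneracy of the given lifting of $\gamma|_{[d-1]}$. Assumption~(i) combined with Theorem~\ref{thm independent}, whose hypotheses are met since $M\setminus\{d\}$ is nilpotent and $\gamma|_{[d-1]}$ has no coinciding points or loops, yields $\dim\Lift_{M\setminus\{d\},q}(\gamma|_{[d-1]})=\dim(M\setminus\{d\})\geq 1+\deg(d)$; denote this subspace by $L\subseteq\CC^{d-1}$.

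The second step is the dimension bound. There is a natural inclusion $\Lift_{M,q}(\gamma)\subseteq L\oplus\CC\cdot e_d\subseteq\CC^d$, where the ambient subspace has dimension at least $2+\deg(d)$. Inside it, $\Lift_{M,q}(\gamma)$ is carved out by the linear equations attached to those size-three circuits of $M$ which involve $d$, i.e.\ the equations $[\widetilde{\gamma}_d,\widetilde{\gamma}_{a},\widetilde{\gamma}_{b}]=0$ for each line $\ell\in\mathcal{L}_d$ and each pair $a,b\in\ell\setminus\{d\}$. Since $\gamma_d,\gamma_a,\gamma_b$ all lie on $l$, each such equation expands to a linear equation in $(z_d,z_a,z_b)$. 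The key reduction is that, modulo the constraints defining $L$, all of the equations attached to a fixed line $\ell$ collapse to a single linear condition: when $|\ell|\geq 4$ the constraints of $M\setminus\{d\}$ already force $\widetilde{\gamma}_{\ell\setminus\{d\}}$ to be coplanar in $\CC^3$, and the standard Laplace identity then shows that $[\widetilde{\gamma}_d,\widetilde{\gamma}_{a},\widetilde{\gamma}_{b}]=0$ is equivalent, for every choice of $a,b\in\ell\setminus\{d\}$, to the single condition that $\widetilde{\gamma}_d$ lie in the lifted plane $\widetilde{\gamma}_\ell$; when $|\ell|=3$ there is only one equation to begin with. Hence at most $\deg(d)$ effective linear equations are imposed, and
\[\dim\Lift_{M,q}(\gamma)\geq(2+\deg(d))-\deg(d)=2.\]

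To conclude, I would pick any nonzero $(z_1,\ldots,z_d)\in\Lift_{M,q}(\gamma)$ and set $\widetilde{\gamma}_i=\gamma_i+z_iq$; by Lemma~\ref{lemma: motivation liftability} this yields $\widetilde{\gamma}\in V_{\CCC(M)}$. Some coordinate $z_i$ is nonzero, so $\widetilde{\gamma}_i\notin l$ (using $q\notin l$), while the remaining $\widetilde{\gamma}_j$ with $z_j=0$ span $l$ because assumption~(ii) prevents any two $\gamma_j$ from being proportional. Therefore $\rk(\widetilde{\gamma})=3$ and the lifting is non-degenerate, answering Question~\ref{question} affirmatively. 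The main technical point in this plan is the per-line collapse of the second paragraph: one must verify rigorously that the potentially $\binom{|\ell|-1}{2}$ circuit equations coming from a single line $\ell$ reduce to at most one independent linear condition modulo $L$, even on degenerate loci where some $\widetilde{\gamma}_{a_i},\widetilde{\gamma}_{a_j}$ become proportional. Once this is in hand, the rest of the argument is a routine application of the linear-algebra dimension formula together with Theorem~\ref{thm independent}.
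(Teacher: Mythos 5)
There is a genuine gap, and it is located precisely at the dimension count. Fixing $\gamma_d\in l$ in advance and cutting $L\oplus\CC\cdot e_d$ by the $\deg(d)$ per-line conditions gives you only $\dim\Lift_{M,q}(\gamma)\geq 2$, and this bound is too weak. The degenerate locus
\[
D=\{z\in\CC^d:\rk\,\widetilde{\gamma}^z\leq 2\}
\]
is itself a $2$-dimensional \emph{linear} subspace of $\CC^d$: for a hyperplane $\ker(n)$ with $n(q)=1$, the coplanarity of $\widetilde{\gamma}^z$ forces $z_i=-n(\gamma_i)$, and this is a linear injection from the two-parameter family of admissible $n$ (injectivity uses that $\{\gamma_i\}$ spans $l$ and $q\notin l$). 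Moreover $D\subseteq\Lift_{M,q}(\gamma)$ automatically, since $d$ coplanar vectors satisfy every rank-three circuit condition. So $\dim\Lift_{M,q}(\gamma)\geq 2$ is perfectly compatible with $\Lift_{M,q}(\gamma)=D$, in which case every lifting is degenerate and the conclusion is false. Your last step assumes a nonzero $z$ has at least two vanishing coordinates so that the corresponding $\widetilde{\gamma}_j$ still span $l$; but a generic nonzero $z\in D$ has \emph{no} vanishing coordinates, and the resulting $\widetilde{\gamma}$ spans exactly the plane $\ker(n)\neq l$. So the argument does not close.

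The paper sidesteps this exactly by \emph{not} fixing $\gamma_d$ before the dimension count. It imposes only the $\deg(d)-2$ concurrency conditions (that the lifted lines through $d$ meet in a point) on liftings of $\gamma|_{[d-1]}$, intersects the resulting $Z$ with $T=\Lift_{M\setminus\{d\},q}(\gamma|_{[d-1]})$, and applies Proposition~\ref{prop: hartsorne} to get $\dim(T\cap Z)\geq 3$ --- one more than your bound and, crucially, one more than $\dim D$, so a non-degenerate $z\in T\cap Z$ exists. Only afterwards is $\tau_d$ defined as the projection from $q$ of the concurrency point onto $l$. Deferring the choice of $\gamma_d$ replaces $\deg(d)$ constraints by $\deg(d)-2$ and supplies exactly the extra dimension you are missing. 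If you want to salvage your phrasing, you must let $\gamma_d$ vary (or equivalently work on $T\cap Z$ first and solve for $\gamma_d$ at the end); as written, the plan cannot rule out $\Lift_{M,q}(\gamma)=D$.
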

\begin{proof}
We first prove that we can find a lifting of $\tau$ to $\widetilde{\tau} \in \VCM$ such that the lines of $M$ in $\{\widetilde{\tau}_m:m\in \mathcal{L}_d\}$ are concurrent. More precisely, we will prove that there exists a vector $z =(z_1, \ldots, z_d) \in \CC^d$ for which $$\widetilde{\tau}^z =(\tau_{1}+z_{1}q,\ldots,\tau_{d-1}+z_{d-1}q)$$ satisfies the conditions of Question~\ref{question}. 
\newline
First we consider the concurrency condition. Pick two points for each $m_i \in \mathcal{L}_d$, denote them as $p_{1m_i}$ and $p_{2m_i}$. The lines through these points should be concurrent, which is translated by Lemma \ref{klj} into the following conditions.
    \begin{equation}\label{eq:concurrency conditions}
\left\{
    \begin{array}{ll}
        t_3^z = (\widetilde{\tau}^z_{p_{1m_1}} \vee \widetilde{\tau}^z_{p_{2m_1}}) \wedge (\widetilde{\tau}^z_{p_{1m_2}} \vee \widetilde{\tau}^z_{p_{2m_2}}) \vee (\widetilde{\tau}^z_{p_{1m_3}} \vee \widetilde{\tau}^z_{p_{2m_3}}) = 0 \\
        \vdots \\
        t_{\deg(d)}^{z} = (\widetilde{\tau}^z_{p_{1m_1}} \vee \widetilde{\tau}^z_{p_{2m_1}}) \wedge (\widetilde{\tau}^z_{p_{1m_2}} \vee \widetilde{\tau}^z_{p_{2m_2}}) \vee (\widetilde{\tau}^z_{p_{1m_{\deg(d)}}} \vee \widetilde{\tau}^z_{p_{2m_{\deg(d)}}}) = 0 \\
    \end{array}
\right.
\end{equation}
So define $$Z:=\{z:\widetilde{\tau}^z \text{ satisfies the conditions of Equation } \ref{eq:concurrency conditions}\}.$$
Since $Z$ is determined by the vanishing locus of $\deg(d)-2$ polynomials, we can use Proposition~\ref{prop: hartsorne} to conclude that:
$$\dim(Z) \geq d+2-\deg(d).$$
Define $$T :=\{z:\widetilde{\tau}^z \in V_{\CCC(M \setminus \{d\}}\}.$$
By definition, $T$ is precisely the linear space $\Lift_{M\setminus \{d\},q}(\gamma)$ introduced in Definition~\ref{definition dim lift}, and thus
\[\dim(T)=\dim_{q}^{\gamma}(M\setminus \{d\})=\dim(M\setminus \{d\}),\]
where the final equality follows from Theorem~\ref{thm independent}. Note that $T \cap Z$ is non-empty, as it contains the zero vector. Considering the dimension of $T \cap Z$, we can again use Proposition~\ref{prop: hartsorne} to conclude that:
$$\dim(T \cap Z) \geq 2-\deg(d)+\dim(M \setminus \{d\}) \geq 3.$$
Since $T \cap Z$ has dimension at least three, we can find a $d$-tuple $z \in T \cap Z$ for which $\widetilde{\tau}^z_d$ has rank three. 

Thus we can find a non-degenerate $\widetilde{\tau}$, which is a lifting of $\tau$ from $q$, such that $\widetilde{\tau}_{m_1}, \widetilde{\tau}_{m_2}, \ldots, \ldots, \widetilde{\tau}_{m_k}$ are concurrent in a point $x \in \CC^3$.
\newline
\newline
Now project $x$ from $q$ onto the line $l$ and define the resulting point to be $\tau_d$. This yields the desired result.
\end{proof}
\section{Pascal Configuration}
In this section, we study the Pascal configuration, the point-line configuration illustrated in Figure~\ref{fig:pascal 1} (Left). Throughout this section, we denote this point-line configuration by $M$. Specifically, we determine the irreducible decomposition of its circuit variety and we find a generating set for its matroid variety, up to radical.

We recall the definition of a hypergraph automorphism, as in \cite{bretto2013hypergraph}, a key concept in the following chapters.
\begin{definition}
A {\em hypergraph automorphism} of a hypergraph $H$ with vertices $V$ and hyperedges $E$ is a permutation of the vertex set, which induces a bijection from the set of hyperedges.
\end{definition}
Note that, when proving a result, it suffices to prove it for one representative from each equivalence class under the equivalence relation induced by the automorphisms.
\begin{remark}
For each pair of points $i,j \in [6]$, one can find a hypergraph automorphism of $M$ which maps $i$ to $j$. A similar result holds for each pair of points $i,j \in \{7,8,9\}$. We say that the points in $[6]$ are symmetric to each other, and that the points in $\{7,8,9\}$ are symmetric to each other. 
\end{remark}
We also introduce the following notation for the remainder of the thesis:
\begin{notation}\label{notation pi}
Consider a point-line configuration $N$ on $[d]$.
\begin{itemize}
\item $\pi_N^i$ denotes the point-line configuration with one line containing all the points in $[d]$, except for $i$. Moreover, we identify the points that share a common line with $i$ in $N$. See Figure~\ref{fig:pascal 1} (Center) for an illustration of $\pi_{M}^{1}$.
\item Recall that $N(i)$ denotes the point-line configuration obtained by making $i$ a loop. More generally, for a subset $\{i_{1},\ldots,i_{k}\}\subset [d]$, $N(i_{1},i_{2}, \ldots, i_{k})$ denotes the point-line configuration obtained by turning all points in $\{i_{1},i_{2}, \ldots, i_{k}\}$ into loops. Note that this point-line configuration is isomorphic to $M\setminus \{i_{1},\ldots,i_{k}\}$, if we remove the loops.
    \end{itemize}
\end{notation}
Moreover, the following remark will be used throughout the remainder of the paper.

\begin{remark}
    When perturbing, we will always assume that the perturbation is small enough such that no new dependencies are introduced, as in Lemma \ref{lemma:finite epsilon to not create dependencies}.
\end{remark}
\subsection{Irreducible decomposition of $V_{\mathcal{C}(M)}$}

In this subsection, we determine an irreducible decomposition for the circuit variety of $M$. Rather than using the decomposition strategy in this section, we develop a shorter alternative.
We start this subsection with a remark.
\begin{remark} \label{remark reduced}
For matroids $M$ of rank at most three with loops or parallel elements, the results from Theorem \ref{nil coincide} still apply.
Consider $M_{\text{red}}$, obtained by removing loops and identifying double points, then Theorem \ref{nil coincide} holds for $M_{\text{red}}$. Putting the double points and loops back, if $M$ is nilpotent without points of degree at least three, then $$\VCM = V_M.$$ Moreover, if every proper submatroid of $M$ is nilpotent and $M$ has no points of degree at least three, then $$\VCM = V_M \cup V_{U_{2,d}}',$$ where $V_{U_{2,d}}'$ is the matroid obtained from the uniform matroid with $d$ points such that it has the same double points and loops as the original matroid. 
\end{remark}
We will introduce Lemma \ref{props}, which will play a fundamental role in our discussion. Therefore, we first prove another lemma, for which the proof is deferred to Section \ref{appendix:proof of lemma pappus}.
\begin{lemma} \label{auxiliary lemma}
    Denote the point-line configuration $M \setminus \{1\}$ as $N$. This is depicted in Figure~\ref{fig:pascal 2} (Left). Let $A$ be the point-line configuration on the ground set $\{2,\ldots,9\}$, in which $2=3=9$ and the points $\{2,4,7,8\}$ form a line, as in Figure~\ref{fig:pascal 1} (Right). Then $V_A \subseteq V_N$.
\end{lemma}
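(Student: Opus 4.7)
Since $V_A=\overline{\Gamma_A}$ and $V_N$ is Zariski closed, with the Zariski and Euclidean closures of the constructible set $\Gamma_N$ coinciding, the plan is to show that every $\gamma\in\Gamma_A$ is a Euclidean limit of realizations in $\Gamma_N$. First I would unpack the combinatorics of $A$: the parallel class $\{2,3,9\}$ forces $\gamma_2=\gamma_3=\gamma_9=:p$, the line $\{2,4,7,8\}$ places $p,\gamma_4,\gamma_7,\gamma_8$ on a common projective line $l$, and the lines $\{4,5,7\}$ and $\{5,6,8\}$ of $N$ that are also in $A$ then force $\gamma_5,\gamma_6$ onto $l$ as well. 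Hence every realization of $A$ has all eight vectors supported on $l$, pairwise distinct except for the triple coincidence at $p$.

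For such a $\gamma$, I would construct a one-parameter family $\gamma^{(t)}\in\Gamma_N$ with $\gamma^{(t)}\to\gamma$ as $t\to 0$. Choose affine coordinates so that $l=\{y=0\}$ and fix a transverse direction $e_y$; put $\gamma^{(t)}_7=\gamma_7$, $\gamma^{(t)}_8=\gamma_8$, $\gamma^{(t)}_4=\gamma_4+t\,e_y$, and let $\gamma^{(t)}_9$ be a small translation of $\gamma_9$ along $l$ (so the line $\{7,8,9\}$ stays on $l$). Then define $\gamma^{(t)}_5$ as the intersection of the line through $\gamma^{(t)}_4,\gamma^{(t)}_7$ with the vertical line through $\gamma_5$; $\gamma^{(t)}_6$ analogously from $\gamma^{(t)}_5,\gamma^{(t)}_8$; $\gamma^{(t)}_3$ as the intersection of the line through $\gamma^{(t)}_4,\gamma^{(t)}_9$ with a generic reference line through $p$; and $\gamma^{(t)}_2$ analogously on the line through $\gamma^{(t)}_3,\gamma^{(t)}_8$. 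By construction every line of $N$ is satisfied and $\gamma^{(t)}\to\gamma$, and Lemma~\ref{lemma:finite epsilon to not create dependencies} ensures that for small $t\ne 0$ no extra dependencies arise, so $\gamma^{(t)}\in\Gamma_N$ and therefore $\gamma\in V_N$.

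The main obstacle I foresee is ensuring that the three points $\gamma^{(t)}_2,\gamma^{(t)}_3,\gamma^{(t)}_9$, which all collapse to $p$ in the limit, remain pairwise distinct for $t\ne 0$ and that no accidental collinearity among the perturbed points spoils the realization; this is handled by generic choices of the translation of $\gamma^{(t)}_9$ along $l$ and of the two reference lines through $p$. As a conceptually cleaner alternative I would also consider, one can instead invoke the nilpotency of $N$: the chain $S_N=\{3,4,5,7,8,9\}\supset\{4,7,9\}$ contains no further lines and terminates at the empty matroid, so by Proposition~\ref{nilp lift}, $N$ is liftable. Since $\gamma\in\Gamma_A$ lies in the hyperplane $H$ spanned by $l$ and already belongs to $V_{\mathcal{C}(N)}$, Theorem~\ref{thm: gamma i V_M^lift => liftable} then provides an arbitrarily small non-degenerate lifting of $\gamma$ from any $q\notin H$ into $V_{\mathcal{C}(N)}$, and a genericity argument places the lifted configuration in $\Gamma_N$, yielding the inclusion $V_A\subseteq V_N$.
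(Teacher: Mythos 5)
Your proposal rests on a factual error about the lines of $N=M\setminus\{1\}$, and this error propagates through the whole argument. The Pascal configuration has lines $\{1,6,8\},\{1,5,7\},\{2,4,7\},\{2,6,9\},\{3,4,8\},\{3,5,9\},\{7,8,9\}$ (see Remark~\ref{remark pascal}), so the lines of $N$ are exactly $\{2,4,7\},\{2,6,9\},\{3,4,8\},\{3,5,9\},\{7,8,9\}$. There is no line $\{4,5,7\}$ or $\{5,6,8\}$ in $N$, so nothing forces $\gamma_5,\gamma_6$ onto the line $l$ in a realization of $A$. Once $2=3=9$ is imposed, the former three-point lines through $5$ and $6$ (namely $\{3,5,9\}$ and $\{2,6,9\}$) shrink to pairs and impose no collinearity. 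Consequently a generic element of $\Gamma_A$ has rank~$3$: in the paper's normal form for $\xi\in\Gamma_A$ one has $\xi_9=e_1$, $\xi_4=e_2$, $\xi_5=e_3$, which already span $\CC^3$. Your claim that "every realization of $A$ has all eight vectors supported on $l$" is therefore false.

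This invalidates both halves of your proposal. The explicit one-parameter family is built around the false premise that $\gamma$ lies on $l$ (you place $\gamma_4$ on $l$ and then try to recover $\gamma_5,\gamma_6$ by intersections with lines $\{4,5,7\}$ and $\{5,6,8\}$ that do not exist in $N$). The liftability alternative via Theorem~\ref{thm: gamma i V_M^lift => liftable} is inapplicable for the same reason: that theorem perturbs a collection of vectors lying in a common hyperplane $H$ by lifting from a point $q\notin H$, but a generic $\gamma\in\Gamma_A$ does not lie in any hyperplane. (Incidentally, your computation of $S_N$ is also off: $S_N=\{2,3,4,7,8,9\}$, not $\{3,4,5,7,8,9\}$, although $N$ is indeed nilpotent.) The paper avoids all of this by working with explicit moduli-space parametrizations: after fixing a projective reference frame on $\{9,4,5,6\}$, a realization of $N$ is written in terms of free parameters $v,w,z$, a realization of $A$ in terms of $x,y$, and one chooses $v,w,z$ as functions of $\epsilon$ (with $1+v=1/\epsilon$, $1+z=x/\epsilon$, $w=\epsilon y/x$) so that the resulting $\gamma_\epsilon\in\Gamma_N$ converges to the given $\xi\in\Gamma_A$ as $\epsilon\to 0$. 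That degeneration sends the triple $\{2,3,9\}$ to a common point and flattens $\{2,4,7,8\}$ onto a line while keeping $5,6$ in general position, which is the geometric picture your analysis missed.
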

\begin{lemma}\label{props}
    Denote the point-line configuration $M \setminus \{1\}$ as $N$, shown in Figure~\ref{fig:pascal 2} (Left). Then, we have \[V_{\mathcal{C}(N)} = V_{N} \cup V_{N(9)}.\]
\end{lemma}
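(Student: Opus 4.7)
The inclusion $V_{N}\cup V_{N(9)}\subseteq V_{\mathcal{C}(N)}$ is routine: $V_N\subseteq V_{\mathcal{C}(N)}$ by Remark~\ref{generating set ICM}, and since $\mathcal{D}(N)\subseteq \mathcal{D}(N(9))$, every realization of $N(9)$ includes the dependencies of $N$, so $\Gamma_{N(9)}\subseteq V_{\mathcal{C}(N)}$ and the containment extends to $V_{N(9)}$ after taking Zariski closures. The real work is the reverse inclusion. The plan is to fix an arbitrary $\gamma\in V_{\mathcal{C}(N)}$ and to split on whether $\gamma_{9}=0$.

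For the case $\gamma_{9}=0$, the goal is to show $\gamma\in V_{N(9)}$. Since $\gamma_{9}=0$, the truncated collection $(\gamma_{2},\ldots,\gamma_{8})$ lies in $V_{\mathcal{C}(N\setminus\{9\})}$, and it suffices to prove $(\gamma_{2},\ldots,\gamma_{8})\in V_{N\setminus\{9\}}$. I would verify that the submatroid $N\setminus\{9\}$ is nilpotent by running the chain from Definition~\ref{def: solvable and nilpotent}: in the Pascal configuration the degree-three points are $\{7,8,9\}$ and both $7$ and $8$ lie on the lines through $1$, so once $1$ is removed only $9$ remains of degree three in $N$; removing $9$ then leaves a configuration in which $S_{N\setminus\{9\}}$ visibly collapses to $\emptyset$ in a short chain. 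With nilpotency in hand, Theorem~\ref{nil coincide}(i) yields $V_{\mathcal{C}(N\setminus\{9\})}=V_{N\setminus\{9\}}$, which gives $\gamma\in V_{N(9)}$ as required.

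For the case $\gamma_{9}\neq 0$, I would construct a perturbation of $\gamma$ into $\Gamma_{N}$, exploiting the abundance of degree-two points in $N$. For each degree-two point $p$ with $\mathcal{L}_{p}=\{l,l'\}$, either $\gamma_{p}$ is forced to be the intersection $\gamma_{l}\cap\gamma_{l'}$ (when both spans have rank two and differ), or Lemma~\ref{lemma: if gamma_i gamma_j wedge gamma_k gamma_l =0} allows me to reposition $\gamma_{p}$ freely while remaining in $V_{\mathcal{C}(N)}$. Combining this flexibility with Lemma~\ref{lemma: meet of 2 extensors arbitrary close}, which keeps intersection points arbitrarily close under small perturbations, I would incrementally break any accidental coincidences, starting with the vectors that enjoy the most freedom and working towards those constrained by several lines, while keeping $\gamma_{9}\neq 0$ and the three lines through $9$ concurrent at $\gamma_{9}$. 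The delicate sub-case is the one where several of the $\gamma_{i}$ collapse onto a subspace through $\gamma_{9}$, producing for instance the identification $\gamma_{2}=\gamma_{3}=\gamma_{9}$ together with the extra line $\{2,4,7,8\}$; this is exactly the configuration $A$ of Lemma~\ref{auxiliary lemma}, which already supplies a perturbation of $\gamma$ into $V_{N}$.

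The main obstacle I anticipate is organizing the $\gamma_{9}\neq 0$ case into a clean finite enumeration of degenerations. Generic configurations are dispatched easily by local perturbations thanks to the many degree-two points, but coincidences involving $\gamma_{9}$ and the points $7,8$ (which were degree-three in the Pascal configuration before removing $1$) can interact in subtle ways and typically force the configuration into an intermediate matroid with an extra collinearity. Lemma~\ref{auxiliary lemma} is tailored to absorb precisely the worst such collapse, so with that lemma available the argument should reduce to a short check that every remaining degeneration is either directly perturbable or symmetric (via a hypergraph automorphism of $N$) to the situation of Lemma~\ref{auxiliary lemma}.
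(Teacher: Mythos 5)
Your first two paragraphs are correct and match the paper's proof exactly. The inclusion $\supseteq$ is routine, and the case $\gamma_{9}=0$ is handled precisely as you describe: $N\setminus\{9\}$ is nilpotent with no points of degree greater than two, so Theorem~\ref{nil coincide}(i) gives $V_{\mathcal{C}(N(9))}=V_{N(9)}$.

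The case $\gamma_{9}\neq 0$ is where the gap lies. You correctly sense that Lemma~\ref{lemma: third config loop matroid variety} lets you discard loops and that the collapse $\gamma_{2}=\gamma_{3}=\gamma_{9}$ is the worst degeneration, handled by Lemma~\ref{auxiliary lemma}; and you even guess the extra line $\{2,4,7,8\}$. But you do not isolate the clean dichotomy that makes the enumeration finite, and you never explain (i) what to do away from that collapse, or (ii) why the collapse forces $\{2,4,7,8\}$ to be collinear. The paper splits as follows. If $\gamma_{2}\neq\gamma_{9}$ (or symmetrically $\gamma_{3}\neq\gamma_{9}$), observe that the only line of $N$ through $6$ is $\{2,6,9\}$, and that $N\setminus\{6\}$ is nilpotent with all points of degree at most two; so by Theorem~\ref{nil coincide}(i) one perturbs $\{\gamma_{2},\gamma_{3},\gamma_{4},\gamma_{5},\gamma_{7},\gamma_{8},\gamma_{9}\}$ into $\Gamma_{N\setminus\{6\}}$, and since $\gamma_{2}\neq\gamma_{9}$ the perturbed line $\widetilde{\gamma}_{2,9}$ stays close to the original, so $\gamma_{6}$ can then be slid onto $\widetilde{\gamma}_{2,9}$, yielding an element of $\Gamma_{N}$. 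If instead $\gamma_{2}=\gamma_{3}=\gamma_{9}$, the dependencies $\{2,4,7\}$, $\{3,4,8\}$, $\{7,8,9\}\in\mathcal{D}(N)$ translate, after the identification, into $\{\gamma_{2},\gamma_{4},\gamma_{7}\}$, $\{\gamma_{2},\gamma_{4},\gamma_{8}\}$, $\{\gamma_{2},\gamma_{7},\gamma_{8}\}$ all dependent, which forces $\operatorname{rk}\{\gamma_{2},\gamma_{4},\gamma_{7},\gamma_{8}\}\leq 2$ (this needs a one-line linear-algebra check using $\gamma_{2}\neq 0$); hence $\gamma\in V_{\mathcal{C}(A)}$, and the simplified $A$ is nilpotent with points of degree at most two, so $V_{\mathcal{C}(A)}=V_{A}\subseteq V_{N}$ by Lemma~\ref{auxiliary lemma}. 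Your ``incrementally break accidental coincidences'' sketch does not supply this dichotomy or the specific nilpotent submatroid $N\setminus\{6\}$ that anchors the generic case, and you acknowledge as much; that is the content you would need to fill in.
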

\begin{proof}
Let $\gamma \in \VCN$. We will prove that $\gamma \in V_N$ or $\gamma \in V_{N(9)}$. We first suppose that $\gamma_{9}=0$. Since $N\setminus \{9\}$, shown in Figure \ref{fig:pascal 2} (Center), is nilpotent and all points of degree have degree at most two, Theorem~\ref{nil coincide} (i) implies that $V_{\CCC(N\setminus \{9\})}=V_{N\setminus \{9\}}$, hence $V_{\CCC(N(9))}=V_{N(9)}$. From this we conclude that $\gamma\in V_{N(9)}$.

Now suppose that $\gamma_9 \neq 0$. Then by Lemma~\ref{lemma: third config loop matroid variety}, we may assume that $\gamma$ has no loops.

\medskip
{\bf Case~1.} Suppose that $\gamma_2 \neq \gamma_9$ or $\gamma_3\neq \gamma_{9}$. We may assume without loss of generality that $\gamma_2 \neq \gamma_9$. By Theorem~\ref{nil coincide} (i), we can apply a perturbation to the vectors $\{\gamma_{2},\gamma_3, \gamma_4, \gamma_{5}, \gamma_7,\gamma_{8}, \gamma_9\}$ to obtain a collection $\{\widetilde{\gamma}_{2},\widetilde{\gamma}_{3},\widetilde{\gamma}_{4},\widetilde{\gamma}_{5},\widetilde{\gamma}_{7}, \widetilde{\gamma}_{8}, \widetilde{\gamma}_{9}\}\in \Gamma_{N \setminus \{6\}}$.

Since $\gamma_{2}\neq \gamma_{9}$, the line $\widetilde{\gamma}_{2,9}$ is a perturbation of the line $\gamma_{2,9}$. Since the only line in $N$ containing the point $6$ is $\{2,9,6\}$, we can then perturb $\gamma_6$ so that it lies on the line $\widetilde{\gamma}_{2,9}$. This yields a collection of vectors in $\Gamma_{N}$, showing that $\gamma\in V_{N}$.

\medskip
{\bf Case~2.} Suppose that $\gamma_2= \gamma_3= \gamma_9$. Since the subsets $\{2,7,4\},\{7,8,9\}$ and $\{3,4,8\}$ are dependent in $N$ and $\gamma\in V_{\CCC(N)}$, it follows that the sets of vectors $\{\gamma_2,\gamma_7,\gamma_4\},\{\gamma_7,\gamma_8,\gamma_9\}$ and $\{\gamma_3,\gamma_4,\gamma_8\}$ are linearly dependent. Moreover, as $\gamma_2= \gamma_3= \gamma_9$, we obtain that the following sets of vectors are linearly dependent as well:
\[\{\gamma_2,\gamma_7,\gamma_4\}, \{\gamma_2,\gamma_7,\gamma_8\}, \{\gamma_2,\gamma_4,\gamma_8\}.\]
From this we conclude that the points $\{\gamma_2,\gamma_4,\gamma_7,\gamma_8\}$ are on a line.

Let $A$ be the point-line configuration as in Lemma \ref{auxiliary lemma}. Given that $\rank \{\gamma_2,\gamma_4,\gamma_7,\gamma_8\}\leq 2$, it follows that $\gamma \in V_{\CCC(A)}$.

If we identify the double point $\{2,3,9\}$ as a simple point in $A$, then the resulting matroid is nilpotent and all of its points have degree at most two. By Theorem~\ref{nil coincide} (i), we conclude that $V_{\CCC(A)} = V_A$. By Lemma \ref{auxiliary lemma}, $V_A \subseteq V_N$, so $\gamma \in V_N$, which finishes the proof.
\end{proof}

We now present the irreducible decomposition of the circuit variety associated with the Pascal configuration.

\begin{theorem} \label{proposition: decomposition pascal configuration}
The circuit variety $V_{\mathcal{C}(M)}$ admits the following irreducible decomposition:
\begin{equation}\label{equ}V_{\mathcal{C}(M)} = V_M \cup V_{U_{2,9}} \cup_{i=7}^9 V_{M(i)}.\end{equation}
\end{theorem}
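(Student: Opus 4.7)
The direction $\supseteq$ of Equation~\eqref{equ} is routine: $V_{U_{2,9}} \subseteq V_{\CCC(M)}$ because a collection with all nine points on a common line has every three of them linearly dependent, so in particular all circuits of $M$ are included; and for $i \in \{7,8,9\}$, $V_{M(i)} \subseteq V_{\CCC(M)}$ since $M(i)$ has strictly more dependencies than $M$.

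For the reverse inclusion, I would fix $\gamma \in V_{\CCC(M)}$ and exploit two structural features: Lemma~\ref{props}, which gives $V_{\CCC(N)} = V_N \cup V_{N(9)}$ for $N = M\setminus\{1\}$, and the hypergraph automorphisms of $M$, which act transitively on $[6]$ and on $\{7,8,9\}$. The argument then splits into three cases according to the zero pattern of $\gamma$.

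In Case~1, some $\gamma_j = 0$ for $j \in [6]$; by the $[6]$-symmetry one may take $j = 1$, and Lemma~\ref{props} applied to $\gamma|_N$ yields two subcases. If $\gamma|_N \in V_N$, then $\gamma \in V_{M(1)}$, and since $1$ has degree two in $M$, Lemma~\ref{lemma: third config loop matroid variety} yields $V_{M(1)} \subseteq V_M$. If instead $\gamma|_N \in V_{N(9)}$, then $\gamma_9 = 0$ as well, so $\gamma \in V_{M(1,9)}$; applying Lemma~\ref{lemma: third config loop matroid variety} to the point $1$ inside $M(9)$, where $1$ still has degree at most two, gives $\gamma \in V_{M(9)}$. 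In Case~2, $\gamma$ has no loop in $[6]$ but $\gamma_j = 0$ for some $j \in \{7,8,9\}$; by symmetry take $j = 9$, so $\gamma \in V_{\CCC(M(9))}$. A direct check using Definition~\ref{def: solvable and nilpotent} shows that $M\setminus\{9\}$ is nilpotent with every point of degree at most two, so Theorem~\ref{nil coincide}(i) gives $V_{\CCC(M\setminus\{9\})} = V_{M\setminus\{9\}}$, hence $\gamma \in V_{M(9)}$.

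In Case~3, $\gamma$ has no loops at all. Since $\gamma_9 \neq 0$, Lemma~\ref{props} forces $\gamma|_N \in V_N$, and I would perturb $\gamma|_N$ to $\tilde{\gamma}|_N \in \Gamma_N$ and define $\tilde{\gamma}_1$ as the intersection of the two perturbed lines $\tilde{\gamma}_{ab}, \tilde{\gamma}_{cd}$ of $M$ through $1$. Whenever $\gamma_{ab}\wedge\gamma_{cd} \neq 0$, Lemma~\ref{lemma: meet of 2 extensors arbitrary close} ensures $\tilde{\gamma}_1 \to \gamma_1$, yielding $\gamma \in V_M$. In the degenerate subcase $\gamma_{ab} = \gamma_{cd}$, four of the six conic points already lie on a common line $l$; tracing through the remaining incidences of the Pascal configuration one by one forces each of the remaining vectors onto $l$, placing $\gamma \in V_{U_{2,9}}$.

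The main obstacle is this cascade in the degenerate subcase of Case~3: one must verify carefully, using the specific line structure of $M$, that an initial four-point collinearity propagates under the circuit equations to a collinearity of all nine points. Once this is established, irreducibility of each component on the right of Equation~\eqref{equ} follows from Theorem~\ref{nil coincide}(iv) applied to the solvable matroids $M$ and $M(i)$, together with the obvious irreducibility of $V_{U_{2,9}}$; irredundancy is verified by a dimension count showing that no component is contained in another.
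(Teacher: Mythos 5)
Your Cases~1 and~2 are correct: a loop at a point of $[6]$ is absorbed by Lemma~\ref{props} together with Lemma~\ref{lemma: third config loop matroid variety}, and a loop at a point of $\{7,8,9\}$ reduces to $V_{M(i)}$ via nilpotency of $M\setminus\{9\}$. This matches the paper's treatment; the paper simply handles the two kinds of loops in the opposite order, disposing of loops in $\{7,8,9\}$ first and then perturbing any remaining loop in $[6]$ to a nonzero vector inside $\VCM$ rather than arguing a containment of matroid varieties.

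The gap is in Case~3, and it is deeper than the cascade merely needing to be verified. Your two subcases, ``$\gamma_{68}\wedge\gamma_{57}\neq 0$'' and ``$\gamma_{68}=\gamma_{57}$'', are not complementary: if $\gamma_6=\gamma_8$, the wedge vanishes because $\gamma_{68}$ has rank one, not because the two lines coincide, and neither subcase applies. You also test only the wedge at the point $1$; the correct dichotomy, used in the paper, is ``$\gamma_{l_1}\wedge\gamma_{l_2}\neq 0$ for \emph{some} $p\in[6]$'' versus ``the wedge vanishes at \emph{every} $p\in[6]$''. The first branch gives $\gamma\in V_M$ by the perturbation argument applied at whichever $p$ works, via symmetry and Lemma~\ref{props}. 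For the second branch, the paper first invokes Lemma~\ref{lemma: if gamma_i gamma_j wedge gamma_k gamma_l =0} to perturb inside $\VCM$ so that $\gamma_i\neq\gamma_j$ for $i\in[6]$, $j\in[9]$, $i\neq j$; only after this does every line $\neq\{7,8,9\}$ have rank two, so each vanishing wedge genuinely forces the two lines through $p$ to coincide, and the six such lines form a closed cycle of coincidences, placing all nine vectors on one line. Without this distinctness perturbation your cascade can stall, since each propagation step requires two already-placed points to be distinct in order to span the common line; and without considering all $p\in[6]$, the no-loop configurations whose wedge vanishes at $1$ but not elsewhere are not handled at all. Filling in exactly these two points is the role Lemma~\ref{lemma: if gamma_i gamma_j wedge gamma_k gamma_l =0} plays in the paper's Case~2.
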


\begin{proof}
The inclusion $\supseteq$ is clear, since the matroids on the right-hand side have more dependencies
than M. To prove the reverse inclusion, we must show that any $\gamma\in V_{\mathcal{C}(M)}$ lies in the variety on the right-hand side of~\eqref{equ}. 

First, suppose that there exists a point $i \in \{7,8,9\}$ satisfying $\gamma_i = 0$. Without loss of generality, we may assume that $i=7$. $M(7)$ is shown in Figure~\ref{fig:pascal 2} (Right). Since the matroid $M\setminus \{7\}$ is nilpotent and has no points of degree at least three, it follows that $V_{\mathcal{C}(M(7))} = V_{M(7)}$. Consequently, we have $\gamma \in V_{M(7)}$, as desired. Hence, we can assume that $\gamma_{7},\gamma_{8},\gamma_{9}\neq 0$. Using Lemma \ref{lemma: third config loop matroid variety}, we can then assume that every vector of $\gamma$ is non-zero. 

\medskip
{\bf Case~1.} Suppose that there exists a point $p\in \{1,\ldots,6\}$ such that $\gamma_{l_1} \wedge \gamma_{l_2} \neq 0$, where $\{l_1, l_2\} =\mathcal{L}_p$. We may assume without loss of generality that $p=1$.

Since $M\setminus \{1\}$ is the matroid from Lemma~\ref{props}, applying this result yields $\restr{\gamma}{\{2,\ldots,9\}}\in V_{M\setminus \{1\}}$. Consequently, we can perturb the vectors $\{\gamma_{2},\ldots,\gamma_{9}\}$ to obtain a collection $\{\widetilde{\gamma}_{2},\ldots,\widetilde{\gamma}_{9}\}\in \Gamma_{M\setminus \{1\}}$. We can extend $\widetilde{\gamma}$ by defining 
\[\widetilde{\gamma}_1 = \widetilde{\gamma}_5 \widetilde{\gamma}_7 \wedge \widetilde{\gamma}_6 \widetilde{\gamma}_8,\]
then $\widetilde{\gamma}\in \Gamma_{M}$. It follows from Lemma \ref{lemma: meet of 2 extensors arbitrary close} that $\widetilde{\gamma}$ is arbitrarily close to $\gamma$, thus $\gamma\in V_{M}$.

\medskip
{\bf Case~2.} Suppose that for each point $p \in \{1, \ldots, 6\}$, we have $\gamma_{l_1} \wedge \gamma_{l_2} = 0$, where $\{l_1, l_2\} =\mathcal{L}_p$. 

By applying Lemma \ref{lemma: if gamma_i gamma_j wedge gamma_k gamma_l =0}, we can find a perturbation of $\gamma$ in $V_M$, such that for each $i \in [6], j \in [9]$ with $i \neq j$, we can assume that $\gamma_i \neq \gamma_j$. Observe that for every line $l$ of $M$ distinct from $\{7,8,9\}$, the set $\{\gamma_{p}:p\in l\}$ contains three different points and no loops, thus $\rank(\gamma_{l})=2$. By our assumption in Case~2, these six lines must coincide, that is,
\[\gamma_{\{1,6,8\}} = \gamma_{\{1,5,7\}} = \gamma_{\{2,6,9\}} = \gamma_{\{2,7,4\}} = \gamma_{\{3,4,8\}}=\gamma_{\{3,5,9\}}.\]

Thus the vectors $\{\gamma_{1},\ldots,\gamma_{9}\}$ all lie in a common two-dimensional subspace. We conclude that $\gamma\in V_{\mathcal{C}(U_{2,9})}=V_{U_{2,9}}$.

\medskip
The irreducibility of each component follows from Theorem~\ref{nil coincide}. The irredundancy of $V_{U_{2,9}}$ follows from a similar argument as in \cite{clarke2024liftablepointlineconfigurationsdefining}. 
The irredundancy of $V_{M(i)}$ is proven in Section \ref{appendix:proof of lemma pappus}.
    \end{proof}
    \begin{figure}
    \centering
    \includegraphics[width=0.9\linewidth]{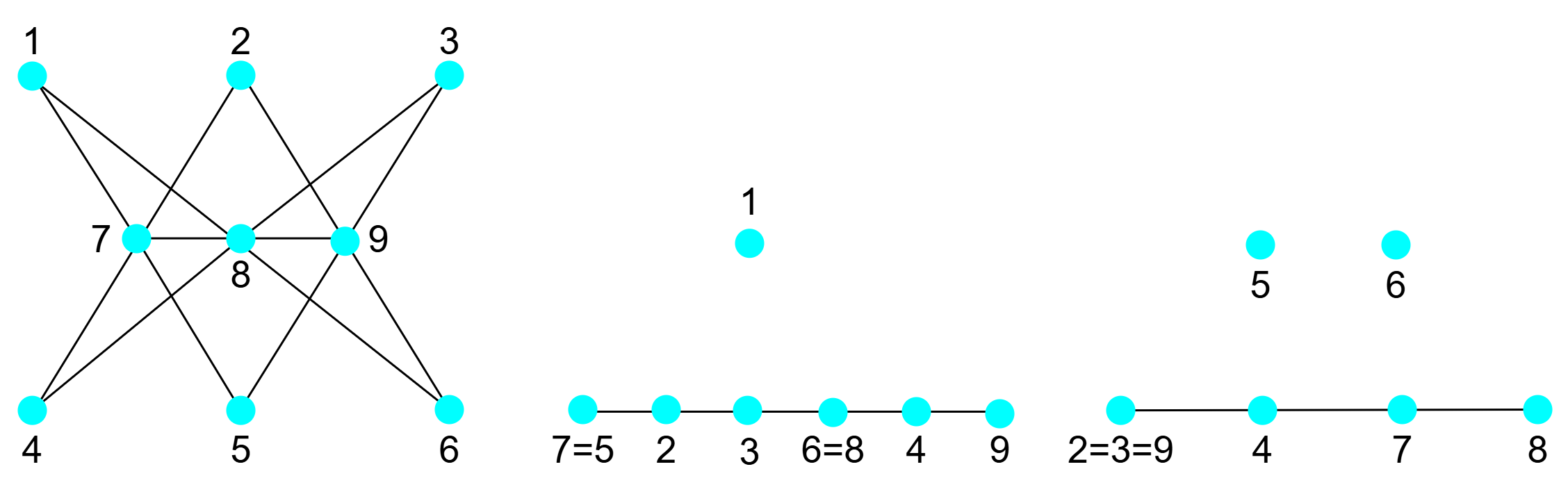}
    \caption{(Left) Pascal configuration; (Center) $\pi_M^1$; (Right) $A$.}
    \label{fig:pascal 1}
\end{figure}
\begin{figure}
    \centering
    \includegraphics[width=0.9\linewidth]{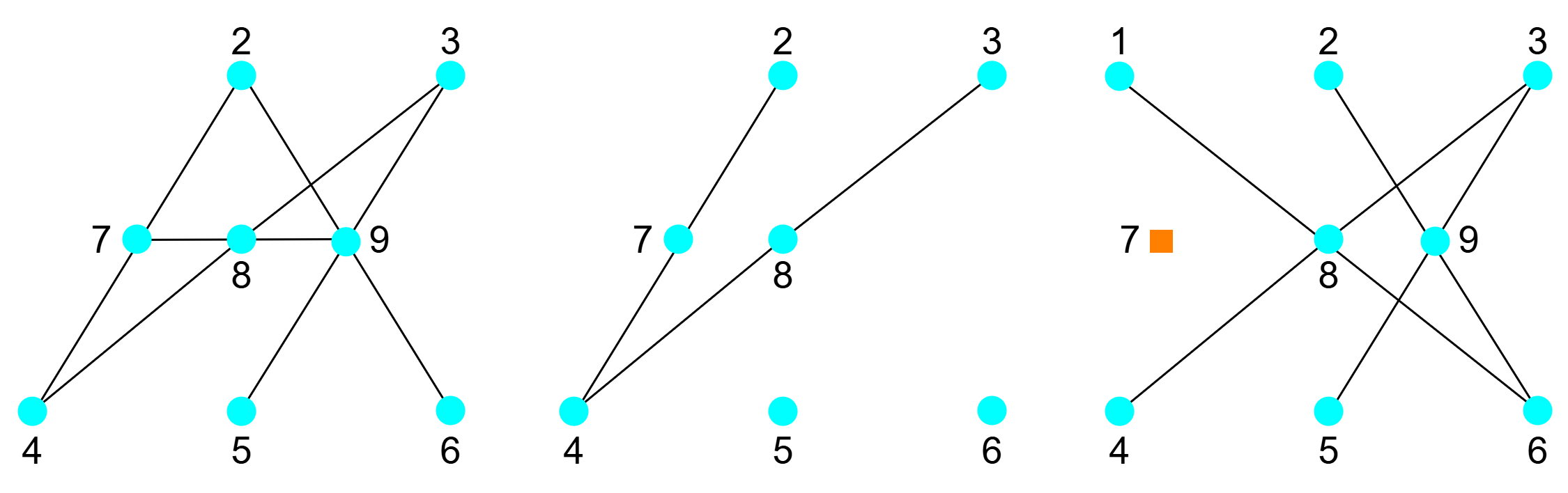}
    \caption{(Left) $M \setminus \{1\}$; (Center) $M \setminus \{1,9\}$; (Right) $M(7)$.}
    \label{fig:pascal 2}
\end{figure}

\subsection{Defining equations of $V_{M}$}

In this section, we provide a complete set of defining equations for the matroid variety of the Pascal configuration, up to radical. First, we state a lemma, for which the proof is deferred to Section \ref{appendix:proof of lemma pappus}.
\begin{lemma} \label{lemma pascal V_B subset V_M}
    Consider the matroid $B$ for which all points coincide. Then $V_B \subseteq V_M$.
\end{lemma}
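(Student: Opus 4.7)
The plan is to show that any $\gamma \in V_B$ arises as the Euclidean limit of a family of genuine realizations of the Pascal configuration $M$. Since $\Gamma_M$ is constructible, its Euclidean and Zariski closures coincide (as used in Section~\ref{appendix:proof of lemma pappus}), so producing such a limit immediately gives $\gamma \in V_M$.

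Write $\gamma_i = c_i v$ for a fixed nonzero vector $v \in \CC^3$ and scalars $c_i \in \CC$; the case $v = 0$ (i.e.\ $\gamma$ is the zero collection) reduces to perturbing to any nonzero collinear configuration first. The key step is Pascal's theorem. I would fix a smooth conic $C \subset \mathbb{P}^2$ passing through $[v]$, and for each small $t > 0$ choose six distinct points $P_1(t), \ldots, P_6(t)$ on $C$, all tending to $[v]$ as $t \to 0$, arranged in the cyclic order $1,5,3,4,2,6$. This particular ordering pairs opposite sides of the hexagon as $(\overline{P_1 P_5}, \overline{P_2 P_4})$, $(\overline{P_1 P_6}, \overline{P_3 P_4})$, $(\overline{P_2 P_6}, \overline{P_3 P_5})$, so the intersections $P_7(t), P_8(t), P_9(t)$ of these pairs are collinear by Pascal's theorem, matching the incidence structure of $M$ listed in Section~\ref{sec: pascal & pappus}. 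Hence the nine projective points form a realization of $M$, and all three derived points tend to $[v]$ as $t \to 0$.

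Next, I would lift each $P_i(t)$ to affine representatives $\widetilde{\gamma}_i(t) = \lambda_i(t) v + \varepsilon_i(t)$ in $\CC^3$ with $\lambda_i(t) \to 1$ and $\varepsilon_i(t) \to 0$, and rescale each vector independently: multiply $\widetilde{\gamma}_i(t)$ by $c_i/\lambda_i(t)$ when $c_i \neq 0$, and by a strictly nonzero scalar $s(t) \to 0$ when $c_i = 0$. All rank conditions defining $\Gamma_M$ are invariant under scaling of individual vectors by nonzero scalars, so the rescaled collection still lies in $\Gamma_M$. As $t \to 0$ the rescaled $\widetilde{\gamma}(t)$ converges to $\gamma$ in the Euclidean topology, which completes the argument.

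The main obstacle is handling the loop case $c_i = 0$, where one needs $\widetilde{\gamma}_i(t) \neq 0$ for every $t > 0$ while simultaneously forcing $\widetilde{\gamma}_i(t) \to 0$; the choice of a strictly nonzero scaling factor $s(t)$ with $s(t) \to 0$ resolves this. The only remaining verification is combinatorial, namely that the cyclic labeling $1,5,3,4,2,6$ on the conic reproduces exactly the lines $\{1,5,7\}, \{2,4,7\}, \{1,6,8\}, \{3,4,8\}, \{2,6,9\}, \{3,5,9\}, \{7,8,9\}$ of $M$, which is a direct check.
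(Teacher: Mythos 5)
Your proof takes a genuinely different route from the paper's. The paper writes down an explicit five-parameter family of realizations of $M$ (in the variables $x,y,z,\epsilon,\eta$) and verifies directly that, as all parameters tend to $0$, the nine columns converge to the common point $(1,0,0)$; invariance of $V_M$ under linear transformations and column rescaling then finishes the argument. You instead exploit the geometric origin of the Pascal configuration: six points on a conic collapsing to $[v]$, with the three derived Pascal points produced by intersecting opposite sides. Your combinatorial matching is correct: with the hexagon order $1,5,3,4,2,6$ the opposite-side pairs $(\overline{P_1P_5},\overline{P_2P_4})$, $(\overline{P_3P_5},\overline{P_2P_6})$, $(\overline{P_3P_4},\overline{P_1P_6})$ give exactly the points $7,9,8$ and, with Pascal's line $\{7,8,9\}$, all seven lines of $M$ are reproduced. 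The rescaling trick at the end (including the $s(t)\to0$ trick for the unnecessary loop case — note that $\Gamma_B$ contains no loops, so you could drop it) is also sound.

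However, there is a genuine gap in the step ``all three derived points tend to $[v]$ as $t\to0$.'' As the six points on $C$ collapse to $[v]$, \emph{every} chord $\overline{P_iP_j}$ converges to the tangent line $T_{[v]}C$, so each pair of opposite-side lines consists of two lines converging to the \emph{same} limit line. The intersection of two nearly coincident lines is not controlled by their common limit: it can converge to any point of $T_{[v]}C$, or diverge to infinity. Concretely, parameterize $C$ locally by $s\mapsto P(s)$ with $P(0)=[v]$ and take $P_i(t)=P(ta_i)$ for constants $a_i$. Taking $C$ to be the parabola $y=x^2$ and $P(s)=(s,s^2)$, the chord $\overline{P_iP_j}$ has equation $y=t(a_i+a_j)x-t^2a_ia_j$, so $\overline{P_1P_5}\cap\overline{P_2P_4}$ has $x$-coordinate $t(a_1a_5-a_2a_4)/(a_1+a_5-a_2-a_4)$; if $a_1+a_5=a_2+a_4$ this point escapes to infinity for all $t$ rather than tending to $[v]$. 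The same genericity issue also underlies the implicit claim that the nine points form a realization of $M$ (no unintended collinearities) for small $t>0$. The argument is salvageable — one should fix a parameterization $P(s)$, choose the constants $a_1,\ldots,a_6$ generically so that the relevant finitely many linear conditions (such as $a_1+a_5\neq a_2+a_4$, $a_3+a_5\neq a_2+a_6$, $a_1+a_6\neq a_3+a_4$) are avoided, and then verify the limit explicitly — but as written this step is asserted, not proved. The paper's explicit parameterization sidesteps this issue entirely because the convergence is visible by inspection of the matrix entries.
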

\begin{theorem} \label{pascal generators}
    Let $M$ be the matroid associated to the Pascal configuration. Then $$I_{M}=\sqrt{I_{\mathcal{C}(M)}+G_{M}+I_{M}^{\textup{lift}}}.$$
\end{theorem}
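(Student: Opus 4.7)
The plan is to apply the perturbation strategy from Subsection~\ref{subsection: perturbation strategy}. By Theorem~\ref{thm: hilbert nullstellensatz}, the desired ideal equality is equivalent to the geometric identity
\[
V_M \;=\; V_{\mathcal{C}(M)} \cap V(G_M) \cap V(I_M^{\text{lift}}),
\]
in which the inclusion $\subseteq$ is immediate from Remark~\ref{generating set ICM}, Proposition~\ref{inc G} and Theorem~\ref{thm: gamma i V_M^lift => liftable}. I fix $\gamma$ in the right-hand side. By the irreducible decomposition obtained in Theorem~\ref{proposition: decomposition pascal configuration}, $\gamma$ lies in at least one of $V_M$, $V_{U_{2,9}}$, or $V_{M(i)}$ for some $i\in\{7,8,9\}$, and the case $\gamma \in V_M$ is trivial. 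The goal in each remaining case is to exhibit a collection in $\Gamma_M$ arbitrarily close to $\gamma$ in the Euclidean topology, which places $\gamma$ in the Zariski closure $V_M$.

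First I would dispose of the loop components $V_{M(i)}$. The hypergraph automorphism of the Pascal configuration that permutes $\{7,8,9\}$ reduces this to the case $i=9$, so $\gamma_9=0$. Since the three lines through $9$ are $\{7,8,9\}$, $\{2,6,9\}$, $\{3,5,9\}$, the hypothesis $\gamma \in V(G_M)$ combined with Proposition~\ref{gm toegevoegd} forces the spans $\gamma_{78}$, $\gamma_{26}$, $\gamma_{35}$ to be concurrent in $\mathbb{P}^2$. If at least two of these spans are genuinely distinct rank-two lines, I redefine $\gamma_9$ to be the common intersection point; the resulting configuration still lies in $V_{\mathcal{C}(M)} \cap V(G_M) \cap V(I_M^{\text{lift}})$ and now has $\gamma_9 \neq 0$, so by the decomposition it falls into either $V_M$ or $V_{U_{2,9}}$. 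If no two of the three spans are genuinely distinct, the six vectors $\gamma_2,\gamma_3,\gamma_5,\gamma_6,\gamma_7,\gamma_8$ all collapse into a common plane; after perturbing the degree-two points $\gamma_1,\gamma_4$ onto this plane (which is possible by Lemma~\ref{lemma: third config loop matroid variety} or the same Grassmann-Cayley concurrency at points $1$ and $4$), $\gamma$ falls into $V_{U_{2,9}}$.

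The remaining and essential case is $\gamma \in V_{U_{2,9}}$: all nine vectors lie in a common two-dimensional subspace $H \subset \mathbb{C}^3$. I pick any $q \notin H$; because $\gamma \in V(I_M^{\text{lift}})$, Theorem~\ref{thm: gamma i V_M^lift => liftable}(ii) supplies a non-degenerate lift $\widetilde{\gamma}\in V_{\mathcal{C}(M)}$ of $\gamma$ from $q$, and this lift can be made arbitrarily small. I then need the lift to land in $V_M$, not in another component of $V_{\mathcal{C}(M)}$: non-degeneracy of the lift automatically rules out $V_{U_{2,9}}$, while a dimension count on the lifting space $\Lift_{M,q}(\gamma)$ from Definition~\ref{definition dim lift} shows that the loci $\{\widetilde{\gamma}_i=0\}$ for $i\in\{7,8,9\}$ cut out proper subspaces inside it, so a generic lift avoids $V_{M(7)}$, $V_{M(8)}$, $V_{M(9)}$. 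The extreme boundary case, in which $\gamma$ is so degenerate that the lifting space itself collapses into these loci — for instance when all $\gamma_j$ coincide in $H$ — is covered directly by Lemma~\ref{lemma pascal V_B subset V_M}. Since $V_M$ is Zariski closed, $\gamma \in V_M$.

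The hard part is precisely this last case: it is the only place where the lifting ideal $I_M^{\text{lift}}$ is indispensable, because neither the circuit nor the Grassmann-Cayley conditions produce any non-degenerate information once every vector collapses into a common hyperplane. The dimension count on $\Lift_{M,q}(\gamma)$ does not follow immediately from Theorem~\ref{thm independent}, since the high-degree points $\{7,8,9\}$ of the Pascal configuration themselves form a line and so obstruct nilpotency of the obvious submatroids. One must therefore carry out this count by hand for the Pascal configuration and verify that, after imposing $\widetilde{\gamma}_i\neq 0$ for $i\in\{7,8,9\}$, a positive-dimensional family of valid small lifts remains — which is, up to technicalities, the geometric content encoded by the very large collection of lifting polynomials counted in Theorem~(A).
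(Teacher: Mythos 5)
Your proposal follows the same overall perturbation strategy as the paper (decompose $V_{\mathcal{C}(M)}$, perturb/lift out of each non-trivial component), but it has a genuine gap in the ``essential case'' $\gamma\in V_{U_{2,9}}$ with a loop at a degree-three point. You claim that a dimension count on $\Lift_{M,q}(\gamma)$ shows the loci $\{\widetilde{\gamma}_i=0\}$ for $i\in\{7,8,9\}$ cut out proper subspaces, so a generic lift avoids $V_{M(i)}$. This is false when $\gamma_i=0$. Suppose $\gamma_7=0$ and all of $\gamma$ lies in a rank-two subspace $H$ with $q\notin H$. Then $\widetilde{\gamma}_7=z_7 q$, and the circuit $\{1,5,7\}$ forces $\det(\widetilde{\gamma}_1,\widetilde{\gamma}_5,\widetilde{\gamma}_7)=z_7\det(\gamma_1,\gamma_5,q)=0$; since $\gamma_1,\gamma_5\in H$ are generically independent and $q\notin H$, the latter determinant is nonzero, so $z_7=0$ is forced. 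Every lift of $\gamma$ therefore lands back in $V_{M(7)}$ --- the locus $\{\widetilde{\gamma}_7=0\}$ is not a proper subspace of $\Lift_{M,q}(\gamma)$ but equals it. Genericity buys you nothing here.

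This is precisely why the paper's Case~2.2 does not reason this way. It first \emph{redefines} $\gamma_7$ to a nonzero point of the ambient line $l$ so that the new $\gamma$ (still inside $l$) is liftable to a non-degenerate collection. That redefinition does not come from a genericity argument on $\Lift_{M,q}(\gamma)$; it comes from Proposition~\ref{prop: nilpotent add point liftable} (and Proposition~\ref{prop: degree < 3 add point liftable} for the degree-two loop), whose hypotheses are verified by the dimension count $\dim(M\setminus\{7\})=4\geq 1+\deg(7)$ from Proposition~\ref{proposition: dim(ker(liftmat))}. Your remark that the concurrent triple $\{7,8,9\}$ ``obstructs nilpotency of the obvious submatroids'' is also incorrect: $M\setminus\{7\}$ \emph{is} nilpotent (deleting $7$ shrinks the line $\{7,8,9\}$ to a non-line $\{8,9\}$, and the chain $S$-iteration terminates), and that nilpotency is exactly what the paper exploits. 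In short, the essential new machinery of the paper for this theorem is Subsection~\ref{subsec: a sufficient criterion for liftability}, and your proposal does not supply a substitute for it.

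A secondary, more minor issue: in your loop case you redefine $\gamma_9$ via the Grassmann--Cayley concurrency and assert that the new configuration ``still lies in $V_{\mathcal{C}(M)}\cap V(G_M)\cap V(I_M^{\text{lift}})$.'' The claim about $V(I_M^{\text{lift}})$ is not justified and is not clearly true (the paper never needs it: when the redefined collection has rank three with no loops among $\{7,8,9\}$, the decomposition already places it in $V_M$, and when it stays rank two the paper routes through the nilpotency-based Propositions rather than through $V(I_M^{\text{lift}})$ of the redefined configuration).
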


\begin{proof}
To prove the equality of the statement, we will show the following equivalent equality of varieties
\begin{equation}\label{lifti}
V_M = V_{\mathcal{C}(M)} \cap V(G_M) \cap V_{M}^{\textup{lift}}.
\end{equation}

The inclusion $\subset$ in~\eqref{lifti} follows directly from Remark~\ref{generating set ICM}, Theorem~\ref{thm: gamma i V_M^lift => liftable} and Proposition~\ref{inc G}. To establish the reverse inclusion, consider $$\gamma \in V(G_M) \cap V(I_M^{\text{lift}}) \cap \VCM.$$ We must show that $\gamma\in V_{M}$. Since $\gamma\in V_{\mathcal{C}(M)}$, it follows from Theorem~\ref{proposition: decomposition pascal configuration} that
\begin{equation}\label{gamma in}
\gamma\in V_M \cup V_{U_{2,9}} \cup_{i=7}^9 V_{M(i)}.
\end{equation}

\medskip
{\bf Case~1.} Suppose that $\rk(\gamma)=3$.

\medskip
{\bf Case~1.1.} Suppose that $\gamma_{7},\gamma_{8},\gamma_{9}\neq 0$.  Then it immediately follows from Theorem~\ref{proposition: decomposition pascal configuration} that $\gamma\in V_{M}$, as desired. 

\medskip
{\bf Case~1.2} Suppose that there is a loop $\gamma_i$ for some point $i \in \{7,8,9\}$. If there are at least two distinct lines $l \in \mathcal{L}_i$ with $\rk(\gamma_l) = 2$, we redefine $\gamma_i$ as the intersection of these lines, using $\gamma \in V(G_M)$. After applying this procedure to all such loops, we may assume that for each remaining loop $i$, there is at most one line $l\in \mathcal{L}_i$ with $\rk(l) = 2$.

Now perturb $\gamma_i$ to a non-zero vector in $\gamma_{l}$ for some $l \in \mathcal{L}_i$ with $\rk(l) = 2$. If no such line exists, we redefine $\gamma_i$ arbitrarily. The resulting configuration still lies in $\VCM$, since any line passing through $\gamma_i$ with rank one will still have rank at most two after this perturbation. Following this process for all points in $\{7,8,9\}$, we redefine $\gamma$ such that $\gamma_{7},\gamma_{8},\gamma_{9}\neq 0$ and the configuration remains in $\VCM$. At this stage, we fall into Case~1.1, from which it follows that $\gamma\in V_{M}$, as desired. 

\medskip
{\bf Case~2.} Suppose that $\rk(\gamma) =2$. In this case, all the vectors of $\gamma$ lie on a single line, which we denote by $l$. 

\medskip
{\bf Case~2.1.} Suppose that $\gamma_i \neq 0$ for all $i \in \{1, \ldots, 9\}$. Since $\gamma \in V(I_M^{\text{lift}})$, we lift $\gamma$ to $\widetilde{\gamma} \in \VCM$, such that $\rk(\widetilde{\gamma}) = 3$ and $\widetilde{\gamma}$ does not have any zero vectors. By Case~1.1, this ensures that $\widetilde{\gamma} \in V_M$, and consequently $\gamma\in V_{M}$, as desired.

\medskip
{\bf Case~2.2.} Suppose that at least one vector $\gamma_i$ is a loop. If multiple elements in $\{\gamma_{i}\}$ are loops, redefine all loops but one as arbitrary points on $l$. Consequently, we may assume that $\gamma$ has one loop and due to symmetry reasons, we can assume that $\gamma_1$ or $\gamma_7$ is the zero vector. First assume that $\gamma_1$ is the only zero vector. Since $M \setminus\{1\}$ is nilpotent, $\gamma|_{\{2,\ldots, 9\}}$ is liftable from any $q$. Thus by Proposition~\ref{prop: degree < 3 add point liftable}, we can redefine $\gamma_1$ such that $\gamma$ can be lifted to a collection of vectors of rank three in $\VCM$. By Case~1.1, this implies $\gamma \in V_M$.

\medskip Now assume that $\gamma_7 = 0$. Let $q\in \CC^3$ be any point outside $l$. By Proposition~\ref{proposition: dim(ker(liftmat))}, we obtain
\[\dim_q^\gamma(M \setminus \{7\}) = \dim_q^\gamma(S_M) = 4.\] 
Applying Proposition~\ref{prop: nilpotent add point liftable}, we can redefine $\gamma_7$ as a point on $l$ such that the resulting configuration is liftable to a non-degenerate collection of vectors in $\VCM$. By Case~1.1, $\gamma \in V_M$.

\medskip

{\bf Case~3.} Suppose that $\rk(\gamma) = 1$. In this case, all points either coincide with a fixed point or are loops. If at least one point is a loop, perturb the remaining points to reduce to Case~2.2. If there are no loops, then Lemma \ref{lemma pascal V_B subset V_M} implies that $\gamma \in V_M$.

\end{proof}

Theorem~\ref{lemma pascal V_B subset V_M} establishes that the ideals $I_{\CCC(M)}, G_M$ and $I_M^{\textup{lift}}$ generate $I_{M}$ up to radical, but it does not present an explicit generating set. This will be done in the following remark.
\begin{remark} \label{remark pascal}
By Theorem~\ref{lemma pascal V_B subset V_M}, to find explicit generators for $I_M$, up to radical, it suffices to find explicit generators for $I_{\CCC(M)}, G_M$ and $I_M^{\textup{lift}}$.
\begin{itemize}[noitemsep]
\item The circuit polynomials are precisely the bracket polynomials corresponding to the circuits of $M$, as explained in Definition~\ref{cir}. In this case, they are:
$$\bigl\{[1,6,8],[1,5,7],[2,4,7],[2,6,9],[3,4,8],[3,5,9],[7,8,9]\bigr\}.$$
\item Concerning the Grassmann–Cayley polynomials, we can deduce from the proof of Theorem~\ref{lemma pascal V_B subset V_M} that it suffices to consider the following:
\begin{itemize}
\item[{\rm (i)}] The polynomial
\[[1,5,3][1,4,2][5,4,6][3,2,6]-[1,5,4][1,3,2][5,3,6][4,2,6]\in G_{M}\]
which follows from the Grassmann–Cayley expression
\[(15 \wedge 24) \vee (16 \wedge 34) \vee (35 \wedge 26)=0.\]
\item[{\rm (ii)}] The polynomial 
\[[5,2,6][3,6,1][7,3,4]-[3,2,6][3,6,1][7,5,4]+[3,2,6][4,6,1][7,5,3]\in G_{M}\]
which corresponds to the expression
\[7\vee (53\wedge 26)\vee (34\wedge 61)=0,\]
together with the two analogous polynomials in $G_M$ obtained from the expressions
\[8\vee (51\wedge 24)\vee (35\wedge 62)=0,\qquad \text{and} \qquad 9\vee (43\wedge 16)\vee (24\wedge 51)=0.\]
\item[{\rm (iii)}] The polynomial 
\[[7,4,9][3,6,1]-[4,6,1][7,3,9]\in G_M\]
corresponding to the expression
\[7\vee 9 \vee (34\wedge 61)=0,\]
together with its two analogues derived from
\[7\vee 8 \vee (35\wedge 62)=0,\qquad \text{and} \qquad 9\vee 8 \vee (15\wedge 42)=0.\]
\end{itemize}
These are precisely the seven Grassmann–Cayley polynomials that the authors of \cite{Sidman} introduced in Theorem~3.0.2. For a detailed treatment of their construction and properties, we refer the reader to that paper.
\item One can notice that in the proof of Theorem~\ref{lemma pascal V_B subset V_M}, we only use the assumption $\gamma \in V(I_M^{\textup{lift}})$ to provide the existence of a non-degenerate lifting of the vectors $\{\gamma_1,\ldots,\gamma_9\}$ to a collection in $V_{\CCC(M)}$. Thus, by Lemma~\ref{lemma: motivation liftability}, it suffices to consider the $7\times 7$ minors of the liftability matrices $\mathcal{M}_{q}(M)$.

Since $q$ ranges over $\CC^{3}$, the set of all such minors is not finite. However, the discussion in Section~6 of \cite{liwski2024pavingmatroidsdefiningequations} shows that one obtains a finite set of generators by selecting vectors $q_1, \ldots q_9\in \CC^{3}$ and replacing the vector $q$ appearing in the brackets of column $i$ with $q_i$. This also gives us a polynomial inside $I_M$. By multilinearity of the determinant, it is enough to take each $q_i$ from the canonical basis $\{e_1, e_2, e_3\}$ of $\CC^3$. The resulting polynomials are the $7\times 7$ minors of the matrices 

 $$
\scalebox{0.95}{$
\begin{pmatrix}
       [68q_1] & 0 & 0 & 0 & 0 & -[18q_6] & 0 & [16q_8] & 0 \\
       [57q_1] & 0 & 0 & 0 & -[17q_5] & 0 & [15q_7] & 0 & 0 \\
       0 & [47q_2] & 0 & -[27q_4] & 0 & 0 & [24q_7] & 0 & 0 \\
       0 & [69q_2] & 0 & 0 & 0 & -[29q_6] & 0 & 0 & [26q_9] \\
       0 & 0 & [48q_3] & -[38q_4] & 0 & 0 & 0 & [34q_8] & 0 \\
       0 & 0 & [59q_3] & 0 & -[39q_5] & 0 & 0 & 0 & [35q_9] \\
       0 & 0 & 0 & 0 & 0 & 0 & [89q_7] & -[79q_8] & [78q_9] \\
\end{pmatrix}
$}
$$
where $q_1, \ldots, q_9\in \{e_1, e_2, e_3\}$. The total number of such polynomials is $\textstyle \binom{9}{7}3^{9}$.
\end{itemize}
\end{remark}
\section{Pappus configuration}
In this section, we discuss the Pappus configuration, which is the point-line configuration illustrated in Figure~\ref{fig:Pappus, Pappus, I_i, J_i} (Left). We find a complete set of defining equations for its matroid variety.
Throughout this section, we denote the Pappus configuration by $M$.
The following theorem from \cite{liwskimohammadialgorithmforminimalmatroids} provides the irreducible decomposition of the Pappus configuration.
\begin{theorem} \label{pappus decompo}
   The circuit variety of $M$ admits the following irredundant, irreducible decomposition:
\begin{equation}\label{equation: decomposition of pappus} \VCM = V_{M} \cup V_{U_{2,9}} \cup_{i=1}^{18} V_{I_i} \cup_{i=1}^3 V_{J_i} \cup_{i=1}^9 V_{\pi_M^i}, \end{equation}
where the matroids in the decomposition are the following:

\begin{itemize}
\item $U_{2,9}$ denotes the uniform matroid of rank two on the ground set $[9]$.
\item The matroids $I_i$ for $i\in [18]$ are obtained from $M$ by making one of its points a loop and adding one of the circuits $\{1, 4, 9\}, \{2,5,8\}$ or $\{3,6,7\}$, see Figure~\ref{fig:Pappus, Pappus, I_i, J_i} (Center). 
\item $J_{1},J_{2},J_{3}$ denote the matroids derived from $M$ by making all three points of one of the triples $\{1,4,9\}, \{3,6,7\}$ or $ \{2,5,8\}$ loops, see Figure~\ref{fig:Pappus, Pappus, I_i, J_i} (Right).
\item The matroids $\pi_M^i$ are defined in Notation~\ref{notation pi}. See Figure~\ref{fig:Pappus, K_i, L__i, N} (Left) for an illustration.
\end{itemize}
\end{theorem}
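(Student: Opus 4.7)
The inclusion $\supseteq$ is immediate, since each matroid on the right-hand side has at least the dependencies of $M$, so its realization space lies in $\VCM$ and hence so does its Zariski closure. For the reverse inclusion, my plan is to follow the decomposition strategy of Subsection~\ref{subsec: decomposition strategy}. By Proposition~\ref{deco circ},
$$V_{\mathcal{C}(M)} = V_M \cup \bigcup_{N \in \min(M)} V_{\mathcal{C}(N)},$$
so the first step is to compute $\min(M)$ using the algorithm referenced in that subsection. The Pappus configuration has a rich symmetry: the classical Pappus theorem forces the three triples $\{1,4,9\}, \{2,5,8\}, \{3,6,7\}$ to be dependent in every realization, and these \emph{hidden} dependencies are precisely what drive the extra components. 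I would expect the minimal matroids to split into families obtained either by declaring a single point to be a loop, or by imposing one of the three Pappus triples as a new circuit. Together with the natural projection configurations $\pi_M^i$, these candidates already match the components listed in the theorem.

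The second step is to iterate the strategy on each $V_{\mathcal{C}(N)}$ produced, recomputing $\min(N)$ and decomposing further until every remaining circuit variety satisfies one of the stopping conditions of Theorem~\ref{nil coincide}. Concretely, for a configuration $N$ that is nilpotent with all points of degree at most two, we obtain $V_{\mathcal{C}(N)} = V_N$, which should close off the families $V_{I_i}$, $V_{J_i}$ and $V_{\pi_M^i}$ after enough loops have accumulated. For a configuration whose proper submatroids are all nilpotent but which itself still has no points of degree $\geq 3$, we get $V_{\mathcal{C}(N)} = V_N \cup V_{U_{2,d}}$; the $V_{U_{2,9}}$ component of the theorem enters the decomposition in exactly this way. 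Where matroids with loops or double points appear, I would pass to $N_{\text{red}}$, apply the above, and then reintroduce the loops and parallel classes. Irreducibility of each final component follows from Theorem~\ref{nil coincide}(iii)–(iv), since each of $M, U_{2,9}, I_i, J_i, \pi_M^i$ is either nilpotent or solvable.

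The last step is irredundancy. For each claimed component $V_C$ on the right-hand side, I would exhibit a realization $\gamma \in \Gamma_C$ that fails to lie in any other listed component, using the fact that $V_D$ imposes either a loop, a coincidence, or a collinearity that $\gamma$ can be arranged to violate. The components are naturally distinguished by their loop patterns (none for $V_M$ and $V_{U_{2,9}}$, one for $V_{I_i}$, three for $V_{J_i}$, and the degenerate line-collapse pattern for $V_{\pi_M^i}$), so these separations are combinatorial rather than computational.

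The main obstacle is the bookkeeping during the iteration: each pass of the strategy may output many minimal matroids, and many of these will produce circuit varieties already contained in components previously identified. Tracking the Pappus-forced dependency carefully whenever loops appear — so that when one point of a triple becomes a loop, one correctly recognizes when the remaining two must collapse, generating an $I_i$ rather than a new configuration — is the delicate part. In practice, I would rely on the implementation mentioned in Subsection~\ref{subsec: decomposition strategy} to enumerate $\min(N)$ at each stage, and organize the output by the loop pattern so that the final pruning to $1 + 1 + 18 + 3 + 9 = 32$ components, and the verification that none of them can be dropped, can be read off systematically.
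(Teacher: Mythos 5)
Your general plan — apply the decomposition strategy of Subsection~\ref{subsec: decomposition strategy} and iterate — is the right one, and indeed the paper does not prove Theorem~\ref{pappus decompo} at all: it is cited from \cite{liwskimohammadialgorithmforminimalmatroids}. (The paper only gives an independent argument, in Chapter~\ref{sec: decompo revisited} and the Appendix, for the sub-result on $M(9)$ in Proposition~\ref{prop: Pappus M(9)}.) But as written the proposal has two genuine gaps.

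The first and most serious is in the irredundancy step. You claim the components are "naturally distinguished by their loop patterns\ldots\ so these separations are combinatorial rather than computational." This is false. Setting a single point of the Pappus configuration to be a loop does not take you out of $V_M$: in fact $V_{M(i)} \subset V_M$ for every $i$, which is exactly Lemma~5.5(iii) of \cite{liwskimohammadialgorithmforminimalmatroids} and is invoked in the proof of Theorem~\ref{thm:Pappus}. This is why the matroids $M(i)$ do \emph{not} appear on the right-hand side of Equation~\eqref{equation: decomposition of pappus}; the matroids $I_i$ carry a loop \emph{and} an added Pappus circuit, and it is the added circuit, not the loop pattern, that makes $V_{I_i} \not\subset V_M$. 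Worse, a realization $\gamma \in \Gamma_{I_i}$ has strictly \emph{more} dependencies than $M$, so it automatically sits inside every $V_{\CCC(M)}$-style constraint; the thing one must show is that $\gamma$ is not a \emph{limit} of elements of $\Gamma_M$. That is a geometric degeneration statement (of the kind made explicit in Section~\ref{appendix:proof of lemma pappus} for the irredundancy of $V_{M(i)}$ in the Pascal case, using the failure of concurrency of a triple of lines and continuity of $\wedge$), not something that can be read off from which points are loops.

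The second gap is that your characterization of $\min(M)$ is a guess and the guess is incomplete. You predict only loops, Pappus triples, and the $\pi_M^i$; but one can see already from the decomposition of $M(9)$ in Chapter~\ref{sec: decompo revisited} that minimal matroids of the Pappus configuration (and of the configurations produced along the way) also include many \emph{parallel-point identifications} (e.g.\ matroids with $1=2$, $4=5$, or with $1=2=7$), whose circuit varieties must then each be shown to be absorbed by $V_M$ or by one of the listed components. These absorptions are nontrivial lemmas of exactly the $V_{M(i)}\subset V_M$ type, not bookkeeping. Until $\min(M)$ is actually computed and those inclusions are proved, the proposal does not yet establish the equality, only the "$\supseteq$" inclusion.
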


\begin{figure}
    \centering
    \includegraphics[width=0.9\linewidth]{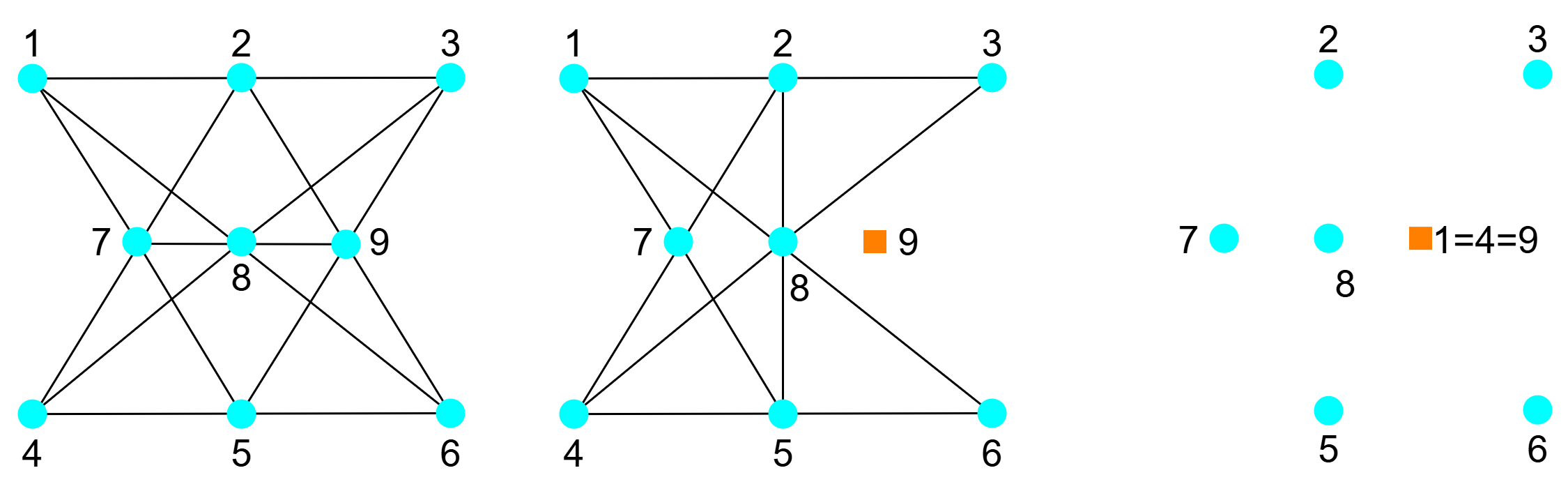}
    \caption{(Left) Pappus Configuration, (Center) $I_9$, (Right) $J_1$.}
    \label{fig:Pappus, Pappus, I_i, J_i}
\end{figure}
\begin{figure}
    \centering
    \includegraphics[width=0.9\linewidth]{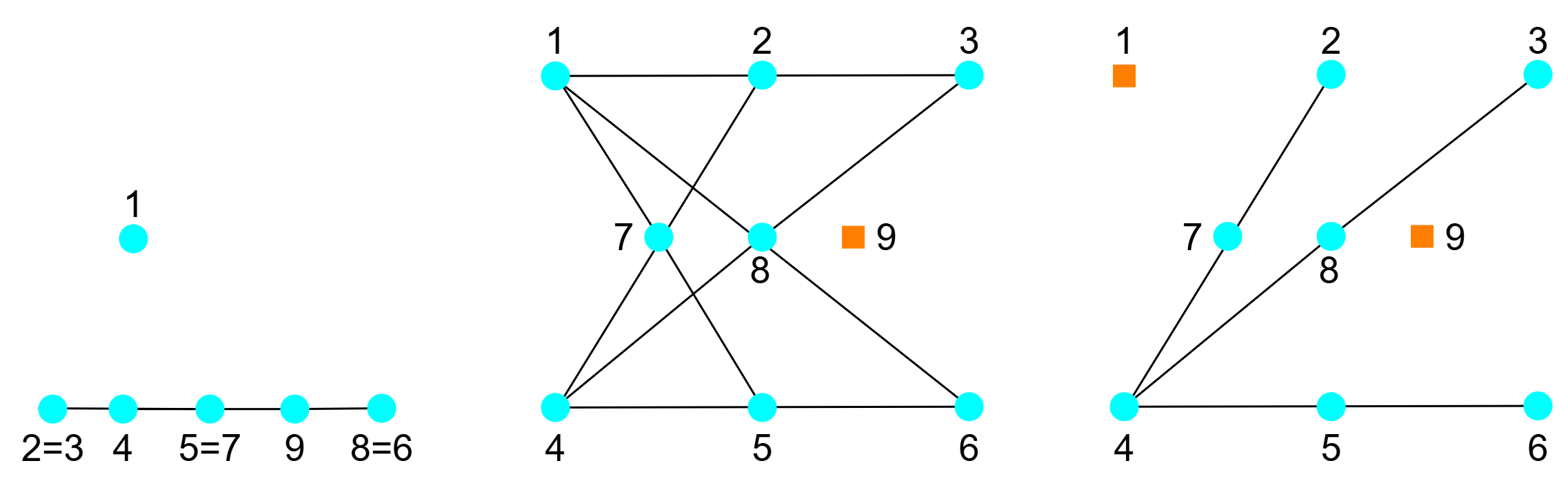}
    \caption{(Left) $\pi_M^i$; (Center) $M(9)$; (Right) $
    M(1,9)$.}
    \label{fig:Pappus, K_i, L__i, N}
\end{figure}
In order to determine the defining equations of $M$, we will use the following proposition concerning the decomposition of the circuit variety of the matroids $M(i)$, derived from $M$ by making one of its points a loop. Since all $i$ are analogous, we focus on $i=9$ in the proposition below, as in Figure \ref{fig:Pappus, K_i, L__i, N} (Center). Note that we do not use the decomposition strategy; instead, we opt for a shorter alternative. A proof of this result with the decomposition strategy will be provided in Chapter \ref{sec: decompo revisited}.
\begin{proposition} \label{prop: Pappus M(9)}
    The following equation holds for the circuit variety of  $M(9)$:
    \begin{equation} \label{eq: VCM(9) final decomposition}
V_{\CCC(M(9))} = V_{M(9)} \cup V_{U_{2,9}(9)} \cup_{j=1,4} V_{M(9,j)} \cup V_{M(1,4,9)}.
\end{equation}
\end{proposition}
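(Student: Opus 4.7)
The plan is to prove the inclusion $\supseteq$ trivially and then handle the inclusion $\subseteq$ by a case analysis modelled on the shorter alternative introduced in the proof of Theorem~\ref{proposition: decomposition pascal configuration}, rather than the decomposition strategy of Subsection~\ref{subsec: decomposition strategy}. For $\supseteq$, each matroid on the right-hand side is obtained from $M(9)$ by either declaring further points to be loops (for the three $V_{M(9,j)}$ and $V_{M(1,4,9)}$ terms) or by turning every three-subset into a circuit (for $V_{U_{2,9}(9)}$), so its realization space, and hence its Zariski closure, is contained in $V_{\CCC(M(9))}$. For $\subseteq$, fix $\gamma \in V_{\CCC(M(9))}$, so $\gamma_9 = 0$, and organise the argument by the vanishing pattern of $\gamma_1, \gamma_4$ and by $\rk(\gamma)$.

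The first block of cases deals with additional loops among $\{1,4\}$. If $\gamma_1 = \gamma_4 = 0$, then $\gamma$ includes the dependencies of $M(1,4,9)$; after removing the three loops, the surviving configuration consists only of the Pappus lines meeting none of $\{1,4,9\}$, which is a cactus configuration whose points have degree at most two. Theorem~\ref{nil coincide}(i) together with Remark~\ref{remark reduced} then yields $V_{\CCC(M(1,4,9))} = V_{M(1,4,9)}$, so $\gamma \in V_{M(1,4,9)}$. If exactly one of $\gamma_1, \gamma_4$ vanishes, the surviving member of $\{1,4\}$ is the common intersection of its three Pappus lines, giving a forest (hence cactus) configuration; a routine perturbation along those concurrent lines, in the spirit of the proof of Lemma~\ref{lemma: third config loop matroid variety}, places $\gamma$ in $V_{M(9,1)}$ or $V_{M(9,4)}$ respectively.

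From now on we may assume $\gamma_1, \gamma_4 \neq 0$. Since the three Pappus lines through $9$ cease to be lines of $M(9)$ once $9$ is a loop, every point in $\{2,3,5,6,7,8\}$ has degree at most two in $M(9)$. Consequently, if $\gamma_i = 0$ for such an $i$, Lemma~\ref{lemma: third config loop matroid variety} applied inside $V_{\CCC(M(9))}$ lets me perturb $\gamma_i$ off the origin without leaving the circuit variety, and we may therefore assume that $9$ is the only loop of $\gamma$. If $\rk(\gamma) \leq 2$, all nonzero vectors lie in a common plane, and a small perturbation to pairwise distinct vectors gives a realization of $U_{2,9}(9)$, showing $\gamma \in V_{U_{2,9}(9)}$.

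The remaining, and most delicate, case is $\rk(\gamma) = 3$ with only $\gamma_9$ a loop; the target is $\gamma \in V_{M(9)}$, to be obtained by perturbing $\gamma|_{\{1,\dots,8\}}$ to a realization of $M \setminus \{9\}$. If every subspace $\gamma_l$, for $l$ among the six Pappus lines not through $9$, already has rank two, then Lemma~\ref{lemma: meet of 2 extensors arbitrary close} allows me to preserve all concurrencies under a small perturbation of the vectors at the degree-three points $1$ and $4$. If some $\gamma_l$ has rank at most one, then the three points of $l$ coincide, and Lemma~\ref{lemma: if gamma_i gamma_j wedge gamma_k gamma_l =0} spreads them apart along $l$, reducing to the previous subcase. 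The principal obstacle will be ensuring that the concurrencies at the two degree-three points $1$ and $4$ can be simultaneously preserved under a single perturbation, because $M \setminus \{9\}$ is solvable but not nilpotent and Theorem~\ref{nil coincide}(i) does not apply directly; I would resolve this by invoking the irreducibility of $V_{M \setminus \{9\}}$ from Theorem~\ref{nil coincide}(iv) together with a direct dimension count paralleling the rank-three step of the Pascal proof.
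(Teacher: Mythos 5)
Your overall architecture (handle extra loops among $\{1,4\}$, then remove other loops via Lemma~\ref{lemma: third config loop matroid variety}, then split on rank) tracks the paper's alternative proof, and the loop cases are essentially sound though more fragmented than necessary: the paper disposes of $\gamma_1=0$ or $\gamma_4=0$ in one stroke by citing the three-concurrent-lines decomposition $V_{\mathcal{C}(N(1))}=V_{N(1)}\cup V_{N(1,4)}$ of Example~\ref{ex: decomposition circuit variety 3 concurrent lines}, both components already appearing on the right-hand side. The genuine gap is the rank-three case, which you correctly flag but do not close. Your subcase ``all six $\gamma_l$ have rank two, perturb only the degree-three points $1$ and $4$'' is not a valid strategy: coincidences among degree-two points, such as $\gamma_2=\gamma_3$ or $\gamma_2=\gamma_5$, are perfectly compatible with all $\gamma_l$ having rank two, yet they obstruct realizability and cannot be cured by moving only $\gamma_1,\gamma_4$. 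The two patches you propose do not help: irreducibility of $V_{M\setminus\{9\}}$ (Theorem~\ref{nil coincide}(iv)) is a property of the variety, not a criterion for deciding whether a given $\gamma\in V_{\mathcal{C}(N)}$ lies in $V_N$; and there is no ``dimension count'' in the rank-three step of the Pascal argument --- that step simply invokes the already-established Pascal decomposition and is not available here, since the decomposition is precisely what you are trying to prove.

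The move you are missing is the paper's Case~1: show there exists a degree-two point $p\in[8]\setminus\{1,4\}$ with $\gamma_{l_1}\wedge\gamma_{l_2}\neq 0$, where $\{l_1,l_2\}=\mathcal{L}_p$ (once $\rk(\gamma)=3$ such a $p$ must exist, for if $\gamma_{l_1}\wedge\gamma_{l_2}=0$ at every degree-two point then, after separating coinciding vectors with Lemma~\ref{lemma: if gamma_i gamma_j wedge gamma_k gamma_l =0}, all six line subspaces coincide and $\rk(\gamma)\leq 2$, which is the $U_{2,9}(9)$ case). Deleting that single $p$ destroys the two lines through it and breaks the unique hexagonal cycle of degree-two points in $N=M\setminus\{9\}$, so $N\setminus\{p\}$ is nilpotent with no points of degree at least three. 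Theorem~\ref{nil coincide}(i) then yields a perturbation $\widetilde{\gamma}|_{N\setminus\{p\}}\in\Gamma_{N\setminus\{p\}}$, and re-inserting $\widetilde{\gamma}_p:=\widetilde{\gamma}_{l_1}\wedge\widetilde{\gamma}_{l_2}$, which Lemma~\ref{lemma: meet of 2 extensors arbitrary close} keeps close to $\gamma_p$ since $\gamma_{l_1}\wedge\gamma_{l_2}\neq 0$, produces a realization of $N$ near $\gamma$. Your ``perturb $1$ and $4$'' reflex is backwards: the deletion must happen at a degree-two point precisely because that is what makes the residual configuration nilpotent, which is the hypothesis the paper needs in order to perturb at all.
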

\begin{proof}
We will decompose the variety $N = M \setminus \{9\}.$ 
The inclusion $'\supseteq'$ in Equation~\eqref{eq: VCM(9) final decomposition} is obvious, since the matroids on the right have more dependencies than those on the left. For the other inclusion, first assume that $\gamma_1 = 0$ or $\gamma_4 = 0$, we can assume without loss of generality that the former holds. $M(1,9)$ is shown in Figure \ref{fig:Pappus, K_i, L__i, N} (Right). By Example \ref{ex: decomposition circuit variety 3 concurrent lines}, $V_{\mathcal{C}(N(1))} = V_{N(1)} \cup V_{N(1,4)}$, thus $\gamma \in V_{N(1)} \cup V_{N(1,4)}$.
\newline
\newline
Now assume $\gamma \in V_{\mathcal{C}(N)}$, $\gamma_1, \gamma_4 \neq 0$. By Lemma \ref{lemma: third config loop matroid variety}, we can assume that $\gamma$ has no loops.
\begin{mycases}
\medskip
    \case{There is a point $p \in [8] \setminus \{1,4\}$ such that $\gamma_{l_1} \wedge \gamma_{l_2} \neq 0$, where $\{l_1, l_2\} =\mathcal{L}_p$.} 
    Since $N \setminus \{p\}$ is nilpotent and has no points of degree at least three, we can use Theorem~\ref{nil coincide} to perturb $\gamma|_{N \setminus \{p\}}$ to  $\widetilde{\gamma}|_{N \setminus \{p\}} \in \Gamma|_{N \setminus \{p\}}$. Define $\widetilde{\gamma}_p = \widetilde{\gamma}_{l_1} \wedge \widetilde{\gamma}_{l_2},$ with $\mathcal{L}_p = \{l_1, l_2\}.$ Then $\widetilde{\gamma} \in \Gamma_N$, so by Lemma \ref{lemma: meet of 2 extensors arbitrary close}, $\gamma \in V_N$.
    \medskip
    \case{For all points $p \in [8]\setminus\{1,4\}$, $\gamma_{l_1} \wedge \gamma_{l_2} = 0$, where $\{l_1, l_2\} =\mathcal{L}_p$.} 
    
\medskip

After applying Lemma \ref{lemma: if gamma_i gamma_j wedge gamma_k gamma_l =0}, we can assume that $\gamma_i \neq \gamma_j$ for $i\in [8] \setminus \{1,4\}, j \in [8], i \neq j$. Thus for each $l \in \mathcal{L}_N$, the corresponding subspace $\gamma_l$ has rank two.

Because of Case 2, we have that $\gamma_{\{1,2,3\}}$ $= $ $\gamma_{\{2,7,4\}}$ $=$ $\gamma_{\{3,4,8\}}$ $=$ $\gamma_{\{1,6,8\}}$ $= $ $\gamma_{\{4,5,6\}}$ $=$ $\gamma_{\{1,5,7\}}$, implying that the vectors corresponding to the points in $[8]$ are on a line, thus $\gamma \in V_{U_{2,8}}.$ This finishes the proof.
    \end{mycases}
\end{proof}

We now state the main result of this section, which provides a complete set of defining equations for the Pappus configuration.

\begin{theorem}\label{thm:Pappus}
    Let $M$ be the matroid associated to the Pappus configuration. Then $$I_{M}=\sqrt{I_{\mathcal{C}(M)}+G_{M}+I_{M}^{\textup{lift}}}.$$
\end{theorem}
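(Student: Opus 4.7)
The plan is to apply the Perturbation Strategy, translating the ideal identity into the equivalent variety equation
\[
V_M \;=\; V_{\mathcal{C}(M)} \cap V(G_M) \cap V(I_M^{\textup{lift}}),
\]
where the inclusion $\subseteq$ is immediate from Remark~\ref{generating set ICM}, Theorem~\ref{thm: gamma i V_M^lift => liftable}, and Proposition~\ref{inc G}. For the reverse inclusion, I would pick an arbitrary $\gamma$ in the right-hand side and use Theorem~\ref{pappus decompo} to place $\gamma$ in one of the irreducible components $V_M$, $V_{U_{2,9}}$, $V_{I_i}$, $V_{J_i}$, or $V_{\pi_M^i}$. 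The case analysis would then be organized by $\mathrm{rk}(\gamma)$, mirroring the structure of the proof of Theorem~\ref{pascal generators}.

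In the case $\mathrm{rk}(\gamma)=3$, the hypothesis $\gamma \in V(G_M)$ combined with Proposition~\ref{gm toegevoegd} allows me to redefine any loop $\gamma_p = 0$ as the common intersection of the three (now concurrent) lines through $p$; this is available at every point of Pappus, since each one has degree exactly three. Residual loops surviving this step can be perturbed along the single remaining rank-two line (or arbitrarily) as in Lemma~\ref{lemma: third config loop matroid variety}. Once $\gamma$ is loop-free of rank three, Theorem~\ref{pappus decompo} forces $\gamma \in V_M$, since this is the only loop-free rank-three component of the decomposition. This disposes of the components $V_{I_i}$ and $V_{J_i}$ in the rank-three regime.

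When $\mathrm{rk}(\gamma)\le 2$, so $\gamma$ lies in $V_{U_{2,9}}$ or $V_{\pi_M^i}$ (possibly with additional loops from an $I_i$ or $J_i$ component whose points have all collapsed to a line), I would use the liftability hypothesis $\gamma \in V(I_M^{\textup{lift}})$ to lift $\gamma$ to a non-degenerate rank-three element of $V_{\mathcal{C}(M)}$ and then conclude by the previous step. The key tool is Proposition~\ref{prop: nilpotent add point liftable}: every single-point deletion $M\setminus\{i\}$ of Pappus is nilpotent, and every point of $M$ has degree three, so the dimensional inequality $\dim(M\setminus\{i\}) \geq 1+\deg(i) = 4$ has to be verified combinatorially via Proposition~\ref{proposition: dim(ker(liftmat))} (or equivalently via the ordering formula in Remark~\ref{remark: emiliano}). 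Loops present before lifting can be filled in one by one using Proposition~\ref{prop: degree < 3 add point liftable}; the rank-one case reduces to rank two by a small initial perturbation, analogously to Case~3 of the proof of Theorem~\ref{pascal generators}. An explicit generating set can then be extracted exactly as in Remark~\ref{remark pascal}.

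The main obstacle I expect is the family $V_{\pi_M^i}$: these are rank-two configurations in which the Pappus incidences force several points to coincide in $\mathbb{P}^2$, so producing a non-degenerate lifting that simultaneously respects all nine Pappus circuits requires coordinating the Grassmann--Cayley redefinition with the lifting produced by Proposition~\ref{prop: nilpotent add point liftable}. In particular, one must confirm that the lifting can be arranged to place the three ``free'' points in Pappus-general position, and that the dimension computation $\dim(M\setminus\{i\}) \geq 4$ is uniform across the nine symmetric choices of $i$. Once these verifications are made, the argument closes in the same pattern as the Pascal proof.
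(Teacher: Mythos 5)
Your proposal reorganizes the paper's case analysis (which is indexed by the \emph{number of loops}, running through five cases with several subcases each) into a case analysis by $\mathrm{rk}(\gamma)$, mirroring the Pascal proof. This restructuring is a legitimate idea, but several of the tools you invoke to fill in loops do not apply to Pappus, and some essential external inputs used in the paper are not replaced.

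The most concrete gap is your reliance on Proposition~\ref{prop: degree < 3 add point liftable} and Lemma~\ref{lemma: third config loop matroid variety} to handle loops. Both results require the point in question to have degree at most two. In the Pappus configuration \emph{every} point has degree exactly three, so neither result ever applies directly. (In Pascal the points $1,\ldots,6$ have degree two, which is why the paper's Pascal proof can invoke Proposition~\ref{prop: degree < 3 add point liftable} for them.) Your fallback for the rank-three case --- ``residual loops surviving this step can be perturbed along the single remaining rank-two line (or arbitrarily) as in Lemma~\ref{lemma: third config loop matroid variety}'' --- therefore breaks down precisely in the degenerate situations that actually occur, for instance when two or three of the loops lie on a common line of $M$ so that some of the three lines through a loop are not of rank two; Proposition~\ref{gm toegevoegd} gives no information in that case, and a degree-three loop cannot simply be perturbed off the origin without possibly violating a circuit condition. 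The paper handles exactly these cases (Cases~3.2, 4.1 and 5) by quoting Lemma~5.5(ii) of \cite{liwskimohammadialgorithmforminimalmatroids}, and Case~2 is anchored on the non-trivial inclusion $V_{M(9)} \subset V_M$ (Lemma~5.5(iii) there); your outline contains no substitute for these inclusions.

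The rank-one step is also incomplete: if all nine vectors coincide and there are no loops, there is no small perturbation that raises the rank to two while staying in $V_{\CCC(M)}$ unless you argue through a specific intermediate matroid. The paper resolves this by observing $\gamma \in V_{\CCC(L_k)} = V_{L_k}$ for the matroid $L_k$ obtained by the identifications $4=6$, $1=3$, $7=9$, and then invoking $V_{L_k}\subset V_M$ from Lemma~5.4(iii) of the same reference --- again an inclusion your plan does not supply. You correctly identified Proposition~\ref{prop: nilpotent add point liftable} with the dimension bound $\dim(M\setminus\{i\})\geq 4$ as the key to the rank-two-with-loops step, and your Grassmann--Cayley idea for promoting a single loop to a non-zero vector when the three incident lines have rank two is sound (the paper uses it in Cases~3.1.2 and~4.2). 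But the degenerate configurations and the rank-one case need the external redundancy lemmas, or else a self-contained replacement, before the argument closes.
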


\begin{proof}
To prove the equality of the statement, we will show the following equivalent equality of varieties
\begin{equation}\label{lifting}
V_M = V_{\mathcal{C}(M)} \cap V(G_M) \cap V_{M}^{\textup{lift}}.
\end{equation}

The inclusion $\subset$ in~\eqref{lifting} follows directly from Remark~\ref{generating set ICM}, Theorem~\ref{thm: gamma i V_M^lift => liftable} and Proposition~\ref{inc G}. 
To establish the reverse inclusion, let $\gamma\in V_{G_{M}}\cap V_{M}^{\text{lift}}\cap V_{\mathcal{C}(M)}$ arbitrary. We must show that $\gamma\in V_{M}$. To do so, we consider different cases.  

    \begin{mycases}
\case {\bf $\gamma$ has no loops.}
\newline
Then we can see from Equation~\eqref{equation: decomposition of pappus} that $\gamma \in V_{M}$ or $\gamma \in V_{U_{2,9}}$ or $\gamma \in V_{\pi_M^i}$. We can assume that $\gamma \notin V_M.$

\medskip
{\bf Case 1.1.} If not all the points are collinear, then $\gamma \in V_{\pi_M^i}$.
        So eight points of $\gamma$ are on a line, and one point is outside that line, without loss of generality we can assume this point to be 9.
        Since $M \setminus \{9\}$ is a submatroid of $M$ of full rank, using Theorem~\ref{thm: gamma i V_M^lift => liftable}, we can lift $\gamma|_{[8]}$ from the point $\gamma_9$ to $\widetilde{\gamma}|_{[8]}$ in $V_{\CCC(M\setminus \{9\})}$, such that the points $\widetilde{\gamma}_1, \ldots, \widetilde{\gamma}_8$ are not collinear. Extending $\widetilde{\gamma}|_{[8]}$ to $\widetilde{\gamma}$ by defining $\widetilde{\gamma}_9 = \gamma_9$, it follows that $\widetilde{\gamma} \in \VCM$. We can conclude by Theorem~\ref{pappus decompo} that $\widetilde{\gamma} \in V_{M}$. 
        
        \medskip
        
        \textbf{Case 1.2.} Now assume that all the points are collinear, and there are at least two different points. Since $\gamma \in \VMLIFT$, we can lift this collection infinitesimally from an arbitrary point to a non-degenerate $\widetilde{\gamma} \in \VCM$ by Theorem \ref{thm: gamma i V_M^lift => liftable}. 
        If $\widetilde{\gamma} \in V_{M}$, we are done.
        Else $\widetilde{\gamma} \in V_{\pi_M^i}$. We may assume without loss of generality that all the points in the collection $\widetilde{\gamma}$ except $\widetilde{\gamma}_1$ are collinear. Since a perturbation cannot create new dependencies, the following equations should hold for our original configuration $\gamma$: $$\gamma_2 = \gamma_3, \gamma_5 = \gamma_7, \gamma_6 = \gamma_8.$$ One can perturb $\gamma_1$ arbitrarily such that it is not on the line formed by $\gamma_2, \ldots, \gamma_9$ anymore. Then we can complete the proof as in Case 1.1.

        \medskip
        
     \textbf{Case 1.3.} Assume that all the vectors $\gamma_1, \ldots, \gamma_9$ coincide. 
    Consider the matroid $L_k$ obtained from $M$ by setting $4=6,1=3$ and $7=9$, as in Figure 8 (Right) of \cite{liwskimohammadialgorithmforminimalmatroids}.
    Clearly, $\gamma \in V_{\mathcal{C}(L_k)} = V_{L_k}$ by Theorem~\ref{nil coincide}. By Lemma 5.4 (iii) of \cite{liwskimohammadialgorithmforminimalmatroids}, it follows that $V_{L_k} \subset V_{M},$ thus $\gamma \in V_M.$
    \newline
\case\textbf{$\gamma$ has exactly one loop.} \newline
Assume without loss of generality that the loop is 9. It is proven in Lemma~5.5 (iii) of \cite{liwskimohammadialgorithmforminimalmatroids} that $V_{M(9)} \subset V_M$, so it suffices to prove that $\gamma \in V_{M(9)}.$ The matroid $M(9)$ is shown in Figure \ref{fig:Pappus, K_i, L__i, N} (Center).

Using Proposition~\ref{prop: Pappus M(9)}, $\gamma \in V_{M(9)}$ or $\gamma \in V_{U_{2,9}(9)}$. If the former holds, $\gamma \in V_M$, so we can assume that  $\gamma \in V_{U_{2,9}(9)}$. Since $\gamma \in \VMLIFT$, we can lift the collection of vectors $\gamma|_{[8]}$ to the collection of vectors $\widetilde{\gamma}|_{[8]}$ of dimension three from an arbitrary vector $q$ not on the line. Then $\widetilde{\gamma}|_{[8]}$ does not have any loops, and is non-degenerate, so it lies in $V_{M\setminus\{9\}}$. Extending $\widetilde{\gamma}|_{[8]}$ to $\widetilde{\gamma}$ by defining $\widetilde{\gamma}|_9$ to be the zero vector, it is clear that $\widetilde{\gamma} \in V_{M(9)}.$

\medskip

     \case\textbf{$\gamma$ has exactly two loops}

     \medskip
     
     {\bf Case 3.1.} Suppose that the 
     the two loops do not belong to a common line in $M$. In that case, we can assume without loss of generality that the points $1$ and $9$ are loops. The matroid $M(1,9)$ is illustrated in Figure~\ref{fig:Pappus, K_i, L__i, N} (Right). 

    \medskip
     
     {\bf Case 3.1.1} Assume first that the vectors $\gamma_{2},\ldots,\gamma_{8}$ are collinear. By applying a perturbation to these points within the line they span, we may further assume that they are pairwise distinct. 
     Using Proposition~\ref{proposition: dim(ker(liftmat))}, $\dim_q^\gamma(M \setminus \{1, 9\}) = 1+3 = 4$. Moreover $M \setminus \{1, 9\}$ is nilpotent, $\deg(1) = 3$ and all the points $\gamma_i$ are pairwise distinct and non-zero for $i\in \{2,\ldots,8\}$.
     So we can use Proposition~\ref{prop: nilpotent add point liftable} to redefine $\gamma_1$ on the line through $\gamma_2, \ldots, \gamma_8$, such that $\gamma|_{[8]} \in V_{M \setminus \{9\}}^{\text{lift}}$. This reduces to Case 2, from which we can conclude that $\gamma \in V_M.$
     
     \medskip
     
      {\bf Case 3.1.2} Suppose now that $\rank\{\gamma_{2},\ldots,\gamma_{8}\}=3$.  
Since $\gamma\in V(G_{M})$, one can redefine $\gamma_{1}$ to be a non-zero vector, while keeping the collection of vectors within $V_{\CCC(M(9))}$. Let $\widetilde{\gamma}$ denote this modified collection. Given that $\widetilde{\gamma}$ is in $V_{\CCC(M(9))}$, has rank three, and contains only one loop, it follows from Equation~\eqref{eq: VCM(9) final decomposition} that $\widetilde{\gamma}\in V_{M(9)}$. Since $V_{M(9)}\subset V_{M}$, we deduce that $\widetilde{\gamma}\in V_{M},$ so $\gamma\in V_{M}$.

      \medskip
      
      {\bf Case 3.2.} Suppose that the two loops correspond to elements of $[9]$ that lie on a common line in $M$. In this case, we can use Lemma 5.5 (ii) of \cite{liwskimohammadialgorithmforminimalmatroids} to conclude that the circuit variety of this matroid is contained within that of $V_{M}$.

\medskip

     \case{\bf $\gamma$ has exactly three loops.}

\medskip

{\bf Case~4.1} Suppose that two of the three loops correspond to elements of $[9]$ that lie on a common line in $M$. In this case, we can conclude by Lemma~5.5 (ii) of \cite{liwskimohammadialgorithmforminimalmatroids} that the circuit variety of this matroid is contained within that of $V_{M}$.

\medskip

{\bf Case~4.2} Suppose that no two of the three loops correspond to elements of $[9]$ that lie on a common line in $M$. In this case, we may assume without loss of generality that the loops correspond to the elements $1,4$ and $9$, as shown in Figure \ref{fig:Pappus, Pappus, I_i, J_i}. Since $\gamma \in V_{G_M}$, we can redefine $\gamma_4$ as a non-zero vector, while keeping the collection of vectors within $V_{\CCC(M)}$. This reduces to Case~3, where the argument relies only on the concurrency of the lines $\gamma_{23},\gamma_{68}$ and $\gamma_{57}$, a condition that remains satisfied after the perturbation.
    
    \medskip
     \case{\bf There are at least four loops}
     
     \medskip
     
    In this case at least two of these four loops belong to a common line in $M$. It then follows from Lemma~5.5 (ii) of \cite{liwskimohammadialgorithmforminimalmatroids} that the circuit variety of this matroid is contained in $V_{M}$.
\end{mycases}
\end{proof}

Although Theorem~\ref{thm:Pappus} establishes that the ideals $I_{\CCC(M)}, G_M$ and $I_M^{\textup{lift}}$ generate $I_{M}$ up to radical, it does not present an explicit generating set. The following remark presents such a generating set.
\begin{remark} \label{remark pappus}
By Theorem~\ref{thm:Pappus}, to find explicit generators for $I_M$, up to radical, it suffices to find explicit generators for $I_{\CCC(M)}, G_M$ and $I_M^{\textup{lift}}$.
\begin{itemize}[noitemsep]
 \item The circuit polynomials are precisely the bracket polynomials corresponding to $M$. For the Pappus configuration, they are:
     $$\bigl\{[1,2,3],[1,6,8],[1,5,7],[2,4,7],[2,6,9],[7,8,9],[4,5,6],[3,4,8],[3,5,9]\bigr\}.$$ 
\item For the Grassmann–Cayley polynomials, the proof of Theorem~\ref{thm:Pappus} shows that it suffices to consider the nine polynomials arising from the nine triples of concurrent lines in $M$. These are given by:
\begin{equation*}
\begin{aligned}\bigl\{&[2,3,5][7,6,8]-[2,3,7][5,6,8], [1,3,4][7,6,9]-[1,3,7][4,6,9], \\ &[1,2,4][8,5,9]-[1,2,8][4,5,9],
[2,4,1][5,8,9]-[2,4,5][1,8,9], \\
&
     [7,9,1][6,3,4]-[7,9,6][1,3,4],[2,6,3][5,7,8]-[2,6,5][3,7,8], \\
&[2,7,3][8,5,6]-[2,7,8][3,5,6],[4,6,1][7,3,9]-[4,6,7][1,3,9],\\
&[2,9,1][8,4,5]-[2,9,8][1,4,5]\bigr\}.
\end{aligned}
\end{equation*}
\item From the proof of Theorem~\ref{thm:Pappus}, it follows that the only instances where the assumption $\gamma\in V(I_M^{\textup{lift}})$ is invoked, are: 
\begin{enumerate}
\item To ensure the existence of a non-degenerate lifting of the vectors $\{\gamma_{1},\ldots,\gamma_{9}\}$ to a collection in $V_{\CCC(M)}$.
\item To guarantee that for each $i\in [9]$, the eight vectors $\{\gamma_1, \ldots,\gamma_{i-1}, \hat{\gamma}_i, \gamma_{i+1}, \ldots, \gamma_9\}$ have a non degenerate lifting to a collection in $V_{\CCC(M\setminus \{i\})}$. 
\end{enumerate}
Thus, by Theorem \ref{thm: gamma i V_M^lift => liftable}, it suffices to consider the $7\times 7$ minors of the liftability matrices $\mathcal{M}_q(M)$, together with the $6\times 6$ minors of the liftability matrices $\mathcal{M}_q(M\setminus \{i\})$ for $i\in [9]$.

Although the set of all such minors is not finite, since $q$ ranges over $\CC^{3}$, we may adopt the same strategy as used in Remark~\ref{remark pascal} to obtain a finite generating set. Namely, we select vectors $q_{1},\ldots,q_{9}\in \{e_{1},e_{2},e_{3}\}$ and replace the vector $q$ appearing in the bracket of column $i$ with $q_i$. The resulting polynomials are the $7\times 7$ minors of the matrices

$$
\scalebox{0.95}{$
\begin{pmatrix}
   [23q_1] & [-13q_2] & [12q_3] & 0 & 0 & 0 & 0& 0 & 0 \\
   [68q_1] & 0 & 0 & 0 & 0 & -[18q_6] & 0 & [16q_8] & 0 \\
   [57q_1] & 0 & 0 & 0 & -[17q_5] & 0 & [15q_7] & 0 & 0 \\
   0 & [47q_2] & 0 & -[27q_4] & 0 & 0 & [24q_7] & 0 & 0 \\
   0 & [69q_2] & 0 & 0 & 0 & -[29q_6] & 0 & 0 & [26q_9] \\
   0 & 0 & [48q_3] & -[38q_4] & 0 & 0 & 0 & [34q_8] & 0 \\
   0 & 0 & [59q_3] & 0 & -[39q_5] & 0 & 0 & 0& [35q_9] \\
   0 & 0 & 0 & 0 & 0 & 0 & [89q_7] & -[79q_8] & [78q_9] \\
   0 & 0 & 0 & [56q_4] & -[46q_5] & [45q_6] & 0 & 0 & 0
\end{pmatrix}
$}
$$
where $q_{1},\ldots,q_{9}\in \{e_{1},e_{2},e_{3}\}$, and the $6 \times 6$ minors of the matrices
$$
\scalebox{0.95}{$
\begin{pmatrix}
   [23q_1] & [-13q_2] & [12q_3] & 0 & 0 & 0 & 0 & 0 \\
   [68q_1] & 0 & 0 & 0 & 0 & -[18q_6] & 0 & [16q_8] \\
   [57q_1] & 0 & 0 & 0 & -[17q_5] & 0 & [15q_7] & 0 \\
   0 & [47q_2] & 0 & -[27q_4] & 0 & 0 & [24q_7] & 0 \\
   0 & 0 & [48q_3] & -[38q_4] & 0 & 0 & 0 & [34q_8] \\
   0 & 0 & 0 & [56q_4] & -[46q_5] & [45q_6] & 0 & 0
\end{pmatrix}
$}
$$
   where $q_1, \ldots, q_8 \in \{e_1,e_2,e_3\}$, 
   along with the $6\times 6$ minors of the matrices analogous to this, corresponding to omitting each index $i=1,\ldots,8$. The total number of such polynomials is:
\[\textstyle \binom{9}{7}3^{9}+\binom{8}{6}3^{10}.\]
\end{itemize}
\end{remark}
\numberwithin{theorem}{chapter}
\numberwithin{equation}{chapter}
\chapter{\texorpdfstring{Decomposition of the circuit variety of the third configuration $9_3$}{Decomposition of the circuit variety of the third configuration 9_3}}
\label{Decomposition of the Circuit Variety of the third configuration 93}
We will consider the third configuration $9_3$, as in Figure \ref{fig:Third Config A_1 B} (Left). Throughout this chapter, this point-line configuration is denoted by $M$. We will find an irreducible decomposition of its circuit variety, as in Theorem \ref{thm decompo third 9_3}. In Chapter \ref{sec: pascal & pappus}, we managed to give a proof without using the decomposition strategy for the decomposition of the circuit varieties of the Pascal configuration and the Pappus configuration with one loop. Since the third configuration $9_3$ has no points of degree two, we are not able to develop a similar technique. However, we can use the first step of the decomposition strategy, as outlined in Subsection \ref{subsec: decomposition strategy}, to find the matroids which are minimal over $M$. For these matroids, if it is possible, we use a similar technique as in Chapter \ref{sec: pascal & pappus} to find a decomposition of its circuit variety, else we use again the decomposition strategy.

\medskip
We start this chapter with the following remark about $M$.
\begin{remark}
Note that there are two classes of symmetric points in $M$: $\{3,6,8\}$ and $\{1,2,4,5,7,9\}$. Moreover, for every point $i \in [9]$, there are exactly two points which are not on a line with $i$. Denote these points as $k_i^1$ and $k_i^2.$
\end{remark}
The following remark will be important for the decomposition strategy.
\begin{remark}
When working with the minimal matroids of a non-simple rank-three matroid $N$, we instead consider implicitly its reduced version $N_{\text{red}}$, defined by removing loops and identifying double points. This approach is valid given the correspondence between the minimal matroids of $N$ and $N_{\text{red}}$. To reconstruct the minimal matroids of $N$ again, we have to reintroduce these loops and double points again.
\end{remark}

\noindent

The minimal matroids over $M$ are:
\begin{itemize}
    \item The matroid obtained by identifying the points in $\{7,8,9\}$ as in Figure \ref{fig:Third Config A_1 B} (Center). 
    Denote this matroid by $A_1$, and denote the five other matroids derived from it by applying an automorphism of $M$ by $A_i$.
     \item The matroid obtained by identifying the pairs of points $\{1,2\}$, $\{4,5\}$ and $\{7, 9\}$, as in Figure \ref{fig:Third Config A_1 B} (Right). 
    Denote this matroid by $B$.  
    \item The two matroids obtained by identifying $\{1,5,9\}$ or $\{2,4,7\}$.
    Denote the former by $C_1$, shown in Figure \ref{fig:Third config C pi_M^1 M(1)} (Left) and the latter by $C_2$.  
      \item The three matroids $\pi_M^i$ for $i$ in $\{3,6,8\}$. $\pi_M^1$ is shown in Figure \ref{fig:Third config C pi_M^1 M(1)} (Center).
    \item The matroid obtained by adding $\{3,6,8\}$ as a circuit. Denote this matroid by $D$.
    \item The matroids $M(i)$, where $i$ is in $\{1,2,4,5,7,9\}$. $M(1)$ is shown in Figure \ref{fig:Third config C pi_M^1 M(1)} (Right).
    \end{itemize}
    \begin{figure}
    \centering
    \includegraphics[width=0.9\linewidth]{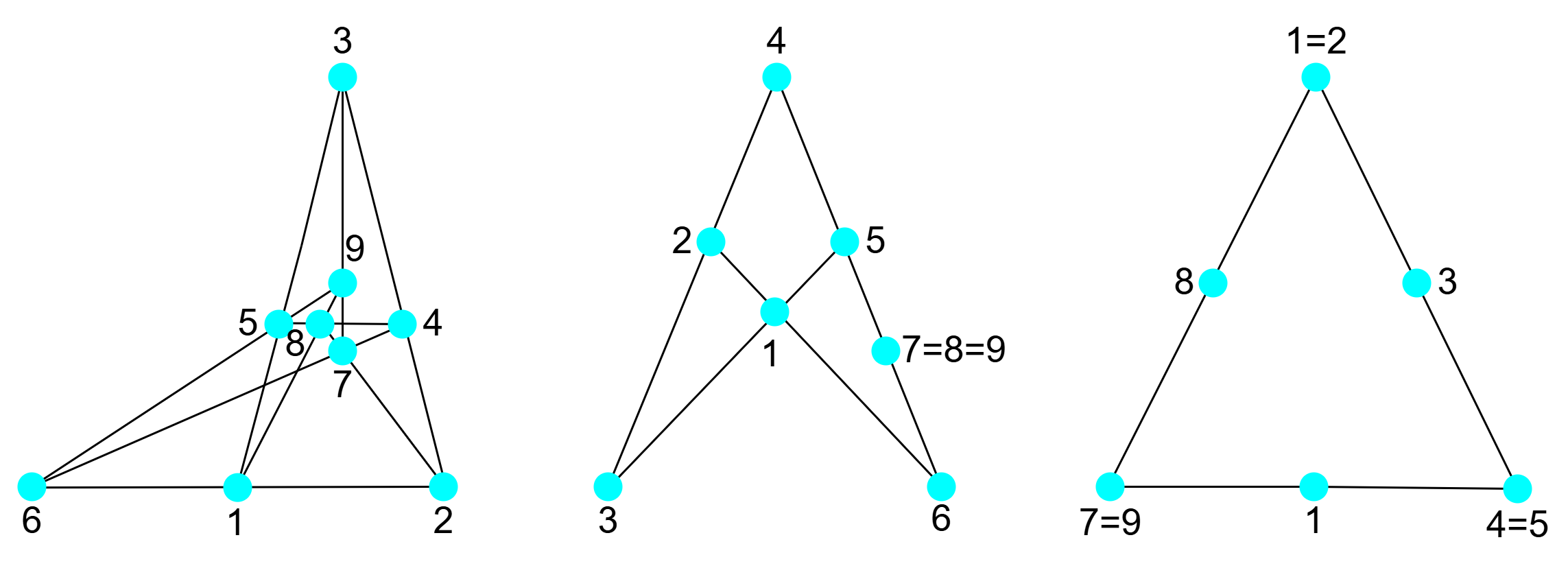}
    \caption{(Left) Third configuration $9_3$; (Center) $A_1$; (Right) $B$.}
    \label{fig:Third Config A_1 B}
\end{figure}
\begin{figure}
    \centering
    \includegraphics[width=0.9\linewidth]{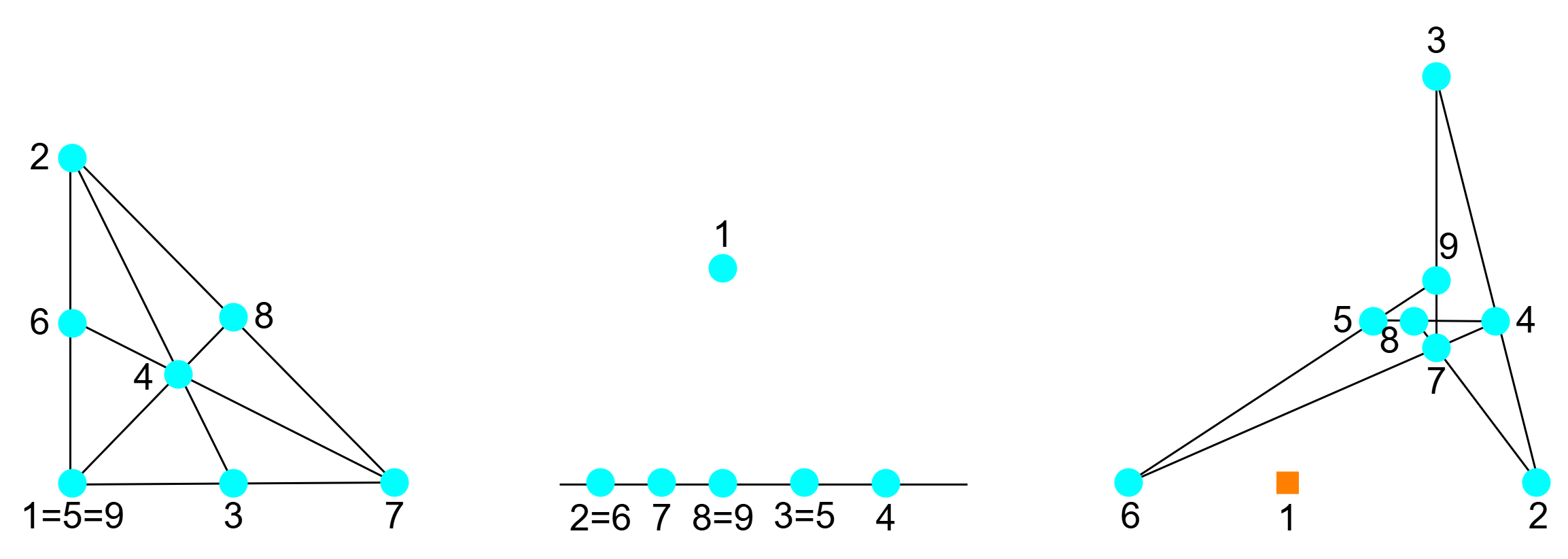}
    \caption{(Left) $C_1$; (Center) $\pi_M^1$; (Right) $M(1)$.}
    \label{fig:Third config C pi_M^1 M(1)}
\end{figure}
As in the decomposition strategy, it follows that:
$$\VCM = V_M \cup_{i=1}^6 V_{\CCC(A_i)} \cup V_{\CCC(B)} \cup_{i=3,6,8} V_{\CCC(\pi^i_M)} $$
\begin{equation} \label{eq: first decomposition for third config 93}
    \cup_{i=1}^2 V_{\CCC(C_i)} \cup V_{\CCC(D)} \cup_{i=1,2,4,5,7,9} V_{\CCC(M(i))}.
\end{equation}
By Theorem~\ref{nil coincide}, it already follows that 
\begin{equation} \label{eq:inc V_B}
V_{\CCC(B)} = V_B.
\end{equation}
We prove the following lemmas.
\begin{lemma} \label{third lemma 1}
For the point-line configuration $A_1$, as in Figure \ref{fig:Third Config A_1 B} (Center), we have that:
    $$V_{\CCC(A_1)} \subset V_{A_1} \cup V_{U_{2,9}}.$$

\end{lemma}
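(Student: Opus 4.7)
The plan is to follow the strategy of Lemma~\ref{props} and Case~2 of Proposition~\ref{prop: Pappus M(9)}. Let $\gamma \in V_{\CCC(A_1)}$; I will show that $\gamma \in V_{A_1} \cup V_{U_{2,9}}$. First I would reduce to the case in which $\gamma$ has no loops: for each point $i$ of degree at most two in $A_1$, Lemma~\ref{lemma: third config loop matroid variety} lets me perturb away any loop at $i$, while for the merged point $p$ arising from identifying $\{7,8,9\}$, if $\gamma_p = 0$ then $A_1 \setminus \{p\}$ should be a nilpotent configuration with no high-degree points, so Theorem~\ref{nil coincide}(i) realises the remaining vectors and $\gamma_p$ can then be redefined as a non-zero vector on the intersection of the at most two distinct line subspaces through $p$ (or arbitrarily, if there is none).

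Next I perform the main case split. Suppose there exists a degree-two point $i$ with lines $\mathcal{L}_i = \{l_1, l_2\}$ satisfying $\gamma_{l_1} \wedge \gamma_{l_2} \neq 0$, and such that $A_1 \setminus \{i\}$ is nilpotent with no degree-three points. Then Theorem~\ref{nil coincide}(i) yields a perturbation $\widetilde{\gamma}|_{[9] \setminus \{i\}} \in \Gamma_{A_1 \setminus \{i\}}$; setting $\widetilde{\gamma}_i = \widetilde{\gamma}_{l_1} \cap \widetilde{\gamma}_{l_2}$ and invoking Lemma~\ref{lemma: meet of 2 extensors arbitrary close} gives $\widetilde{\gamma} \in \Gamma_{A_1}$ arbitrarily close to $\gamma$, so $\gamma \in V_{A_1}$. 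In the complementary case, for every such $i$ and every pair $l_1, l_2 \in \mathcal{L}_i$ we have $\gamma_{l_1} \wedge \gamma_{l_2} = 0$. Applying Lemma~\ref{lemma: if gamma_i gamma_j wedge gamma_k gamma_l =0} I may assume after a small perturbation that all $\gamma_j$ are pairwise distinct and non-zero, so every $\gamma_l$ has rank exactly two; the vanishing-meet condition then forces $\gamma_{l_1} = \gamma_{l_2}$ for every pair of lines sharing a common point, and propagating this equality through the connected incidence structure of $A_1$ identifies all line subspaces with a single common plane, giving $\gamma \in V_{U_{2,9}}$.

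The main obstacle is the combinatorial verification needed to execute Case~1, namely exhibiting a degree-two point $i$ for which $A_1 \setminus \{i\}$ is nilpotent with no degree-three points. This amounts to a direct inspection of the line structure of $A_1$, exploiting the symmetries between the points $\{1,\ldots,6\}$ inherited from $M$, and analogously in the loop-reduction step for the merged point $p$ whose degree in $A_1$ exceeds two. If a degree-two candidate $i$ does not exist among the ordinary points, one would instead perform the perturbation argument directly at $p$, using that any two non-coincident lines through $p$ suffice to determine $\widetilde{\gamma}_p$ and that the remaining lines through $p$ automatically agree in the limit by the hypothesis $\gamma_{l_1} \wedge \gamma_{l_2} \neq 0$ failing for all but possibly one pair.
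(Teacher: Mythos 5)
Your proposal follows the paper's two-case argument (nonvanishing meet at a degree-two point yields $\gamma\in V_{A_1}$ via nilpotency of the deletion and Lemma~\ref{lemma: meet of 2 extensors arbitrary close}; otherwise all line subspaces coincide and $\gamma\in V_{U_{2,9}}$), which is exactly the route the paper takes, so the approach matches. One correction to your setup: the paper first passes to the simple matroid $A'$ obtained by deleting the parallel copies $8,9$, and in $A'$ the merged point has degree \emph{one} — after identifying $\{7,8,9\}$, the lines $\{4,6,7\},\{4,5,8\},\{5,6,9\}$ of $M$ collapse into the single line $\{4,5,6,7\}$ — so Lemma~\ref{lemma: third config loop matroid variety} already covers it and the separate high-degree treatment of $p$ that you sketch is unnecessary; relatedly, in Case~2 you cannot claim all $\gamma_j$ pairwise distinct on $[9]$ since $\gamma_7,\gamma_8,\gamma_9$ are forced to coincide, which is why the paper works on $A'$ and moves the degree-one point $7$ off the others by a free perturbation along its unique line rather than by Lemma~\ref{lemma: if gamma_i gamma_j wedge gamma_k gamma_l =0}.
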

\begin{proof}
Denote $A'$ for the matroid obtained from $A_1$ by omitting the points $8$ and $9$. Then it suffices to prove that
    $$V_{\CCC(A')} = V_{A'} \cup V_{U_{2,7}}.$$
Let $\gamma \in V_{\CCC(A')}$. By Lemma \ref{lemma: third config loop matroid variety}, we can assume that $\gamma$ has no loops. We consider two cases:
    \begin{mycases}
        \case Assume there exists a point $p \in [6]$ such that $\gamma_{l_1} \wedge \gamma_{l_2} \neq 0$, where $\{l_1, l_2\} =\mathcal{L}_p$. Then we can use Theorem~\ref{nil coincide} to perturb $\gamma|_{[7] \setminus \{p\}}$ to  $\widetilde{\gamma}|_{[7] \setminus \{p\}} \in \Gamma_{A' \setminus \{p\}}$. Define $\widetilde{\gamma}_p = \widetilde{\gamma}_{l_1} \wedge \widetilde{\gamma}_{l_2}$. By Lemma \ref{lemma: meet of 2 extensors arbitrary close}, $\widetilde{\gamma}$ is an arbitrary perturbation of $\gamma$. Moreover, $\widetilde{\gamma} \in \Gamma_{A'}$, thus $\gamma \in V_{A'}.$
    \case{For all points $p \in [6]$, $\gamma_{l_1} \wedge \gamma_{l_2} = 0$, where $\{l_1, l_2\} =\mathcal{L}_p$.} 
\newline    
Since the point 7 is on only one line of the point-line configuration, we can assume after a perturbation that $\gamma_7$ does not coincides with another vector of $\gamma$. Moreover, using Lemma \ref{lemma: if gamma_i gamma_j wedge gamma_k gamma_l =0}, we can assume that after a perturbation $\gamma_i \neq \gamma_j$ for $i,j \in [6], i \neq j$. Thus we may assume that each line has rank two. Since $\gamma_{\{2,3,4\}} = \gamma_{\{4,5,6,7\}} = \gamma_{\{1,2,6\}} = \gamma_{\{1,3,5\}}$, the vectors in $\{\gamma_1, \ldots, \gamma_7\}$ lie on a line, thus $\gamma \in V_{U_{2,7}}.$ This concludes the proof.
    \end{mycases}    
\end{proof}
\begin{lemma} \label{third lemma 2}
The following equality holds for the point-line configuration $C_1$ in Figure \ref{fig:Third config C pi_M^1 M(1)} (Left):
    $$V_{\CCC(C_1)} \subset V_{C_1} \cup V_{B} \cup_{i=1,2,4,7} V_{C(i)} \cup V_{U_{2,9}}.$$
\end{lemma}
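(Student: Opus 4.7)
The proof plan follows the template of Lemma \ref{third lemma 1}. Let $\gamma \in V_{\CCC(C_1)}$; I will peel off loops first, then split into a generic and a degenerate geometric case.

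For the loop reduction, points of degree at most two in $C_1$ allow their loops to be perturbed away via Lemma \ref{lemma: third config loop matroid variety} without leaving $V_{\CCC(C_1)}$. For each point $i \in \{1,2,4,7\}$ (the points of $C_1$ whose degree remains at least three besides the identified vertex $1 = 5 = 9$), if $\gamma_i = 0$ then $\gamma \in V_{\CCC(C_1(i))}$; one checks that, after reducing the triple identification, $C_1 \setminus \{i\}$ becomes a nilpotent point-line configuration, so Theorem \ref{nil coincide} gives $V_{\CCC(C_1(i))} = V_{C_1(i)}$, absorbing the $\bigcup_{i \in \{1,2,4,7\}} V_{C_1(i)}$ term of the decomposition.

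Assume henceforth that $\gamma$ has no loops. As in the two cases of Lemma \ref{third lemma 1}, I distinguish whether some point $p$ of degree two in $C_1$ satisfies $\gamma_{l_1} \wedge \gamma_{l_2} \neq 0$, where $\mathcal{L}_p = \{l_1,l_2\}$. In the generic case, $C_1 \setminus \{p\}$ is nilpotent (again after reducing the triple identification), so Theorem \ref{nil coincide} lets me perturb $\gamma|_{[9]\setminus\{p\}}$ to some $\widetilde{\gamma}|_{[9]\setminus\{p\}} \in \Gamma_{C_1 \setminus \{p\}}$; defining $\widetilde{\gamma}_p := \widetilde{\gamma}_{l_1} \wedge \widetilde{\gamma}_{l_2}$ and invoking Lemma \ref{lemma: meet of 2 extensors arbitrary close} yields $\gamma \in V_{C_1}$.

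In the degenerate case, every degree-two point collapses its two lines. Using Lemma \ref{lemma: if gamma_i gamma_j wedge gamma_k gamma_l =0} together with small perturbations, I may assume that the only coincidences among the $\gamma_i$ are those forced by $1 = 5 = 9$, so that every line of $C_1$ has rank exactly two. The equalities $\gamma_{l_1} = \gamma_{l_2}$ at every degree-two point then propagate across the incidence structure of the $9_3$ configuration. A finite combinatorial case-check, driven by the constraint $\gamma_1 = \gamma_5 = \gamma_9$ and the nine lines of $M$, forces exactly one of two outcomes: either all nine vectors are collinear, giving $\gamma \in V_{U_{2,9}}$, or the additional identifications induced by the collapse coincide with the pairs $\{1,2\}$, $\{4,5\}$, $\{7,9\}$, giving $\gamma \in V_B$. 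This combinatorial chase is the main obstacle, but being finite and fully determined by the $9_3$ incidence data plus the imposed triple identification, it amounts to a routine (if tedious) verification.
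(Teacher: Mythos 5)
Your outline follows the same overall strategy as the paper's proof (loop reduction to the $V_{C_1(i)}$ components, then a dichotomy over degree-two points between a generic perturbation case and a collinearity-collapse case), but there is a concrete error in the generic case. You assert that $C_1 \setminus \{p\}$ is nilpotent for a degree-two point $p \in \{3,6,8\}$. It is not. After reducing the triple identification $1=5=9$, the configuration $C_1 \setminus \{p\}$ (say $p=3$) has remaining lines $\{1,2,6\}$, $\{1,4,8\}$, $\{4,6,7\}$, $\{2,7,8\}$ on six points, which is a $4$-cycle of lines — isomorphic to the quadrilateral set. Every point has degree exactly two, so $S_{C_1\setminus\{p\}}$ equals the full ground set and the nilpotency chain stabilizes without reaching $\emptyset$. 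This configuration is only quasi-nilpotent (every \emph{proper} submatroid is nilpotent). Consequently you cannot invoke Theorem~\ref{nil coincide}(i); you must use part~(ii), which yields $V_{\CCC(C_1 \setminus \{p\})} \subset V_{C_1 \setminus \{p\}} \cup V_{U_{2,8}}$. You then need an extra step, as in the paper's proof: the hypothesis $\gamma_{l_1} \wedge \gamma_{l_2} \neq 0$ (in your generic case) rules out the collinear alternative $\gamma|_{[9]\setminus\{p\}} \in V_{U_{2,8}}$, after which the perturbation argument and the extension via Lemma~\ref{lemma: meet of 2 extensors arbitrary close} go through as you describe.

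The rest of your plan is consistent with the paper. Your loop reduction for $i \in \{1,2,4,7\}$ is correct: $C_1 \setminus \{i\}$ (reduced) is genuinely nilpotent there, so Theorem~\ref{nil coincide} together with Remark~\ref{remark reduced} absorbs those components, and Lemma~\ref{lemma: third config loop matroid variety} handles loops at degree-two points. In the degenerate case your ``routine combinatorial verification'' is what the paper actually carries out: after ruling out collinearity and using Lemma~\ref{lemma: if gamma_i gamma_j wedge gamma_k gamma_l =0} to separate $\gamma_3, \gamma_6, \gamma_8$ from each other and the rest, the three conditions $\gamma_{l_1} \wedge \gamma_{l_2} = 0$ at the points $3,6,8$ force coincidences among $\{\gamma_1, \gamma_2, \gamma_4, \gamma_7\}$, which (together with $\gamma_1 = \gamma_5 = \gamma_9$) propagate to all of $\{1,2,4,5,7,9\}$ coinciding, giving $\gamma \in V_B$. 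You should make that deduction explicit rather than waving at it, but the claim is right.
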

\begin{proof}
Let $\gamma \in V_{\CCC(C_1)}$.
If there is a point of degree three which is a loop, we can assume without loss of generality that this point is 2. Then $\gamma \in V_{\CCC(C_1(2))} = V_{C_1(2)}$ by Theorem~\ref{nil coincide} and Remark~\ref{remark reduced}. So we can assume that $\gamma$ has no loops of degree three. By Lemma \ref{lemma: third config loop matroid variety}, we can assume that $\gamma$ has no loops at all. We consider two cases:
    \begin{mycases}
        \case{There is a point $p \in \{3,6,8\}$ such that $\gamma_{l_1} \wedge \gamma_{l_2} \neq 0$, where $\{l_1, l_2\} =\mathcal{L}_p$.} \\
Without loss of generality, we may assume that $p=3$. By Theorem~\ref{nil coincide} and Remark \ref{remark reduced}, $V_{\CCC(C_1 \setminus \{3\})} \subset V_{C_1 \setminus \{3\}} \cup V_{U_{2,8}}.$ If $\gamma|_{[9] \setminus \{3\}} \in V_{U_{2,8}}$, this contradicts the assumption that $\gamma_{24} \wedge \gamma_{17} \neq 0$, thus $\gamma|_{[9] \setminus \{3\}} \in V_{C_1 \setminus \{3\}}$. Perturb $\gamma|_{[9] \setminus \{3\}}$ to $\widetilde{\gamma}|_{[9] \setminus \{3\}} \in \Gamma_{C_1 \setminus \{3\}}$. Define $\widetilde{\gamma}_3 = \widetilde{\gamma}_2 \widetilde{\gamma}_4\wedge \widetilde{\gamma}_1 \widetilde{\gamma}_7$. By Lemma \ref{lemma: meet of 2 extensors arbitrary close}, $\widetilde{\gamma}$ is a perturbation of $\gamma$, and $\widetilde{\gamma} \in \Gamma_{C_1}$, thus $\gamma \in V_{C_1}.$
    \case{For all points $p \in \{3,6,8\}$, $\gamma_{l_1} \wedge \gamma_{l_2} = 0$, where $\{l_1, l_2\} =\mathcal{L}_p$.} 
\newline    
By Lemma \ref{lemma: if gamma_i gamma_j wedge gamma_k gamma_l =0}, we can apply a perturbation to $\gamma$ such that we can moreover assume that $\gamma_i \neq \gamma_j$ if $i \neq j$, for $i \in \{3,6,8\}, j \in [9].$
If $\{\gamma_1, \gamma_6, \gamma_2, \gamma_4, \gamma_7\}$ are collinear, then all the vectors $\gamma_i$ for $i \in [9]$ are collinear, thus $\gamma \in U_{2,9}$. The same holds for the set of vectors formed by the lines through the points 6 and 8. So we can assume that the vectors in $\{\gamma_1, \gamma_6, \gamma_2, \gamma_4, \gamma_7\}$ and $\{\gamma_2, \gamma_7, \gamma_8, \gamma_4, \gamma_1\}$ and $\{\gamma_1, \gamma_3, \gamma_7, \gamma_2, \gamma_4\}$ are not collinear.
Since $\gamma_{1} \gamma_2 \wedge \gamma_4 \gamma_7 = 0$, it follows that $\gamma_1= \gamma_2$ or $\gamma_4 = \gamma_7$. Similarly, $\gamma_1 = \gamma_7$ or $\gamma_2 = \gamma_4$, moreover $\gamma_1 =  \gamma_4$ or $\gamma_2 = \gamma_7$.
Assume without loss of generality that $\gamma_1= \gamma_2= \gamma_7$. Then $\gamma$ is a realization of a matroid for which the sets $\{1,4,8\},\{1,4,3\},\{1,4,6\}$ are dependent. Since not all the vectors are collinear, and $\gamma_3 \neq \gamma_6 \neq \gamma_8$, it must hold that $1=4$, in which case $\gamma \in V_{\CCC(B)}=V_B$.
    \end{mycases}    
\end{proof}

The proof of the following lemma is deferred to Section \ref{appendix:proof of lemma pappus}, page \pageref{proof V_C in V_M}, since it uses a technique outlined in that chapter.
\begin{lemma} \label{third lemma 3}
    For the point-line configuration $C_1$ as in Figure \ref{fig:Third config C pi_M^1 M(1)} (Left), the following equation holds: 
    $$V_{C_1} \subset V_M.$$
\end{lemma}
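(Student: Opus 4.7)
The plan is the following. Since $\Gamma_{C_1}$ is dense in $V_{C_1}$, it suffices to show every $\gamma \in \Gamma_{C_1}$ lies in $V_M$; producing a sequence $\gamma^{(\epsilon)}\in \Gamma_M$ converging to $\gamma$ in the Euclidean topology will place $\gamma$ in the Zariski closure $V_M$. Fix such a $\gamma$, let $p := \gamma_1=\gamma_5=\gamma_9$, and record the following combinatorial feature of $M$: because $\{1,5,9\}$ is precisely the non-collinear triple being identified to form $C_1$, and because each of the six remaining points $\{2,3,4,6,7,8\}$ has degree three, a direct count shows each of them lies on exactly one line with each of $1$, $5$, and $9$. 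Consequently every line of $M$ contains exactly one element of $\{1,5,9\}$, and the nine lines split into three triples inducing three perfect matchings $\mu_1,\mu_5,\mu_9$ on $\{2,3,4,6,7,8\}$. Evaluated on $\gamma$, all nine corresponding lines pass through the common point $p$.

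Before the main deformation I would dispose of degenerate subcases — when some $\gamma_i$ coincide or when the rank drops — by the same types of perturbation arguments already used in Chapter~\ref{sec: pascal & pappus}, in particular via Lemma~\ref{lemma: third config loop matroid variety} and Lemma~\ref{lemma: if gamma_i gamma_j wedge gamma_k gamma_l =0}. In the generic case, the strategy is to pull the three copies of $p$ apart: introduce a small parameter $\epsilon>0$ and seek points $\gamma_1^\epsilon,\gamma_5^\epsilon,\gamma_9^\epsilon$ close to but distinct from $p$, together with perturbations $\gamma_i^\epsilon$ of $\gamma_i$ for $i\in\{2,3,4,6,7,8\}$, so that for each $j\in\{1,5,9\}$ the three lines of $\mu_j$ remain concurrent at $\gamma_j^\epsilon$. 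Linearizing the nine collinearity conditions at $\epsilon=0$ produces a linear system of nine equations in the tangent data $(v_1,v_5,v_9;w_2,w_3,w_4,w_6,w_7,w_8)$, which has $2\cdot 9=18$ effective (projective) degrees of freedom. I would then upgrade a first-order solution to an exact deformation by constructing each $\gamma_j^\epsilon$ as an intersection of two perturbed lines through the Grassmann--Cayley meet, invoking Lemma~\ref{lemma: meet of 2 extensors arbitrary close} to guarantee continuity as $\epsilon\to 0$.

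The main obstacle is to verify that the first-order system genuinely admits a solution with $v_1,v_5,v_9$ pairwise independent modulo the direction of $p$, i.e.\ that the joint Jacobian of the nine line conditions has enough rank at $\gamma$ for the projection onto the $(v_1,v_5,v_9)$-component to be surjective. Morally this should follow because the three matchings $\mu_1,\mu_5,\mu_9$ exploit the six points in combinatorially distinct ways, but the rigorous verification will almost certainly require either a bracket-algebra computation based on the Second Fundamental Theorem~\ref{thm: 2nd fundamental thm of invariant theory}, or an explicit check in affine coordinates after placing $p$ at a standard point and normalising three of the free vectors by a projective transformation. I expect this step to parallel the Grassmann--Cayley computations in the proof of Lemma~\ref{lemma pascal V_B subset V_M}, which is the Pascal analogue that this lemma is modelled on.
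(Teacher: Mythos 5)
Your combinatorial description of $M$ relative to the identified triple $\{1,5,9\}$ is incorrect, and the deformation scheme built on it has no foothold. The lines of the third configuration $9_3$ are $\{1,2,6\}$, $\{1,3,5\}$, $\{1,8,9\}$, $\{2,3,4\}$, $\{2,7,8\}$, $\{3,7,9\}$, $\{4,5,8\}$, $\{4,6,7\}$, $\{5,6,9\}$. In particular $1,5,9$ are \emph{pairwise} joined by lines of $M$, namely $\{1,3,5\}$, $\{1,8,9\}$, $\{5,6,9\}$. Consequently your claim that ``every line of $M$ contains exactly one element of $\{1,5,9\}$'' fails for six of the nine lines (three contain two of them, three contain none), and the claim that each of $\{2,3,4,6,7,8\}$ lies on exactly one line with each of $1,5,9$ is false for $2,4,7$: for instance $2$ meets $1$ via $\{1,2,6\}$ but lies on no line through $5$ or through $9$. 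There are no perfect matchings $\mu_1,\mu_5,\mu_9$ to exploit. Rather, in $C_1$ the three lines $\{1,3,5\}$, $\{1,8,9\}$, $\{5,6,9\}$ collapse to rank-one sets and the identified point has degree four, so the ``pull the three copies of $p$ apart along three matchings'' picture does not describe the degeneration at all. Even setting that aside, you explicitly leave the decisive step --- surjectivity of the linearized concurrency system onto the tangent data $(v_1,v_5,v_9)$ --- as a conjectured parallel to the Pascal case, so by your own account the argument is not complete.

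The paper's proof avoids both problems by being entirely explicit: it normalizes four points of $\gamma\in\Gamma_M$ to a projective frame, writes the remaining entries in terms of two free parameters $x,y$ subject to one bracket relation $y\bigl(x(y+2)-1\bigr)=0$ coming from the circuit $\{5,9,6\}$, writes a matching chart for $\Gamma_{C_1}$, and exhibits the one-parameter substitution $x=\tfrac{1}{\epsilon+1}$, $y=-1+\epsilon$ with $\epsilon\to 0$. This degeneration is read off the coordinates directly, with no Jacobian analysis and no reliance on a combinatorial incidence pattern that, in this configuration, simply does not hold. I would recompute the line structure of $M$ and of $C_1$ before attempting any deformation, and then either reproduce the explicit chart computation or find a Grassmann--Cayley meet argument adapted to the actual degenerations (three lines collapsing, one point of degree four), not to a fictitious matching structure.
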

We state a lemma which will be useful to prove another result.
\begin{figure}
    \centering
    \includegraphics[width=0.9\linewidth]{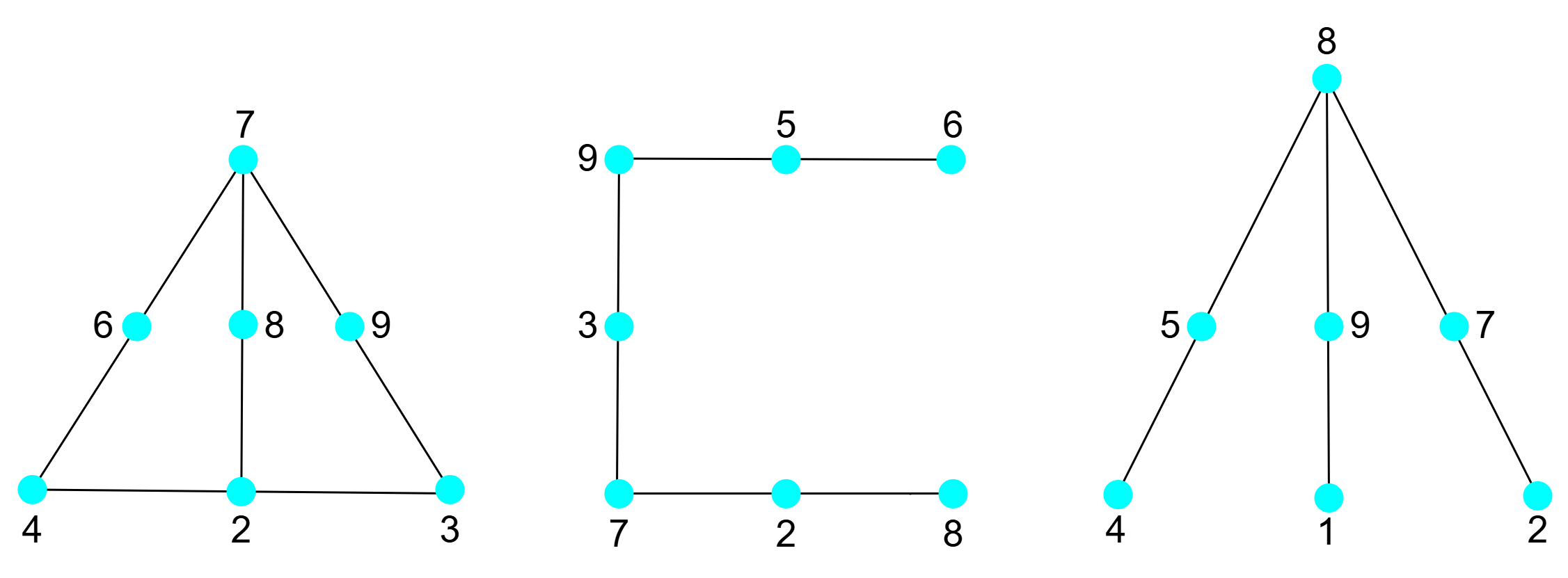}
    \caption{(Left) $Q$; (Center) $M \setminus \{1,4\}$; (Right) $M \setminus \{3,6\}$.}
    \label{fig:toevoegingen thesis}
\end{figure}
\begin{lemma} \label{lemma: third config hulplemma}
    For the matroid $Q$ with lines $\{4,6,7\}, \{2,7,8\}, \{3,7,9\}, \{2,3,4\}$, as in Figure \ref{fig:toevoegingen thesis} (Left), we have
    $$V_{\CCC(Q)} = V_Q \cup V_{Q(7)}.$$
\end{lemma}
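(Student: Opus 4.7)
The inclusion $V_Q \cup V_{Q(7)} \subseteq V_{\mathcal{C}(Q)}$ is immediate, since both matroids on the left have at least the dependencies of $Q$. For the reverse inclusion we take $\gamma \in V_{\mathcal{C}(Q)}$, and the plan is to mirror the strategy used in Lemma~\ref{props} and Proposition~\ref{prop: Pappus M(9)}: first dispose of the case $\gamma_7 = 0$, then reduce to the situation in which $\gamma$ has no loops, and finally split on whether some degree-two point $p \in \{2,3,4\}$ sees its two adjacent lines meeting transversally in $\mathbb{P}^2$.

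If $\gamma_7 = 0$, then $\gamma \in V_{\mathcal{C}(Q(7))}$. The reduced matroid $Q(7)_{\text{red}}$ is the single line $\{2,3,4\}$ together with the isolated points $6, 8, 9$, which is nilpotent and has no points of degree greater than two; Theorem~\ref{nil coincide}~(i) together with Remark~\ref{remark reduced} therefore gives $V_{\mathcal{C}(Q(7))} = V_{Q(7)}$, so $\gamma \in V_{Q(7)}$. From now on we assume $\gamma_7 \neq 0$. Since every other point of $Q$ has degree at most two, iterated application of Lemma~\ref{lemma: third config loop matroid variety} allows us to assume further that $\gamma$ contains no loops. We then split on whether there exists $p \in \{2,3,4\}$ with $\gamma_{l_1} \wedge \gamma_{l_2} \neq 0$, where $\mathcal{L}_p = \{l_1, l_2\}$. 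If such a $p$ exists, we may assume $p = 2$ without loss of generality; in that case $Q \setminus \{2\}$ consists of two lines meeting at $7$ plus isolated points, which is nilpotent and has no points of degree greater than two, so by Theorem~\ref{nil coincide}~(i) we perturb $\gamma|_{Q \setminus \{2\}}$ to a realization $\widetilde{\gamma}|_{Q \setminus \{2\}} \in \Gamma_{Q \setminus \{2\}}$, set $\widetilde{\gamma}_2 = \widetilde{\gamma}_3 \widetilde{\gamma}_4 \wedge \widetilde{\gamma}_7 \widetilde{\gamma}_8$, and invoke Lemma~\ref{lemma: meet of 2 extensors arbitrary close} to conclude $\gamma \in V_Q$.

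In the remaining case, $\gamma_{l_1} \wedge \gamma_{l_2} = 0$ for every $p \in \{2,3,4\}$ with $\mathcal{L}_p = \{l_1, l_2\}$. Applying Lemma~\ref{lemma: if gamma_i gamma_j wedge gamma_k gamma_l =0} at each such $p$, we perturb $\gamma$ inside $V_{\mathcal{C}(Q)}$ so as to remove all coincidences among its vectors; if this perturbation lands in the previous case we are finished, so we may assume the wedge still vanishes for every $p \in \{2,3,4\}$. Then each of the subspaces $\gamma_{\{2,3,4\}}$, $\gamma_{\{2,7,8\}}$, $\gamma_{\{3,7,9\}}$, $\gamma_{\{4,6,7\}}$ has rank exactly two, and the vanishing wedges force all four to coincide, so every non-loop vector of $\gamma$ lies on a single line $l \subset \mathbb{C}^3$. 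To place $\gamma$ in $V_Q$, we perturb $\gamma_7$ slightly off $l$ in a generic direction to a vector $\widetilde{\gamma}_7$ and then perturb $\gamma_6, \gamma_8, \gamma_9$ onto the three new lines $\langle \widetilde{\gamma}_7, \gamma_4 \rangle$, $\langle \widetilde{\gamma}_7, \gamma_2 \rangle$, $\langle \widetilde{\gamma}_7, \gamma_3 \rangle$ respectively, keeping the transversal $\{2,3,4\}$ on $l$. The main delicate point is to verify that this produces an element of $\Gamma_Q$ rather than merely of $V_{\mathcal{C}(Q)}$: the genericity of the perturbation of $\gamma_7$, combined with Lemma~\ref{lemma:finite epsilon to not create dependencies}, ensures that the three new lines through $\widetilde{\gamma}_7$ are pairwise distinct and that no additional collinearities are introduced, completing the argument.
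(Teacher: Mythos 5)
Your proof is correct but takes a genuinely different route from the paper's. The paper splits on whether some $p \in \{2,3,4\}$ has $\gamma_p \neq \gamma_7$: in that case it removes the \emph{degree-one} point $6$ rather than a degree-two point, perturbs $\gamma|_{\{2,3,4,7,8,9\}}$ into $\Gamma_{Q\setminus\{6\}}$ via nilpotency of $Q\setminus\{6\}$, and replaces $\gamma_6$ on $\widetilde{\gamma}_{47}$; the residual hard case is exactly $\gamma_2 = \gamma_3 = \gamma_4 = \gamma_7$, which the paper reduces to a parallel-point matroid $O$ and resolves by an explicit matrix limit proving $V_O \subset V_Q$ in the appendix. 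Your wedge-based Case~1 is the more ``standard'' pattern from the other decomposition lemmas, but it is strictly narrower: the configuration with all seven vectors distinct and collinear, which the paper's Case~1 already absorbs (since $\gamma_4 \neq \gamma_7$ there), lands in your Case~2 and must be handled by hand. Your replacement --- perturbing $\gamma_7$ off the line and re-threading $6,8,9$ onto the three new lines through $\widetilde{\gamma}_7$ --- does work, and trades the paper's appendix computation for a direct geometric perturbation.

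The one place you should tighten is the justification of that endgame. Lemma~\ref{lemma:finite epsilon to not create dependencies} only guarantees that sets which are \emph{independent} in $\gamma$ stay independent under small perturbation; in your collinear $\gamma$ the triples you worry about, such as $\{6,8,9\}$, are already \emph{dependent}, and what you need is the reverse direction: that they become independent in $\widetilde{\gamma}$. The cited lemma says nothing about this. What actually closes the case is a direct genericity statement over the free parameters (the direction of $\widetilde{\gamma}_7$ off $l$ and the positions of $\widetilde{\gamma}_6, \widetilde{\gamma}_8, \widetilde{\gamma}_9$ on their lines): each unwanted collinearity is a single nontrivial polynomial condition, and there are enough parameters to avoid them all while remaining $\epsilon$-close to $\gamma$. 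Replace the lemma citation with that argument. Relatedly, in the coincidence-removal step you also need to move the degree-one points $6,8,9$ freely on their lines, not only invoke Lemma~\ref{lemma: if gamma_i gamma_j wedge gamma_k gamma_l =0} at $p \in \{2,3,4\}$; this is easy but should be said.
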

\begin{proof} If $\gamma_7 = 0$, then $\gamma \in V_{\CCC(Q(7))} = V_{Q(7)}$, by Theorem~\ref{nil coincide}. Thus we can assume that $\gamma_7 \neq 0$. By Lemma \ref{lemma: third config loop matroid variety}, we may assume that $\gamma$ has no loops. 
\begin{mycases}
\case Suppose that $\gamma_4 \neq \gamma_7$ or $\gamma_3 \neq \gamma_7$ or $\gamma_2 \neq \gamma_7$. Without loss of generality, we may assume that $\gamma_4 \neq \gamma_7$. Perturb $\gamma|_{\{2,3,4,7,8,9\}}$ to $\widetilde{\gamma}|_{\{2,3,4,7,8,9\}} \in \Gamma_{Q \setminus \{6\}}$ using Theorem~\ref{nil coincide}, since $Q \setminus \{6\}$ is nilpotent and has no points of degree greater than two. Then define $\widetilde{\gamma}_6$ on $\widetilde{\gamma}_{47}$. Since $\gamma_4 \neq \gamma_7$, it follows that $\widetilde{\gamma}_6$ is a perturbation of $\gamma_6$. $\widetilde{\gamma} \in \Gamma_Q$, which implies that $\gamma \in V_Q$.
\case    
 Suppose that
    $\gamma_2= \gamma_3 = \gamma_4 = \gamma_7$. Denote the matroid on the ground set $\{2,3,4,6,7,8,9\}$ with $2=3=4=7$ and no lines by $O$. It is clear that $\gamma \in V_{\CCC(O)}.$ Since $V_{\CCC(O)} = V_O$, the proof finishes if we can prove that $V_O \subset V_Q$, as done in 
    Section \ref{appendix:proof of lemma pappus}.
    \end{mycases}
\end{proof}
\begin{lemma} \label{third lemma 4}\label{prop:decompo pascal(1)}
    Let $M$ denote the third configuration $9_3$ and $N=M \setminus \{1\}$. Then $$\VCN = V_N \cup V_{N(4)} \cup V_{N(7)} \cup V_{U_{2,8}}.$$
\end{lemma}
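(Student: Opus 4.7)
The inclusion $\supseteq$ is immediate, since each matroid on the right has strictly more dependencies than $N$. For the converse, let $\gamma \in \VCN$. The key structural observation is that in $N$ the points of degree three are precisely the two points of $[9] \setminus \{1\}$ not lying on any line through $1$ in $M$, namely $4$ and $7$; the remaining six points have degree two.

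First I would handle loops. If $\gamma_4 = 0$, then $\gamma \in V_{\CCC(N(4))}$, and the reduced configuration $N \setminus \{4\}$ shown in Figure \ref{fig:toevoegingen thesis} (Center) is nilpotent with all points of degree at most two, so Theorem \ref{nil coincide} together with Remark \ref{remark reduced} gives $V_{\CCC(N(4))} = V_{N(4)}$ and hence $\gamma \in V_{N(4)}$. The case $\gamma_7 = 0$ is symmetric, yielding $\gamma \in V_{N(7)}$. For a loop at any of the degree-two points, Lemma \ref{lemma: third config loop matroid variety} provides a perturbation that removes the loop without leaving $\VCN$. So we may assume $\gamma$ has no loops.

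Next I would mimic the case split used in Lemma \ref{props} and Proposition \ref{prop: Pappus M(9)}. In \textbf{Case A}, suppose there exists a degree-two point $p \in \{2,3,5,6,8,9\}$ with $\gamma_{l_1} \wedge \gamma_{l_2} \neq 0$, where $\mathcal{L}_p = \{l_1,l_2\}$. The submatroid $N \setminus \{p\}$ has seven points and four lines, and either is nilpotent with all points of degree at most two, or is covered by Lemma \ref{lemma: third config hulplemma} applied to the relevant local structure; in either case we can perturb $\gamma|_{[9] \setminus \{1,p\}}$ to a realization $\widetilde{\gamma}|_{[9] \setminus \{1,p\}} \in \Gamma_{N \setminus \{p\}}$ and then set $\widetilde{\gamma}_p := \widetilde{\gamma}_{l_1} \wedge \widetilde{\gamma}_{l_2}$. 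Lemma \ref{lemma: meet of 2 extensors arbitrary close} guarantees that $\widetilde{\gamma}_p$ is arbitrarily close to $\gamma_p$, and $\widetilde{\gamma} \in \Gamma_N$, so $\gamma \in V_N$. In \textbf{Case B}, for every degree-two point $p$ we have $\gamma_{l_1} \wedge \gamma_{l_2} = 0$. By Lemma \ref{lemma: if gamma_i gamma_j wedge gamma_k gamma_l =0} I may perturb $\gamma$ so that all eight vectors are pairwise distinct, and then each line of $N$ has rank two; the vanishing meet at each degree-two point forces $\gamma_{l_1} = \gamma_{l_2}$. Since each of the six lines of $N$ contains at least one degree-two point and the incidence graph on the degree-two points links every pair of lines through a chain of shared points, all six lines collapse to a common two-dimensional subspace, so every $\gamma_i$ with $i \in [9]\setminus\{1\}$ lies on this line and $\gamma \in V_{U_{2,8}}$.

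The main obstacle is the combinatorial bookkeeping specific to $9_3$: one must verify that $N \setminus \{4\}$ and $N \setminus \{7\}$ are nilpotent with all points of degree at most two, classify the intermediate matroids $N \setminus \{p\}$ arising in Case A so that either Theorem \ref{nil coincide} or Lemma \ref{lemma: third config hulplemma} applies, and, most delicately, establish that in Case B concurrency at the six degree-two points genuinely propagates through the incidence structure of $N$ to force all six lines to coincide rather than splitting into independent pencils.
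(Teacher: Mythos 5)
Your proposal matches the paper's proof essentially step for step: handle loops at the two degree-three points $4,7$ via nilpotency of $N\setminus\{4\}$ and $N\setminus\{7\}$, remove remaining loops with Lemma~\ref{lemma: third config loop matroid variety}, then split on whether some degree-two point has a nonzero meet (perturb and redefine via Lemma~\ref{lemma: meet of 2 extensors arbitrary close}, with the subdivision between $p\in\{2,3,6,8\}$ handled by Theorem~\ref{nil coincide} and $p\in\{5,9\}$ by Lemma~\ref{lemma: third config hulplemma}, exactly as the paper's Cases~1.1/1.2) or all meets vanish (collapse to $V_{U_{2,8}}$). The one small overclaim is that Lemma~\ref{lemma: if gamma_i gamma_j wedge gamma_k gamma_l =0} does not give pairwise distinctness of all eight vectors, only distinctness of the degree-two points from the rest, so line $\{4,6,7\}$ may degenerate; but, as the paper notes, the collinearity chain runs through the other five lines and the conclusion is unaffected.
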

\begin{proof}
    Let $\gamma \in \VCN$. If $\gamma_4 = 0$, then it follows by Theorem~\ref{nil coincide} that $\gamma \in V_{\CCC(N(4))} = V_{N(4)}$, since $N \setminus \{4\}$ is nilpotent and has no points of degree greater than two. $M \setminus \{1,4\}$ is shown in Figure \ref{fig:toevoegingen thesis} (Center). A similar result holds if $\gamma_7 = 0$. Now assume that $\gamma_4 \neq 0 \neq \gamma_7$. By Lemma \ref{lemma: third config loop matroid variety}, we can assume that $\gamma$ has no loops. We consider two cases:
    \begin{mycases}
        \case{There is a point $p \in \{2,3,5,6,8,9\}$ such that $\gamma_{l_1} \wedge \gamma_{l_2} \neq 0$, where $\{l_1, l_2\} =\mathcal{L}_p$.} 
        
        \medskip

        {\bf Case 1.1.}
   If $p \in \{5,9\}$, we may assume without loss of generality that $p=5$. Then we can use Lemma \ref{lemma: third config hulplemma} to perturb $\gamma|_{[9] \setminus \{1,5\}}$ to $\widetilde{\gamma}|_{[9] \setminus \{1,5\}}$, since $\gamma$ has no loops. If we extend this collection by $\widetilde{\gamma}_5 = \widetilde{\gamma}_6 \widetilde{\gamma}_9 \wedge \widetilde{\gamma}_5 \widetilde{\gamma}_8$, then $\widetilde{\gamma} \in \Gamma_N$. By Lemma \ref{lemma: meet of 2 extensors arbitrary close}, $\widetilde{\gamma}$ is an arbitrary perturbation of $\gamma$, so $\gamma \in V_N.$

    \medskip
     
     {\bf Case 1.2.} If $p \in \{2,3,6,8\}$ we can use Theorem~\ref{nil coincide} to perturb $\gamma|_{M \setminus \{p\}}$ to  $\widetilde{\gamma}|_{M \setminus \{p\}} \in \Gamma|_{M \setminus \{p\}}$. Define $\widetilde{\gamma}_p = \widetilde{\gamma}_{l_1} \wedge \widetilde{\gamma}_{l_2}$, then $\widetilde{\gamma} \in \Gamma_N$. By Lemma \ref{lemma: meet of 2 extensors arbitrary close}, $\widetilde{\gamma}$ is an arbitrary perturbation of $\gamma$, so $\gamma \in V_N.$
     \medskip 
    \case{For all points $p \in \{2,3,5,6,8,9\}$, $\gamma_{l_1} \wedge \gamma_{l_2} = 0$, where $\{l_1, l_2\} =\mathcal{L}_p$.} Using Lemma \ref{lemma: if gamma_i gamma_j wedge gamma_k gamma_l =0}, we can assume that $\gamma_i \neq \gamma_j$ for $i \in \{2,3,5,6,8,9\}, j \in [9] \setminus \{1\}, i \neq j$.   
Notice that $\rk(\gamma_l) = 3$ for all lines of $N$ except for possibly $l = \{4,6,7\}$. Because of Case 2, we have that $\gamma_{\{2,3,4\}} = \gamma_{\{2,7,8\}} = \gamma_{\{3,7,9\}} = \gamma_{\{4,5,8\}} = \gamma_{\{5,6,9\}}$, implying that the points in $\{2,3,4,5,6,7,8,9\}$ are on a line, thus $\gamma \in V_{U_{2,8}}.$ This concludes the proof.
    \end{mycases}
\end{proof}
\begin{lemma} \label{third lemma 5}
    Let $N=M \setminus \{3\}$. Then $$\VCN = V_N \cup V_{N(6)} \cup V_{N(8)} \cup V_{N(6,8)} \cup V_{U_{2,8}}.$$
\end{lemma}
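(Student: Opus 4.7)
The proof will follow the pattern of Lemma~\ref{third lemma 4}. The structure of $N=M\setminus\{3\}$ is the key: after removing the special point~$3$ the three lines through~$3$ disappear, so $6$ and $8$ keep degree three (none of their lines involves $3$), every non-special point in $\{1,2,4,5,7,9\}$ has degree two, and $N$ has exactly six lines, namely three through $6$ and three through $8$. The inclusion $\supseteq$ in the claimed decomposition is immediate, since each matroid on the right-hand side has more dependencies than $N$; I focus on the reverse inclusion, so let $\gamma\in V_{\mathcal{C}(N)}$.

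I first dispose of the loop cases. If $\gamma_6=0$, then $\gamma|_{[9]\setminus\{3,6\}}\in V_{\mathcal{C}(N\setminus\{6\})}$, and $N\setminus\{6\}$ is the three-concurrent-lines configuration through~$8$ (cf.~Figure~\ref{fig:toevoegingen thesis}~(Right), which shows $M\setminus\{3,6\}=N\setminus\{6\}$). By Example~\ref{ex: decomposition circuit variety 3 concurrent lines}, $V_{\mathcal{C}(N\setminus\{6\})}=V_{N\setminus\{6\}}\cup V_{(N\setminus\{6\})(8)}$, and reinserting the loop at~$6$ places $\gamma$ in $V_{N(6)}$ or $V_{N(6,8)}$ respectively. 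The symmetric argument for $\gamma_8=0$ places $\gamma$ in $V_{N(8)}\cup V_{N(6,8)}$. Assume henceforth that $\gamma_6,\gamma_8\neq 0$; since every non-special point has degree two in $N$, Lemma~\ref{lemma: third config loop matroid variety} further allows me to assume that $\gamma$ has no loops whatsoever.

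I then case-split on whether some non-special $p$ satisfies $\gamma_{l_1}\wedge\gamma_{l_2}\neq 0$ with $\{l_1,l_2\}=\mathcal{L}_p$. If such a $p$ exists, the submatroid $N\setminus\{p\}$ has all points of degree at most two—the degrees of $6$ and $8$ each drop to two, the two non-special partners of $p$ drop to degree one, and the remaining three non-special points stay at degree two—and a direct check of the nilpotency chain (two iterations of $S_{(\cdot)}$ suffice to reach the empty matroid) shows $N\setminus\{p\}$ is nilpotent. Theorem~\ref{nil coincide}~(i) then lets me perturb $\gamma|_{[9]\setminus\{3,p\}}$ to some $\widetilde{\gamma}|_{[9]\setminus\{3,p\}}\in\Gamma_{N\setminus\{p\}}$; setting $\widetilde{\gamma}_p=\widetilde{\gamma}_{l_1}\wedge\widetilde{\gamma}_{l_2}$ and invoking Lemma~\ref{lemma: meet of 2 extensors arbitrary close} yields a small perturbation of $\gamma$ in $\Gamma_N$, hence $\gamma\in V_N$. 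Otherwise $\gamma_{l_1}\wedge\gamma_{l_2}=0$ for every non-special $p$; Lemma~\ref{lemma: if gamma_i gamma_j wedge gamma_k gamma_l =0} then perturbs to make the vectors $\{\gamma_i:i\in[9]\setminus\{3\}\}$ pairwise distinct, so every line of $N$ has rank exactly two. For each non-special $p$ with $\mathcal{L}_p=\{\{6,p,q_6\},\{8,p,q_8\}\}$, the equality of the two $2$-planes $\gamma_{\{6,p,q_6\}}$ and $\gamma_{\{8,p,q_8\}}$ forces both $\gamma_6$ and $\gamma_8$ onto a common $2$-plane, which must therefore be $\gamma_{\{6,8\}}$; consequently every non-special $\gamma_p$ lies on $\gamma_{\{6,8\}}$ and $\gamma\in V_{U_{2,8}}$. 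The main technical obstacle will be the careful verification that $N\setminus\{p\}$ is nilpotent for every non-special~$p$ (which requires tracking which non-special partners of $p$ drop in degree), together with the Zariski-closure subtlety in the loop step needed to transfer the decomposition of $V_{\mathcal{C}(N\setminus\{6\})}$ back into one of $V_{\mathcal{C}(N(6))}$ after reintroducing the loop at~$6$.
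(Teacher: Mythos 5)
Your proof is correct and follows the same overall skeleton as the paper's (loop cases via Example~\ref{ex: decomposition circuit variety 3 concurrent lines}, then a two-case split on whether some degree-two $p$ has $\gamma_{l_1}\wedge\gamma_{l_2}\neq 0$). The genuine difference is in Case~1: the paper's proof cites Proposition~\ref{prop:decompo pascal(1)} (the decomposition of $V_{\mathcal{C}(M\setminus\{1\})}$) to justify the perturbation of $\gamma|_{[9]\setminus\{p,3\}}$, which is an opaque step since that proposition concerns a different matroid than $N\setminus\{p\}=M\setminus\{p,3\}$; you instead directly check that $N\setminus\{p\}$ is nilpotent with all points of degree at most two and invoke Theorem~\ref{nil coincide}(i), yielding $V_{\mathcal{C}(N\setminus\{p\})}=V_{N\setminus\{p\}}$ outright. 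Your route is more self-contained and arguably closer to what the paper does in the analogous Case~1.2 of Lemma~\ref{third lemma 4}, which uses exactly Theorem~\ref{nil coincide}. Two small inaccuracies worth fixing: the nilpotency chain of $N\setminus\{p\}$ takes three applications of $S_{(\cdot)}$, not two (e.g., for $p=1$ one has $N\setminus\{1\}\supset\{4,5,6,7,8\}\supset\{4\}\supset\emptyset$); and in Case~2, Lemma~\ref{lemma: if gamma_i gamma_j wedge gamma_k gamma_l =0} only perturbs degree-two points, so it makes the non-special $\gamma_p$ distinct from everything but does not guarantee $\gamma_6\neq\gamma_8$, so one should not name the common $2$-plane $\gamma_{\{6,8\}}$—instead observe that every line of $N$ contains two (now distinct) non-special points, hence has rank two, and chain the plane equalities through the six non-special $p$'s to force all eight vectors onto a single plane, exactly as the paper does.
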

\begin{proof}
    Let $\gamma \in \VCN$. If $\gamma_6 = 0$, then by Example \ref{ex: decomposition circuit variety 3 concurrent lines}, $\gamma \in V_{\CCC(N(6))} = V_{N(6)} \cup V_{N(6,8)}$. A similar result holds if $\gamma_8=0$. Now assume that $\gamma_6 \neq 0 \neq \gamma_8$. By Lemma \ref{lemma: third config loop matroid variety}, we may assume that $\gamma$ has no loops at all. We consider two cases:
    \begin{mycases}
        \case There is a point $p \in \{1,2,4,5,7,9\}$ such that $\gamma_{l_1} \wedge \gamma_{l_2} \neq 0$, where $\{l_1, l_2\} =\mathcal{L}_p$. In this case, we can use Proposition~\ref{prop:decompo pascal(1)} to perturb $\gamma|_{[9] \setminus \{p,3\}}$ to  $\widetilde{\gamma}|_{[9] \setminus \{p,3\}} \in \Gamma|_{M \setminus \{p,3\}}$. Define $\widetilde{\gamma}_p = \widetilde{\gamma}_{l_1} \wedge \widetilde{\gamma}_{l_2}$. Then $\widetilde{\gamma} \in \Gamma_N$, so by Lemma \ref{lemma: meet of 2 extensors arbitrary close}, $\Tilde{\gamma} \in V_N.$
        \medskip
    \case{For all points $p \in \{1,2,4,5,7,9\}$, $\gamma_{l_1} \wedge \gamma_{l_2} = 0$, where $\{l_1, l_2\} =\mathcal{L}_p$.} 
    We can assume by Lemma \ref{lemma: if gamma_i gamma_j wedge gamma_k gamma_l =0} that $\gamma_i \neq \gamma_j$ for $i\in \{1,2,4,5,7,9\}, j \in [9] \setminus \{3\}$.
Note that $\rk(\gamma_l) = 3$ for all lines. Because of Case 2, we have that $\gamma_{\{1,2,6\}} = \gamma_{\{1,8,9\}} = \gamma_{\{2,7,8\}} = \gamma_{\{4,5,8\}} = \gamma_{\{4,6,7\}}= \gamma_{\{4,5,8\}}$, implying that the points in $\{1,2,4,5,6,7,8,9\}$ lie on a line, thus $\gamma \in V_{U_{2,8}}.$ This concludes the proof.
    \end{mycases}
\end{proof}

\begin{lemma} \label{third lemma 6}\label{lemma:decompo third D}
The following inclusion holds:
    $$V_{\CCC(D)} \subset \cup_{i=1}^6 V_{\CCC(A_i)} \cup V_B \cup_{i=1}^2 V_{\CCC(C_i)} \cup_{i=1}^9 V_{\pi^i_M} \cup V_{U_{2,9}} \cup_{i=1}^9 V_{\CCC(M(i))}.$$
\end{lemma}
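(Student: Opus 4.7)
The plan is to apply the decomposition strategy of Subsection~\ref{subsec: decomposition strategy} to $V_{\CCC(D)}$. Using Algorithm~3.31 of \cite{liwskimohammadialgorithmforminimalmatroids}, I would first compute the set $\min(D)$ of matroids minimal over $D$. By Proposition~\ref{deco circ}, this gives
$$V_{\CCC(D)} = V_D \cup \bigcup_{N \in \min(D)} V_{\CCC(N)},$$
so it suffices to check that $V_D$ and each $V_{\CCC(N)}$ with $N \in \min(D)$ are contained in the claimed union.

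For each minimal matroid $N$ over $D$, the dependencies satisfy $\mathcal{D}(N) \supsetneq \mathcal{D}(D) \supsetneq \mathcal{D}(M)$. After deleting loops and identifying parallel elements, I would verify case by case (using the list produced by the algorithm) that either $N$ coincides with one of the minimal matroids over $M$ appearing in Equation~\eqref{eq: first decomposition for third config 93}, namely some $A_i$, $B$, $C_i$, $\pi_M^i$, $M(i)$, or $U_{2,9}$, or $N$ has still more dependencies, in which case $V_{\CCC(N)} \subseteq V_{\CCC(N')}$ for such an $N'$. Either way, $V_{\CCC(N)}$ is absorbed into the right-hand side of the claim.

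The main obstacle is proving that $V_D$ itself lies in the right-hand side. Here I would fix an arbitrary $\gamma \in V_D$ and proceed by a case analysis in the spirit of the proofs of Lemmas~\ref{third lemma 1}--\ref{third lemma 5}. If some $\gamma_i$ vanishes, then $\gamma \in V_{\CCC(D(i))} \subseteq V_{\CCC(M(i))}$, which is a term of the right-hand side. Otherwise I would try to perturb $\gamma$ into a realization of $D$ free of further coincidences; the crucial point is that the extra circuit $\{3,6,8\}$ combined with the nine circuits of $M$ is over-determined, so such a perturbation is possible only when an additional degeneracy occurs. Translating this over-determination into bracket relations via the Grassmann-Cayley algebra and Theorem~\ref{thm: 2nd fundamental thm of invariant theory}, I expect $\gamma$ to exhibit one of the following further degeneracies: a coincidence of two symmetric points, placing $\gamma$ in $V_B$, some $V_{\CCC(A_i)}$, or some $V_{\CCC(C_i)}$; eight points collinear, giving some $V_{\pi_M^i}$; or all nine points collinear, giving $V_{U_{2,9}}$. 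Verifying this rigidity statement by explicit Grassmann-Cayley manipulations is the technical heart of the argument.
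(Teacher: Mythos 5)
Your overall structure --- apply the decomposition strategy to $D$, then verify containment of $V_D$ and of each $V_{\CCC(N)}$ for $N$ minimal over $D$ --- is the same as the paper's. However, two of your specific claims are wrong in ways that would sink the proof. First, you assert that every minimal matroid $N$ over $D$ either coincides with a minimal matroid over $M$ (some $A_i, B, C_i, \pi_M^i, M(i)$, or $U_{2,9}$) or has strictly more dependencies than one of them, so that $V_{\CCC(N)}$ is absorbed for free. This fails: the computation of $\min(D)$ produces several matroids --- for instance $E_1$ with the identifications $2=6,3=7,4=8$, and the families $F_i,F_i',F_i'',G_i,G_i',G_i''$ with other partial identifications --- that are incomparable in the dependency order to every minimal matroid over $M$. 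For these the paper does not absorb them by domination; instead it proves $V_{E_i},V_{F_i},V_{G_i}\subset V_M$ directly by writing down parametrized normal forms of the moduli and taking limits. That limit work is the technical bulk of the proof, and your ``either way, $V_{\CCC(N)}$ is absorbed'' step contains no argument covering it.

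Second, your handling of $V_D$ is aimed at the wrong target. You propose to take an arbitrary $\gamma \in V_D$ and argue, via Grassmann--Cayley bracket identities, that the over-determination of the added circuit $\{3,6,8\}$ forces an extra degeneracy in $\gamma$, placing it in one of the other components. This argument cannot close as stated: if $\Gamma_D$ were nonempty, a generic $\gamma\in V_D$ would be an honest realization of $D$ with \emph{no} extra degeneracy, so no bracket relation could force one. The actual situation is that $D$ is not realizable --- the paper exhibits a normal form for a putative $\gamma\in\Gamma_D$ and shows the circuit $\{3,6,8\}$ forces $x^2=0$, which kills a basis minor --- hence $\Gamma_D=\emptyset$ and $V_D=\overline{\Gamma_D}=\emptyset$, and that term drops out for free. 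Your ``over-determined'' intuition is pointing at exactly this non-realizability; the fix is to push it to the conclusion $\Gamma_D=\emptyset$ rather than chase degeneracies of a phantom $\gamma$.
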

The proof of this lemma, which only uses the decomposition strategy, is given in Section \ref{appendix:proof of lemma pappus} on page \pageref{inc V_D third}. 

\medskip

Combining Equation \eqref{eq: first decomposition for third config 93} with Equation \eqref{eq:inc V_B} and Lemma \ref{third lemma 1}, \ref{third lemma 2}, \ref{third lemma 3}, \ref{third lemma 4}, \ref{third lemma 5}, \ref{third lemma 6}, we have proven the following decomposition for $\VCM$:
$$
    \VCM = V_M \cup_{i=1}^6 V_{A_i} \cup V_B \cup_{i=1}^9V_{\pi_M^i} $$ $$ \cup_{i=1}^9 \bigl( \cup_{j=1,2} V_{M(i,k_j^i)} \bigr)  \cup V_{M(3,6,8)}  \cup V_{U_{2,9}} \cup_{i=1}^9 V_{M(i)}.
$$
Since all the matroids except $M$ appearing in this decomposition are solvable, it follows from Theorem~\ref{nil coincide} that their matroid varieties are irreducible. For the irreducibility of $V_M$, we use Table~4.1 from \cite{corey2025singular}. The proof of the irredundancy of each component is deferred to Section \ref{appendix:proof of lemma pappus}.
\medskip

Thus, summarizing the results of this chapter:
\begin{theorem} \label{thm decompo third 9_3}
    The following is an irredundant, irreducible, decomposition for the point-line configuration $M$ of the third configuration $9_3$, where we use the same notation for the point-line configurations as previously in this chapter.
    $$\VCM = V_M \cup_{i=1}^6 V_{A_i} \cup V_B \cup_{i=1}^9V_{\pi_M^i} $$     \begin{equation}
\cup_{i=1}^9 \bigl( \cup_{j=1,2} V_{M(i,k_j^i)} \bigr)  \cup V_{M(3,6,8)}  \cup V_{U_{2,9}} \cup_{i=1}^9 V_{M(i)}.
\end{equation}
\end{theorem}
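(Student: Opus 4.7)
The plan is to assemble the decomposition in three layers: first produce an initial (redundant) decomposition of $V_{\CCC(M)}$ using the minimal matroid strategy, then simplify each piece using the circuit variety decompositions for the minimal matroids established in the preceding lemmas, and finally verify irreducibility and irredundancy of each surviving component.

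First I would invoke Proposition~\ref{deco circ} together with the enumeration of matroids minimal over $M$ (namely the six matroids $A_1,\ldots,A_6$, the matroid $B$, the two matroids $C_1,C_2$, the three configurations $\pi_M^i$ for $i\in\{3,6,8\}$, the configuration $D$, and the six matroids $M(i)$ for $i\in\{1,2,4,5,7,9\}$). This gives Equation~\eqref{eq: first decomposition for third config 93}. Then I would substitute the refinements proved earlier in the chapter: Lemma~\ref{third lemma 1} replaces every $V_{\CCC(A_i)}$ by $V_{A_i}\cup V_{U_{2,9}}$ (using the symmetry among the six $A_i$); Equation~\eqref{eq:inc V_B} replaces $V_{\CCC(B)}$ by $V_B$; Lemmas~\ref{third lemma 2} and~\ref{third lemma 3} absorb $V_{\CCC(C_i)}$ into the union $V_M\cup V_B\cup_j V_{M(i,k_j^i)}\cup V_{U_{2,9}}$; Lemmas~\ref{third lemma 4} and~\ref{third lemma 5} decompose $V_{\CCC(\pi_M^i)}$ for $i\in\{3,6,8\}$ (since $\pi_M^i$ is isomorphic to $M\setminus\{i\}$ after identifications, so the corresponding lemmas apply); Lemma~\ref{third lemma 6} handles $V_{\CCC(D)}$; and the decomposition of each $V_{\CCC(M(i))}$ for $i\in\{1,2,4,5,7,9\}$ follows again by applying Proposition~\ref{deco circ} and Lemma~\ref{lemma: third config loop matroid variety} iteratively, producing only the components $V_{M(i)}$, $V_{M(i,k_j^i)}$, and additional $\pi_M^\cdot$ or $U_{2,9}$ pieces already included. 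After collecting all terms and removing repetitions, only the components listed in the theorem remain.

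Next I would handle irreducibility. For every component except $V_M$, the associated matroid is solvable (each $A_i$ and $B$ arise by identifying points on forest-like subconfigurations, each $\pi_M^i$ collapses to a line plus one extra point, and each $M(S)$ becomes nilpotent once enough loops are introduced), so Theorem~\ref{nil coincide}\,(iv) gives irreducibility of the matroid variety. For $V_M$ itself I would cite the known irreducibility from Table~4.1 of~\cite{corey2025singular}, as indicated by the paragraph preceding the theorem. Finally, $V_{U_{2,9}}$ is irreducible because the realization space of the uniform matroid is irreducible (the complement of a hypersurface in the space of rank-two configurations).

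The main obstacle is irredundancy. For each of the listed components one must exhibit a collection of vectors belonging to it but outside every other component; this is straightforward for $V_M$ (pick a generic realization), and for $V_{U_{2,9}}$ (pick nine distinct collinear points with no further incidences), but more delicate for the various $V_{M(i,k_j^i)}$ and $V_{\pi_M^i}$, where one must distinguish configurations that differ only in which specific subset of points collapses or becomes a loop. The strategy I would follow is: for each component $V_N$ on the right-hand side, pick $\gamma$ a generic realization of $N$ and verify directly that the incidence pattern of $\gamma$ is the matroid $N$; then the only components of the decomposition containing $\gamma$ correspond to matroids $N'\le N$, and by genericity only $N'=N$ can occur. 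Since this boils down to a finite symmetry-reduced case check (three orbits among the $M(i,k_j^i)$, one for $M(3,6,8)$, two orbits among the loop-free components), I would defer the explicit verification to the appendix, as announced in the excerpt.
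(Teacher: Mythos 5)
Your overall strategy matches the paper's (minimal matroids via Proposition~\ref{deco circ}, refine each piece with the preceding lemmas, then check irreducibility and irredundancy), but the substitution step contains a concrete misassignment. You claim Lemmas~\ref{third lemma 4} and~\ref{third lemma 5} decompose $V_{\CCC(\pi_M^i)}$, on the grounds that ``$\pi_M^i$ is isomorphic to $M\setminus\{i\}$ after identifications.'' That is false. By Notation~\ref{notation pi}, $\pi_M^i$ is the configuration with a \emph{single} line through all points except $i$, whereas $M\setminus\{i\}$ retains the six lines of $M$ that avoid $i$; the two are not isomorphic, and those lemmas say nothing about $\pi_M^i$. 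In fact $\pi_M^i$ needs no lemma at all: after reducing parallel points it is one line plus one off-line point, every point has degree at most one, so it is nilpotent and $V_{\CCC(\pi_M^i)}=V_{\pi_M^i}$ directly by Theorem~\ref{nil coincide} and Remark~\ref{remark reduced}.

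The role you wrongly gave to $\pi_M^i$ is precisely the role those lemmas play for $V_{\CCC(M(i))}$: Lemma~\ref{third lemma 4} decomposes $V_{\CCC(N)}$ for $N=M\setminus\{1\}$ and Lemma~\ref{third lemma 5} does the same for $N=M\setminus\{3\}$, which, after reinstating the loop and using the two symmetry classes $\{1,2,4,5,7,9\}$ and $\{3,6,8\}$, give the decompositions of $V_{\CCC(M(i))}$ for all $i\in[9]$ and produce exactly the pieces $V_{M(i)}$, $V_{M(i,k_j^i)}$, $V_{M(3,6,8)}$ and a uniform piece. Your replacement for this step, namely decomposing each $V_{\CCC(M(i))}$ by ``applying Proposition~\ref{deco circ} and Lemma~\ref{lemma: third config loop matroid variety} iteratively,'' is therefore redundant and, more importantly, unsubstantiated: you neither enumerate the minimal matroids over $M(i)$ nor show the iteration stops at the claimed components, which is precisely the computation Lemmas~\ref{third lemma 4} and~\ref{third lemma 5} carry out (and which the decomposition-strategy redo in Chapter~\ref{sec: decompo revisited} performs at considerable length). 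Once the two lemmas are reassigned to their correct targets and $\pi_M^i$ is dispatched by nilpotency, the rest of your assembly and the irreducibility and irredundancy discussion line up with the paper.
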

\numberwithin{theorem}{chapter}

\chapter{The decomposition of the circuit variety of the Pappus configuration and the third configuration $9_3$ revisited} \label{sec: decompo revisited}
In Chapter \ref{sec: pascal & pappus}, we give a proof of the irreducible decomposition of the circuit variety of the Pascal configuration and of the Pappus configuration with one loop, based on the fact that $M(9)$ has some points of degree two. In Chapter \ref{Decomposition of the Circuit Variety of the third configuration 93}, we give a proof of an irreducible decomposition of the third configuration $9_3$ which uses the decomposition strategy in the first step, but instead of recursively applying this strategy for each minimal matroid encountered, we use a similar method as in Chapter \ref{sec: pascal & pappus}. However, in this chapter we give a proof which only uses the decomposition strategy. One can notice that the proofs based on the decomposition strategy are remarkably more extensive than the ones based on our previous methods. However, the decomposition strategy has the advantage that it also works for matroids without points of degree at most two.
\section{Pappus configuration}
Denote the Pappus configuration by $M$. We will give an alternative proof of Theorem \ref{prop: Pappus M(9)}, which only uses the decomposition strategy as outlined in Subsection \ref{subsec: decomposition strategy}. 

\medskip
{\bf Proof of Proposition \ref{prop: Pappus M(9)}}
The minimal matroids over $M(9)$ are the following. An automorphism means a hypergraph automorphism of $M(9).$ The point 9 is a loop in each point-line configuration in this list.
\begin{itemize} 
\item The matroid $A_1$ obtained by identifying $1=2=7$, and four other matroids $A_i$ derived from $A_1$ via automorphisms, as in Figure \ref{fig:Gemengd Pappus A_i B_i C_i} (First Left).
\item The matroid $A'$ obtained by identifying $2=3=4$, and lines $\{1,5,7\},\{8,1,6\},\{2,5,6\}.$
\item The matroid $B_1$ obtained by identifying the points in $\{1,2,6\}$ and placing them on the line with 4,5 and 7, and the three other matroids $B_i$ formed by automorphism, as in Figure \ref{fig:Gemengd Pappus A_i B_i C_i} (Second Left). 
\item The matroid $B'_1$ obtained by identifying the points in $\{2,4,6\}$ and placing them on the line with the points $\{1,3,8\}$ and a matroid $B_2'$ formed by automorphism.
\item The six matroids obtained by identifying two points of degree two which are on the same line. We call these matroids $C_i$, as in Figure \ref{fig:Gemengd Pappus A_i B_i C_i} (Second Right).
\item The matroid $D_1$ with $1=2$, $4=5$, and three other matroids formed by automorphisms, as in Figure \ref{fig:Gemengd Pappus A_i B_i C_i} (First Right). 
\item The matroid $D_1'$ with $1=7,3=4$ and lines $\{1,2,3\},\{8,1,6\},\{3,5,6\}$, and another matroid obtained by automorphism.
\item The matroid $\widetilde{I_1} = I_9$ as in Figure \ref{fig:Pappus, Pappus, I_i, J_i} (Center) and another matroid $\widetilde{I_2}$ formed by automorphism where 9 is still a loop.
\item $M(9,1)$ as in Figure \ref{fig:Pappus, K_i, L__i, N} (Center), and $M(9,4)$.\end{itemize}
\begin{figure}
    \centering
    \includegraphics[width=0.9\linewidth]{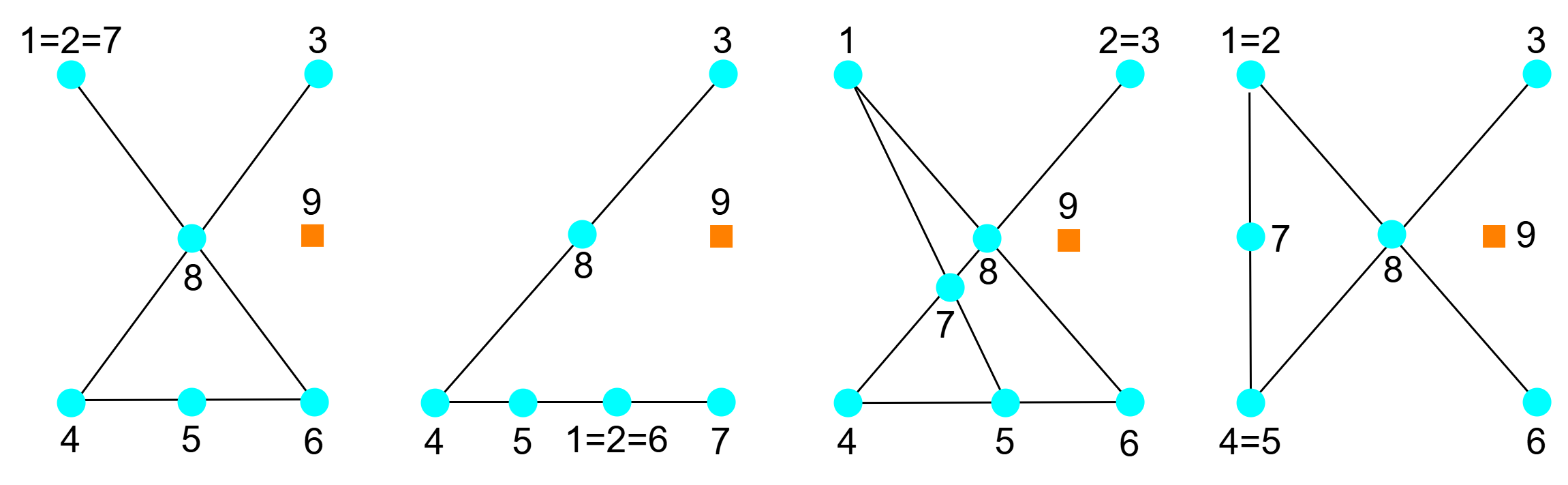}
    \caption{(First Left) $A_1$, (Second Left) $B_1$, (Second Right) $C_1$, (First Right) $D_1$.}
    \label{fig:Gemengd Pappus A_i B_i C_i}
\end{figure}
The point-line configurations $A_i, A', B_i, B', D_i$ and $D_i'$ are nilpotent with no points of degree greater than two and $C_i$ is quasi-nilpotent and has no points of degree greater than two. So from Theorem~\ref{nil coincide}, it follows that 
   $$ V_{\mathcal{C}(M(9))} = V_{M(9)} \cup_{i=1}^5 V_{A_i} \cup V_{A'} \cup_{i=1}^4 V_{B_i} \cup_{i=1}^2 V_{B_i'} $$
    \begin{equation} \label{eq: pappus decomposition M(9) first}
    \cup_{i=1}^6 V_{C_i} \cup_{i=1}^4 V_{D_i} \cup_{i=1}^2 V_{D_i'} \cup_{i=1}^2 V_{\mathcal{C}(\widetilde{I_i})} \cup V_{\CCC(M(9,1))} \cup V_{\CCC(M(9,4))} \cup V_{U_{2,9}(9)}.
\end{equation}
The following inclusions hold:
\begin{enumerate}
    \item For $i \in \{1, \ldots, 5\}$: $V_{A_i} \subset V_{M(9)}$.
    \item $V_{A'} \subset V_{M(9)}$.
    \item For $i \in \{1, \ldots, 4\}$: $V_{B_i} \subset V_{M(9)}$.
    \item For $i \in \{1, 2\}$: $V_{B_i'} \subset V_{M(9)}$.
    \item For $i \in \{1, \ldots, 6\}$: $V_{C_i} \subset V_{M(9)}$.
    \item For $i \in \{1, \ldots, 4\}$: $V_{D_i} \subset V_{M(9)}$.
    \item For $i \in \{1, 2\}$: $V_{D_i'} \subset V_{M(9)}$.
    \item $V_{\CCC(\widetilde{I_i})} \subset V_{M(9)}  \cup V_{U_{2,9}} \cup_{j=1,4} V_{M(9,j)} \cup V_{M(9,1,4)}.$
    \item $V_{\CCC(M(9,1))} = V_{M(9,1)} \cup V_{M(9,1,4)}.$
\end{enumerate}
The proofs of the statements in 1,3,5,6 and 8 are technical, therefore we defer them to Section \ref{appendix:proof of lemma pappus}. The proofs of the other statements in 2, 4, 7, are completely analogous to the ones in respectively 1,3,6. The statement in 9 immediately follows from Example \ref{ex: decomposition circuit variety 3 concurrent lines}.
Thus we have found a decomposition of $V_{M(9)}$:
\begin{equation*} 
V_{\CCC(M(9))} =V_{M(9)} \cup V_{U_{2,9}(9)} \cup V_{M(9,1)} \cup V_{M(9,4)} \cup V_{M(9,1,4)}.
\end{equation*}
\section{Third configuration $9_3$}
In this section, we revisit the decomposition of the circuit variety of the third configuration $9_3$. In Chapter \ref{Decomposition of the Circuit Variety of the third configuration 93}, we give a proof of Theorem \ref{thm decompo third 9_3} which uses the decomposition strategy only in the first step and in the proof of Lemma \ref{lemma:decompo third D}. We can also give a proof which uses the decomposition strategy in each step. We use the same notation for each matroid as in Chapter \ref{Decomposition of the Circuit Variety of the third configuration 93}.

Consider the following lemma.
\begin{lemma} \label{lemma: inclusions third 9_3}
    The following inclusions hold.
    \begin{enumerate}
        \item \begin{equation} \label{eq: lemma 1 for third config}
V_{\mathcal{C}(A_i)} \subset V_{A_i} \cup_{j=1}^9 V_{\pi_M^j} \cup V_{U_{2,9}} \cup_{i=1}^9 V_{\CCC(M(i))}.\end{equation}
\item     \begin{equation} \label{eq: lemma 3 third config}
        V_{\CCC(C_i)} \subset V_M \cup V_B \cup_{j=1}^9 V_{\pi_{M}^j} \cup V_{U_{2,9}} \cup_{j=1}^9 V_{\CCC(M(j))}.
\end{equation}
\item \begin{equation} \label{eq: lemma 4 third config}
    V_{\CCC(D)} \subset \cup_{i=1}^6 V_{\CCC(A_i)} \cup V_{B} \cup_{i=1}^2 V_{\CCC(C_i)} \cup_{i=1}^9 V_{\pi_M^i} \cup V_{U_{2,9}} \cup_{i=1}^9 V_{\CCC(M(i))}.
\end{equation}

\item Let $i$ be a point in $\{1,2,4,5,7,9\}.$ Denote $k_1^i$ and $k_2^i$ for the two points for which there is no line in $\mathcal{L}_M$ containing the pair $\{i,k_1^i\}$ or $\{i,k_2^i\}$. Then  
\begin{equation} \label{eq: lemma 5 third config}
    V_{\CCC(M(i))} \subset V_{M(i)} \cup V_B \cup_{j=1,2} V_{M(i,k_1^i)} \cup_{j=1}^9 V_{\pi_{M}^j} \cup V_{U_{2,9}}. 
\end{equation}
\item Let $i$ be a point in $\{3,6,8\}$. Denote the other two points in $\{3,6,8\}$ as $k_1^i$ and $k_2^i$. Then
\begin{equation} \label{eq: lemma 6 third config}
V_{\CCC(M(i))} \subset V_{M(i)} \cup_{j=1,2} V_{M(i,k_j^i)} \cup V_{M(3,6,8)} \cup_{j=1}^9 V_{\pi_{M}^j} \cup V_{U_{2,9}}.
\end{equation}
    \end{enumerate}
\end{lemma}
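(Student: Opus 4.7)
The plan is to apply the decomposition strategy of Subsection~\ref{subsec: decomposition strategy} iteratively to each of the five left-hand sides. For a given configuration $N \in \{A_i, C_i, D, M(i)\}$, I would first compute $\min(N)$ using Algorithm~3.31 of \cite{liwskimohammadialgorithmforminimalmatroids} (via the implementation kindly provided by R\'emi Pr\'ebet), then apply Proposition~\ref{deco circ} to split
\[V_{\CCC(N)} = V_N \cup \bigcup_{N'\in\min(N)} V_{\CCC(N')},\]
and then repeat on every new circuit variety until each terminal configuration either is nilpotent with no point of degree $\geq 3$ (so Theorem~\ref{nil coincide}(i) gives $V_{\CCC(\cdot)}=V_{\cdot}$) or has all proper submatroids nilpotent with no high-degree points (so Theorem~\ref{nil coincide}(ii) applies and a $V_{U_{2,d}}$ component appears). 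For non-simple rank-three matroids encountered along the way, I would pass to the reduced version and translate back using Remark~\ref{remark reduced}.

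The bookkeeping task for each inclusion is then to verify that every terminal matroid variety in the decomposition of the left-hand side coincides with, or is contained in, one of the allowed components on the right-hand side. For inclusion~(1), the minimal matroids of $A_i$ arise from further identifications of points in the ground set, and after iteration their circuit varieties are absorbed into $V_{A_i}$, $V_{\pi_M^j}$, $V_{U_{2,9}}$, or $V_{\CCC(M(j))}$. For inclusion~(2), the procedure on $C_i$ produces in addition a component identified as $V_M$ (when the identifications yield a realizable full copy of $M$, using the analogue of Lemma~\ref{third lemma 3}) and $V_B$ (when enough coincidences collapse three symmetric pairs of points). For inclusion~(3), since $D$ is obtained from $M$ by imposing the extra circuit $\{3,6,8\}$, the first round of minimal matroids over $D$ already contains the $A_i$ and $C_i$; subsequent iterations produce only configurations already accounted for, so the decomposition of $V_{\CCC(D)}$ is dominated by the stated union.

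For inclusions~(4) and (5), the analysis parallels that of the Pappus case $M(9)$ handled in the previous section. For $i \in \{1,2,4,5,7,9\}$, the two points $k_1^i, k_2^i$ not joined to $i$ by a line of $M$ parametrize the surviving double-loop components $V_{M(i,k_j^i)}$, while the extra identifications coming from the minimal matroids of $M(i)$ collapse to either $V_{M(i)}$, $V_B$, $V_{\pi_M^j}$, or $V_{U_{2,9}}$. For $i \in \{3,6,8\}$, the additional symmetry of the orbit $\{3,6,8\}$ under the automorphism group of $M$ allows a further triple-loop component $V_{M(3,6,8)}$ to appear in the decomposition.

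The main obstacle is purely combinatorial: for each of the five starting configurations, the first round of minimal matroids typically produces dozens of new configurations, and several rounds of iteration are needed before every branch terminates at Theorem~\ref{nil coincide}. Systematic exploitation of the hypergraph automorphisms of $M$ (transitivity on $\{3,6,8\}$ and on $\{1,2,4,5,7,9\}$) is essential to reduce the enumeration to a manageable number of orbits, and at each stage one must recognize the terminal matroid varieties as instances of the components listed on the right-hand side; the more delicate inclusions, such as identifying certain collapsed matroids with subvarieties of $V_M$, are handled by arguments analogous to Lemma~\ref{third lemma 3} and Lemma~\ref{lemma:decompo third D}. Redundancy of intermediate components is then eliminated at the end, leaving precisely the inclusions claimed.
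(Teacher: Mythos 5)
Your proposal correctly identifies the method the paper uses: iterated application of the decomposition strategy, passing to reduced matroids and translating back via Remark~\ref{remark reduced}, and terminating at configurations covered by Theorem~\ref{nil coincide}. But what you have written is a description of that method, not a proof of the lemma — nearly all of the content is deferred to ``bookkeeping'' that is asserted rather than carried out, and the bookkeeping is precisely where the mathematics is. In the paper's appendix proof, a large number of intermediate configurations arise that are \emph{not} terminal and are \emph{not} settled by Theorem~\ref{nil coincide} (among them $D$, $E_i$, $F_i$, $G_i$ in part~3; $F$, $G_i$, $H_i$, $I$, $J$, $U$, $V$, $X$, $N_i$, $P$ in part~4; $D_i$, $E_i$, $K$ in part~5). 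For each such $X$, an inclusion $V_X\subset V_Y$ is established either by an explicit moduli-space parametrization and limit argument (pick a normal form for $\mathcal{R}(Y)$ using Equation~\eqref{1234}, impose the extra circuit relations via a determinantal constraint, and exhibit a one-parameter family in $\Gamma_Y$ converging to an arbitrary $\xi\in\Gamma_X$, as for $V_{E_i}\subset V_M$, $V_{F_i}\subset V_M$, $V_{G_i}\subset V_M$, $V_I\subset V_{M(9)}$, $V_J\subset V_{M(9)}$), or by a wedge-product perturbation argument built on Lemma~\ref{lemma: meet of 2 extensors arbitrary close} (as for $V_{A_1}\subset V_{M(9)}$, $V_{D_1}\subset V_{M(9)}$, $V_{H_1}\subset V_{M(9)}$, and many others). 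Beyond a single allusion to ``arguments analogous to Lemma~\ref{third lemma 3}'', you neither identify which components need such a treatment nor construct any of them.

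Two further points your outline glosses over would actually derail a literal execution of your plan. First, some intermediate matroids in the iteration are not realizable ($D$ in part~3 and $P$ in part~4), so the naive ``repeat until every branch hits a Theorem~\ref{nil coincide} case'' loop would not terminate on those branches; the paper instead verifies $\Gamma_X=\emptyset$ directly from a parametrized representative and deletes that node from the decomposition. Your proposal has no provision for this. Second, your statement that ``the first round of minimal matroids over $D$ already contains the $A_i$ and $C_i$'' is not accurate: $\min(D)$ produces matroids that strictly dominate $A_i$ and $C_i$ in the dependency order, and one then argues that their circuit varieties are contained in $V_{\CCC(A_i)}$ and $V_{\CCC(C_i)}$. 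That distinction is exactly why the right-hand side of Equation~\eqref{eq: lemma 4 third config} is a union of the \emph{circuit} varieties $V_{\CCC(A_i)}$ and $V_{\CCC(C_i)}$ rather than the matroid varieties $V_{A_i}$ and $V_{C_i}$.
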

A proof of this lemma which only uses the decomposition strategy is given in Section \ref{appendix:proof of lemma pappus}. It is clear that this proves Theorem
\ref{thm decompo third 9_3}.
\chapter{Conclusion and Further Research}

In this thesis, we have found an irreducible decomposition for the circuit varieties of cactus configurations, up to redundancy, and an irreducible decomposition for the circuit variety of the Pascal configuration and the third configuration $9_3$. During these proofs, we also decomposed the circuit varieties of many other point-line configurations. Since the cactus configurations are a family of configurations, Theorem \ref{thm: decomposition of circuit variety of a cactus} enables us to decompose the circuit variety of various point-line configurations immediately, including configurations with numerous points and lines. For these configurations, algorithmic methods such as the decomposition strategy are not suitable, as for instance illustrated in Example \ref{example cactus decompo}. This example shows a cactus configuration which has 132 minimal matroids and only a few
are related by automorphisms. This implies that the decomposition strategy in this case results in a computationally heavy situation.

\medskip

For some matroids, including the Pascal configuration, the Pappus configuration with one loop and the third configuration $9_3$, we have given a proof which is much shorter than the one based on the decomposition strategy. Our technique applies to many other point-line configurations as well. If the point-line configuration has \enquote{enough} points of degree two, then we can immediately apply our method. If not, as for the third configuration $9_3$, we can use the decomposition strategy in a first step: we identify the minimal matroids of our point-line configuration. For these point-line configurations, if they have \enquote{enough} points of degree two, we can apply our developed technique, else we use the decomposition strategy again. We can repeat this process recursively. This makes the proofs significantly faster. To illustrate this, the thesis gives a proof of the decomposition of the circuit variety of the third configuration $9_3$ and the Pappus matroid with one loop using the decomposition strategy as well, and one can notice that these proofs are much longer and involve many other point-line configurations. Therefore, having introduced this technique, we are able to decompose the circuit varieties of many new matroids for which the computation using only the decomposition strategy is too challenging.

\medskip

An evident question extending our work is a generalization to other dimensions. As mentioned, point-line configurations have a natural generalization to larger dimensions, called $n$-paving matroids. This year, there appeared a paper \cite{liwski2025efficient} which studies these decompositions in new dimensions. An interesting question would be the following: 
\begin{question}
Is it possible to extend the idea behind our more efficient method for the decomposition of circuit varieties to other dimensions?
\end{question}
Our technique cannot be carried over directly to other paving matroids, since it implicitly uses some properties of $\mathbb{P}^2$.

\medskip

On the other hand, we also find a generating set for the matroid ideal of cactus configurations and the Pascal and Pappus configurations. Although it may seem that these results are limited since they involve specific types of matroids, the problem of finding a generating set for matroid ideals is notoriously difficult, since few methods to construct explicit generators are known. The Pascal and Pappus configuration are up to my best knowledge the only known cases where the Grassmann-Cayley ideal and the lifting ideal are both irredundant in the generating set. An obvious question that can be investigated is the following. 
\begin{question}
    Identify new matroids for which $$I_M = \sqrt{I_{\CCC(M)}+G_M+I_M^\textup{lift}}.$$ 
\end{question}
However, we have a very strong suspect that the matroid ideal of among others the point-line configuration associated to the affine plane, as in \cite{liwskimohammadialgorithmforminimalmatroids}, is not generated by only circuit, Grassmann-Cayley and lifting polynomials. However, we do not have techniques yet to construct new types of polynomials for the generating set. This leads to the following question. 
\begin{question}
    Develop new methods to construct other ideals irredundant for the generating set of a matroid ideal.
\end{question}
We can also try to generalize our previous results to new dimensions. Instead of point-line configurations, we can study paving matroids. Moreover, the circuit and lifting ideal are well-defined for higher dimensions. However, the construction of Grassmann-Cayley ideal uses specific properties of rank three, and therefore it is not obvious how to redefine it for new dimensions.
\begin{question}
    What is a good definition for $G_M$ in higher dimensions, and are there paving matroids for which $$I_M = \sqrt{I_{\CCC(M)}+G_M+I_M^{\textup{lift}}}$$ holds?
\end{question}
A last question comes up when we look at the explicit set of generating polynomials for the Pascal and Pappus matroid, as in Remark \ref{remark pascal} and \ref{remark pappus}. The number of lifting polynomials in the generating set is remarkably larger than that of the circuit and Grassmann-Cayley polynomials. 
\begin{question}
    Is there a smaller generating set which generates an ideal $I$ for which $I_M^{\textup{lift}} \subset I \subset I_M$?
    \end{question}
    We already used the graph ideal, defined in \cite{liwski2024pavingmatroidsdefiningequations}, instead of the lifting ideal to find a finite set of generators containing $I_M^{\textup{lift}}$, but for any practical applications, it would be very useful to find a smaller generating set.
\cleardoublepage
\appendix
\numberwithin{theorem}{chapter}
\chapter{Appendices}\label{ch:myappendix}
\section{Proof of Lemma \ref{lemma:finite epsilon to not create dependencies}}
We present a proof for Lemma \ref{lemma:finite epsilon to not create dependencies}.
\medskip

    If a set of $d$ vectors is linearly dependent, then all the $\min(n,d) \times \min(n,d)$-minors of the corresponding $n \times d$-matrix vanish. Define $$\phi: \CC^{n \times d} \to \CC^i: A \to (A_j)_{j=1, \ldots, i},$$
    where $A_j$ are all the possible $\min(n,d) \times \min(n,d)$-minors of $A$. Since the determinant is a continuous function for the induced topology by the norm, $\phi$ is continuous as well. Identifying a collection of vectors with a matrix, we can apply $\phi$ as well to a collection of $d$ vectors in $\CC^n$. Assume $\{\gamma_1, \ldots, \gamma_d\}$ is a linearly independent set. 
    Then $\phi(\{\gamma_1, \ldots, \gamma_d\}) = \overline{x} \neq (0, \ldots, 0)$. Denote $x = \lVert\overline{x}\rVert$. There is a $\delta$, such that for any collection of vectors $\Tilde{\gamma}$: \begin{equation} \label{eq: condition on perturbation} \text{If } \lVert\widetilde{\gamma} - \gamma\rVert < \delta, \text{ then } \lVert\phi(\widetilde{\gamma}) - (0, \ldots, 0)\rVert  > x/2.\end{equation} In that case, $\{\widetilde{\gamma}_1, \ldots, \widetilde{\gamma}_n\}$ is a linearly independent set of vectors as well. Consider all the possible linearly independent subsets of vectors in $\gamma$. For each set, define a corresponding $\delta > 0$, such that the property in Equation~\eqref{eq: condition on perturbation} holds. Let $\epsilon$ be the minimal value of the set containing every  such $\delta$. Then if one perturbs $\gamma$ to $\widetilde{\gamma}$ such that $\lVert\gamma- \widetilde{\gamma}\rVert < \delta$, there are no additional dependencies created.
\section[Detailed computations for Example 3.19]{Detailed computations for Example \ref{ex: cactus}}\label{appendix: example}
This section presents some detailed computations for Example \ref{ex: cactus}.
\medskip

The vectors $\gamma_3$ and $\gamma_2$ have the following explicit expressions:
\[ \gamma_3=
\left( 
\begin{array}{c}
- l_1 y_{11} + l_1 y_{12} - y_{11} + y_{12} \\ 
- l_1 y_{8} y_{11} + l_1 y_{8} y_{12} - y_{11} + y_{12} \\ 
- l_1 y_{8} z_{11} + l_1 y_{12} z_{11} + l_1 y_{8} z_{12} - l_1 y_{11} z_{12} + y_{12} z_{11} - y_{11} z_{12} - z_{11} + z_{12}
\end{array} 
\right)
\]
and 
$$
\scalebox{0.95}{$
\begin{aligned}
\gamma_2 = &\;
\left(
\begin{array}{c}
- l_1 y_{11} z_{9} + l_1 y_{12} z_{9} + l_1 y_{11} z_{10} - l_1 y_{12} z_{10} - y_{11} z_{9} + y_{12} z_{9} + y_{11} z_{10} - y_{12} z_{10} \\
-l_1y_{11} y_{10}z_{9}+l_1y_{10}y_{12}z_{9} -l_1y_{9}y_{8}z_{11} + l_1 y_{8} y_{10} z_{11} + l_1 y_{9} y_{12} z_{11} - l_1 y_{10} y_{12} z_{11} \\
- l_1 y_{8} z_{9} z_{11} +l_1 y_{12} z_{9} z_{11} + l_1 y_{8} z_{11} z_{10} - l_1 y_{12} z_{11} z_{10} + l_1 y_{8} z_{9} z_{12}
\end{array}
\right) \\[1em]
&+
\left(
\begin{array}{c}
0 \\
l_1 y_{9} y_{11} z_{10} - l_1 y_{9} y_{12} z_{10} + l_1y_{9}y_8z_{12}-l_1 y_{8} y_{11} z_{12}-l_1y_{9} y_{10} z_{12} + l_1 y_{11} y_{10} z_{12} \\
- l_1 y_{11} z_{9} z_{12} - l_1 y_{8} z_{10} z_{12} +l_1y_{11} z_{10} z_{12} + y_{12} z_{9} z_{11} - y_{12} z_{11} z_{10} - y_{11} z_{9} z_{12}
\end{array}
\right) \\[1em]
&+
\left(
\begin{array}{c}
0 \\
-y_{11}y_{10} z_{9}+y_{10} y_{12} z_{9} + y_{9} y_{12} z_{11} - y_{10} y_{12} z_{11} + y_{9} y_{11} z_{10}-y_{9} y_{12} z_{10}- y_{9} y_{11} z_{12} \\
+ y_{11} z_{10} z_{12} - z_{9} z_{11} + z_{11} z_{10} + z_{9} z_{12} -z_{10} z_{12}
\end{array}
\right) \\[1em]
&+
\left(
\begin{array}{c}
0 \\
y_{11} y_{10} z_{12} - y_{9} z_{11}+y_{10} z_{11} + y_{9} z_{12} - y_{10} z_{12} \\
0
\end{array}
\right)
\end{aligned}
$}
$$

\normalsize
The quadratic equation in Example \ref{ex: cactus} is the following:

\[
\begin{aligned}
& -l_1^2 y_{8}^2 z_9 z_{11}
+ l_1^2 y_8 y_{12} z_9 z_{11}
+ l_1^2 y_{11} y_{10} z_9 z_8
- l_1^2 y_{10} y_{12} z_9 z_8 
 + l_1^2 y_9 y_8 z_{11} z_8 \\
& - l_1^2 y_8 y_{10} z_{11} z_8
- l_1^2 y_9 y_{12} z_{11} z_8
+ l_1^2 y_{10} y_{12} z_{11} z_8 
+ l_1^2 y_{8}^2 z_{11} z_{10}
- l_1^2 y_8 y_{12} z_{11} z_{10} \\ &
- l_1^2 y_9 y_{11} z_8 z_{10}
+ l_1^2 y_9 y_{12} z_8 z_{10} 
- l_1^2 y_{11} y_8 z_9 z_{12}
+ l_1^2 y_{8}^2 z_9 z_{12}
+ l_1^2 y_9 y_{11} z_8 z_{12} \\ &
- l_1^2 y_9 y_8 z_8 z_{12} 
- l_1^2 y_{11} y_{10} z_8 z_{12}
+ l_1^2 y_8 y_{10} z_8 z_{12}
+ l_1^2 y_{11} y_8 z_{10} z_{12}
- l_1^2 y_{8}^2 z_{10} z_{12} \\
&+ l_1\, y_8 y_{12} z_9 z_{11}
+ l_1\, y_{11} y_{10} z_9 z_8
- l_1\, y_{10} y_{12} z_9 z_8
- l_1\, y_9 y_{12} z_{11} z_8 
+ l_1\, y_{10} y_{12} z_{11} z_8 \\
&- l_1\, y_8 y_{12} z_{11} z_{10}
- l_1\, y_9 y_{11} z_8 z_{10}
+ l_1\, y_9 y_{12} z_8 z_{10}
- l_1\, y_{11} y_8 z_9 z_{12}
+ l_1\, y_9 y_{11} z_8 z_{12} \\
&- l_1\, y_{11} y_{10} z_8 z_{12}
+ l_1\, y_{11} y_8 z_{10} z_{12}
+ l_1\, y_{11} y_{10} z_9
- l_1\, y_{10} y_{12} z_9
+ l_1\, y_9 y_8 z_{11}
- l_1\, y_8 y_{10} z_{11} \\
&- l_1\, y_9 y_{12} z_{11}
+ l_1\, y_{10} y_{12} z_{11}
- 2l_1\, y_8 z_9 z_{11}
+ l_1\, y_{12} z_9 z_{11}
+ l_1\, y_9 z_{11} z_8
- l_1\, y_{10} z_{11} z_8 \\
&- l_1\, y_9 y_{11} z_{10}
+ l_1\, y_9 y_{12} z_{10}
+ 2l_1\, y_8 z_{11} z_{10}
- l_1\, y_{12} z_{11} z_{10}
+ l_1\, y_9 y_{11} z_{12}
- l_1\, y_9 y_8 z_{12} \\
&- l_1\, y_{11} y_{10} z_{12}
+ l_1\, y_8 y_{10} z_{12}
- l_1\, y_{11} z_9 z_{12}
+ 2l_1\, y_8 z_9 z_{12}
- l_1\, y_9 z_8 z_{12}
+ l_1\, y_{10} z_8 z_{12} \\
&+ l_1\, y_{11} z_{10} z_{12}
- 2l_1\, y_8 z_{10} z_1
+ y_{11} y_{10} z_9
- y_{10} y_{12} z_9
- y_9 y_{12} z_{11}
+ y_{10} y_{12} z_{11} \\ &
+ y_{12} z_9 z_{11} 
- y_9 y_{11} z_{10}
+ y_9 y_{12} z_{10}
- y_{12} z_{11} z_{10}
+ y_9 y_{11} z_{12}
- y_{11} y_{10} z_{12} \\
& - y_{11} z_9 z_{12}
+ y_{11} z_{10} z_{12}
+ y_9 z_{11}
- y_{10} z_{11}
- z_9 z_{11}
+ z_{11} z_{10} \\
& - y_9 z_{12}
+ y_{10} z_{12}
+ z_9 z_{12}
- z_{10} z_{12}=0.
\end{aligned}
\]

So \[
\begin{aligned}
A = &- y_{8}^2 z_9 z_{11}
+  y_8 y_{12} z_9 z_{11}
+ y_{11} y_{10} z_9 z_8
-  y_{10} y_{12} z_9 z_8 
+  y_9 y_8 z_{11} z_8
-  y_8 y_{10} z_{11} z_8 \\ &
-  y_9 y_{12} z_{11} z_8
+  y_{10} y_{12} z_{11} z_8 
+  y_{8}^2 z_{11} z_{10}
-  y_8 y_{12} z_{11} z_{10}
-  y_9 y_{11} z_8 z_{10}
+  y_9 y_{12} z_8 z_{10} \\
& -  y_{11} y_8 z_9 z_{12}
+  y_{8}^2 z_9 z_{12}
+  y_9 y_{11} z_8 z_{12}
-  y_9 y_8 z_8 z_{12} 
-  y_{11} y_{10} z_8 z_{12}
+  y_8 y_{10} z_8 z_{12} \\ &
+ y_{11} y_8 z_{10} z_{12}
-  y_{8}^2 z_{10} z_{12} \\
\end{aligned} \]

\[
\begin{aligned}
B = &  y_8 y_{12} z_9 z_{11}
+  y_{11} y_{10} z_9 z_8
-  y_{10} y_{12} z_9 z_8
-  y_9 y_{12} z_{11} z_8
+  y_{10} y_{12} z_{11} z_8 \\
&-  y_8 y_{12} z_{11} z_{10}
-  y_9 y_{11} z_8 z_{10}
+  y_9 y_{12} z_8 z_{10}
-  y_{11} y_8 z_9 z_{12}
+  y_9 y_{11} z_8 z_{12} \\
&-  y_{11} y_{10} z_8 z_{12}
+  y_{11} y_8 z_{10} z_{12}
+  y_{11} y_{10} z_9
-  y_{10} y_{12} z_9
+  y_9 y_8 z_{11}
-  y_8 y_{10} z_{11} \\
&-  y_9 y_{12} z_{11}
+  y_{10} y_{12} z_{11}
- 2 y_8 z_9 z_{11}
+  y_{12} z_9 z_{11}
+  y_9 z_{11} z_8
-  y_{10} z_{11} z_8 \\
&-  y_9 y_{11} z_{10}
+  y_9 y_{12} z_{10}
+ 2 y_8 z_{11} z_{10}
-  y_{12} z_{11} z_{10}
+  y_9 y_{11} z_{12}
-  y_9 y_8 z_{12} \\
&-  y_{11} y_{10} z_{12}
+  y_8 y_{10} z_{12}
-  y_{11} z_9 z_{12}
+ 2 y_8 z_9 z_{12}
-  y_9 z_8 z_{12}
+  y_{10} z_8 z_{12} \\
&+  y_{11} z_{10} z_{12}
- 2 y_8 z_{10} z_1
\end{aligned}
\]
\[
\begin{aligned}
C = & y_{11} y_{10} z_9
- y_{10} y_{12} z_9
- y_9 y_{12} z_{11}
+ y_{10} y_{12} z_{11}
+ y_{12} z_9 z_{11} 
- y_9 y_{11} z_{10} \\
&
+ y_9 y_{12} z_{10}
- y_{12} z_{11} z_{10}
+ y_9 y_{11} z_{12}
- y_{11} y_{10} z_{12} 
- y_{11} z_9 z_{12}
+ y_{11} z_{10} z_{12}\\
&
+ y_9 z_{11}
- y_{10} z_{11}
- z_9 z_{11}
+ z_{11} z_{10}  - y_9 z_{12}
+ y_{10} z_{12} 
+ z_9 z_{12}
- z_{10} z_{12}.
\end{aligned}
\]
\section{Identifying redundant and irredundant matroid varieties}  \label{appendix:proof of lemma pappus}
In this section, we will develop methods to solve the following problems, as in \cite{Connectednessandcombinatorialinterplayinthemodulispaceoflinearrangements,liwskimohammadialgorithmforminimalmatroids}.
\begin{question} \label{question appendix}
    Consider two matroids $M$ and $N$ on the same ground set, is $V_N \subset V_M$?
\end{question}
\begin{question} \label{quest irredundant}
    Given a decomposition of a circuit variety of a rank three matroid $M$: $$V_{\CCC(M)} = \cup_{i=1}^n V_{X_i},$$
    which components are irredundant?
\end{question}

We first introduce some definitions. 
\begin{definition}
    Consider a matroid $M$ of rank three on $[d]$. The {\em projective realization space} $\mathcal{R}(M)$ is formed by all the
collections of points $\gamma = \{\gamma_i:i \in [d]\} \subseteq \mathbb{P}^2$ for which the following condition holds:
$$\{\gamma_{i_1}, \ldots, \gamma_{i_k}\} \text{ is linearly dependent} \iff \{i_1, \ldots, i_k\} \text{ is a dependent set of } M.$$
\end{definition}
\begin{definition}
   Consider a matroid $M$. We define the {\em moduli space} as the quotient of this realization space $\mathcal{R}(M)$ by
the action of the projective general linear group $\text{PGL}_3(\CC) = \text{GL}_3(\CC)/\CC^*.$
\end{definition}
We have introduced this moduli space, since if $\{1,2,3,4\}$ is a circuit of $M$, then we can find a unique representative $\gamma \in \mathcal{R}(M)$ for each class in $\mathcal{M}(M)$ for which:
\begin{equation} \label{1234}
\{\gamma_1, \gamma_2, \gamma_3, \gamma_4\} = \begin{pmatrix}
    1 & 0 & 0 & 1 \\
    0 & 1 & 0 & 1 \\
    0 & 0 & 0 & 1 
\end{pmatrix}.
\end{equation}
Thus $\mathcal{R}(M)$ is the set of all collections of $d$ vectors $\gamma \in \mathcal{R}(M)$ for which Equation~\eqref{1234} holds. We need one final concept before we can outline a strategy to study Question \ref{question appendix}. We introduce $m$-perturbations as in \cite{Connectednessandcombinatorialinterplayinthemodulispaceoflinearrangements}. 
\begin{definition}
Consider a point-line configuration $M$ on the ground set $[d]$, and consider a line $l \in \mathcal{L}$ and a point $p \in l$. The point-line configuration on $[d]$ with set of lines:
\[
\mathcal{L}_{\widetilde{M}} = 
\begin{cases} 
(\mathcal{L} \setminus \{l\}) \cup \{l \setminus \{p\}\} & \text{if } |l| > 3, \\ 
\mathcal{L} \setminus \{l\} & \text{if } |l| = 3,
\end{cases}
\]
is called an \emph{elementary perturbation} of $M$ at $\{l, p\}$ and denoted with $\widetilde{M}$. 
An $m$-\emph{perturbation} of $M$ is a sequence of $m$ elementary perturbations,
\[
M_0, M_1, \dots, M_m = M,
\]
where $M_{i}$ is an elementary perturbation of $M_{i-1}$ for each $i \in [m]$ and $M_0$ is solvable.

\end{definition}
To prove the redundancy of matroid varieties in this section, we will use the following proposition. We refer to \cite{Connectednessandcombinatorialinterplayinthemodulispaceoflinearrangements} for a proof.
\begin{proposition}[Theorem~4.5 in \cite{Connectednessandcombinatorialinterplayinthemodulispaceoflinearrangements}] \label{prop appendix}
If a point-line configuration $M$ admits an $m$-perturbation, there exist an element $d \in \mathbb{N}$, an open subset $U \subset \mathbb{C}^d$ and $m$ polynomials $P_1, \dots, P_m$ such that
\[
\mathcal{M}(M) \cong U \cap V(P_1, \dots, P_m).
\]
Moreover, the proof of this theorem is constructive, so we can obtain an explicit description of the open set $U$ and the polynomials $P_1, \dots, P_m$ from it.
\end{proposition}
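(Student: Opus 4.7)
My plan is to prove this result by induction on $m$, constructing an explicit parametrization of $\mathcal{M}(M)$ step by step along the perturbation chain $M_0, M_1, \ldots, M_m = M$.

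For the base case $m = 0$, the configuration $M = M_0$ is solvable. Here I would produce a birational parametrization $\phi: U \to \mathcal{M}(M)$ with $U$ an open subset of some $\mathbb{C}^d$ and no polynomial constraints required. The construction uses the solvability chain $M = M^0 \supset M^1 \supset \cdots \supset M^k = \emptyset$ from Definition~\ref{def: solvable and nilpotent}: after fixing a projective frame to kill the $\mathrm{PGL}_3(\mathbb{C})$-action, I would order the points so that the deepest (most constrained) points are placed first, and each subsequent point is either (i) the intersection of two already-constructed lines (contributing no parameters), (ii) constrained to a single already-constructed line (contributing one free parameter), or (iii) completely free in $\mathbb{P}^2$ (contributing two parameters). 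The solvability hypothesis is exactly what guarantees that no concurrency condition is forced: every point of degree at least three is eventually absorbed into the inner strata, where its defining lines are determined by points placed even earlier. The open set $U$ is defined by the inequations expressing the genericity conditions (distinctness of prescribed points, correct rank of prescribed line-spans, nondegeneracy of the frame).

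For the inductive step, suppose the statement holds for configurations admitting an $(m-1)$-perturbation. Given $M$ with $m$-perturbation $M_0, \ldots, M_m = M$, the truncated chain $M_0, \ldots, M_{m-1}$ is an $(m-1)$-perturbation of $M_{m-1}$, so by the inductive hypothesis there is an isomorphism $\mathcal{M}(M_{m-1}) \cong U' \cap V(P_1, \ldots, P_{m-1})$ with $U' \subset \mathbb{C}^{d'}$ open. The matroid $M$ differs from $M_{m-1}$ by a single elementary perturbation at some pair $(l, p)$: the collinearity constraint $p \in l$ is dropped. I would then modify the parametrization of $\mathcal{M}(M_{m-1})$ by removing the symbolic expression that placed $p$ on $l$ and reintroducing $p$ with its new available parameters (one or two, depending on the remaining lines through $p$ in $M$). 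The resulting presentation contains one extra polynomial constraint $P_m$ coming from the fact that the freed point $p$ must still participate in the concurrency imposed by the other lines it lies on in $M$; combined with open conditions (ensuring that the freshly freed point is \emph{not} on $l$, so the realization belongs to $\mathcal{M}(M)$ and not to $\mathcal{M}(M_{m-1})$), this yields the desired description $\mathcal{M}(M) \cong U \cap V(P_1, \ldots, P_m)$.

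The main obstacle is precisely tracking how the parameter set, the open conditions, and the polynomial constraints transform at each elementary perturbation; in particular, one must verify that exactly one new polynomial equation is introduced at each step (neither more nor fewer), and that this equation can be chosen genuinely polynomial rather than rational, with denominators absorbed into the open set $U$. A secondary difficulty is to check that the constructed map is an isomorphism of varieties and not just a bijection, which requires exhibiting an algebraic inverse that recovers the parameters from the configuration. The constructive nature of the argument, as emphasized in the statement, is essential here: the parametrization must be written down in enough detail at every inductive step so that the polynomials $P_1, \ldots, P_m$ can be read off explicitly, which is exactly what makes the result useful for the applications in Section~\ref{appendix:proof of lemma pappus}.
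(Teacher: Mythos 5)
The paper does not prove this proposition; it cites the external source and uses the result as a black box, so there is no in-paper proof to compare against. Your overall framework — induction on $m$, parametrizing the solvable end of the chain by an open set and adding one polynomial per elementary perturbation — has the right shape, but the inductive step is internally inconsistent. Following the definition as transcribed (with $M_0$ solvable, $M_m = M$, and $M_i$ an elementary perturbation of $M_{i-1}$), each step strictly \emph{deletes} a collinearity, so $M$ has $m$ fewer dependencies than $M_0$. Passing from $M_{m-1}$ to $M$ drops the constraint $p \in l$, which can only relax the description of the moduli space: it either frees $p$ (adding parameters) or removes an equation that previously encoded a concurrency at $p$. It cannot create a new equation, so your claim that this step "contains one extra polynomial constraint $P_m$" is impossible, and the induction as written would produce zero polynomials, not $m$.

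What actually happens — and what every worked example in this appendix tacitly relies on — is the reverse chain: $M$ is perturbed stepwise \emph{down} to a solvable $M'$ with an explicit open parametrization, and $\mathcal{M}(M)$ is then recovered by intersecting that open set with the $m$ determinantal polynomials that \emph{re-impose} the deleted collinearities (one $3\times 3$ minor per deletion, with denominators cleared into the open set $U$). The definition of $m$-perturbation appears to have been transcribed with the chain direction flipped — the $9_3$ configuration, for instance, is parametrized by two free parameters subject to a single polynomial constraint, so exactly one deletion reaches a solvable configuration — and that flip is likely what led you astray. To repair the argument, reverse the inductive direction, take the open parametrization of $\mathcal{M}(M')$ as the starting point, and at each step verify that the reinstated collinearity is a genuine polynomial (rather than rational) in the chosen parameters. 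Your base case also asserts without proof that solvability yields an open parametrization with no polynomial constraints; the paper only records that solvability implies irreducibility of $V_M$, which is strictly weaker, so that step too requires an argument.
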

Using this result, we can outline the following strategy to prove that $V_M \subset V_N$:
\noindent
\begin{strategy} \leavevmode\par
\begin{itemize} 
    \item Describe $\mathcal{M}(N)$ choosing representatives satisfying Equation~\eqref{1234} and Proposition~\ref{prop appendix}.
    \item Describe $\mathcal{M}(M)$ choosing representatives satisfying Equation~\eqref{1234} and Proposition~\ref{prop appendix}.
    \item Pick an arbitrary element $\xi \in \mathcal{M}(M)$ satisfying Equation \ref{1234}. Prove that one can construct a sequence of elements in $\Gamma_N$ converging to $\xi$.
    \item Then $\xi$ lies in the Euclidean closure of the realization space of $M$. Since the Zariski topology is coarser than the Euclidean topology, $\xi$ lies in the Zariski closure of the realization space of $M$ as well, which is by definition the matroid variety. Thus $\Gamma_M \subset V_N$. Since $V_M$ is the closure of $\Gamma_M$ and $V_N$ is closed as well, this implies that $V_M \subset V_N$.
\end{itemize}
\end{strategy}
Now we address question \ref{quest irredundant}. If $V_{\CCC(M)} = \cup_{i=1}^n V_{X_i}$, we say that $V_{X_1}$ is irredundant in this decomposition if $V_{X_1} \not \subset V_{X_j}$ for any $j \neq i$. To prove such a statement, we first introduce a definition from \cite{dolgachev631} and some properties.
\begin{definition}
    Consider a topological space $X$ and a subset $V$. $V$ is called {\em constructible} if it can be written as a disjoint union of finitely many locally closed subsets.
\end{definition}
\begin{proposition}
    The realization space of a matroid $M$ is locally closed in the Zariski topology.
\end{proposition}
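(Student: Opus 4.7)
The plan is to exhibit $\Gamma_{M}$ explicitly as the intersection of a Zariski-closed set with a Zariski-open set, from which local closedness is immediate. The argument is essentially a rereading of the proof of Proposition~\ref{prop: expression gamma_M} already given in the excerpt.

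First, I would split the biconditional appearing in Definition~\ref{realization space} into its two implications. The forward implication -- that every dependent subset of $M$ gives rise to a linearly dependent collection of vectors -- is precisely the defining condition for $\gamma$ to lie in $V_{\CCC(M)}$, which is Zariski-closed by Definition~\ref{cir}. The backward implication -- that every independent subset of $M$ gives rise to a linearly independent collection of vectors -- is equivalent to asking that for every basis $B$ of $M$ there is at least one maximal minor of $X$ with columns indexed by $B$ that does not vanish on $\gamma$. By the definition of the bases ideal $J_{M}$, this condition is exactly $\gamma \notin V(J_{M})$, so it carves out the principal open set $\CC^{nd}\setminus V(J_{M})$.

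Combining the two pieces yields the identity
\[
\Gamma_{M} \;=\; V_{\CCC(M)} \,\cap\, \bigl(\CC^{nd}\setminus V(J_{M})\bigr),
\]
which was in fact already proved in the course of the proof of Proposition~\ref{prop: expression gamma_M} (there it is shown that $V_{\CCC(M)}\setminus V(J_{M})=\Gamma_{M}$). The right-hand side is the intersection of a Zariski-closed subset with a Zariski-open subset of $\CC^{nd}$, hence is locally closed by definition.

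The only point worth double-checking is the equivalence between \enquote{$\gamma$ includes the independencies of $M$} and \enquote{$\gamma\notin V(J_{M})$}; this is the core technical content. In one direction, if $\gamma$ includes the independencies, then every basis $B$ maps to a linearly independent tuple, so some $|B|\times|B|$ minor $[A|B]_{X}$ is non-zero on $\gamma$, and thus $\gamma\notin V(J_{M})$. Conversely, if $\gamma\notin V(J_{M})$, then each basis $B$ contributes at least one non-vanishing maximal minor, forcing $\{\gamma_{b}:b\in B\}$ to be linearly independent; since every independent set of $M$ sits inside some basis, the independencies are preserved. This is the only step that requires a short verification, and it is essentially the computation already carried out in Proposition~\ref{prop: expression gamma_M}, so there is no genuine obstacle.
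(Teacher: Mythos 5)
Your proof is correct and takes essentially the same route as the paper: both reduce to the identity $\Gamma_M = V_{\mathcal{C}(M)} \setminus V(J_M)$ already established in the proof of Proposition~\ref{prop: expression gamma_M}, and then observe that a difference (equivalently, intersection with the open complement) of closed sets is locally closed. You merely spell out the verification of the $J_M$ equivalence rather than citing it, which is harmless.
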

\begin{proof}
    As in \cite{dolgachev631}, a set is locally closed if and only if it can be written as $U_1 \setminus U_2$, where $U_1$ and $U_2$ are both closed. In the proof of Proposition~\ref{prop: expression gamma_M}, one writes $$\Gamma_M=\VCM \setminus V(J_M).$$ Since both sets are closed, the result follows.
\end{proof}
We introduced the notion of constructible sets to state the following proposition.
\begin{proposition} [Corollary~4.20 in \cite{sturmfelsbook}]
    For constructible sets, the Zariski closure equals the Euclidean closure.
\end{proposition}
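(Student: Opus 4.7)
\noindent The plan is to prove the two inclusions separately, with the easy direction following from continuity and the nontrivial direction reduced, through successive simplifications, to a classical density statement for irreducible varieties. Throughout I will write $\overline{A}^{\mathrm{Zar}}$ and $\overline{A}^{\mathrm{Eucl}}$ for the Zariski and Euclidean closures of a set $A \subset \CC^{n}$.

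\noindent \textbf{Step 1 (the easy inclusion).} First I would observe that any polynomial is continuous in the Euclidean topology, so every Zariski closed set is Euclidean closed. Applied to $\overline{C}^{\mathrm{Zar}}$, which is Zariski closed and contains $C$, this gives $\overline{C}^{\mathrm{Eucl}} \subseteq \overline{C}^{\mathrm{Zar}}$ for \emph{any} subset $C$, constructible or not.

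\noindent \textbf{Step 2 (reduction to a single locally closed piece).} By definition, $C$ is a finite disjoint union $C = L_{1} \sqcup \cdots \sqcup L_{m}$ of locally closed sets, each of the form $L_{i} = U_{i} \cap F_{i}$ with $U_{i}$ Zariski open and $F_{i}$ Zariski closed. Since finite unions commute with closure in any topology, I would reduce to showing $\overline{L}^{\mathrm{Zar}} \subseteq \overline{L}^{\mathrm{Eucl}}$ for a single locally closed set $L = U \cap F$. Decomposing $F$ into irreducible components $F = F_{1} \cup \cdots \cup F_{k}$ and writing $L = \bigcup_{j} (U \cap F_{j})$, the same argument reduces the problem to the case where $F_{j}$ is irreducible and $U \cap F_{j}$ is a Zariski open (possibly empty) subset of the irreducible variety $F_{j}$.

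\noindent \textbf{Step 3 (the key lemma).} The heart of the proof is the following statement, which I would prove next: \emph{if $V$ is an irreducible affine variety in $\CC^{n}$ and $W \subseteq V$ is a non-empty Zariski open subset, then $W$ is Euclidean dense in $V$.} Granting this, Step 2 concludes: $\overline{U \cap F_{j}}^{\mathrm{Zar}} = F_{j}$ because $U \cap F_{j}$ is Zariski open and non-empty in the irreducible $F_{j}$, and by the lemma $F_{j} = \overline{U \cap F_{j}}^{\mathrm{Eucl}}$; combining with Step 1 gives equality of the two closures.

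\noindent \textbf{Step 4 (proof sketch of the key lemma, and main obstacle).} This is the step I expect to be the real obstacle. I would argue by induction on $\dim V$. The base case $\dim V = 0$ is trivial (a point). For the inductive step, let $V' = V \setminus W$, a proper Zariski closed subset, so $\dim V' < \dim V$ by irreducibility of $V$. Consider the smooth locus $V_{\mathrm{sm}} \subseteq V$, which is a non-empty Zariski open subset of the same dimension as $V$, and on which $V$ is locally a complex manifold of dimension $\dim V$ by the holomorphic implicit function theorem. At any smooth point $p \in V_{\mathrm{sm}} \cap W$ every Euclidean neighbourhood in $V$ meets $W$, so $p \in \overline{W}^{\mathrm{Eucl}}$. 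For a smooth point $p \in V_{\mathrm{sm}} \cap V'$, a proper analytic subvariety of a connected complex manifold cannot contain a Euclidean open set (this is the classical identity-type statement for holomorphic functions, and is the technical core I would need to invoke carefully); hence every Euclidean neighbourhood of $p$ in $V_{\mathrm{sm}}$ meets $V_{\mathrm{sm}} \setminus V' \subseteq W$, placing $p$ in $\overline{W}^{\mathrm{Eucl}}$ as well. Finally, for a singular point $p \in V_{\mathrm{sing}} = V \setminus V_{\mathrm{sm}}$, the inductive hypothesis applied to the irreducible components of $V_{\mathrm{sing}}$ (each of strictly smaller dimension than $V$) shows that $V_{\mathrm{sm}}$ is Euclidean dense in $V$, and then the previous two cases yield $p \in \overline{V_{\mathrm{sm}}}^{\mathrm{Eucl}} \subseteq \overline{W}^{\mathrm{Eucl}}$. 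The delicate part is the second point, i.e.\ ruling out that a proper analytic subvariety fills a Euclidean ball in a complex manifold; this is where a careful appeal to local parametrizations (or to the fact that Zariski closed sets in $\CC^{n}$ are analytic sets of codimension at least one) is required, and it is the step I would treat most carefully to keep the argument self-contained.
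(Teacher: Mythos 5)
The paper does not prove this proposition: it cites Corollary~4.20 of \cite{sturmfelsbook} without argument, so there is no in-text proof to compare against. Your Steps~1--3 are correct and are the standard reduction: continuity of polynomials gives the easy inclusion for arbitrary sets, and finite unions plus irreducible decomposition bring everything down to the key lemma that a non-empty Zariski open subset of an irreducible affine variety is Euclidean dense. That is the right lemma to aim for.

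The gap is in the singular-point case of Step~4. You claim that the inductive hypothesis applied to the components of $V_{\mathrm{sing}}$ "shows that $V_{\mathrm{sm}}$ is Euclidean dense in $V$," but the inductive hypothesis is a statement about density of Zariski open subsets \emph{inside} lower-dimensional irreducible varieties, not about how $V_{\mathrm{sing}}$ sits inside $V$; moreover an irreducible component $Y$ of $V_{\mathrm{sing}}$ may lie entirely in $V' = V\setminus W$, so that $Y\cap W=\emptyset$ and the inductive hypothesis yields nothing. What is actually needed at this point is the independent fact that a Zariski closed subset of strictly smaller dimension cannot contain a Euclidean-open subset of $V$ — equivalently, that $V_{\mathrm{sm}}$ is Euclidean dense in $V$. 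This follows from the local parametrization theorem for complex analytic sets (every point of a pure $d$-dimensional analytic germ admits a finite branched cover onto a ball in $\CC^{d}$, so local analytic dimension is well defined and a germ of dimension $<d$ cannot coincide with one of dimension $d$), or from a Noether normalization argument on the algebraic side. Once that fact is in hand, your smooth-point argument — implicit function theorem, then the identity principle, together with the codimension bound you mention ruling out that $V'$ fills a local chart — finishes the lemma. You were right to single out this step as the delicate one; the issue is only that the proposed induction does not by itself deliver the density of the smooth locus.
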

\begin{corollary} \label{corol eucl zar coincide}
    For a matroid $M$, the Euclidean closure of the realization space of $M$ equals the Zariski closure.
\end{corollary}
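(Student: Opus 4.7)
The plan is to deduce this corollary directly by combining the two propositions stated immediately above it. The key observation is that a locally closed set is, by definition, constructible: it can be written as the disjoint union of a single locally closed set, which trivially satisfies the definition of a constructible set. So the argument will be a two-line concatenation rather than a substantial proof.

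First I would invoke the preceding Proposition, which asserts that $\Gamma_M$ is locally closed in the Zariski topology (with the explicit presentation $\Gamma_M = V_{\mathcal{C}(M)} \setminus V(J_M)$ coming from the proof of Proposition~\ref{prop: expression gamma_M}). Then I would remark that any locally closed set is constructible, since the class of constructible sets is precisely the Boolean algebra generated by Zariski-closed sets, and a single locally closed set is already a (trivial) finite disjoint union of locally closed subsets. Hence $\Gamma_M$ is constructible.

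Next I would apply the cited Corollary~4.20 of \cite{sturmfelsbook}: for any constructible subset of $\mathbb{C}^n$, its Zariski closure equals its Euclidean closure. Specializing this to $\Gamma_M$ yields the claim. Since by definition $V_M = \overline{\Gamma_M}^{\mathrm{Zar}}$, we conclude that the Euclidean closure of $\Gamma_M$ also equals $V_M$, which is exactly the assertion of the corollary.

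There is essentially no main obstacle here, as both inputs are already set up in the paragraphs preceding the statement. The only subtlety worth flagging is the implicit step \emph{locally closed $\Rightarrow$ constructible}, which should be mentioned (even if only in one sentence) so that the reader sees clearly why the hypothesis of the Sturmfels corollary is satisfied. No perturbation, no matroid-theoretic argument, and no explicit computation is required.
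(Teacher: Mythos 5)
Your proposal is correct and matches the paper's intended argument: the corollary is stated immediately after the two propositions (that $\Gamma_M$ is locally closed, and that constructible sets have coinciding Zariski and Euclidean closures) precisely so it follows by concatenation, and your explicit note that locally closed implies constructible is exactly the bridging remark the paper leaves implicit. No substantive difference.
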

We state one final remark before introducing a strategy to prove the irredundancy of a component in a decomposition.
    \begin{remark} \label{remark irredundancy}
If the circuit variety of a matroid $M$ can be decomposed as $$V_{\CCC(M)} = \cup_{i=1}^n V_{X_i},$$ we can assume without loss of generality that $X_i$ is realizable for each $i \in \{1, \ldots, n\}.$
To check the irredundancy of $X_1$, it immediately holds that $V_{X_1} \not \subset V_{X_j}$ if $X_1 \not \geq X_j$, using the dependency order. This can be seen as follows: assume $X_1 \not \geq X_j$, then there should be a dependent set $D \in \mathcal{D}(X_j) \setminus \mathcal{D}(X_1)$. Consider $\gamma \in \Gamma_{X_1}$. Then $\{\gamma_i:i \in D\}$ is linearly independent, thus $\gamma \not \in V_{X_j}$. 
\newline
So it suffices to find check that $V_{X_1} \not \subset V_{X_j}$ for point-line configurations $X_j \geq X_1$.
\end{remark}
We outline a strategy to prove that $V_{X_1}$ is not redundant in the decomposition $V_{\CCC(M)}=\cup_{i=1}^nV_{X_i}.$

\noindent \begin{strategy} \label{strategy no subset}
\leavevmode\par
\begin{itemize} 
    \item By Remark \ref{remark irredundancy}, we only have to check that $V_{X_1} \not \subset V_{X_j}$ for matroids $X_j \geq X_1$. Thus identify matroids $X_j$ for which $X_1 \leq  X_j$. For each matroids $X_j$, go through the following steps.
    \item Describe $\mathcal{M}(X_j)$ choosing representatives satisfying Equation~\eqref{1234} and Proposition~\ref{prop appendix}.
    \item Fix an explicit $\xi \in \mathcal{M}({X_1})$ satisfying Equation \ref{1234}. 
    \item Prove that one cannot construct a sequence of elements in $\Gamma_{X_j}$ converging to $\xi$.
    \item If such a sequence does not exist, then $\xi$ does not lie in the Euclidean closure of $X_j$. By Corollary \ref{corol eucl zar coincide}, $\xi$ does not lie in $V_{X_j}$.
\end{itemize}
\end{strategy}
\begin{remark}
    To prove that for a matroid $M$, $V_{M(i)}$ is not contained in $V_M$, we can assume by Lemma \ref{lemma: third config loop matroid variety} that the degree of $i$ is at least three. Assume $\{1,2,i\}, \{3,4,i\},\{5,6,i\} \in \mathcal{L}_i$. If one can construct a $\xi \in V_{M(i)}$ for which the lines $\xi_{12}, \xi_{34}$ and $\xi_{56}$ are not concurrent, then there is no sequence in $\Gamma_N$ converging to $\xi$, by Lemma \ref{lemma: meet of 2 extensors arbitrary close}.
\end{remark}
\begin{remark}
    One can notice that this strategy uses a projective framework, while we are working in the affine space. However, recalling Remark~\ref{notation thesis} and replacing the projective general linear group by linear transformations as defined in that remark, this approach will still work.
\end{remark}
\begin{remark}
    To avoid repetition, we will not explicitly mention it in each proof, but when there is stated that \enquote{an arbitrary element $\gamma \in \Gamma_N$ can be written as}, it is understood to be up to a linear transformation. Moreover, we will extensively make use of the last point in Notation \ref{notation thesis}.
\end{remark}
\subsection{Pascal Configuration}
Throughout this subsection, $M$ denotes the Pascal configuration as in Figure \ref{fig:pascal 1} (Left). In this section, we will prove that some matroid varieties are redundant in the decomposition of $\VCM$. On the other hand, we will also prove the irredundancy of some matroid varieties.

\subsubsection{Identifying redundancies}
In this subsection, we will prove some inclusions of matroid varieties appearing in the decomposition of the circuit variety of the Pascal matroid. More specifically, we prove Lemma~\ref{auxiliary lemma} and Lemma~\ref{lemma pascal V_B subset V_M}. This is useful to prove redundancies of some matroid varieties in a decomposition.
\medskip

\noindent{\bf Proof of lemma \ref{auxiliary lemma}} After a linear transformation, an arbitrary element $\gamma \in \Gamma_N$ can be written as \begin{equation}\label{matrix gama}\gamma = \begin{pmatrix} 
1 & 0 & 0 & 1 & 1+v & 1& 1+v & 1 \\
0 & 1 & 0 & 1 & 1 & 0 & 1+z & w+zw \\
0 & 0 & 1 & 1 & 1 & w & 1 & w 
\end{pmatrix},\end{equation}
where each column correspond to $\gamma_i$ for $i=\{9,4,5,6,2,3,7,8\}$ from left to right, and the minors associated to the bases are non-zero. From now on, we will write this more shortly as {\em the columns correspond to $\{9,4,5,6,2,3,7,8\}$}.
An arbitrary element $\xi \in \Gamma_{A}$ has the form $$\xi = \begin{pmatrix}
1 & 0 & 0 & 1 & 1 & 1 & 1 & 1 \\
0 & 1 & 0 & 1 & 0 & 0 & x & y \\
0 & 0 & 1 & 1 & 0 & 0 & 0 & 0
\end{pmatrix},$$
where, again, the columns correspond to $\{9,4,5,6,2,3,7,8\}$ and the minors corresponding to the bases are non-zero. Fix an arbitrary $\xi \in \Gamma_A$. For any $\epsilon$, define $\gamma_\epsilon\in \Gamma_N$ by substituting the following values into Equation~\eqref{matrix gama}:
\[
\left\{
    \begin{array}{ll}
        1+v = \frac{1}{\epsilon} \\
        w  =  \frac{\epsilon}{x}y \\
        1+z =  \frac{x}{\epsilon}
    \end{array}
\right..    \]
Then $$\gamma_\epsilon = \begin{pmatrix}
    1 & 0 & 0 & 1 & 1 & 1 & 1 & 1 \\
    0 & 1 & 0 & 1 & \epsilon & 0 & x & y \\
    0 & 0 & 1 & 1 & \epsilon & \frac{\epsilon y}{x} & \epsilon & \frac{\epsilon y}{x}
\end{pmatrix},$$
using the conventions of Notation~\ref{notation thesis}. As $\epsilon \to 0$, we find that $\gamma_\epsilon \to\xi$, which implies that $\xi \in V_N$. Since $\xi$ is chosen arbitrarily, it follows that $\Gamma_A \subset V_N$, and therefore $V_{A} \subset V_N$, since $V_N$ is a closed set. This completes the proof.

\medskip

\noindent {\bf Proof of Lemma \ref{lemma pascal V_B subset V_M}}
$B$ is a matroid on ground set $[9]$ such that all points coincide. By applying a linear transformation, we may assume that this common point is $(1,0,0)$.
       An arbitrary element of $\Gamma_M$ can be expressed as $$\begin{pmatrix}
           1 & 1 & 1 & 1 & 1+x & 1+y & 1+y+z & y-x-xz & 2x+1 \\
           0 & \epsilon & 0 & \epsilon & \epsilon & \epsilon & \epsilon + \epsilon z & 0 & (1+x) \epsilon \\
           0 & 0 & \eta & \eta & 0 & \eta & \eta & \eta & \eta x
       \end{pmatrix},$$
       where the columns from left to right correspond to $\{7,8,1,2,9,4,3,5,6\}$ and the minors associated to the bases are non-zero. Letting $x,y,\eta, \epsilon, z \to 0$, we see that the configuration in which all points coincide can be realized as a limit of configurations in $\Gamma_M$. It follows that $V_B \subset V_M$.

       \subsubsection{Identifying irredundancies}
       In this subsection, we prove that $V_{M(i)}$ is irredundant in the decomposition of $\VCM$ as in Theorem \ref{proposition: decomposition pascal configuration}, if $i \in \{7,8,9\}$.

\medskip

\noindent{\bf Irredundancy of $V_{M(i)}$}: 
Without loss of generality, we may assume that $i=7$. Consider $$\gamma = \begin{pmatrix}
    1 & 0 & 0 & 1 &  1 & 1 & 3 & 0 & 0\\
    0 & 1 & 0 & 1 &  2 & 0 & 1 & 1 & 0\\
    0 & 0 & 1 & 1  & 1 & 2 & 1 & 4 & 0\\
\end{pmatrix}.$$
Let the columns correspond to $\{9,8,3,6,1,5,2,4,7\}$ from left to right. Then $\gamma \in V_{M(7)}$. One can calculate that $$\gamma_9\gamma_8 \wedge \gamma_1 \gamma_5 = \begin{pmatrix}
    -1 \\ -4 \\ 0
\end{pmatrix}$$ and $$\gamma_9 \gamma_8 \wedge \gamma_2 \gamma_4 = \begin{pmatrix}
    -12 \\ -3 \\ 0
\end{pmatrix}.$$
Since $\wedge$ is continuous and $\{7,8,9\}, \{7,1,5\},\{7,2,4\}$ are lines of $M$, this implies that there is no perturbation $\widetilde{\gamma}$ of $\gamma$ for which $\widetilde{\gamma}_{\{7,8,9\}}, \widetilde{\gamma}_{\{7,1,5\}}$ and $\widetilde{\gamma}_{\{7,2,4\}}$ intersect non-trivially. Thus $\gamma \notin V_M$.
\subsection{Pappus Configuration} 
Denote $M$ for the matroid associated to the Pappus configuration, as in Figure \ref{fig:Pappus, Pappus, I_i, J_i} (Left). In this subsection, we prove Proposition \ref{prop: Pappus M(9)}. From this we can identify redundant matroid varieties in a decomposition. 
\medskip

\noindent{\bf Proof of Proposition~\ref{prop: Pappus M(9)}}
\newline
1) $V_{A_i} \subset V_{M(9)}$
\newline
If we prove this result for $i=1$, then the result holds for the other values of $i$ as well, since these point-line configurations are obtained from $A_1$ by a hypergraph automorphism of $M(9)$. So
without loss of generality, assume that $i=1$. After a linear transformation, an arbitrary $\gamma \in \Gamma_{M(9)}$ can be written as \begin{equation} \label{eq: eerste vb appendix 1} \gamma = \begin{pmatrix} 
1 & 0 & 0 & 1 & 1 & 0 & 1+y & 1+y & 0\\
0 & 1 & 0 & 1 & 1+x & 1 & 1+x & 0 & 0  \\
0 & 0 & 1 & 1 & 1 & 1 & 1 & 1 & 0
\end{pmatrix},\end{equation} where the columns from left to right correspond to $\{1,4,5,8,3,6,2,7,9\}$ and the minors corresponding to the bases are non-zero. After a linear transformation, an arbitrary $\xi \in \Gamma_{A_1}$ can be written as  \begin{equation} \label{eq: appendix eerste vb} \xi = \begin{pmatrix}
    1 & 0 & 0 & 1 & 1 & 0 & 1 & 1 & 0 \\
    0 & 1 & 0 & 1 & 1+z & 1 & 0 & 0 & 0\\
    0 & 0 & 1 & 1 & 1 & 1 & 0 & 0 & 0
\end{pmatrix},\end{equation}
where the columns from left to right correspond to $\{1,4,5,8,3,6,2,7, 9\}$ as well, and the minors corresponding to the bases are non-zero.
Fix a collection of vectors $\xi \in \Gamma_{A_1}$. 
Set $$
\left\{
    \begin{array}{lll}
       1+y & = &\epsilon^{-1}  \\
    x & = &z
    \end{array}
\right.
$$ in $\gamma$ and denote this $\gamma$ as $\gamma_\epsilon$. 
If $\epsilon \to 0$, then $\gamma_\epsilon \to \xi$. So $\xi \in V_{M(9)}$. Thus $\Gamma_{A_1} \subset V_{M(9)}$ and this implies that $V_{A_1} \subset V_{M(9)}$.
\newline
\newline
3) $V_{B_i} \subset V_N$
\newline
We can assume without loss of generality that $i=1$. An arbitrary $\gamma \in \Gamma_N$ can be written as
$$\gamma = \begin{pmatrix}
    1 & 0 & 0 & 1 & 1  & 1& 1 & 1+x-y & 0 \\
    0 & 1 & 0 &  1 & 1 & 0 & 1+x & -y & 0 \\
    0 & 0 & 1 & 1 & 1+x & y & 1+x & 0 & 0 \\
\end{pmatrix},$$
where the columns from left to right correspond to $\{1,5,3,8,4,2,6,7,9\}$ and the minors corresponding to the bases are non-zero.
\newline
An arbitrary element $\xi \in \Gamma_{B_1}$ can be written as $$\xi = \begin{pmatrix}
    1 & 0 & 0 & 1 & 1 & 1 & 1 & 1 & 0\\
    0 & 1 & 0 & 1 & 1 & 0 & 0  &z & 0 \\
    0& 0 & 1 & 1 & 0 & 0 & 0 & 0 & 0 \\
\end{pmatrix},$$
where the columns from left to right correspond to $\{1,5,3,8,4,2,6,7,9\}$ as well, and the minors corresponding to the bases are non-zero. Fix a $\xi \in \Gamma_{B_1}$.
Write $\gamma_\epsilon$ for $\gamma \in \Gamma_N$ with 
$$\left\{
    \begin{array}{lll}
       1+x & = &\epsilon  \\
    y &=& \frac{z(1+x)}{z-1} 
    \end{array}
\right..$$
If $\epsilon \to 0$, $\gamma_\epsilon \to \xi$. So $\xi \in V_N$. Since $\xi$ is arbitrary, $\Gamma_{B_1} \subset V_N$, thus $V_{B_1} \subset V_N$.
\newline
\newline
5) $V_{C_i} \subset V_N$.
\newline
We can assume without loss of generality that $i=1$. An arbitrary $\gamma \in \Gamma_N$ can be written as
$$\gamma = \begin{pmatrix}
    1 & 0 & 0 & 1 & 1  & 1& 1 & 1+x-y & 0 \\
    0 & 1 & 0 &  1 & 1 & 0 & 1+x & -y & 0 \\
    0 & 0 & 1 & 1 & 1+x & y & 1+x & 0 & 0 \\
\end{pmatrix},$$
where the columns from left to right correspond to $\{1,5,3,8,4,2,6,7,9\}$ and the minors corresponding to the bases are non-zero.
\newline
An arbitrary element $\xi \in \Gamma_{C_1}$ can be written as $$\xi = \begin{pmatrix}
    1 & 0 & 0 & 1 & 1 & 0 & 1 & 1 & 0\\
    0 & 1 & 0 & 1 & 1 & 0 & 1+z  &1 & 0 \\
    0& 0 & 1 & 1 & 1+z & 1 & 1+z & 0 & 0\\
\end{pmatrix},$$
where the columns from left to right correspond to $\{1,5,3,8,4,2,6,7,9\}$ as well, and the minors corresponding to the bases are non-zero. Fix a $\xi \in \Gamma_{C_1}$.
Write $\gamma_\epsilon$ for $\gamma \in \Gamma_N$ with 
$$\left\{
    \begin{array}{lll}
       x & = &z \\
    y &=& \frac{-(1+x)}{\epsilon}
    \end{array}
\right..$$
Then $$\gamma_\epsilon = \begin{pmatrix}
    1 & 0 & 0 & 1 & 1  & \frac{\epsilon}{-1-z}& 1 & 1-\epsilon & 0\\
    0 & 1 & 0 &  1 & 1 & 0 & 1+z & 1 & 0\\
    0 & 0 & 1 & 1 & 1+z & 1& 1+z & 0 & 0\\
\end{pmatrix}.$$
If $\epsilon \to 0$, $\gamma_\epsilon \to \xi$. So $\xi \in V_N$. Since $\xi$ is arbitrary, $\Gamma_{C_1} \subset V_N$, thus $V_{C_1} \subset V_N$.
\newline
\newline
6) $V_{D_i} \subset V_N$.
\newline
We can assume without loss of generality that $i=1$. An arbitrary $\gamma \in \Gamma_N$ can be written as $$\gamma = \begin{pmatrix} 
1 & 0 & 0 & 1 & 1 & 1& 1 & y-1 & 0\\
0 & 1 & 0 & 1 & 1+x & 1+x & 0 & y(1+x) & 0 \\
0 & 0 & 1 & 1 & 1 & 1+x & y & 0 & 0
\end{pmatrix},$$ 
where the columns from left to right correspond to $\{1,5,3,6,4,8,2,7,9\}$ and the minors corresponding to the bases are non-zero.
An arbitrary element $\xi \in \Gamma_{D_1}$ can be written as $$\xi = \begin{pmatrix}
    1 & 0 & 0 & 1 & 0 & 0 & 1 & 1 & 0\\
0 & 1 & 0 & 1 & 1 & 1 & 0 & w & 0\\
0 & 0 & 1 & 1 & 0 & 1 & 0 & 0 & 0
\end{pmatrix},$$
where the minors corresponding to the bases are non-zero and the columns from left to right correspond to $\{1,5,3,6,4,8,2,7,9\}$ as well. Fix a $\xi \in \Gamma_{D_1}$.
Write $\gamma_\epsilon$ for $\gamma \in \Gamma_N$ with 
$$\left\{
    \begin{array}{lll}
       y & = & \frac{w\epsilon}{w-1} \\
    \epsilon^{-1} &=& 1+x
    \end{array}
\right..$$
If $\epsilon \to 0$, $\gamma_\epsilon \to \xi$. So $\xi \in V_N$. Since $\xi$ is arbitrary, $\Gamma_{D_1} \subset V_N$, thus $V_{D_1} \subset V_N$.
\newline
\newline
8) $V_{\CCC(\widetilde{I}_i)} \in V_{M(9)}.$
\newline
Assume without loss of generality that $i=1$, so we have to prove that $\widetilde{I}_1 = I_9 \subseteq V_{M(9)}$. We use again the decomposition strategy. The minimal matroids over $I_9$ are the following. The point $9$ is a loop in each minimal matroid. An automorphism means a hypergraph automorphism from $I_9$. 
\begin{itemize} 
    \item The matroid $A_1$ with 1=2=7=8 and lines $\{4,5,6\},\{1,3,4\}$, as in Figure \ref{fig:pappus I_i 1} (Left), and three other matroids formed by automorphisms.
    \item The matroid $A_1'$ with $2=3=4=5$ and lines $\{8,1,6\},\{1,2,7\}$, and a matroid formed by automorphism.
    \item The matroid $B$ with $1=2=4=5=8$, as in Figure \ref{fig:pappus I_i 1} (Center).
    \item The matroid $C_1$ with $1=2=7, 5=6$ and lines $\{8,1,5\},\{8,3,4\}$, as in Figure \ref{fig:pappus I_i 1} (Right) and three matroids formed by automorphism.
    \item The matroid $C'$ with $1=5=6, 3=8$ and lines $\{1,2,3\},\{2,4,7\}$.
    \item The matroid $C''$ with $2=3=4$ and $6=8$ and lines $\{1,5,7\},\{2,5,6\}$.
    \item The matroid $D_1$ with $1=3, 4=6$ and lines $\{8,1,4\},$ $\{1,5,7\},$ $\{2,4,7\},$ $\{8,2,5\}$, as in Figure \ref{fig:pappus I_i 2} (Left), and another matroid formed by automorphism.
    \item The matroid $D'$ with $1=7, 3=4$ and lines $\{1,2,3\},$ $\{8,1,6\},$ $\{8,2,5\},$ $\{3,5,6\}$.
    \item The matroid $E_1$ with $2=3=8$ and lines $\{1,2,6\},$ $\{1,5,7\},$ $\{2,4,7\},$ $\{4,5,6\}$, as in Figure \ref{fig:pappus I_i 2} (Center), and two other matroids formed by automorphism.
    \item Three matroids of the form $\pi_{I_9}^j$, for $j \in \{3,6,7\}$.
    
    \item The matroid $F$ with lines $\{1,2,3\},$ $\{8,1,6\},$ $\{1,5,7\},$ $\{2,4,7\},$ $\{8,2,5\},$ $\{4,5,6\},$ $\{8,3,4\},$ $\{3,6,7\}$, as in Figure \ref{fig:pappus I_i 1} (Right).
    \item The matroids with an additional loop from the set $\{1,2,4,5,8\}.$
\end{itemize}
The matroids $A_i, A_i', B, C_1, C', C'', D_i, D'$ and $E_i$ are nilpotent or quasi-nilpotent, so by Theorem \ref{nil coincide}, their circuit variety is contained in the union of their matroid variety and the matroid variety of $U_{2,9}.$
\begin{figure}
    \centering
    \includegraphics[width=0.9\linewidth]{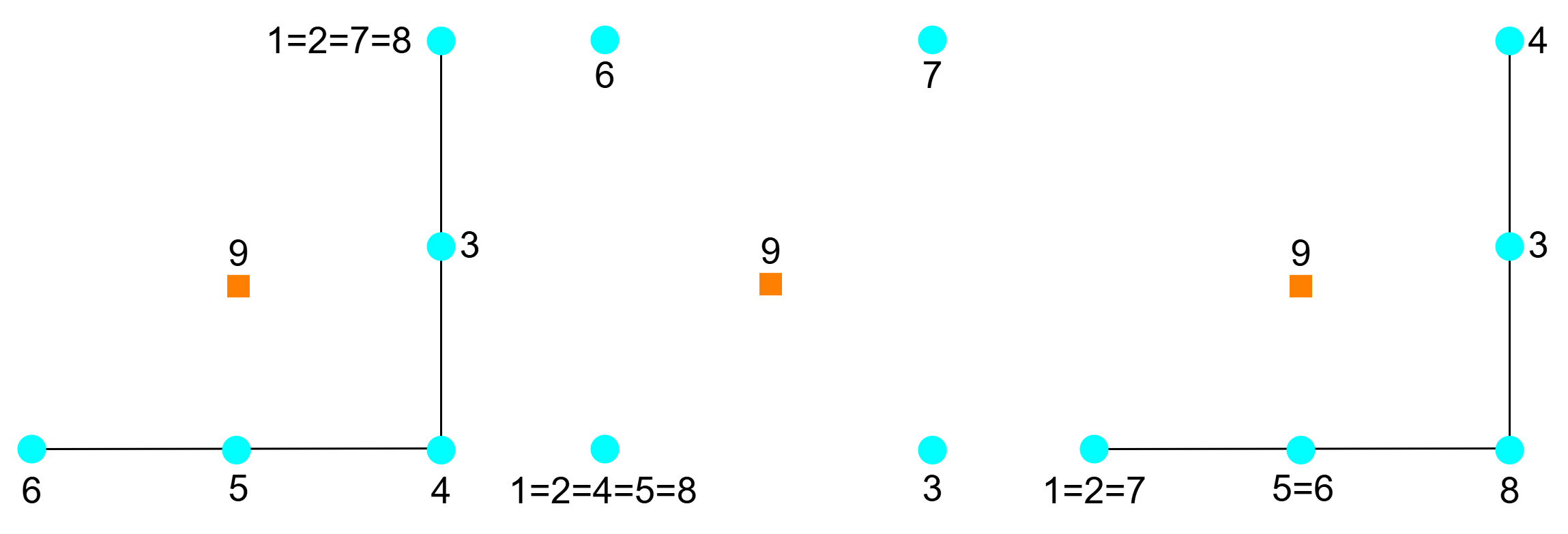}
    \caption{(Left) $A_1$; (Center) $B_1$; (Right) $C_1$.}
    \label{fig:pappus I_i 1}
\end{figure}
\begin{figure}
    \centering
    \includegraphics[width=0.9\linewidth]{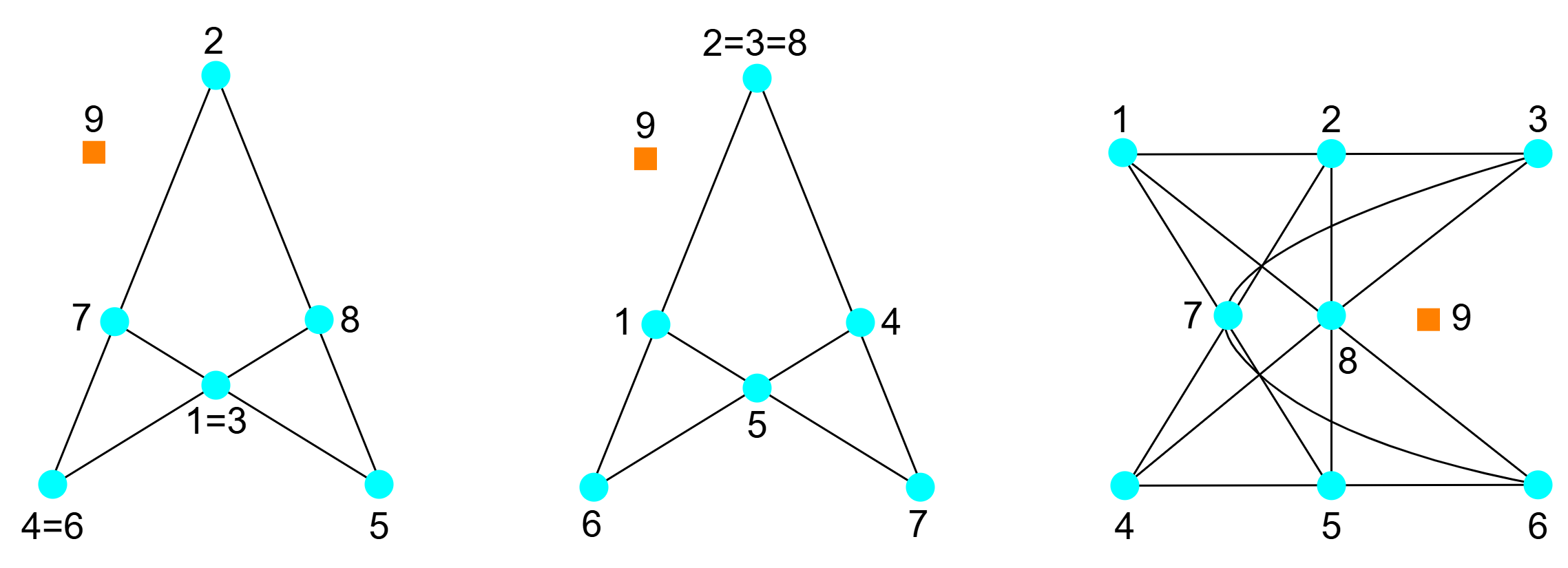}
    \caption{(Left) $D_1$; (Center) $E_1$; (Right) $F.$}
    \label{fig:pappus I_i 2}
\end{figure}
To prove that $V_{I_9} \subset V_{M(9)}$, take an arbitrary $\gamma \in \Gamma_{M(9)}$. It can be written as $$\gamma = \begin{pmatrix}
1 & 0 & 0 & 1 & 1 & 0 & 1 & -z & 0\\
0 & 1 & 0 & 1 & z & 1-z & 0 & y-yz & 0 \\
0 & 0 & 1 & 1 & 0 & 1 & y & y-yz & 0 
\end{pmatrix},$$ where the columns from left to right correspond to $\{1,2,7,8,3,4,5,6,9\}$ and the minors corresponding to the bases are non-zero.
An arbitrary $\xi \in  \Gamma_{I_9}$ can be written  as $$\xi = \begin{pmatrix}
1 & 0 & 0 & 1 & 1 & 0 & 1 & -x & 0\\
0 & 1 & 0 & 1 & x & 1-x & 0 & 1-x & 0 \\
0 & 0 & 1 & 1 & 0 & 1 & 1 & 1-x & 0
\end{pmatrix},$$ where the columns from left to right correspond to $\{1,2,7,8,3,4,5,6,9\}$ as well and the minors corresponding to the bases are non-zero.
Let $$
\left\{
    \begin{array}{ll}
        z = x+\epsilon \\
        y = 1+ \epsilon
    \end{array}
\right..
$$
If $\epsilon \to 0$, then $\gamma \to \xi$. So $\xi \in V_{M(9)}.$ Thus 
\begin{equation} \label{eq: appendix pappus I_i}
V_{I_9} \subset V_{M(9)}. 
\end{equation}
To prove that $V_{A_i} \subset V_{M(9)}$, we can assume without loss of generality that $i=1$. Fix an arbitrary $\xi \in \Gamma_{A_1}$. Define $\widetilde{\xi}_1$ as a perturbation of $\xi_1$ on the line through $\xi_6$ and $\xi_8$, such that $\widetilde{\xi}_1 \neq \xi_1$ and define $\widetilde{\xi}_2 = \widetilde{\xi}_1\xi_3 \wedge \xi_5 \xi_8$. Then define $\widetilde{\xi}_7 = \widetilde{\xi}_2 \xi_4 \wedge \widetilde{\xi}_1 \xi_5$ and extend this collection of vectors to $\widetilde{\xi}$ by defining $\widetilde{\xi_i} = \xi_i$ for the remaining vectors. Since $\xi \in \Gamma_{M(9)}$, we can use Lemma \ref{lemma: meet of 2 extensors arbitrary close} to conclude that $\widetilde{\xi}$ is arbitrarily close to $\xi$. Since $\widetilde{\xi} \in \Gamma_{I(9)}$, it follows that
\begin{equation} \label{eq: appendix Pappus A_1}
V_{A_1} \subset V_{I_9} \subseteq V_{M(9)}.\end{equation} 
The proof that $V_{A_i'} \subset V_{M(9)}$ is completely analogous.
\newline
\newline

To prove that $V_B \subset V_{M(9)}$, take an arbitrary $\gamma \in \Gamma_{I_9}$. It can be written as $$\gamma = \begin{pmatrix} 
1 & 0 & 0 & 1 & 1+x & 1+x & 1+x & 1+y & 0\\
0 & 1 & 0 & 1 & 1 & 1 & 1+x & 1 & 0 \\
0 & 0 & 1 & 1 & 1 & 1+y & 1+y & 1+y & 0
\end{pmatrix},$$ where the columns from left to right correspond to $\{3,6,7,1,2,4,5,8,9\}$ and the minors corresponding to the bases are non-zero.
Moreover, the determinant of the minor corresponding to the columns 2,5,8 is 0, implying $$-yx^2-x^2+xy-y^2=0.$$
An arbitrary $\xi \in  \Gamma_{B}$ can be written  as $$\xi = \begin{pmatrix}
1 & 0 & 0 & 1 & 1 & 1 & 1 & 1 & 0\\
0 & 1 & 0 & 1 & 1 & 1 & 1 & 1 & 0 \\
0 & 0 & 1 & 1 & 1 & 1 & 1 & 1 & 0 
\end{pmatrix},$$
where the columns from left to right correspond to $\{3,6,7,1,2,4,5,8,9\}$ and the minors corresponding to the bases are non-zero.
Let $$
\left\{
    \begin{array}{ll}
        x = \frac{\epsilon +\sqrt{-3-4\epsilon}\epsilon}{2(1+\epsilon)} \\
        y = \epsilon
    \end{array}
\right..
$$
If $\epsilon \to 0$, then $\gamma \to \xi$. Thus 
\begin{equation} \label{eq: appendix Pappus B}
V_{B} \subset V_{M(9)}. 
\end{equation}
To prove that $V_{C_1} \subset V_{M(9)}$, choose $\xi \in \Gamma_{C_1}$ arbitrarily. Define $\widetilde{\xi}_6$ as perturbation of $\xi_6$ on the line through $\xi_4$ and $\xi_5$, such that $\widetilde{\xi}_6 \neq \xi_6$ and define $\widetilde{\xi}_1 = \gamma_2\gamma_3 \wedge \widetilde{\gamma}_6\gamma_8$. Then define $\widetilde{\xi}_7 = \widetilde{\xi}_1 \xi_5 \wedge {\xi}_2 \xi_4$. Extend this collection of vectors to $\widetilde{\xi}$ by defining $\widetilde{\xi_i} = \xi_i$ for the remaining vectors. $\widetilde{\xi}$ is a perturbation of $\xi$ by Lemma \ref{lemma: meet of 2 extensors arbitrary close}. Moreover, since $\widetilde{\xi} \in \Gamma_{I_9} \subset V_{M(9)}$, we can conclude that \begin{equation} \label{eq: appendix Pappus C_1}
V_{C_1} \subset V_{M(9)}.
\end{equation}
The proof that $V_{C'} \subset V_{M(9)}$ and $V_{C''} \subset V_{M(9)}$ is completely analogous.
\newline
\newline
To prove that $V_{D_1} \subset V_{M(9)}$, take an arbitrary $\gamma \in \Gamma_{I_9}$. After a projective transformation, it can be written as \begin{equation} \label{eq: appendix Pappus M(9)}
\gamma = \begin{pmatrix} 
1 & 0 & 0 & 1 & 1 & x & 0 & 1 & 0\\
0 & 1 & 0 & 1 & x & x & 1 & x & 0 \\
0 & 0 & 1 & 1 & 0 & 1 & 1 & 1 & 0
\end{pmatrix},\end{equation} where the columns from left to right correspond to $\{1,2,4,5,3,6,7,8,9\}$ and the minors corresponding to the bases are non-zero. 
An arbitrary $\xi \in  \Gamma_{D_1}$ can be written  as $$\xi = \begin{pmatrix}
1 & 0 & 0 & 1 & 1 & 0 & 0 & 1 & 0\\
0 & 1 & 0 & 1 & 0 & 0 & 1 & 0 & 0 \\
0 & 0 & 1 & 1 & 0 & 1 & 1 & 1 & 0 
\end{pmatrix},$$
where the columns from left to right correspond to $\{1,2,4,5,3,6,7,8,9\}$ as well and the minors corresponding to the bases are non-zero. 
Let $x = \epsilon$. If $\epsilon \to 0$, then $\gamma \to \xi$. So $\xi \in V_{I_9} \subset V_{M(9)}.$ Thus 
\begin{equation} \label{eq: appendix Pappus D_1}
V_{D_1} \subset V_{M(9)}. 
\end{equation}
To prove that $V_{E_1} \subset V_{M(9)}$, take an arbitrary $\gamma \in \Gamma_{I(9)}$. It can be written as in Equation~\eqref{eq: appendix Pappus M(9)}, where the columns from left to right correspond to $\{1,2,4,5,3,6,7,8,9\}$ and the minors corresponding to the bases are non-zero. An arbitrary $\xi \in  \Gamma_{E_1}$ can be written  as $$\xi = \begin{pmatrix}
1 & 0 & 0 & 1 & 0 & 1 & 0 & 0 & 0\\
0 & 1 & 0 & 1 & 1 & 1 & 1 & 1 & 0 \\
0 & 0 & 1 & 1 & 0 & 0 & 1 & 0 & 0 
\end{pmatrix},$$
where the columns again correspond to $\{1,2,4,5,3,6,7,8,9\}$.
Let $x= \frac{1}{\epsilon}$. If $\epsilon \to 0$, then $\gamma \to \xi$. So $\xi \in V_{E_1} \subset V_{M(9)}.$ Thus 
\begin{equation} \label{eq: appendix Pappus E_1}
V_{E_1} \subset V_{M(9)}. 
\end{equation}
For $\pi_{I_9}^j$, by Theorem \ref{nil coincide}, \begin{equation} \label{eq: appendix Pappus pi}
V_{\pi_{I_9}^j} = V_{\CCC(\pi_{I_9}^j)} \subset V_{\CCC(\pi_{M(9)}^j)} = V_{\pi_{M(9)}^j}.\end{equation}
The minimal matroids over $F$ are the following, the point $9$ is a loop in each minimal matroid.
\begin{itemize}
    \item Eight matroids formed as follows: Fix a line $l$ of $F$. The resulting matroid has one line, $l$, and all the points in $[8] \setminus l$ coincide in a point not lying on $l$. By Lemma~7.6, their matroid variety is contained in $V_{I_9} \subset V_{M(9)}$.
    \item The matroids $\pi_F^i$ for $i \in \{1, \ldots, 8\}.$ These matroids are nilpotent as well, so $$V_{\pi_F^j} = V_{\CCC(\pi_F^j)} \subset V_{\CCC(\pi_{M(9)}^j)} = V_{\pi_{M(9)}^j}.$$
    \item The matroid $U_{2,9}(9)$.
    \item The matroids $F(j)$ for $j$ ranging from $1$ to $8$. $V_{\CCC(F(j))} \subset V_{\CCC(M(9,j))}.$
\end{itemize}
The matroid $F$ is not realizable. Assume for contradiction that is is realizable. Then there is an element $\xi \in \Gamma_F$ which can be written as:
$$\xi = \begin{pmatrix}
    1&0&0&1&1&0&1&-x&0 \\ 
    0&1&0&1&x&1-x&0&1-x&0 \\
    0&0&1&1&0&1&1&1-x&0 
\end{pmatrix},$$
where the columns from left to right correspond to $\{1,2,7,8,3,4,5,6,9\}.$ Moreover, since $\{3,6,7\}$ is a circuit of $F$, it follows that $x=0$. But this contradicts that $\xi \in \Gamma_F.$
So \begin{equation}  \label{eq: appendix Pappus F}
V_{\CCC(F)} \subset V_{M(9)} \cup_{j=1}^8 V_{\pi_{M(9)}^j} \cup V_{U_{2,9}(9)} \cup_{j=1}^9 V_{\CCC(M(9,j))}.\end{equation}
If $i \neq 1,4$, then by Theorem~\ref{nil coincide}, $V_{\CCC(M(9,i))}= V_{M(9,i)}.$
Moreover, by Lemma \ref{lemma: third config loop matroid variety}, $V_{M(9,i)} \subseteq V_{M(9)}$ if $i \neq 1,4.$ Thus we only need to consider $V_{\CCC(M(9,1))}$ and $V_{\CCC(M(9,4))}.$
By Example \ref{ex: decomposition circuit variety 3 concurrent lines}, it follows that $V_{\CCC(M(9,1))} = V_{M(9,1)} \cup V_{M(9,1,4)}$.
Thus we can write Equation~\eqref{eq: appendix Pappus F} as:
\begin{equation} \label{eq: appendix pappus F new}
    V_{\CCC(F)}\subset V_{M(9)} \cup V_{U_{2,9}(9)} \cup V_{M(9,1)} \cup V_{M(9,4)} \cup V_{M(9,1,4)}.
\end{equation}
So combining Equation \eqref{eq: appendix Pappus A_1}, \eqref{eq: appendix Pappus B}, \eqref{eq: appendix Pappus C_1}, \eqref{eq: appendix Pappus D_1}, \eqref{eq: appendix Pappus E_1}, \eqref{eq: appendix Pappus pi} and \eqref{eq: appendix pappus F new}, we can conclude that:
\begin{equation} \label{final one pappus appendix}
V_{\CCC(I_9)} \subset V_{M(9)} \cup V_{U_{2,9}} \cup_{j=1,4} V_{M(9,j)} \cup V_{M(9,1,4)} \cup_{j=1}^9V_{\pi_{M(9)}^j}.\end{equation}
Finally, we can notice that $V_{\pi_{M(9)}^j}$ is a redundant component.
Let $\gamma \in \Gamma_{\pi_{M(9)}^j}$. Since $M(9) \setminus \{j\}$ is nilpotent, one can lift $\gamma|_{[8] \setminus \{j\}}$ non-trivially from $\gamma_j$ to $\widetilde{\gamma}$ such that $\widetilde{\gamma} \in V_{\CCC(M(9) \setminus \{j\})}$ by Proposition \ref{nilp lift}. Let $\widetilde{\gamma}' = \widetilde{\gamma} \cup \{\gamma_j\}$. Then $\widetilde{\gamma}' \in V_{\CCC(M(9))}$, and $\widetilde{\gamma}'$ has no loops, so using the decomposition of Equation~\eqref{final one pappus appendix}, it is clear that $\gamma \in V_{M(9)}$.
Thus,
\begin{equation*} 
V_{\CCC(I_9)} \subset V_{M(9)} \cup V_{U_{2,9}(9)} \cup_{j=1,4} V_{M(9,j)} \cup V_{M(9,1,4)}.\end{equation*}

\subsection{Third configuration $9_3$}
Throughout this section, we consider $M$ to be the third configuration $9_3$, as in Figure~\ref{fig:Third Config A_1 B} (Left). First we will prove Lemma~5.2, which states some results on inclusions of matroid varieties. Then we will prove that some matroid varieties in the decomposition in Theorem~\ref{thm decompo third 9_3} are irredundant.
\subsubsection{Identifying redundancies}\label{subsubsec: identifying redundancies third} 
In this subsection, we present a proof for Lemma \ref{lemma: third config hulplemma} and Lemma \ref{lemma: inclusions third 9_3}.
\newline
\newline
{\bf Proof of $V_O \subset V_Q$ as in Lemma \ref{lemma: third config hulplemma}}
\newline
 To prove that $V_O \subset V_Q$, perform a projective transformation such that an arbitrary $\gamma \in \Gamma_Q$ can be written as $$\gamma = \begin{pmatrix} 
1 & 0 & 0 & 1 & 1 & 1+y& 1+y-\frac{y}{x} \\
0 & 1 & 0 & 1 & 0 & y & -y \\
0 & 0 & 1 & 1 & x & y & 0
\end{pmatrix},$$ 
where the minors corresponding to the bases are non-zero and the columns from left to right correspond to $\{7,6,8,9,2,3,4\}$.
An arbitrary element $\xi \in \Gamma_{O}$ can be written as $$\xi = \begin{pmatrix}
1 & 0 & 0 & 1 & 1 & 1 & 1  \\
0 & 1 & 0 & 1 & 0 & 0 & 0  \\
0 & 0 & 1 & 1 & 0 & 0 & 0 
\end{pmatrix},$$
where the minors corresponding to the bases are non-zero and the columns from left to right correspond to $\{7,6,8,9,2,3,4\}$ as well. Write $\gamma_\epsilon$ for $\gamma \in \Gamma_Q$ with 
$$\left\{
    \begin{array}{lll}
       y & = & \epsilon^2 \\ 
    x & = & \epsilon \\
    \end{array}
\right..$$
If $\epsilon \to 0$, $\gamma_\epsilon \to \xi$. So $\xi \in V_Q$. Since $\xi$ is arbitrary, it follows that $\Gamma_{O} \subset V_Q$, thus $V_{O} \subset V_Q$.

\begin{remark}
    Since we will consider many point-line configurations in this section, some symbols will be used multiple times for different point-line configurations. However, we prove five results in this section, so this section is split in five parts, denotes by \enquote{1)}, ..., \enquote{5)}, and within the same part each symbol denotes a unique point-line configuration. So unless otherwise specified, a symbol always denotes the point-line configuration defined in that part.
\end{remark}
\noindent {\bf Proof of Lemma \ref{lemma: inclusions third 9_3}} 

\medskip

1) We prove Equation~\eqref{eq: lemma 1 for third config} without loss of generality for $A_1$, obtained from $M$ by identifying $\{7,8,9\}$ and with lines $\{1,3,5\},\{1,2,6\},\{2,3,4\},\{4,5,6,7\}$, as in Figure \ref{fig:Third Config A_1 B} (Left). The minimal matroids over $A_1$ are the following, in each matroid the points 7,8 and 9 are of course identified. Automorphism means a hypergraph automorphism of $A_1$.
\begin{itemize}
    \item The matroid $B$ derived from identifying the points $\{1,2,3\}$ while placing the remaining ones on a line, as in Figure \ref{fig:Gemengd Third config Decompo A 3} (Left).
    \item The matroid obtained by identifying the points $\{1,2\}$ and $\{4,5\}$, and lines $\{1,3,4\}$ and $\{4,6,7\}$. Denote this matroid by $C_1$ and the other two matroids derived from automorphism as $C_i$, as in Figure \ref{fig:Gemengd Third config Decompo A 3} (Center).
    \item The matroid formed by identifying the points $\{1,5,6\}$. The resulting matroid has $\{2,3,4\}$ and $\{1,4,7\}$ as lines. Denote this matroid by $D_1$ and the two other matroids obtained by automorphism by $D_i$, as in Figure \ref{fig:Gemengd Third config Decompo A 3} (Right).
    \item The matroids $\pi_{A_1}^i$, with $i \in \{1,2,3,7\}$. $\pi_{A_1}^1$ is shown in Figure \ref{fig:Third config decompo A part 2} (Left).
    \item Identify $7$ with a point which is on a line with 7. This results in the quadrilateral set. Denote these matroids as $E_i$, where $E_1$ is the matroid with $4=7$ and lines $\{1,3,5\},\{1,2,6\},\{2,3,4\},\{4,5,6\},$ as in Figure \ref{fig:Third config decompo A part 2} (Center).
    \item The matroids obtained by setting a point in $[6]$ to be a loop.
\end{itemize}
\begin{figure}
    \centering
    \includegraphics[width=0.9\linewidth]{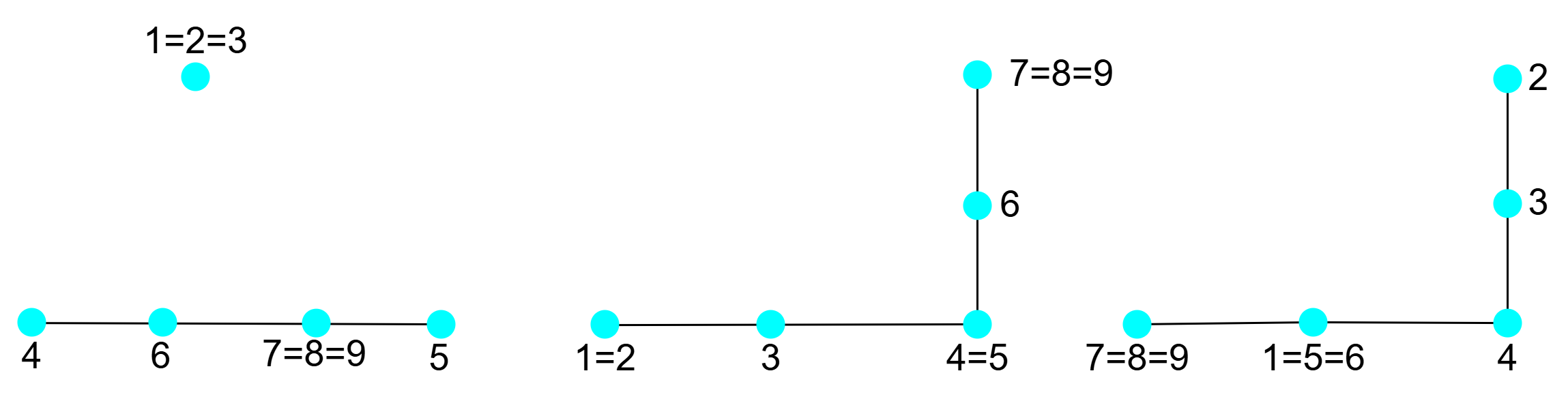}
    \caption{(Left) $B$; (Center) $C_1$, (Right) $D_1$.}
    \label{fig:Gemengd Third config Decompo A 3}
\end{figure}
\begin{figure}
    \centering
\includegraphics[width=0.9\linewidth]{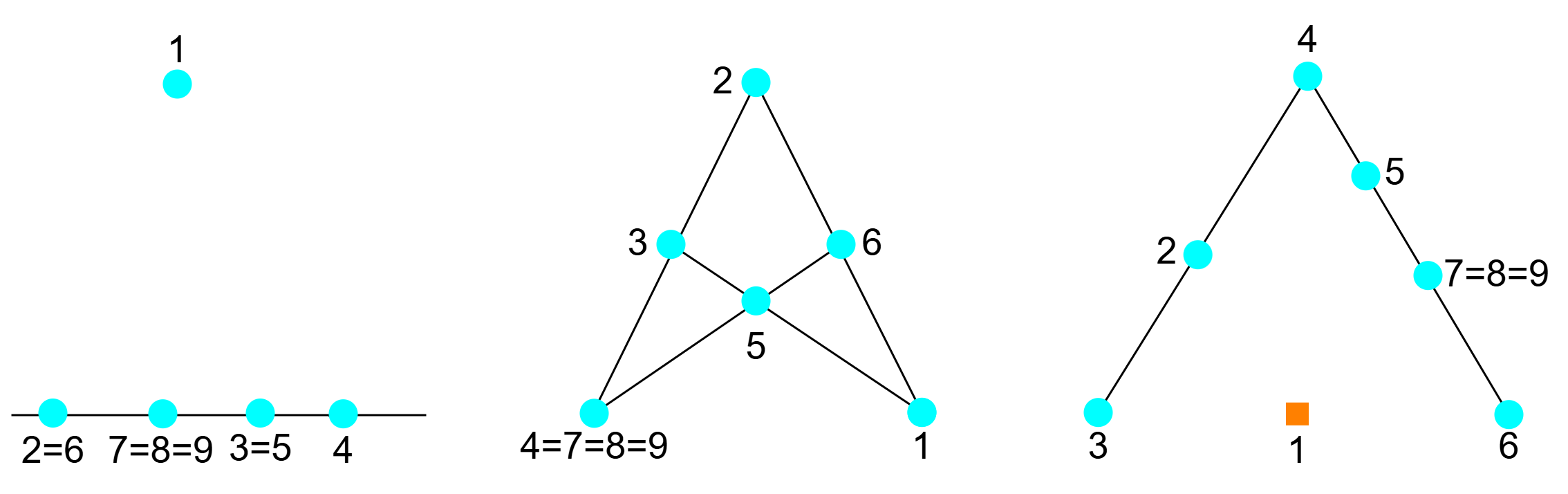}
    \caption{(Left) $\pi_{A_1}^1$; (Center) $E_1$, (Right) $A_1(1)$.}
    \label{fig:Third config decompo A part 2}
\end{figure}
From this, we derive the following decomposition.
$$V_{\CCC(A_1)} = V_{A_1} \cup V_{\mathcal{C}(B)} \cup_{i=1,2,3,7} V_{\CCC(C_i)} \cup_{i=1}^3 V_{\CCC(D_i)} \cup_{i=1}^3 V_{\CCC(\pi_N^i)} \cup_{i=1}^3 V_{\mathcal{C}(E_i)} \cup V_{\CCC(A_1(i))}.$$
Since the matroids $B,C_i,D_i,\pi_{A_1}^i, A_1(i)$ and $E_i$, are nilpotent or quasi-nilpotent, their circuit variety is contained in the union of their matroid variety and of $U_{2,9}$. 
\newline
\newline
First we prove that $V_B \subset V_{A_1}$. Let $\xi \in \Gamma_B$ arbitrarily. Redefine $\xi_3$ arbitrarily on the line through $\xi_1$ and $\xi_5$, such that $\widetilde{\xi}_3 \neq \xi_1$. Then define $\widetilde{\xi}_2 = \widetilde{\xi}_3 \xi_4 \wedge \xi_1 \xi_6$. Since $\xi$ has no loops, $\widetilde{\xi}_2 \to \xi_2$ if $\widetilde{\xi_3} \to \xi_3$ by Lemma \ref{lemma: meet of 2 extensors arbitrary close}. Let $\widetilde{\xi}_p = \xi_p$ for the points $p \in [9] \setminus \{2,3\}$.  Then, $\Gamma_B \subset V_{A_1}$, implying $V_B \subset V_{A_1}.$ 
\newline
\newline
To prove that $V_{C_i} \subset V_{A_1}$, we can use the same strategy, we may assume without loss of generality $i=1$. For an arbitrary $\xi \in \Gamma_{C_1}$, first define $\widetilde{\xi}_5$ arbitrarily on the line through the points $\xi_4,\xi_6$ and $\xi_7$, such that it does not coincide with one of that points. Then define $\widetilde{\xi}_1 = \xi_3 \widetilde{\xi_5} \wedge \xi_2 \xi_6$ and let $\widetilde{\xi}_p = \xi_p$ for the points $p \in [9] \setminus \{1,5\}$. This implies that $V_{C_1} \subset V_{A_1},$ by Lemma \ref{lemma: meet of 2 extensors arbitrary close}.
\newline
\newline
In the same way, one can prove that $V_{D_i} \subset V_{A_1}$. We can assume without loss of generality that $i=1$. Let $\xi \in \Gamma_{D_1}$ arbitrarily. First perturb $\xi_5$ to $\widetilde{\xi}_5$ on the line through $\xi_4$ and $\xi_7$, with $\xi_1 \neq \widetilde{\xi}_5 \neq \xi_6$. Then define $\widetilde{\xi}_1 = \xi_3\widetilde{\xi}_5 \wedge \xi_2 \xi_6$. Let $\widetilde{\xi}_p=\xi_p$ for $p \in [d] \setminus \{1,5\}$. Using Lemma \ref{lemma: meet of 2 extensors arbitrary close}, $V_{D_1} \subset V_{A_1}$.
 \newline
 \newline
Finally we prove that $V_{E_i} \subset V_{A_1}.$ Assume without loss of generality that $i=1$. Let $\xi \in \Gamma_{E_1}$ and perturb $\xi_7,\xi_8,\xi_9$ to the same point on the line through $\xi_4, \xi_5$ and $\xi_6$. Then the perturbed configuration is in $\Gamma_{A_1}$, using Lemma \ref{lemma: meet of 2 extensors arbitrary close}. So $V_{E_1} \subset V_N$. 
\newline
\newline
We can conclude from the above that $$V_{\mathcal{C}({A_1})} \subset V_{A_1} \cup_{i=1}^9 V_{\pi_{A_1}^i} \cup V_{U_{2,9}} \cup_{i=1}^9 V_{{A_1}(i)}.$$
This proves the lemma, since for $i \in [9]$, $V_{\pi_{A_1}^i} =  V_{\CCC(\pi_{A_1}^i)} \subset V_{\CCC(\pi_M^i)} = V_{\pi_M^i}$. In the same way, $V_{\CCC(A_1(i))} \subset V_{\CCC(M(i))}.$
\newline
\newline
2) We will prove Equation 
\eqref{eq: lemma 3 third config}. Denote $M$ the matroid of the third configuration $9_3$ and $C_i$ as in Section \ref{Decomposition of the Circuit Variety of the third configuration 93}, shown in Figure \ref{fig:Third config C pi_M^1 M(1)} (Left). We can assume without loss of generality that $i=1$.
We decompose the circuit variety of $C_1$. The points $1,5$ and $9$ are identified in each case.
The minimal matroids over $C_1$ are the following:
\begin{itemize}
    \item The matroid $A$ with $1,2,4,7$ identified and no lines, as in Figure \ref{fig:third config decompo C} (Left).     \item The matroids $\pi_{C_1}^i$, with $i$ in $\{3,6,8\}$.
    \item The matroid $D$ formed by adding the line $\{3,6,8\}$, as in Figure \ref{fig:third config decompo C} (Center).
    \item The matroids $C_1(i)$ formed by making 1,2,4 or 7 a loop. If the point 1 is for instance a loop, then the lines are $\{2,3,4\},\{4,5,6,7\},$ as in Figure \ref{fig:third config decompo C} (Right).
\end{itemize}
\begin{figure}
    \centering
    \includegraphics[width=0.9\linewidth]{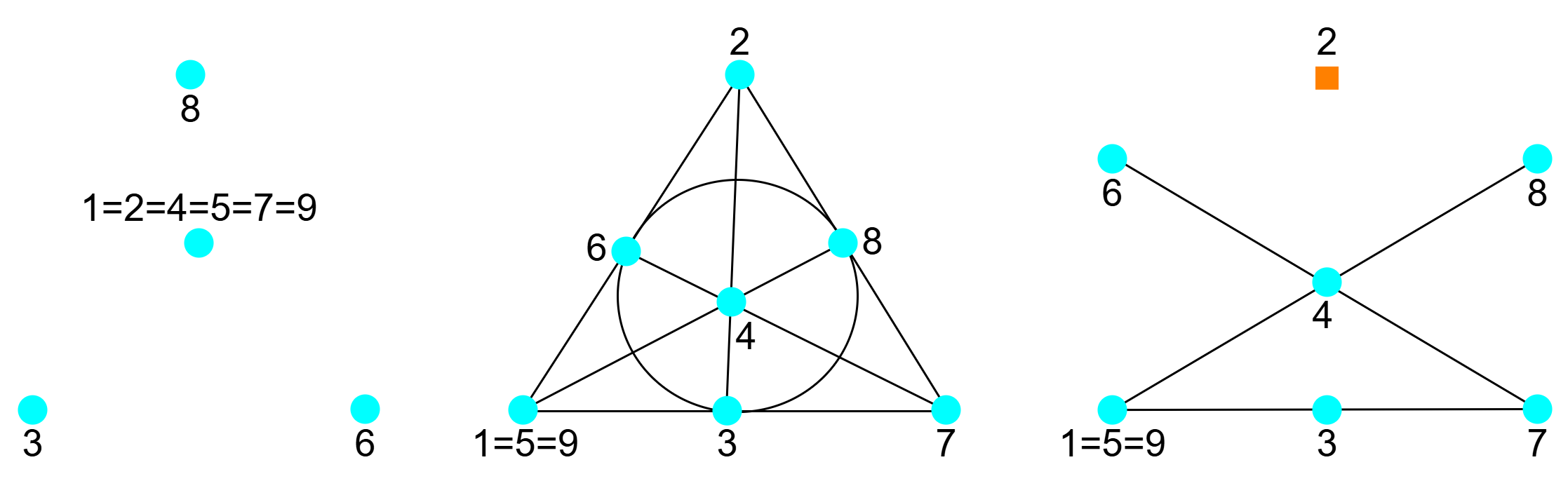}
    \caption{$A$ (Left); $D$ (Center); $C_1(2)$ (Right)}
    \label{fig:third config decompo C}
\end{figure}
Thus we can write \begin{equation} \label{eq:proof lemma 3 appendix} V_{\CCC({C_1})} = V_{C_1} \cup V_{\CCC(A)} \cup_{i=3,6,8} V_{\CCC(\pi_{C_1}^i)} \cup V_{\CCC(D)} \cup_{i=1,2,4,7} V_{\CCC({C_1}(i))}.\end{equation}
It is clear that for the point-line configurations $A,\pi_{C_1}^i$ and ${C_1}(i)$, the matroid variety equals the circuit variety by Theorem~\ref{nil coincide}.
\newline
\newline
\label{proof V_C in V_M}
First we prove that $V_{C_i} \subset V_M$. We will prove this for $C_1$, since the other proofs are similar.
\newline
\newline
An arbitrary $\gamma \in \Gamma_M$ can be written as $$\gamma = \begin{pmatrix} 
1 & 0 & 0 & 1 & 0 & 1 & 1 & 1-x(1+y) & 1\\
0 & 1 & 0 & 1 & 1 & 1 & 1+y & 1-x-xy+y & 1+y \\
0 & 0 & 1 & 1 & x & 0 & 1 & 1-x & (1+y)x
\end{pmatrix},$$ where the minors corresponding to the bases are non-zero and the columns from left to right correspond to $\{1,2,4,7,3,6,8,9,5\}.$ Moreover, the determinant of the minor corresponding to the columns 5,9,6 is 0, implying $$y (x (y + 2) - 1)=0.$$
An arbitrary $\xi \in  \Gamma_{C_1}$ can be written  as $$\xi = \begin{pmatrix}
1 & 0 & 0 & 1 & 0 & 1 & 1 & 1 & 1\\
0 & 1 & 0 & 1 & 1 & 1 & 0 & 0 & 0 \\
0 & 0 & 1 & 1 & 1 & 0 & 1 & 0 & 0 
\end{pmatrix}.$$
Define $$
\left\{
    \begin{array}{ll}
        x = \frac{1}{\epsilon +1} \\
        y = -1+\epsilon
    \end{array}
\right..
$$ If $\epsilon \to 0$, then $\gamma \to \xi$. So $\xi \in V_M.$ Thus 
\begin{equation} \label{eq: lemma 3 appendix VCi C VM}
V_{C_1} \subset V_M. 
\end{equation}

Recall the point-line configuration $B$ from Subsection \ref{Decomposition of the Circuit Variety of the third configuration 93}. Then \begin{equation} \label{eq: lemma 3 VCA}
V_{\CCC(A)} \subset V_{\CCC(B)} = V_{B}.
\end{equation}
For the matroid $D$, we again use the decomposition strategy, the minimal matroids over $D$ are:
\begin{itemize}
    \item The matroids $E_i$ with 1, 2, 4, 5, 7 and 9 identified and one line $\{3,6,8\}$, and seven other matroids of a similar form. 
    \item The matroids $\pi_D^i$, where $i$ ranges over all possible points of $D$. 
    \item The uniform matroid $U_{2,9}'$ with the points 5 and 9 identified.
    \item The matroids $D(i)$, where $i$ varies over all the points of $D$.
\end{itemize}
$V_D$ is empty, since $D$ is the point-line configuration associated to the Fano plane with two additional points 5 and 9 identified with 1, and the Fano plane is not realizable by Example \ref{ex: fano plane not realizable}.
By Theorem~\ref{nil coincide}, it follows that $V_{\CCC(E_i)}= V_{E_i}, V_{\CCC(G_i)} = V_{G_i}, V_{\CCC(U_{2,9}')} = V_{U_{2,9}}'$ and $V_{\CCC(D(i))} = V_{D(i)}.$
For $E_i$, it is clear by Lemma~7.6 of \cite{liwskimohammadialgorithmforminimalmatroids} that $V_{E_i} \subset V_{C_1}$. Finally, notice that $V_{\pi_D^i} \subset V_{\pi_{M}^i}$ and $V_{D(i)} \subset V_{\CCC(M(i))}.$
So \begin{equation} \label{eq: lemma 3 equation for VC(D)}
V_{\CCC(D)} \subset \cup_{i=1}^9 V_{\pi_{C_1}^i} \cup_{i=1}^9 V_{\CCC(C_1(i))} \cup V_{U_{2,9}}.\end{equation}
Using Equation \eqref{eq:proof lemma 3 appendix}, \eqref{eq: lemma 3 appendix VCi C VM}, \eqref{eq: lemma 3 VCA} and \eqref{eq: lemma 3 equation for VC(D)}:
\begin{equation}
    V_{\CCC(C_1)} \subset V_M \cup V_B \cup_{i=1}^9 V_{\pi_{M}^i} \cup_{i=1}^9 V_{\CCC(M(i))} \cup V_{U_{2,9}}.
\end{equation}
\label{inc V_D third}
3) We will prove Equation \eqref{eq: lemma 4 third config}. We again apply the decomposition strategy to $D$, defined in Section \ref{Decomposition of the Circuit Variety of the third configuration 93}, and identify the minimal matroids over $D$. Automorphism means a hypergraph automorphism of $D$:
\begin{itemize}
    \item The matroid formed by setting $1=2=3=8$, and two other matroids of a similar form. Every such matroid has more dependencies than the matroids $A_i$ as defined in Section \ref{Decomposition of the Circuit Variety of the third configuration 93}, see Figure \ref{fig:Third Config A_1 B} (Center), so their circuit variety is contained in the circuit variety of $A_i$.
    \item The matroid formed by setting $1=2=3$ and $6=7=9$, and two matroids of a similar form. Every such matroid has more dependencies than a matroid $A_i$ as defined in Section \ref{Decomposition of the Circuit Variety of the third configuration 93}, see Figure \ref{fig:Third Config A_1 B} (Center), so their circuit variety is contained in the circuit variety of some $A_i$.
    \item The matroid formed by setting 1=2, 4=5, 7=9. This matroid has more dependencies than a matroid $B$ as defined in Section \ref{Decomposition of the Circuit Variety of the third configuration 93}, see Figure \ref{fig:Third Config A_1 B} (Right), so its circuit variety is contained in the one of $B$.
    \item The matroid formed by setting 2=6, 3=7, 4=8. The lines of this matroid are $\{1,3,5\},\{1,4,9\},\{2,3,4\},\{9,2,5\}$, as in Figure \ref{fig:third config decompo d} (Left). Denote this matroid by $E_1$ and the two other matroid rising from automorphism by $E_i$. 
    \item The matroid formed by setting $2=3=5=8=9$. In this case the lines are $\{1,2,6\}$ and $\{4,6,7\}$, as in Figure \ref{fig:third config decompo d} (Center). Denote this matroid by $F_1$ and the other matroids rising from automorphisms by $F_2$. 
    \item The matroid formed by setting $1=3=4=6=9$ and lines $\{8,2,7\},\{8,1,5\}$, and another matroid formed by automorphism, denoted by $F'_i$. 
    \item The matroid formed by setting $1=5=6=7=8$ and lines $\{2,3,4\},\{9,3,1\}$, and another matroid formed by automorphism, denoted by $F''_i$.
    \item The matroid formed by setting $1=3=6=9$ and $2=7$, and lines $\{1,2,4\}, \{8,4,5\}$, as in Figure \ref{fig:third config decompo d} (Right). Denote this matroid by $G_1$ and the matroid formed by automorphism by $G_2$.
     \item The matroid formed by setting $1=5=6=8$ and $2=4$, and lines $\{1,2,7\}, \{9,3,7\}$, and another matroid $G_2$ formed by automorphism. Denote these matroids by $G_i'$. 
     \item The matroid formed by setting $3=4=7=8$ and $5=9$, and lines $\{1,3,5\},\{1,2,6\}$. Denote these matroids by $G_i''$. 
    \item The matroids $\pi_D^i$, where $i$ is in $\{1,\ldots,9\}.$ 
    \item The two matroids formed by identifying $\{1,5,9 \}$ or $\{2,4,7\}$. These matroids have more dependencies than the matroids $C_i$ in the decomposition of the third configuration, so their circuit variety is contained in that of $C_i$.
    \item The matroid with $8=9=6=7$. This matroid has more dependencies than the matroid $A_i$ in the decomposition of the third configuration, so its circuit variety is contained in that of $A_i$.
    \item The uniform matroid with 9 points: $U_{2,9}$.
    \item The matroids formed by making a point in $\{1,\ldots, 9\}$ a loop.
    \end{itemize}
    By the decomposition strategy,
    $$V_{\CCC(D)} \subset V_D \cup_{i=1}^6 V_{\CCC(A_i)} \cup V_{\CCC(B)} \cup_{i=1}^2 V_{\CCC(C_i)} \cup_{i=1}^9 V_{\CCC(\pi_D^i)} \cup_{i=1}^2 V_{\CCC(E_i)} \cup_{i=1}^2 V_{\CCC(F_i)} $$ $$\cup_{i=1}^2 V_{\CCC(F_i')} \cup_{i=1}^2 V_{\CCC(F_i'')} \cup_{i=1}^2 V_{\CCC(G_i)}  \cup_{i=1}^2 V_{\CCC(G_i')}  \cup_{i=1}^2 V_{\CCC(G_i'')}   \cup V_{U_{2,9}} \cup_{i=1}^9 V_{\CCC(D(i))}.$$   
   \begin{figure}
       \centering
       \includegraphics[width=0.9\linewidth]{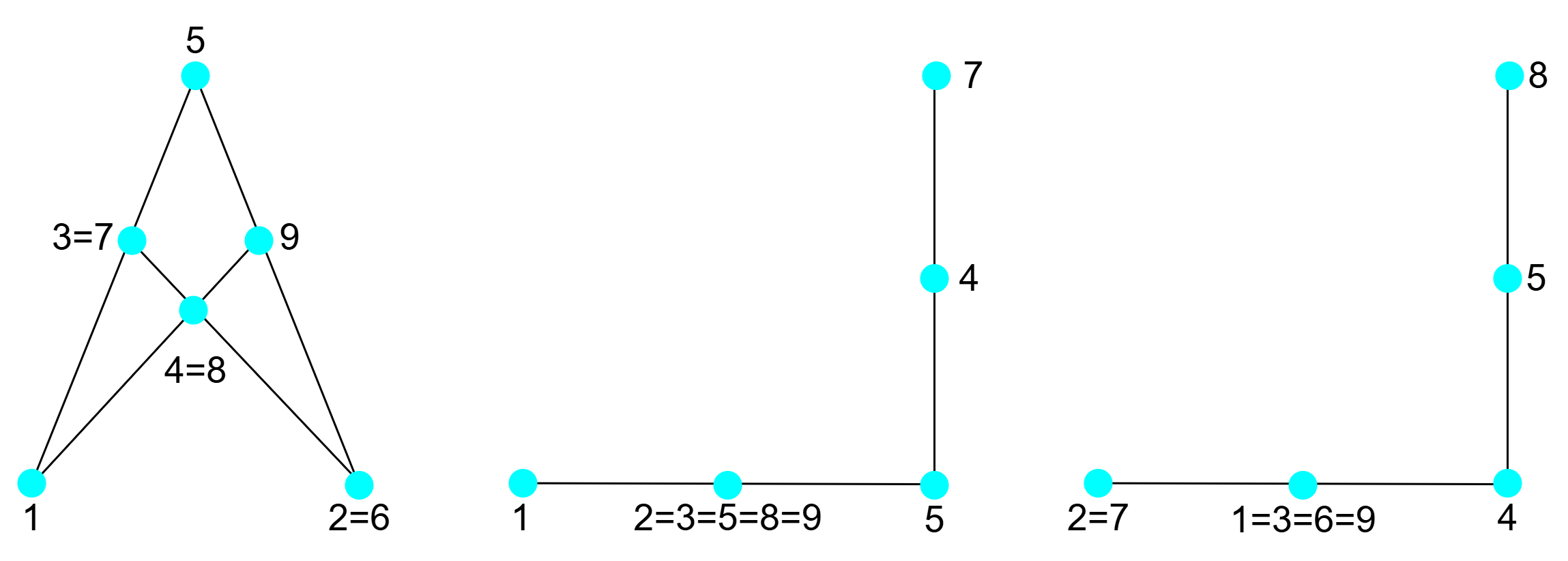}
       \caption{(Left) $E_1$; (Center) $F_1$; (Right) $G_1$.}
       \label{fig:third config decompo d}
   \end{figure} 
By Theorem~\ref{nil coincide} and Remark~\ref{remark reduced}, $V_{\CCC(E_i)} \subset V_{E_i} \cup V_{U_{2,9}},$ and similar results holds for $E_i'$ and $E_i''$. By Theorem~\ref{nil coincide}, $V_{\CCC(F_i)} = V_{F_i}, V_{\CCC(G_i)} = V_{G_i},$ $ V_{\CCC(\pi_D^i)} \subset V_{\CCC(\pi_M^i)} = V_{\pi_M^i}$ and similar results holds for $F_i'$ and $F_i'', G_i'$ and $G_i''$.
    
    An arbitrary $\xi \in \Gamma_D$ can be written as $$\xi = \begin{pmatrix} 
1 & 0 & 0 & 1 & 0 & 1 & x & -x^2 & x\\
0 & 1 & 0 & 1 & 1 & 1 & 1+x & 1-x^2& 1+x \\
0 & 0 & 1 & 1 & x & 0 & x & -x^2+x & x+x^2
\end{pmatrix},$$ where the columns from left to right correspond to $\{1,2,4,7,3,6,8,9,5\}$ and the minors corresponding to the bases are non-zero. Moreover, the determinant of the minor corresponding to the columns 5,9,6 is 0, implying $x^2=0.$ However, from this it follows that there are minors corresponding to bases which are 0, so $\Gamma_D$ is empty.
\newline
\newline
Now we prove that $V_{E_i} \subset V_M$. We prove this for $E_1$, the other proofs are similar. An arbitrary $\gamma \in \Gamma_M$ can be written as $$ \begin{pmatrix}
    1 & 0 & 0 & 1 & 1 & 1 & 1 & -x & -x \\
    0 & 1 & 0 & 1 & 1 & 0 & y+1 & 1 - (1+y)(1+x) & 1 - (1+y)(1+x) \\
    0 & 0 & 1 & 1 & 1+x & 1+x & 1 & 0 & 1 - (1+y)(1+x) 
\end{pmatrix},
$$
where the columns correspond to $\{1,2,5,8,4,3,7,6,9\}$ and the determinants of the minors corresponding to bases are non-zero. Moreover, the determinant of the minor corresponding to the points $\{3,7,9\}$ is 0. This is equivalent with $-y - 2 x y - y^2 - x y^2 = 0$. An arbitrary $\xi \in \Gamma_{E_1}$ can be written as $$\begin{pmatrix}
    1 & 0 & 0 & 1 & 1 & 1 & 1 & 0 & 0  \\
    0 & 1 & 0 & 1 & 1 & 0 & 0 & 1 & 1 \\
    0 & 0 & 1 & 1 & 1 & 1 & 1 & 0 &  1 
\end{pmatrix}.$$
Let $$
\left\{
    \begin{array}{ll}
        x = \epsilon \\
        y = \frac{1+2\epsilon}{-\epsilon-1}
    \end{array}
\right..
$$
Then $\gamma \in V_M$ for all $\epsilon$ and if $\epsilon \to 0$, then $\gamma \to \xi$. Thus $V_{E_1} \subset V_M$. The proof that $V_{E_i'} \subset V_M$ and $V_{E_i''} \subset V_M$ is analogous.
\newline
\newline
We prove that $V_{F_i} \subset V_M$. We will prove this for $F_1$, the other proofs are similar.
An arbitrary $\gamma \in \Gamma_M$ can be written as 
$$
\begin{pmatrix}
    1 & 0 & 0 & 1 & 0 & 1 & 1 & 1-x(1+y) & 1\\
0 & 1 & 0 & 1 & 1 & 1 & 1+y & 1-x-xy+y & 1+y \\
0 & 0 & 1 & 1 & x & 0 & 1 & 1-x & (1+y)x
\end{pmatrix},$$
where the columns correspond to $\{1,2,4,7,3,6,8,9,5\}$ and the determinants of the minors
corresponding to bases are non-zero. Moreover, the determinant of the minor corresponding to the columns 5,9,6 is 0, implying $$y (x (y + 2) - 1)=0.$$ An arbitrary $\xi \in \Gamma_{F_1}$ can be written as 
$$\begin{pmatrix}
    1 & 0 & 0 & 1 & 0 & 1 & 0 & 0 & 0 \\
    0 & 1 & 0 & 1 & 1 & 1 & 1 & 1 & 1 \\
    0 & 0 & 1 & 1 & 0 & 0 & 0 & 0 & 0 
\end{pmatrix},$$
where the columns again correspond to $\{1,2,4,7,3,6,8,9,5\}$  and the determinants of the minors
corresponding to bases are non-zero.
Define 
$$
\left\{
    \begin{array}{ll}
        x = \frac{\epsilon}{\epsilon +1} \\ \\
        1+y= \frac{1}{\epsilon}
    \end{array}
\right..
$$
Then $\gamma \in \Gamma_M$ and $\gamma \to \xi$ if $\epsilon \to 0$. Thus $\xi \in V_M$, and therefore $V_{F_1} \subset V_M$. The proof that $V_{F_i'} \subset V_M$ and $V_{F_i''} \subset V_M$ is analogous.
\newline
\newline
We prove that $V_{G_i} \subset V_M$. We will prove this for $G_1$, the other proofs are similar. An arbitrary $\gamma \in \Gamma_M$ can be written as 
$$ \begin{pmatrix}
    1 & 0 & 0 & 1 & 1 & 1 & 1 & -x & -x \\
    0 & 1 & 0 & 1 & 1 & 0 & y+1 & 1 - (1+y)(1+x) & 1 - (1+y)(1+x) \\
    0 & 0 & 1 & 1 & 1+x & 1+x & 1 & 0 & 1 - (1+y)(1+x) 
\end{pmatrix},
$$
where the columns correspond to $\{1,2,5,8,4,3,7,6,9\}$ and the determinants of the minors corresponding to bases are non-zero. Moreover, the determinant of the minor corresponding to the points $\{3,7,9\}$ is 0. This is equivalent with $-y - 2 x y - y^2 - x y^2 = 0.$
An element $\xi \in \Gamma_{G_i}$ can be written as $$\begin{pmatrix}
    1 & 0 & 0 & 1 & 1 & 1  & 0 & 1 & 1 \\
    0 & 1 & 0 & 1 & 1 & 0 & 1 & 0 &  0 \\
    0 & 0 & 1 & 1 & 0 & 0 & 0 & 0 & 0 \\
\end{pmatrix},$$
where the columns again correspond to $\{1,2,5,8,4,3,7,6,9\}$ as well.
Define 
$$
\left\{
    \begin{array}{ll}
        x = \frac{-1}{2\epsilon+1} \\
        1+y= \frac{1}{\epsilon}
    \end{array}
\right..
$$
Then $\gamma \in V_M$ for every $\epsilon \in \RR_0$ and if $\epsilon \to 0$, then $\gamma \to \xi$. So $V_{G_1} \subset V_M.$ The proofs that $V_{G_i'} \subset V_M$ and $V_{G_i''} \subset V_M$ are analogous.
\newline
\newline
It is clear that $V_{\CCC(D(i))} \subset V_{\CCC(M(i))}$ and $V_{\CCC(\pi_D^i)} \subset V_{\CCC(\pi_M^i)}$.
\newline
\newline
Using all the previous inclusions, we can conclude that:
$$
V_{\CCC(D)} \subset \cup_{i=1}^6 V_{\CCC(A_i)} \cup V_{B} \cup_{i=1}^2 V_{\CCC(C_i)} \cup_{i=1}^9 V_{\pi_M^i} \cup V_{U_{2,9}} \cup_{i=1}^9 V_{\CCC(M(i))}.
$$
4)
We will prove Equation \eqref{eq: lemma 5 third config}. Without loss of generality, we can assume that $i=9$.
First we find the minimal matroids over $M(9)$. Clearly, $9$ is a loop in each minimal matroid. In the following, automorphism means an automorphism of $M(9)$.
\begin{itemize}
    \item The matroid with $1=2=3$ and lines $\{1,7,8\}, \{4,5,8\}$ and $\{4,6,7\}$, as in Figure \ref{fig:decompo Third config M(9) part 1} (Left), and another matroid formed by automorphism. Denote these matroids respectively by $A_1$ and $A_2$.
    \item The matroid with $2=4=7$ and lines $\{1,3,5\},\{1,2,6\},\{8,2,5\}.$ Denote this matroid by $A'$
    \item The matroid with $2=6=7$ and lines $\{1,3,5\},\{2,3,4\},\{8,4,5\}$ and another matroid formed by automorphism. Denote these matroids respectively by $A_1''$ and $A_2''$.
    \item The matroid with $1=2, 4=5$, and lines $\{1,3,4\}, \{1,7,8\}$ and $\{4,6,7\}$: this circuit variety is contained in $V_{\CCC(B)}$, where $B$ is defined as in Section \ref{Decomposition of the Circuit Variety of the third configuration 93}, see Figure \ref{fig:Third Config A_1 B} (Right).
    \item The matroid with $1=3, 4=6$ and lines $\{1,2,4\},\{8,2,7\},\{8,4,5\}$, or $2=8,3=5$, as in Figure \ref{fig:decompo Third config M(9) part 1} (Center). Denote these matroids respectively by $C_1$ and $C_2$.
    \item The matroid with $1=2=8$ and lines $\{1,3,4,5\}$ and $\{4,6,7\}$, as in Figure \ref{fig:decompo Third config M(9) part 1} (Right), and another matroid formed by automorphism. Denote these matroids respectively by $D_1$ and $D_2$.
       \item The matroid $D'$ with $2=4=6$ and lines $\{1,3,5\},\{8,2,5,7\}$.
    \item The matroid $D''$ with $2=4=8$ and lines $\{1,3,5\},\{1,2,6,7\}.$
    \item The matroids with $1=3, 2=7$ and lines $\{1,2,4,6\}, \{4,5,8\}$, as in Figure \ref{fig:decompo Third config M(9) part 2} (Left), and another matroid by automorphism. Denote these matroids by $E_i$.
    \item The matroid formed by identifying $1=5$ and lines $\{1,2,6\},$ $\{8,2,7\},$ $\{2,3,4\},$ $\{8,1,4\},$ $\{4,6,7\}$, as in Figure \ref{fig:decompo Third config M(9) part 2} (Center). Denote this matroid by $F$.
    \item The matroids with $1=6$ or $5=8$ and lines $\{1,3,5\},$ $\{8,2,7\},$ $\{2,3,4\},$ $\{8,4,5\},$ $\{1,4,7\}$, as in Figure \ref{fig:decompo Third config M(9) part 2} (Right). Denote these matroids by $G_i$.
    \item The matroid with $3=4$ and lines $\{1,3,5,8\}, \{1,2,6\}, \{2,7,8\}$ and $\{3,6,7\}$, as in Figure \ref{fig:decompo third config part 3} (Left), and the three other matroids formed by automorphism. Denote these matroids by $H_i$.
    \item The matroid with lines $\{1,3,5,7\}, \{1,2,6\}, \{2,7,8\}, \{2,3,4\}, \{4,5,8\}$ and $\{4,6,7\}$, as in Figure \ref{fig:decompo third config part 3} (Center). Denote this matroid by $I$.
    \item The matroid with lines $\{1,3,5\}, $ $\{1,2,6\}, $ $\{2,7,8\}, $ $\{2,3,4\}, $ $\{4,5,8\}, $ $\{4,6,7\}, $ $\{3,6,8\}$, as in Figure \ref{fig:decompo third config part 3} (Right). Denote this matroid by $J$.
    \item The matroid with $1,2,4$ or $5$ a loop.
    \end{itemize}
    \begin{figure}
        \centering
        \includegraphics[width=0.9\linewidth]{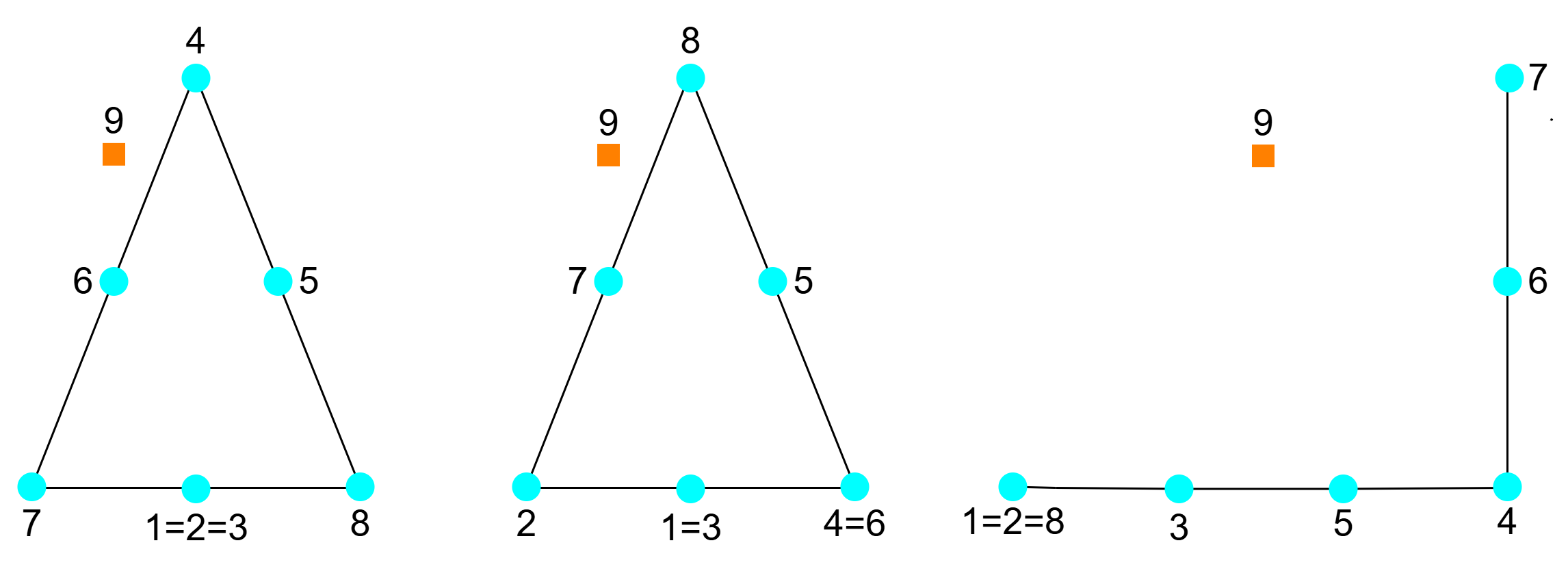}
        \caption{(Left) $A_1$; (Center) $C_1$; (Right) $D_1$.}
        \label{fig:decompo Third config M(9) part 1}
    \end{figure}
     \begin{figure}
        \centering
        \includegraphics[width=0.9\linewidth]{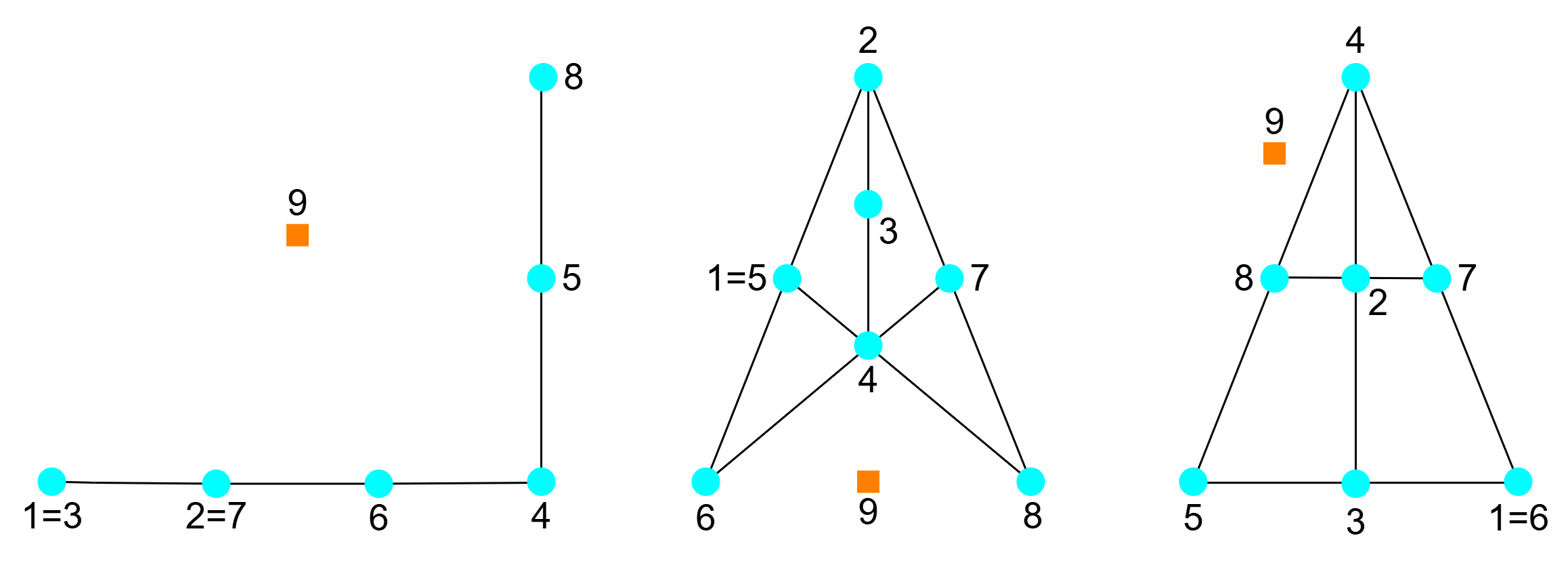}
        \caption{(Left) $E_1$; (Center) $F$; (Right) $G_1$.}
        \label{fig:decompo Third config M(9) part 2}    \end{figure}
     \begin{figure}
        \centering
        \includegraphics[width=0.9\linewidth]{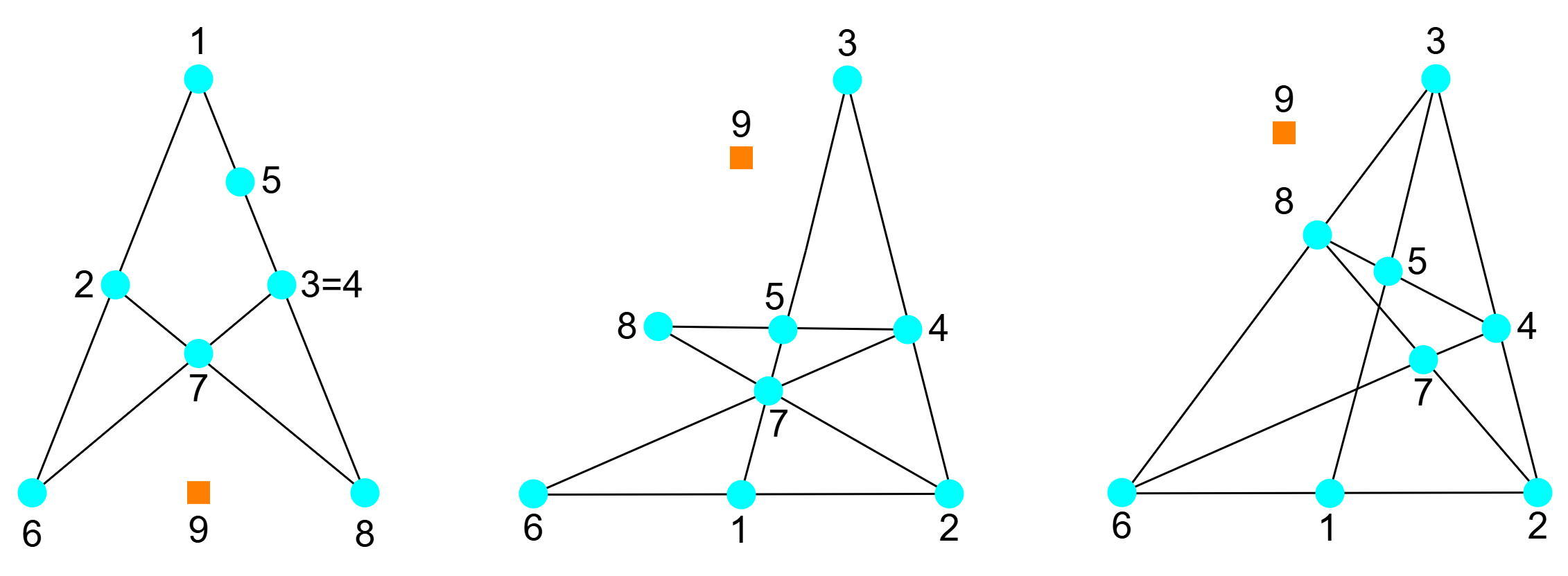}
        \caption{(Left) $H_1$; (Center) $I$; (Right) $J$.}
        \label{fig:decompo third config part 3}
    \end{figure}
By Theorem~\ref{nil coincide}, $V_{\CCC(A_i)} = V_{A_i}, $ $V_{\CCC(A)} = V_{A}, $ $V_{\CCC(A_i'')} = V_{A_i''}, $ $ V_{\CCC(B)} = V_B, $ $ V_{\CCC(C_i)} = V_{C_i},$ $V_{\CCC(D_i)} = V_{D_i}, $ $ V_{\CCC(E_i)} = V_{E_i}$. 
\newline
\newline
To prove that $V_{A_i} \subset V_{M(9)}$, we will prove this for $A_1$, since the other proofs are similar. Let $\xi \in \Gamma_{A_1}$. One can define $\widetilde{\xi}_2$ to be a perturbation of $\xi_2$ on the line through $\xi_2$ and $\xi_4$. Subsequently, one can define $\widetilde{\xi}_3 = \widetilde{\xi}_2 \xi_6 \wedge \xi_1 \xi_5$, by Lemma \ref{lemma: meet of 2 extensors arbitrary close}, $\widetilde{\xi_3} \to \xi_3$ if $\widetilde{\xi_2} \to \xi_2$. Let $\widetilde{\xi}_p = \xi_p$ for the other points. This shows that $\widetilde{\xi} \in \Gamma_{M(9)}$, implying $\xi \in V_{M(9)}$. Thus $V_{A_1} \subset V_{M(9)}.$ The proofs for $A'$ and $A''_i$ can be done in a similar way.
\newline\newline
To prove that $V_{C_i} \subset V_{M(9)}$, we will prove this for $C_1$, the other proofs are similar. Let $\xi \in \Gamma_{C_1}$. One can define $\widetilde{\xi}_4$ to be a perturbation of $\xi_4$ on the line through $\xi_5$ and $\xi_8$, such that it is not on the line through $\xi_1$ and $\xi_2$. Subsequently, one can define $\widetilde{\xi}_6 = \xi_1\xi_2 \wedge \widetilde{\xi}_4 \xi_7$. Finally define $\widetilde{\xi}_3 = \xi_2 \widetilde{\xi}_4 \wedge \xi_1 \xi_5$, and finishing the argument as above implies $V_{C_1} \subset V_{M(9)}$.
\newline 
\newline
To prove that $V_{D_i} \subset V_{M(9)}$, we will prove this for 
$D_1$, the other proofs are similar. Let $\xi \in \Gamma_{D_1}$. Then define $\widetilde{\xi}_1$ to be an arbitrary perturbation of $\xi_1$ on the line through $\xi_6$ and $\xi_2$, and the same for $\widetilde{\xi}_8$ on the line through $\xi_2$ and $\xi_7$. Then define $\widetilde{\xi}_5 = \widetilde{\xi}_1 \xi_3 \wedge \xi_4 \widetilde{\xi}_8$. Let $\widetilde{\xi}_p = \xi_p$ for the other points in the ground set. Since $\widetilde{\xi} \in \Gamma_M$, it follows by Lemma \ref{lemma: meet of 2 extensors arbitrary close} that $\xi
 \in V_{M(9)}$. Thus $V_{D_1} \subset V_{M(9)}$.
\newline
\newline
The proof for $D'$ and $D''$ is analogous.
\newline
\newline
Finally, to prove that $V_{E_i} \subset V_{M(9)}$, we will show this for $V_{E_1}$, the other proofs are similar. Let $\xi \in \Gamma_{E_1}$. Then define $\widetilde{\xi}_1$ to be an arbitrary perturbation of $\xi_1$ on the line through $\xi_3$ and $\xi_5$, and the same for $\widetilde{\xi}_7$ on the line through $\xi_2$ and $\xi_8$. Then define $\widetilde{\xi}_6 = \widetilde{\xi}_1 \xi_2 \wedge \xi_4 \widetilde{\xi}_7$. Finishing the proof as above, this shows that $V_{E_1} \subset V_{M(9)}$.

\medskip

So we can already conclude that:
$$
    V_{\CCC(M(9))} \subset V_{M(9)} \cup V_B \cup V_{\CCC(F)} \cup_{i=1}^2 V_{\CCC(G_i)} \cup_{i=1}^4 V_{\CCC(H_i)} $$
    \begin{equation} \label{eq: proof third config M(9) nilpotent ones definitive}\cup V_{\CCC(I)} \cup V_{\CCC(J)} \cup_{i=1,2,4,5} V_{\CCC(M(9,i))}.
\end{equation}
\begin{remark}
    To avoid repetition, from now on, if we only have proven $V_{Y_1} \subset V_{X}$ for the index 1, this means implicitly that the proof $V_{Y_j} \subset V_{X}$ for the other indices $j$ is either completely analogous or follows from a hypergraph automorphism.
\end{remark}
{\it Decomposing $V_{\CCC(F)}$} \newline
To decompose $V_{\CCC(F)}$, we use again the decomposition strategy. The minimal matroids over $F$ are the following; the points 1 and 5 are identified in each minimal matroid. :
\begin{itemize}
    \item The matroid $K_1$ with $1=2=8$ and lines $\{1,3,4\}$ and $\{4,6,7\}$, as in Figure \ref{fig:decompo M(9)F} (First Left), and three other matroids $K_i$ resulting from automorphisms from $F$. Denote these matroids by $K_i$.
    \item The matroid with $1=6$, $7=8$, and lines $\{2,3,4\}$ and $\{1,4,7\}$, as in Figure \ref{fig:decompo M(9)F} (Second Left), and the matroid resulting from an automorphism from $F$. Denote these matroids by $L_i$.
    \item The matroid with $2=3$, or $3=4$, as in Figure \ref{fig:decompo M(9)F} (Second Right). In the former case, the lines are $\{1,2,6\}, \{8,2,7\}, \{1,4,8\}, \{4,6,7\}$. Denote these matroids respectively by $M_1$ and $M_2$. 
    \item The matroid with $2=4$ and line $\{1,2,6,7,8\}.$ The corresponding circuit variety is contained in $V_{\CCC(\pi_M^3)} = V_{\pi_M^3}$, where $M$ denotes the matroid of the third configuration $9_3$.
    \item The matroid $F_i$ with $\{1,3,7\}$ or $\{3,6,8\}$ added as a circuit, as in Figure \ref{fig:decompo M(9)F} (First Right). Denote these matroids respectively by $N_1$ and $N_2$.
    \item The matroid with 2 or 4 being a loop. In the former case, the lines are $\{1,4,7\}$ and $\{4,6,7\}.$ Denote these matroids by $F(2)$ and $F(4)$.
\end{itemize}
\begin{figure}
    \centering
    \includegraphics[width=0.9\linewidth]{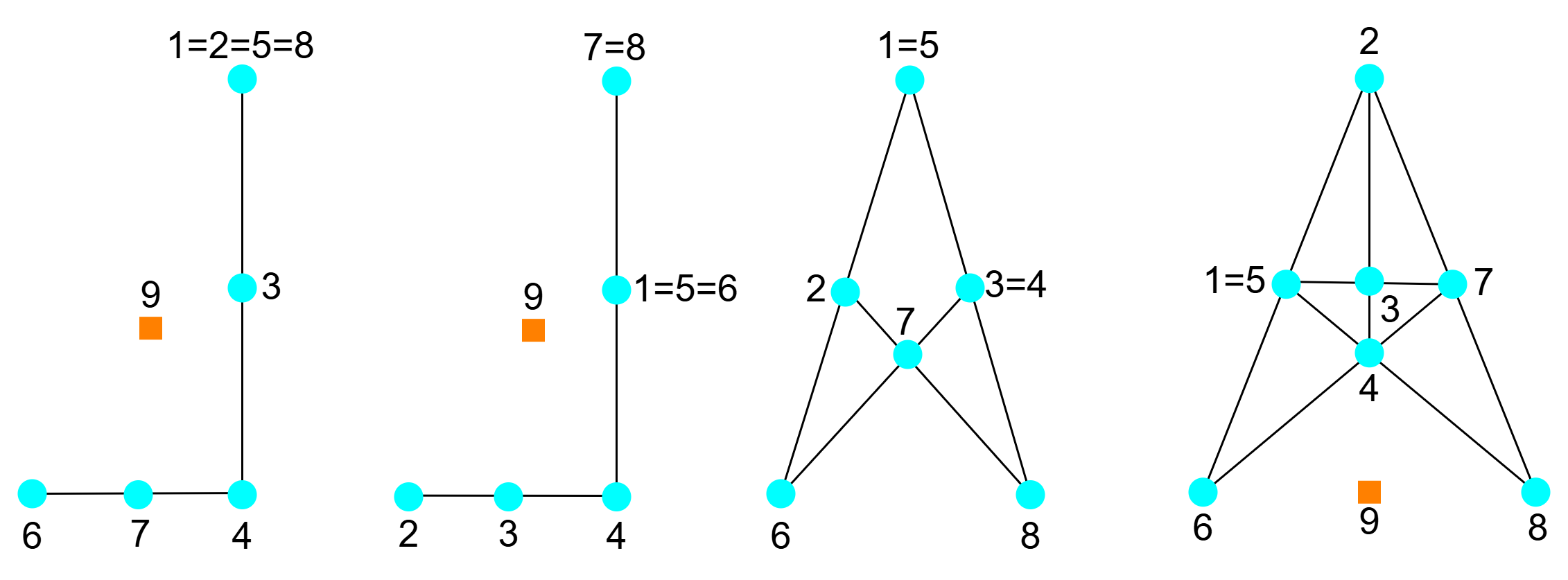}
    \caption{(First Left) $K_1$; (Second Left) $L_1$; (Second Right) $M_1$; (First Right) $N_1$.}
    \label{fig:decompo M(9)F}
\end{figure}
We prove that $V_{F} \subset V_{M(9)}$. Let $\xi \in \Gamma_{F}$ arbitrarily. Define $\widetilde{\xi}_1$ arbitrarily on the line through $\xi_2$ and $\xi_6$, such that $\widetilde{\xi}_1 \neq \xi_5$. Then define $\widetilde{\xi}_5 = \widetilde{\xi}_1 \xi_3 \wedge \xi_4 \xi_8$. By Lemma \ref{lemma: meet of 2 extensors arbitrary close}, $\widetilde{\xi}_5 \to \xi_5$ if $\widetilde{\xi_1} \to \xi_1$. Let $\widetilde{\xi}_p = \xi_p$ for the other points.  Then, $\widetilde{\xi} \in \Gamma_{M(9)}$, thus $\xi \in V_{M(9)}$. This proves that $V_F \subset V_{M(9)}.$ 
\newline
\newline
Except for $N_i$, all listed minimal matroids are nilpotent or quasi-nilpotent, thus the circuit variety of these matroids is contained in the union of their matroid variety and the matroid variety of the uniform matroid with nine points.
\newline
\newline
To prove that $V_{K_1} \subset V_F$, let $\xi \in \Gamma_{V_{K_1}}$ arbitrarily. Define $\widetilde{\xi}_1$ arbitrarily on the line through $\xi_2$ and $\xi_6$, such that $\widetilde{\xi}_1 \neq \xi_2$, and let $\widetilde{\xi}_5 = \widetilde{\xi}_1$. Then define $\widetilde{\xi}_8 = \widetilde{\xi}_1 \xi_4 \wedge \xi_2 \xi_7$. By Lemma \ref{lemma: meet of 2 extensors arbitrary close}, $\widetilde{\xi}_8 \to \xi_8$ if $\widetilde{\xi_1} \to \xi_1$. Let $\widetilde{\xi}_p = \xi_p$ for the other points. Then $\xi \in V_{F}$, thus $\widetilde{\xi} \in V_F$. This proves that $\widetilde{\xi} \in \Gamma_{F},$ thus $\xi \in V_F.$ Then, $V_{K_1} \subset V_{F}.$ 
\newline
\newline
The proof that $V_{L_i} \subset V_{F}$ is completely analogous.
\newline
\newline
To prove that $V_{M_1} \subset V_F$, let $\xi \in \Gamma_{V_{M_1}}$ arbitrarily. Perturb $\xi_3$ arbitrarily on the line through $\xi_2$ and $\xi_4$, such that it does not coincide with $\xi_2$. Then the perturbed $\xi$ lies in $\Gamma_F$, which completes the proof that $V_{M_1} \subset V_F.$ So one can conclude that for $F$:
\begin{equation} \label{eq: proof third config M(9) F}
    V_{\CCC(F)} \subset V_{M(9)} \cup V_{\CCC(M(9,2))} \cup V_{\CCC(M(9,4))} \cup_{i=1}^8 V_{\pi_M^i} \cup V_{\CCC(N_i)} \cup V_{U_{2,9}}.
\end{equation}
We have to decompose the circuit variety of $N_i$ again. Assume without loss of generality that $i=1$. The minimal matroids over $N_i$ are: 
\begin{itemize}
    \item The matroid with $1=2=4=5=7$. One can notice that the circuit variety of this matroid is contained in the circuit variety of $A'$, defined in the beginning of  part 4 of this proof, which is contained in $V_{M(9)}$.
    \item The matroid with $1=2, 4=7$ and line $\{1,3,4,8\}$, and two other matroids formed by automorphisms. The circuit variety of these matroids is contained in $V_{\CCC(\pi_M^i)} = V_{\pi_M^i}$ for some $i \in \{1, \ldots, 8\}$.
    \item The matroid $N_i$ with additional circuit $\{3,6,8\}$. Denote this matroid by $P$.
    \item The matroid with an additional loop from the set $\{1,2,4,7\}$. If the loop is 1, then the lines are $\{2,7,8\}, \{2,3,4\}, \{4,6,7\}$. The circuit varieties of these matroids are contained in $V_{M(9,i)}$, for some $i \in [8]$.
\end{itemize}
To prove that $V_{N_1} \subset V_{M(9)}$, we will show that $V_{N_1} \subset V_F$. Let $\xi \in \Gamma_{N_1}.$ Perturb $\xi_3$ arbitrarily on the line through $\xi_2$ and $\xi_4$, such that the perturbed $\xi_3$ is not on the line through $\xi_1$ and $\xi_7$ anymore. The perturbed $\xi$ lies in $\Gamma_F$. This completes the proof.
\newline
\newline
Thus for $N_i$: 
\begin{equation} \label{eq: proof third config M(9) F 2}
V_{\CCC(N_i)} \subset V_{M(9)} \cup_{i=1}^8 V_{\CCC(M(9,i))} 
\cup_{i=1}^8 V_{\pi_M^i} \cup V_{\CCC(P)}. 
\end{equation}
To decompose $V_{\CCC(P)}$, we consider the minimal matroids over $P$:
\begin{itemize}
\item The matroid with $1=2=3=8$ and circuit $\{4,6,7\}$ and six other matroids of a similar form. These circuit varieties are contained in $V_{N_1}$ or $V_{N_2}$ by Lemma~7.6 of \cite{liwskimohammadialgorithmforminimalmatroids}, which is contained in $V_{M(9)}$. Notice that we cannot use that lemma to state that the circuit varieties are contained in $V_{P}$, since realizability of the matroid is a condition for that lemma. 
\item The matroid with $1=2,8=3,4=7$ and six other matroids of a similar form. These circuit varieties are contained in $V_{\CCC(\pi_M^i)}$ with $i$ varying from 1 to 8.
\item The uniform matroid, with $1=5$ and 9 a loop.
\item The matroids with an additional loop from the set $\{1, 2, \ldots, 8\}$. 
\end{itemize}
$V_P$ is empty, since after a projective transformation, one can write $\gamma \in \Gamma_P$ as:
$$
\begin{pmatrix}
    1 & 0 & 0 & 1 & 0 & 1 & 1 & 1 & 0 \\
    0 & 1 & 0 & 1 & 1 & 1 & 0 & 0 & 0\\
    0 & 0 & 1 & 1 & 1 & 0 & 1 & 0 & 0
\end{pmatrix},
$$
where the columns correspond to $\{1,2,4,7,3,6,8,5,9\}$ and the minor corresponding to the columns $\{3,6,8\}$ is 0. This is contradiction.
Thus: 
\begin{equation} \label{eq: proof third config M(9) F 3}
V_{\CCC(P)} \subset V_{M(9)} \cup_{j=1}^8 V_{\pi_{M}^j} \cup V_{U_{2,9}} \cup V_{\CCC(M(9,i))}.
\end{equation}
Combining equations \eqref{eq: proof third config M(9) F}, \eqref{eq: proof third config M(9) F 2} and \eqref{eq: proof third config M(9) F 3}, one can conclude:
\begin{equation} \label{eq: proof third config M(9) F definitive}
    V_{\CCC(F)} \subset V_{M(9)} \cup_{i=1}^8 V_{\pi_{M}^i} \cup_{i=1}^8 V_{\CCC(M(9,i))} \cup V_{U_{2,9}}.
\end{equation}

{\it Decomposing $V_{\CCC(G_i)}$} \newline
Using Lemma~5.1 of \cite{liwskimohammadialgorithmforminimalmatroids}:
\begin{equation*} 
V_{\CCC(G_1)} = V_{G_1} \cup V_{U_{2,9}'} \cup V_{G_1(2)},\end{equation*} where $U_{2,9}'$ is obtained from the uniform matroid $U_{2,9}$ by setting the point 9 to be a loop and identifying the points 1 and 5.

We prove that $V_{G_1} \subset V_{M(9)}$. Let $\xi \in \Gamma_{V_{G_1}}$ arbitrarily. Define $\widetilde{\xi}_1$ as a perturbation of $\xi_1$ on the line through $\xi_3$ and $\xi_5$, such that $\widetilde{\xi}_1 \neq \xi_6$. Then define $\widetilde{\xi}_6 = \widetilde{\xi}_1 \xi_2 \wedge \xi_4 \xi_7$. By Lemma \ref{lemma: meet of 2 extensors arbitrary close}, $\widetilde{\xi}_6 \to \xi_6$ if $\widetilde{\xi_1} \to \xi_1$. Let $\widetilde{\xi}_p = \xi_p$ for the other points.  Then, $\widetilde{\xi} \in \Gamma_{M(9)}$, thus $\xi \in V_{M(9)}$, implying $V_{G_1} \subset V_{M(9)}.$ 
\newline
\newline
By Theorem \ref{nil coincide}, $V_{G_1(2)} = V_{\CCC(G_1(2))} \subset V_{\CCC(M(9,2))} = V_{M(9,2)}.$
Thus we can conclude that \begin{equation} \label{eq: proof third config M(9) G definitive}
V_{\CCC(G_1)} \subset V_{M(9)} \cup V_{M(9,2)} \cup V_{U_{2,9}}.
\end{equation}
\newline

{\it Decomposing $V_{\CCC(H_i)}$}
\newline
We can assume without loss of generality that $i=1$. The minimal matroids over $H_1$ are the following; automorphism means a hypergraph automorphism of $H_1$ and the points 3 and 4 are identified in each matroid:
\begin{itemize}
    \item The matroid $S_1$ with $1=2=8$ and lines $\{1,3,5\},\{3,6,7\}$ and two other matroids $S_i$ formed by automorphisms of $H_1$. Their circuit varieties are contained in those of of $D_1$, $C_1$ and $A_2''$. $V_{D_1}, V_{C_1}$ and $V_{A_2''}$ are contained in $V_{M(9)}.$ 
    \item The matroid $T_1$ with $1=3, 2=7$, and lines $\{8,1,5\},\{1,2,6\}$, and two other matroids $T_i$ formed by automorphisms of $H_1$.
    \item The matroid with $1=2$, $3=7$ and line $\{1,3,5,8\}$, and three other matroids for which the circuit variety is contained in $V_{\CCC(\pi_M^i)},$ where $i$ ranges from 1 to 8.
    \item The matroid with $2=6=7$, $3=4$ and line $\{1,3,5,8\}$. The matroid variety is contained in that of $V_{H_1}$ by Lemma~7.6 of \cite{liwskimohammadialgorithmforminimalmatroids}.
    \item The matroids with $1=5$ or $3=5$ or $5=8$. Their circuit varieties are contained in the circuit varieties of respectively $F$, $C_2$ and $G_2$.
    \item The matroid obtained from the uniform matroid $U_{2,9}$ by setting the point 9 to be a loop, and identifying the points 3 and 4.
    \item The matroids with an additional loop from the set $\{1,2,3,6,7,8\}$. 
\end{itemize}
First we prove that $V_{H_1} \subset V_{M(9)}$. Let $\xi \in \Gamma_{V_{H_1}}$ be arbitrarily. Define $\widetilde{\xi}_4$ arbitrarily on the line through $\xi_6$ and $\xi_7$, such that $\widetilde{\xi}_4 \neq \xi_3$. Then define $\widetilde{\xi}_3 = \widetilde{\xi}_4 \xi_2 \wedge \xi_1 \xi_5$. Define $\widetilde{\xi}_8 = \widetilde{\xi}_4 \xi_5 \wedge \xi_2 \xi_7$. By Lemma \ref{lemma: meet of 2 extensors arbitrary close}, $\widetilde{\xi}_3 \to \xi_3$ and $\widetilde{\xi}_8 \to \xi_8$ if $\widetilde{\xi}_4 \to \xi_4$. Let $\widetilde{\xi}_p = \xi_p$ for the other points. Then, $\widetilde{\xi} \in \Gamma_{M(9)}$, thus  $\xi \in \Gamma_{M(9)}$. This implies that $V_{H_1} \subset V_{M(9)}.$ 
\newline
\newline
To prove that $V_{T_1} \subset V_{H_1}$, let $\xi \in \Gamma_{V_{T_1}}$ be arbitrarily. Define $\widetilde{\xi}_7$ as a perturbation of $\xi_4$ on the line through $\xi_2$ and $\xi_8$, such that $\widetilde{\xi}_7 \neq \xi_3$. Then define $\widetilde{\xi}_3 = \widetilde{\xi}_7 \xi_6 \wedge \xi_1 \xi_8$ and $\widetilde{\xi}_4=\widetilde{\xi}_3$. By Lemma \ref{lemma: meet of 2 extensors arbitrary close}, $\widetilde{\xi}_3 \to \xi_3$ if $\widetilde{\xi}_7 \to \xi_7$. Let $\widetilde{\xi}_p = \xi_p$ for the other points. Then, $\widetilde{\xi} \in \Gamma_{H_1},$ then $\xi \in V_{H_1}$, implying that $V_{T_1} \subset V_{H_1}.$ 
\newline
\newline
So we can conclude that
\begin{equation} \label{eq: proof third config M(9) H definitive}
    V_{\CCC(H_1)} \subset V_{M(9)} \cup_{i=1}^8 V_{\pi_{M}^i} \cup_{i=1}^8 V_{\CCC(M(9,i))} \cup V_{U_{2,9}}.
\end{equation}

{\it Decomposing $V_{\CCC(I)}$} \label{subsec: decomposing VCI}
\newline
The minimal matroids over $I$ are the following:
\begin{itemize}
    \item The matroids with $1=2=3=7$ or $3=4=5=7$. Their circuit varieties are contained in $V_{\CCC(A_i)} \subset V_{M(9)}.$ $A_i$ is shown in Figure \ref{fig:decompo Third config M(9) part 1} (Left). 
    \item The matroids with $1=2=3$, $5=8$ and three other matroids of a similar form. Their circuit varieties are contained in $V_{\CCC(A_1)}, V_{\CCC(A_2)}, V_{\CCC(D')}$ and $ V_{\CCC(D'')}$. These circuit varieties are contained in $V_{M(9)}.$
    \item The matroids with $1=3,4=6$ or $8=2,3=5$. Their circuit varieties are contained in $V_{C_1}$ and $V_{C_2}$, which are contained in $V_{M(9)}$. 
    \item The matroid with $1=5$. Its circuit variety is contained in $V_{\CCC(F)}.$ 
    \item The matroid with $1=2, 4=7$ and another matroid for which the circuit varieties are contained in $V_{\pi_M^i},$ with $i \in \{1,2, \ldots, 8\}$.
    \item The matroids with $1=6=7$ or $8=5=7$, for which the circuit variety is contained in respectively $V_{\CCC(G)}$ and $V_{\CCC(F)}.$
    \item The matroid $U$ with lines $\{1,3,5,7\},$ $\{1,2,6\},$ $\{8,2,7\},$ $\{2,3,4\},$ $\{8,4,5\},$ $\{4,6,7\},$ $\{8,3,6\}$, as in Figure \ref{fig:decompo M(9) HIJ}. 
    \item The matroids with an additional loop from the set $\{1,2,4,5,7\}.$
\end{itemize}
\begin{figure}
    \centering
    \includegraphics[width=0.9\linewidth]{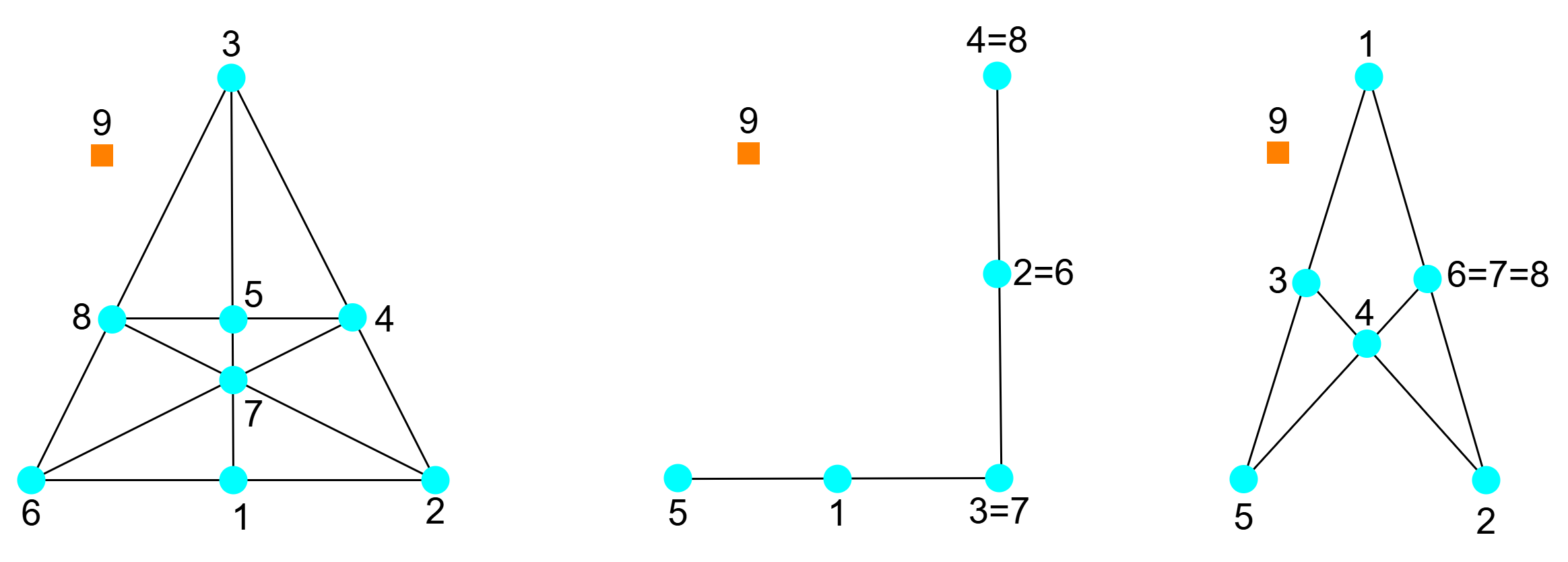}
    \caption{(Left) $U$; (Center) $V$; (Right) $X$.}
    \label{fig:decompo M(9) HIJ}
\end{figure}
$V_I \subset V_{M(9)}$: 
An arbitrary $\gamma \in \Gamma_{M(9)}$ can be written as $$\gamma = \begin{pmatrix} 
1 & 0 & 0 & 1 & 0 & -x & 1 & 1&0 \\
0 & 1 & 0 & 1 & 1 & 1-x & 1 & 0 &0\\
0 & 0 & 1 & 1 & x & 0 & 1+y & 1+y-x&0 
\end{pmatrix},$$ where the minors corresponding to the bases are non-zero and the columns from left to right correspond to $\{1,3,2,8,4,5,7,6,9\}.$ 
An arbitrary $\xi \in \Gamma_{I}$ can be written as \begin{equation} \label{eq: proof third config I matrix}
\xi = \begin{pmatrix} 
1 & 0 & 0 & 1 & 0 & -z & 1 & 1&0 \\
0 & 1 & 0 & 1 & 1 & 1-z & 1 & 0 &0\\
0 & 0 & 1 & 1 & z & 0 & 0 & -z &0
\end{pmatrix},\end{equation}
where the minors corresponding to the bases are non-zero and the columns from left to right correspond to $\{1,3,2,8,4,5,7,6,9\}$ as well.
Fix a $\xi$. Then we have to prove that $\xi \in V_{M(9)}.$ Set $$
\left\{
    \begin{array}{lll}
       1+y & = &\epsilon  \\
    x & = &z
    \end{array}
\right.
$$ in $\gamma$.
If $\epsilon \to 0$, then $\gamma \to \xi$. So $\xi \in V_{M(9)}$. 
\newline \newline
To decompose the matroid $U$, one finds the 
following minimal matroids:
\begin{itemize}
    \item Seven matroids for which the circuit variety is contained in $V_{\pi_M^i}.$ 
    \item A matroid with $1=5$, for which the circuit variety is contained in $V_{\CCC(F)}$.
    \item The matroids with $2=3=6=7$ or $3=4=7=8$ for which the circuit variety is cointained in $V_{\CCC(H_i)}$. 
    \item The matroid $V$ with $2=6,3=7,8=4$ and lines $\{1,3,5\},\{2,3,4\}$, as in Figure \ref{fig:decompo M(9) HIJ} (Center).
\end{itemize}
An arbitrary $\gamma \in \Gamma_{I}$ can be written as $$\gamma = \begin{pmatrix} 
1 & 0 & 0 & 1 & 0 & -x & 1 & 1 &0\\
0 & 1 & 0 & 1 & 1 & 1-x & 1 & 0 &0\\
0 & 0 & 1 & 1 & x & 0 & 0 & -x&0
\end{pmatrix},$$ where the columns from left to right correspond to $\{1,3,2,8,4,5,7,6,9\}.$ 
An arbitrary $\xi \in \Gamma_{U}$ can be written as $$
\xi = \begin{pmatrix} 
1 & 0 & 0 & 1 & 0 & 1 & 1 & 1&0 \\
0 & 1 & 0 & 1 & 1 & 2 & 1 & 0 &0\\
0 & 0 & 1 & 1 & -1 & 0 & 0 & 1 &0
\end{pmatrix},$$
where the minors corresponding to the bases are non-zero and the columns from left to right correspond to $\{1,3,2,8,4,5,7,6\}$ as well.
Fix such a $\xi$. Then we have to prove that $\xi \in V_{M(9)}.$ If $x \to -1$, then $\gamma \to \xi$. So $\xi \in V_{I}$. 
\newline \newline
To prove that $V_V \subset V_{I}$, let $\xi \in \Gamma_{V}$ be arbitrarily. Define $\widetilde{\xi}_7$ as perturbation of $\xi_7$ on the line through $\xi_1$ and $\xi_5$, such that $\widetilde{\xi}_7 \neq \xi_3$. Then define $\widetilde{\xi}_6 = \xi_1 \xi_2 \wedge \xi_4 \widetilde{\xi}_7$, and $\widetilde{\xi}_8 = \xi_4 \xi_5 \wedge \xi_2 \widetilde{\xi}_7$. By Lemma \ref{lemma: meet of 2 extensors arbitrary close}, $\widetilde{\xi}_6 \to \xi_6$ and $\widetilde{\xi}_8 \to \xi_8$ if $\widetilde{\xi}_7 \to \xi_7$. Let $\widetilde{\xi}_p = \xi_p$ for the other points. Then, $\widetilde{\xi} \in \Gamma_{I}$, implying $\xi \in V_{I}.$ Thus $V_V \subset V_I$.

\medskip

So $V_U \subset V_{M(9)} \cup V_{U_{2,9}} \cup_{j=1}^8 V_{\CCC(M(9,j))} \cup_{j=1}^8 V_{\pi_{M}^j}$ as well.
\newline
\newline
Combining the previous results:  
\begin{equation} \label{eq: proof third config M(9) I definitive}
V_{\CCC(I)} \subset V_{M(9)} \cup_{i=1}^8 V_{\pi_{M}^i} \cup_{i=1}^8 V_{\CCC(M(9,i))} \cup V_{U_{2,9}}.\end{equation}
{\it Decomposing $V_{\CCC(J)}$}
\newline
The minimal matroids over $J$ are the following:
\begin{itemize}
    \item The matroids with $1=2=3=8$ or $3=4=5=6$. Their circuit varieties are contained in those of $V_{\CCC(A_i)} \subset V_{M(9)}$ as defined in the beginning of part 4 of this proof.
    \item The matroids with $1=3=4=6$ or $8=2=3=5$. Their circuit varieties are contained in those of $V_{\CCC(C_i)} \subseteq V_{M(9)}$, as defined in the beginning of part 4 of this proof.
    \item The matroids with $1=2=3$, $6=7$ or $ 3=4=5,8=7$, their circuit varieties are contained in those of $V_{\CCC(A_i)} \subset V_{M(9)}$, as defined in the beginning of part 4 of this proof.
    \item The matroids with $8=3=5,4=7$ or $1=3=6, 2=7$, their circuit varieties are contained in those of $V_{\CCC(G_i)} \subset V_{M(9)} \cup V_{M(9,2)} \cup V_{U_{2,9}}$, as defined in the beginning of part 4 of this proof.
    \item The matroid with $1=2$, $4=5$, for which the circuit variety is contained in that of $V_B$. $B$ is defined in Section \ref{Decomposition of the Circuit Variety of the third configuration 93}.
    \item The matroid $W$ with $2=6, 4=8$ and lines $\{1,3,5\},\{2,3,4,7\}$.
    \item The matroid with $1=5$, for which the circuit variety is contained in that of $V_{\CCC(F_i)}$.
    \item The matroid with $1=6, 5=8$, for which the circuit variety is contained in those of $V_{\CCC(G_i)}$.
    \item The matroid with $2=4=7$. This matroid variety is contained in $V_{\CCC(A')}$. 
    \item The matroid $X$ with $6=7=8$ and lines $\{1,3,5\},$ $\{1,2,6\},$ $\{2,3,4\},$ $\{4,5,6\}$, as in Figure \ref{fig:decompo M(9) HIJ} (Right).
    \item The matroid $U$ as encountered in the decomposition of $I$ as defined in the beginning of part 4 of this proof.
    \item Matroids with an additional loop from the set $\{1,2,3,4,5,6,8\}$.
\end{itemize}
First we prove that $V_J \subset V_{M(9)}$:
An arbitrary $\gamma \in \Gamma_{M(9)}$ can be written as $$\gamma = \begin{pmatrix} 
1 & 0 & 0 & 1 & 0 & -x & 1 & 1 &0\\
0 & 1 & 0 & 1 & 1 & 1-x & 1 & 0 &0\\
0 & 0 & 1 & 1 & x & 0 & 1+y & 1+y-x &0
\end{pmatrix},$$ where the columns from left to right correspond to $\{1,3,2,8,4,5,7,6,9\}$ and the minors corresponding to the bases are non-zero. 
\newline \newline
An arbitrary $\xi \in \Gamma_{J}$ can be written as $$\gamma = \begin{pmatrix} 
1 & 0 & 0 & 1 & 0 & -z & 1 & 1 &0\\
0 & 1 & 0 & 1 & 1 & 1-z & 1 & 0 &0\\
0 & 0 & 1 & 1 & x & 0 & 1+z & 1 &0
\end{pmatrix},$$ where the columns from left to right correspond to $\{1,3,2,8,4,5,7,6,9\}$ and the minors corresponding to the bases are non-zero.
Let $$
\left\{
    \begin{array}{lll}
       y & = &z+ \epsilon  \\
    x& = &z
    \end{array}
\right..
$$
Then $\epsilon \to 0$ implies $\gamma \to \xi$. Therefore, $\xi \in V_{M(9)}$. This proves the statement.
\newline
\newline
To prove that $V_X \subset V_J$, let $\xi \in \Gamma_{X}$ be arbitrarily. Define $\widetilde{\xi}_8$ arbitrarily on the line through $\xi_4$ and $\xi_5$, such that $\widetilde{\xi}_8 \neq \xi_7$. Then define $\widetilde{\xi}_6 = \widetilde{\xi}_8 \xi_3 \wedge \xi_1 \xi_2$. Define $\widetilde{\xi}_7 = \widetilde{\xi}_6 \xi_4 \wedge \xi_2 \widetilde{\xi}_8$. By Lemma \ref{lemma: meet of 2 extensors arbitrary close}, $\widetilde{\xi}_6 \to \xi_6$ and $\widetilde{\xi}_7 \to \xi_7$ if $\widetilde{\xi}_8 \to \xi_8$. Let $\widetilde{\xi}_p = \xi_p$ for the other points. Then, $\widetilde{\xi} \in \Gamma_J$, so $\xi \in V_J$. This implies that $V_{X} \subset V_{J}.$ 
The proof that $V_W \subset V_{M(9)}$ is completely analogous to the other proofs.
\newline
\newline
From this, one can conclude that:
\begin{equation} \label{eq: proof third config M(9) J definitive}
V_{\CCC(J)} \subset V_{M(9)} \cup V_B \cup_{i=1}^8 V_{\pi_{M}^i} \cup_{i=1}^8 V_{\CCC(M(9,i))} \cup V_{U_{2,9}}.
\end{equation}

{\it Decomposing $V_{\CCC(M(9,i))}$} \newline
Finally, we have to decompose $V_{\CCC(M(9,i))}$. 
We have to distinguish four different cases due to symmetry: 
\begin{itemize}
\item Points in $\{3,6,8\}$ (all such points are on a line with 9).
\item Points in $\{1,5\}$, which are on a line with 9.
\item The point $\{7\}$, which is not on a line with 9. \item The points $\{2,4\}$, which are not on a line with 9.
\end{itemize}
The matroid $M(9,3)$ has lines $\{1,2,6\},\{8,2,7\},\{8,4,5\},\{4,6,7\}$, so it is nilpotent and has no points of degree greater than two, implying that $V_{\CCC(M(9,3))} = V_{M(9,3)}.$ Moreover, $V_{M(9,3)} \subset V_{M(9)}$, since the point 3 has degree two in $M(9)$, so that we can use Lemma \ref{lemma: third config loop matroid variety}. In conclusion:
\begin{equation*} \label{eq: appendix V(M(3)) M(3,6) definitive
}
    V_{\CCC(M(3,9))} \subset V_{M(9)}.
\end{equation*}
The matroid $M(9,1)$ has lines $\{8,2,7\},\{2,3,4\},\{8,4,5\},\{4,6,7\}$, and the point 4 has degree three. The minimal matroids in this case are the following; the points 1 and 9 are loops in each matroid: 
\begin{itemize}
\item Five nilpotent matroids for which the matroid variety is contained in $V_B$, where $B$ is defined as in Section \ref{Decomposition of the Circuit Variety of the third configuration 93}, see Figure \ref{fig:Third Config A_1 B} (Right), and in $V_{\CCC(A_1)}$
, $V_{\CCC(E_1)}$, 
$V_{\CCC(D_1)}$,  
$V_{\CCC(C_1)}$, as defined in part 4 of this proof. These circuit varieties are contained in $V_{M(9)}$.
\item The matroid with $3=4$ and lines $\{8,2,7\},\{8,3,5\},\{3,6,7\}$, and two matroids of a similar form. 
It is analogous to the previous cases to prove that their circuit varieties are contained in $V_{M(9)}.$
\item The matroid with $8=7$ and lines $\{2,3,4\},\{4,5,6,7\}$. It is analogous to the previous cases to prove that its circuit variety is contained in $V_{M(9)}.$
\item The matroid $Y$ with lines $\{8,2,7\},\{2,3,4\},\{8,4,5\},\{4,6,7\},\{8,3,6\}.$
\item The matroid $Z$ with lines $\{8,2,7\},\{2,3,4\},\{8,4,5\},\{4,6,7\},\{2,5,6\}$ and two other matroids formed by automorphisms of $M(9,1).$
\item The matroid $M(1,4,9)$.
\end{itemize}
By Lemma \ref{lemma: third config loop matroid variety}, $V_{M(9,1)} \subset V_{M(9)}$. The decomposition of $V_{\CCC(Y)}$ and $V_{\CCC(Z)}$ is completely analogous to the previous calculations, except that we have to use the following statement a couple of times: for all points $i$ except 4, $V_{M(9,1,i)} \subset V_{M(9,1)}$ by Lemma \ref{lemma: third config loop matroid variety} again. Moreover, $V_{M(9,1,4)} \subset V_{M(9,4)}.$
So $$V_{\CCC(M(9,1))} \subset V_{M(9)} \cup V_B \cup_{i=1}^8 V_{\pi_M^i} \cup V_{M(9,4)} \cup V_{U_{2,9}}.$$
For the matroid $M(9,7)$, the lines are $\{1,3,5\},\{1,2,6\},\{2,3,4\},\{8,4,5\}$, so using Lemma \ref{lemma: third config loop matroid variety}, $V_{\CCC(M(9,7))} =V_{M(9,7)} \subset V_{M(9)}$. 
\newline
\newline
Finally, for $V_{M(9,2)}$, the lines are $\{1,3,5\}, \{8,4,5\},\{4,6,7\}$, so this matroid is nilpotent and has no points of degree greater than two. Therefore $V_{\CCC(M(9,2))} = V_{M(9,2)}$.
\newline
\newline
So 
\begin{equation} \label{eq: proof third config M(9) M(9,i) definitive}
\cup_{i=1}^8 V_{\CCC(M(9,i))} \subset V_{M(9)} \cup V_{M(9,2)} \cup V_{M(9,4)} \cup V_B \cup_{i=1}^8 V_{\pi_M^i} \cup V_{U_{2,9}}.\end{equation}
So combining equations \eqref{eq: proof third config M(9) nilpotent ones definitive}, \eqref{eq: proof third config M(9) F definitive}, \eqref{eq: proof third config M(9) G definitive}, \eqref{eq: proof third config M(9) H definitive}, \eqref{eq: proof third config M(9) I definitive}, \eqref{eq: proof third config M(9) J definitive} and \eqref{eq: proof third config M(9) M(9,i) definitive}:
$$ V_{\CCC(M(i))} \subset  V_{M(i)} \cup_{j=1,2} V_{M(i,k_j^i)} \cup V_B \cup_{j=1}^9 V_{\pi_{M}^j} \cup V_{U_{2,9}},$$ where $k_j^i$ and $l_1^i$ are defined as in Equation \eqref{eq: lemma 5 third config}.
\newline
\newline
5) 
 Finally, we will prove Equation \eqref{eq: lemma 6 third config}.
This proof is very analogous to that of Equation \eqref{eq: lemma 5 third config}. The minimal matroids over $M(3)$ are the following; the point 3 is a loop in each minimal matroid:
\begin{itemize}
    \item The matroid $A_1$ with $1=2=8$ and lines $\{1,4,5\},\{4,6,7\},\{9,5,6\}$, as in Figure \ref{fig:decompo third config M(3)} (Left), and five other matroids $A_i$ formed by automorphisms of $M(3)$.
    \item The matroid $B_1$ with $1=4=6$ and lines $\{8,2,7\},\{8,1,5,9\}$, as in Figure \ref{fig:decompo third config M(3)} (Center), and four other matroids formed by automorphisms of $M(3)$.
    \item The matroid $B'$ with $7=8=9$ and lines $\{1,2,6\},\{4,5,6,7\}$.
    \item The matroid $C_1$ with $1=6, 5=8$ and lines $\{2,5,7\},\{1,4,7\},\{1,5,9\}$, as in Figure \ref{fig:decompo third config M(3)} (Right), and two other matroids formed by automorphisms of $M(3)$.
    \item The matroid $C'_1$ with $1=8, 6=7$ and lines $\{1,2,6\},\{1,4,5\},\{9,5,6\}$, and two other matroids formed by automorphisms of $M(3)$.
    \item The matroid $D_1$ with $1=2$, and lines $\{8,1,9,7\},$ $\{8,4,5\},$ $\{4,6,7\},$ $\{9,5,6\}$, as in Figure \ref{fig:decompo third config M(3) part 2} (Left), and five other matroids $D_i$ formed by automorphisms.
    \item The matroid $E_1$ with lines $\{1,2,6\},$ $\{8,2,7\},$ $\{8,4,5\},$ $\{4,6,7\},$ $\{9,5,6\},$ $\{8,1,9\},$ $\{1,5,7\}$, as in Figure \ref{fig:decompo third config M(3) part 2} (Center), and another matroid $E_2$ formed by automorphism.
    \item The matroid with an additional loop in $\{6,8\}.$
\end{itemize}
\begin{figure}
    \centering
    \includegraphics[width=0.9\linewidth]{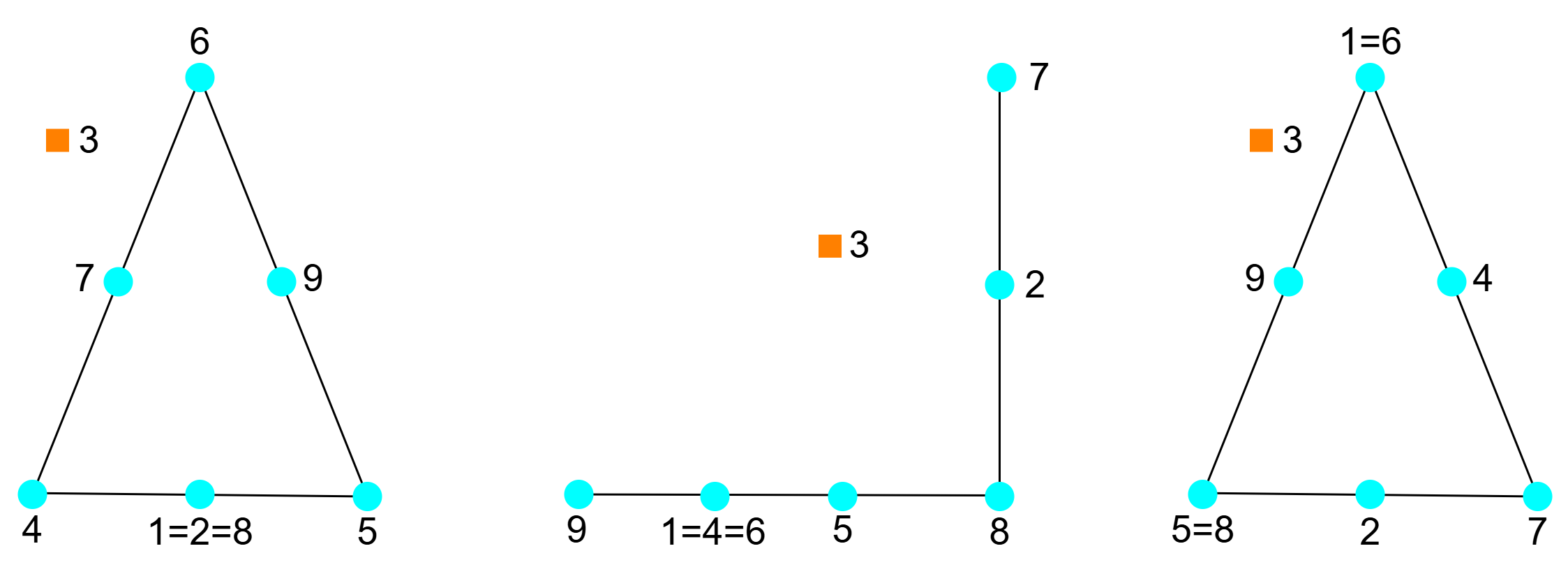}
    \caption{(Left) $A_1$; (Center) $B_1$; (Right) $C_1$.}
    \label{fig:decompo third config M(3)}
\end{figure}
\begin{figure}
    \centering
    \includegraphics[width=0.9\linewidth]{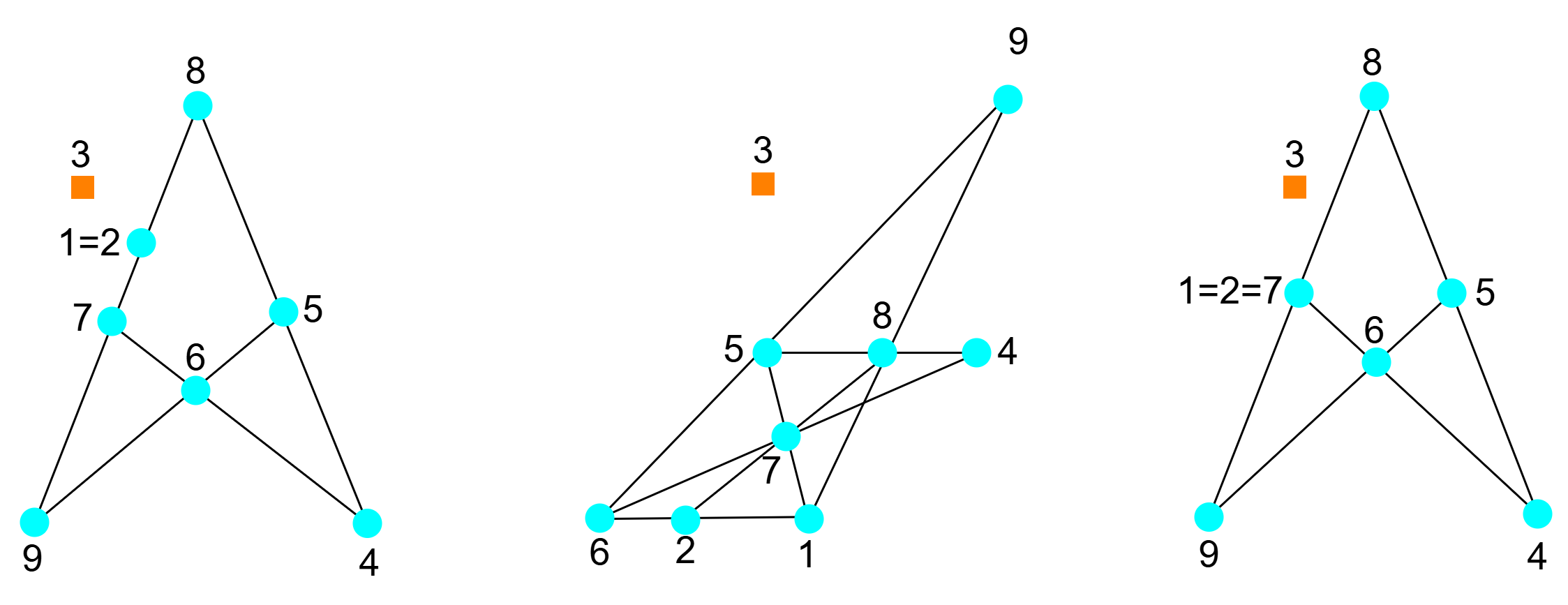}
    \caption{(Left) $D_1$; (Center) $E_1$; (Right) $F_1$.}
    \label{fig:decompo third config M(3) part 2}
\end{figure}
To prove that $V_{A_1} \subset V_{M(3)}$, let $\xi \in \Gamma_{A_1}$ be arbitrarily. Define $\widetilde{\xi}_2$ as a perturbation of $\xi_2$ on the line through $\xi_7$ and $\xi_8$, such that $\widetilde{\xi}_2 \neq \xi_2$. Then define $\widetilde{\xi}_1 = \widetilde{\xi}_2 \xi_6 \wedge \xi_8 \xi_9$. By Lemma \ref{lemma: meet of 2 extensors arbitrary close}, $\widetilde{\xi}_1 \to \xi_1$ if $\widetilde{\xi}_2 \to \xi_2$. Let $\widetilde{\xi}_p = \xi_p$ for the other points. Then, $\widetilde{\xi} \in \Gamma_M$, so $\xi \in V_M$. Thus $V_{A_1} \subset V_{M(3)}.$ This concludes the proof.
\newline
\newline
To prove that $V_{B_1} \subset V_M$, let $\xi \in \Gamma_{B_1}$ be arbitrarily. Define $\widetilde{\xi}_6$ as a perturbation of $\xi_6$ on the line through $\xi_1$ and $\xi_2$, such that $\widetilde{\xi}_6 \neq \xi_6$. Then define $\widetilde{\xi}_4$ as a perturbation of $\xi_4$ on the line through $\xi_6$ and $\xi_7$, such that $\widetilde{\xi}_4 \neq \xi_4$ and
$\widetilde{\xi}_5 = \widetilde{\xi}_6 \xi_9 \wedge \widetilde{\xi}_8 \xi_4$, and finish the proof completely analogously to the above.
\newline
\newline
The proof that $V_{B'} \subset V_M$ is completely analogous to the above.
\newline
\newline
$V_{C_1} \subset V_{M(3)}$:
An arbitrary $\gamma \in \Gamma_{M(3)}$ can be written as \begin{equation} \label{eq: appendix decomp third config matrix of M(3)}
\gamma = \begin{pmatrix} 
1 & 0 & 0 & 1 & 1+x & 1+x & 0 & 1 & 0\\
0 & 1 & 0 & 1 & x & x+y & x+y & 0 &0\\
0 & 0 & 1 & 1 & x & x & x & 1 &0
\end{pmatrix},\end{equation} where the columns from left to right correspond to $\{6,8,4,2,1,9,5,7,3\}$ and the minors corresponding to the bases are non-zero.
\newline \newline
An arbitrary $\xi \in \Gamma_{C_1}$ can be written as $$\gamma = \begin{pmatrix} 
1 & 0 & 0 & 1 & 1 & 1 & 0 & 1 & 0 \\
0 & 1 & 0 & 1 & 0 & z & 1 & 0 & 0\\
0 & 0 & 1 & 1 & 0 & 0 & 0 & 1 & 0
\end{pmatrix},$$ where the columns from left to right correspond to $\{6,8,4,2,1,9,5,7,3\}$ and the minors corresponding to the bases are non-zero. 
If $$
\left\{
    \begin{array}{lll}
       y & = &z  \\
    x& = &\epsilon
    \end{array},
\right.
$$
then $\epsilon \to 0$ implies that $\gamma \to \xi$. Therefore, $\xi \in V_{M(3)}$. This proves the statement.
\newline \newline
The proof for $V_{C_i'} \subset V_{M(3)}$ is analogous.
So we can conclude already that: 
\begin{equation} \label{eq: appendix V(M(3)) nilpotent definitive}
V_{\CCC(M(3))} \subset V_{M(3)} \cup_{i=1}^6 V_{\CCC(D_i)} \cup_{i=1}^2 V_{\CCC(E_i)} \cup_{i=6,8} V_{M(3,i)}.\end{equation}
The minimal matroids over the matroid $D_1$ are:
\begin{itemize}
    \item The matroid $F_1$ with $1=7$ and lines $\{8,1,9\},\{8,4,5\},\{1,4,6\},\{9,5,6\}$, as in Figure \ref{fig:decompo third config M(3) part 2}, and two other matroids $F_i$ formed by automorphisms of $D_1$.
    \item The matroid $G_1$ with $4=5,7=9$ and lines $\{8,1,7\},\{4,6,7\}$, as in Figure \ref{fig:third config part 3} (Left) and two other matroids $G_i$ formed by automorphism.
    \item The matroid with $4=5=6$ and line $\{8,1,9,7\}$. The circuit variety of this matroid is contained in that of $V_{A_1}$.
    \item The matroid $I_1$ with $4=7=8$ and lines $\{1,4,9\},\{9,5,6\},$ as in Figure \ref{fig:third config part 3} (Center) and two other matroids $I_i$ formed by automorphisms.
    \item Four matroids for which the circuit variety is contained in $V_{\pi_M^i}.$
    \item The matroid obtained from the uniform matroid $U_{2,9}$ by identifying the points 1 and 2 and setting the point $3$ to be a loop.
    \item The matroid obtained from $D_1$ by setting an additional loop from the set $\{4,5, 6,7,8,9\}.$
\end{itemize}
\begin{figure}
    \centering
    \includegraphics[width=0.9\linewidth]{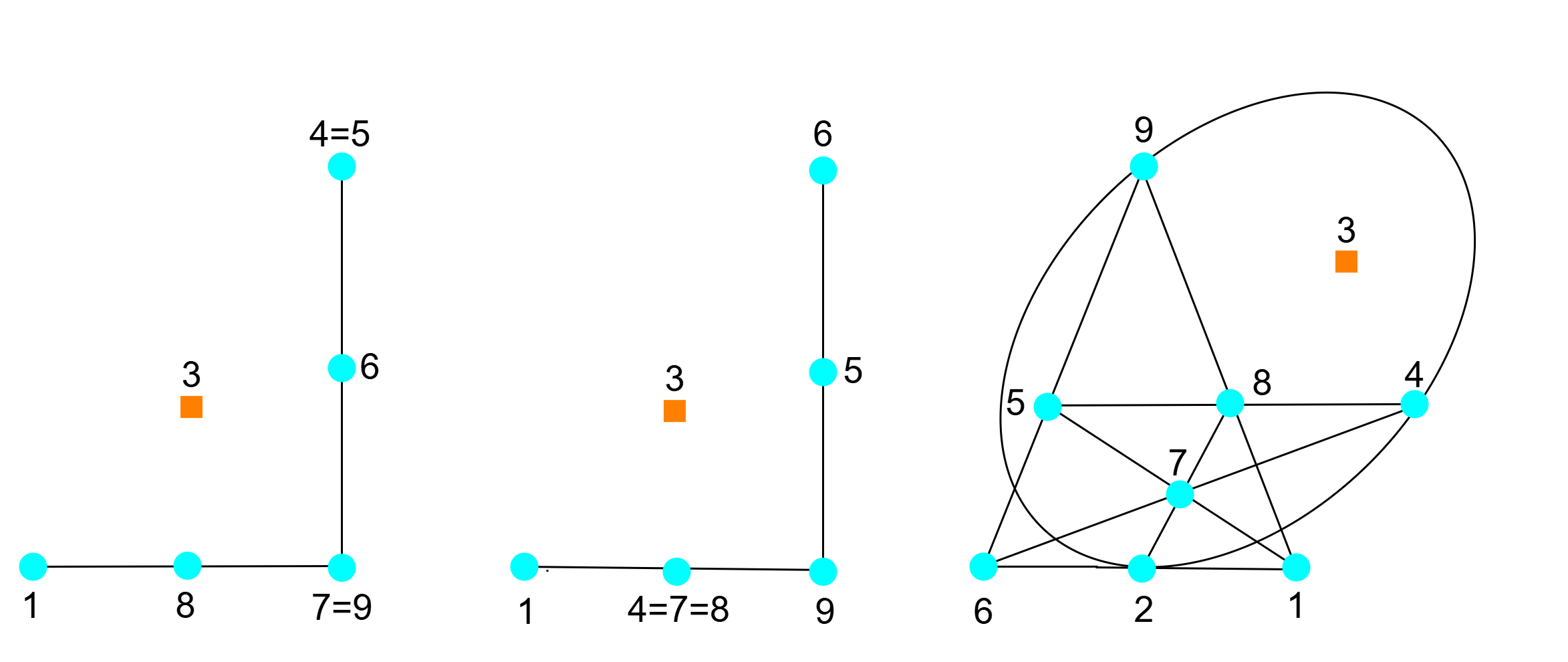}
    \caption{(Left) $G_1$; (Center) $I_1$; (Right) $K$.}
    \label{fig:third config part 3}
\end{figure}
We prove that $V_{D_1} \subset V_{M(3)}$:
An arbitrary $\gamma \in \Gamma_{M(3)}$ can be written as $$\gamma = \begin{pmatrix} 
1 & 0 & 0 & 1 & 1 & 1 & 1 & 1 & 0\\
0 & 1 & 0 & 1 & 0 & y+1 & y+1 & 1 & 0\\
0 & 0 & 1 & 1 & x & 1 & x & 0 & 0
\end{pmatrix},$$ where the columns from left to right correspond to $\{1,8,6,5,2,4,7,9,3\}$ and minors corresponding to the bases are non-zero. 
\newline \newline
An arbitrary $\xi \in \Gamma_{J}$ can be written as $$\gamma = \begin{pmatrix} 
1 & 0 & 0 & 1 & 1 & 1 & 1 & 1 & 0\\
0 & 1 & 0 & 1 & 0 & w+1 & w+1 & 1 &0\\
0 & 0 & 1 & 1 & 0 & 1 & 0 & 0 &0
\end{pmatrix},$$ where the columns from left to right correspond to $\{1,8,6,5,2,4,7,9,3\}$ and the minors corresponding to the bases are non-zero. 
If $$
\left\{
    \begin{array}{lll}
       y & = &w  \\
    x&= &\epsilon
    \end{array},
\right.
$$
then $\gamma \to \xi$, if $\epsilon \to 0$. Therefore, $\xi \in V_{M(3)}$. This proves the statement.
\newline
\newline
$V_{F_1} \subset V_{D_1}$, since for a collection of vectors $\xi \in \Gamma_{F_1}$, one can perturb the point $\xi_1$ on the line through $\xi_8$ and $\xi_9$ such that it does not coincide with $\xi_7$ anymore.
\newline
\newline
To prove that $V_{G_1} \subset V_{D_1}$, let $\xi \in \Gamma_{{G_1}}$ be arbitrarily. Define $\widetilde{\xi}_9$ as a perturbation of $\xi_9$ on the line through $\xi_1$ and $\xi_8$, such that $\widetilde{\xi}_9 \neq \xi_9$. Then define $\widetilde{\xi}_5 = \widetilde{\xi}_9 \xi_6 \wedge \xi_8 \xi_4$, and finish the proof completely analogously to the previous cases. 
\newline
\newline
To prove that $V_{I_1} \subset V_{D_1}$, let $\xi \in \Gamma_{V_{I_1}}$ be arbitrarily. Define $\widetilde{\xi}_8$ as a perturbation of $\xi_8$ on the line through $\xi_1$ and $\xi_9$, such that $\widetilde{\xi}_8 \neq \xi_8$. Then define $\widetilde{\xi}_4 = \widetilde{\xi}_7 \xi_6 \wedge \xi_5 \xi_8$, and finish the proof completely analogously to the previous cases. 
\newline
\newline 
So \begin{equation} \label{eq: appendix V(M(3)) D_1 definitive}
    V_{\CCC(D_1)} \subset V_{M(3)} \cup_{i=6,8} V_{M(3,i)} \cup V_{U_{2,9}}.
\end{equation}
The minimal matroids over $E_1$ are: 
\begin{itemize}
    \item Sixteen nilpotent matroids for which the circuit variety is contained in that of $V_{\CCC(A_i)}, V_{\CCC(B_i)}, V_{\CCC(B')}, V_{\CCC(C_i)}$ and $V_{\CCC(C')}$.
    \item Three matroids for which the circuit variety is contained in $V_{\pi_M^i}$, the point $i$ varying from 1 to 9 except for 3. 
    \item The matroid with $1=2=7$ and lines $\{8,1,9\},\{8,4,5\},\{1,4,6\},\{9,5,6\}$ and two other matroids $J_i$ formed by automorphisms. Their circuit varieties are contained in $V_{\CCC(D_1)}$ from the decomposition of $M(3)$.
    \item The matroid $K$ with lines $\{1,2,6\},$ $\{8,2,7\},$ $\{8,4,5\},$ $\{4,6,7\},$ $\{9,5,6\},$ $\{8,1,9\},$ $\{1,5,7\},$ $\{9,2,4\}$, as in Figure \ref{fig:decompo third config part 3} (Right).
    \item The matroids with an additional loop in $\{1,5,6,7,8\}$.
\end{itemize}
To prove that $V_{E_1} \subset V_M$, after a projective transformation one can write an arbitrary $\gamma \in V_{M(3)}$ as in Equation~\eqref{eq: appendix decomp third config matrix of M(3)}. An element $\xi \in V_{E_1}$ can be written after a projective transformation as \begin{equation} \label{eq: appendix decomp third config matrix of E_1}
    \begin{pmatrix} 
1 & 0 & 0 & 1 & 1 & 1 & 1 & 1 &0\\
0 & 1 & 0 & 1 & 0 & z & z & 1 & 0\\
0 & 0 & 1 & 1 & z & 1 & z & 0 & 0
\end{pmatrix},
\end{equation}
where the columns again correspond to $\{1,8,6,5,2,4,7,9,3\},$ and the minors corresponding to the bases are non-zero.
Let $$
\left\{
    \begin{array}{lll}
       y & = &w-1+\epsilon  \\
    x&= &w+\epsilon
    \end{array}
\right..
$$
If $\epsilon \to 0$, it is clear that $\gamma \to \xi$, implying $\xi \in V_{M(3)}$. This concludes the proof.
\newline
\newline
The minimal matroids over $K$ are the following:
\begin{itemize}
    \item Eight matroids for which the circuit variety is contained in $V_K$ by Lemma~7.6 of \cite{liwskimohammadialgorithmforminimalmatroids}.
    \item Eight matroids for which the circuit variety is contained in $V_{\pi_M^i}$, where $i$ varies from $1$ to 9 except 3.
    \item The uniform matroid with 3 a loop.
    \item The matroid with an additional loop in $\{1,2,4,5,6,7,8,9\}$.
\end{itemize}
$V_K \subset V_{E_1}$, since an arbitrary $\gamma \in \Gamma_{E_1}$ can be written as in Equation~\eqref{eq: appendix decomp third config matrix of E_1}, with $z$ such that no minors corresponding to non-bases are zero. $\xi \in \Gamma_K$ can be written as in Equation~\eqref{eq: appendix decomp third config matrix of E_1}, if one replaces $z$ by $\widetilde{z}$ and let $\widetilde{z}^2+\widetilde{z}+1=0$. If $z \to \widetilde{z}$, $\gamma \to x$, such that $\xi \in V_{E_1}$, finishing the proof. 
\newline 
So \begin{equation} \label{eq: appendix V(M(3)) E_1 definitive}
    V_{\CCC(E_1)} \subset V_{M(3)} \cup_{i=1}^8 V_{M(3,i)} \cup V_{U_{2,9}}
\end{equation}
The matroid $M(3,i)$ with $i \in \{1,2,4,5,7,9\}$ is already decomposed in the previous section up to a hypergraph automorphism of $M$, so we only have to consider $M(3,i)$ with $i \in \{6,8\}$. We can assume without loss of generality that $i=6$. That matroid has lines $\{8,2,7\},\{8,4,5\},\{8,1,9\}$, so by Lemma~5.5 (iv) of \cite{liwskimohammadialgorithmforminimalmatroids} \begin{equation} \label{eq: appendix V(M(3)) M(3,6) definitive}
    V_{\CCC(M(3,6))} = V_{M(3,6)} \cup V_{M(3,6,8)}. 
\end{equation}
Combining equations \eqref{eq: appendix V(M(3)) nilpotent definitive}, \eqref{eq: appendix V(M(3)) D_1 definitive}, \eqref{eq: appendix V(M(3)) E_1 definitive}, \eqref{eq: appendix V(M(3)) M(3,6) definitive}, \eqref{eq: proof third config M(9) M(9,i) definitive} and \eqref{eq: appendix V(M(3)) M(3,6) definitive}:
\begin{equation} 
    V_{\CCC(M(3))} = V_{M(3)} \cup V_{U_{2,9}} \cup_{i=1}^9 V_{\pi_M^i} \cup V_{M(3,6)} \cup V_{M(3,8)} \cup V_{M(3,6,8)}.
\end{equation}
\subsubsection{Identifying irredundancies}

In this subsection, we prove the irredundancy of Theorem \ref{thm decompo third 9_3}. For the irredundancy of $V_{U_{2,9}}$, one can proof this using similar methods as in \cite{clarke2024liftablepointlineconfigurationsdefining}. For the other proofs, we use Strategy \ref{strategy no subset}.

\medskip
We first prove the irredundancy of $V_{A_i}.$ By Strategy \ref{strategy no subset}, we only have to prove that $V_{A_i} \not \subset V_M$. We prove this for $i=1$. Define $\xi \in \Gamma_{A_1}$ as $$\xi = \begin{pmatrix} 
1 & 0 & 0 & 1 & 1 & 1 & 1 & 1 & 1\\
0 & 1 & 0 & 1 & 0 & 1 & 1 & 1 & 1 \\
0 & 0 & 1 & 1 & 3 & 3 & 0 & 1 & 1
\end{pmatrix},$$
where the columns from left to right correspond to $\{1,3,6,8,2,4,5,7,9\}$.
Suppose from the contrary that $\xi \in V_M$. Then one can find for every $\epsilon > 0$ a collection of vectors $\gamma_\epsilon \in \Gamma_M$, such that $\gamma \to \xi$ if $\epsilon \to 0$. One can perturb $\gamma_\epsilon \in \Gamma_{M}$ to $\widetilde{\gamma}_\epsilon \in \Gamma_M$, such that the vectors corresponding to $\{1,3,6,8\}$ form the standard reference frame for a projective space: 
\begin{equation} \label{eq: proof irredundancy of V_Ai}
\widetilde{\gamma}_{\epsilon} = \begin{pmatrix} 
1 & 0 & 0 & 1 & 1 & 1 & 1-z & 1 & 1-z\\
0 & 1 & 0 & 1 & 0 & w & w-z & w & w-z \\
0 & 0 & 1 & 1 & z & z & 0 & w-z(w-1) & w-z
\end{pmatrix},
\end{equation} where the columns from left to right correspond to $\{1,3,6,8,2,4,5,7,9\}$ and the minors corresponding to the bases are non-zero. Moreover, 
\begin{equation} \label{eq: irred third}
    z(1-w)(2-z)=0,
\end{equation} as the minor corresponding to the columns $3,7,9$ should vanish. So $z=2$. Since 
$\widetilde{\gamma}_{\epsilon_{2}} \to \xi_2$, it follows that $z \to 3$. From this contradiction, it follows that $\gamma \notin V_M$ and we can conclude that $V_{A_i}$ is not redundant.
\newline
\newline
To prove the irredundancy of $V_B$, we only have to prove that $V_B \not \subset V_M$, by Strategy \ref{strategy no subset}.
To prove this, consider $$\xi = \begin{pmatrix} 
1 & 0 & 0 & 1 & 1 & 1 & 1 & 1 & 1\\
0 & 1 & 0 & 1 & 0 & 3 & 3 & 3 & 3 \\
0 & 0 & 1 & 1 & 0 & 0 & 0 & 3 & 3
\end{pmatrix},$$
where the columns from left to right again correspond to $\{1,3,6,8,2,4,5,7,9\}$. Clearly, $\xi \in \Gamma_B$.
Suppose from the contrary that $\xi \in V_M$. Then one can find for every $\epsilon > 0$ a collection of vectors $\gamma_\epsilon \in \Gamma_M$, such that $\gamma \to \xi$ if $\epsilon \to 0$. One can perturb $\gamma_\epsilon \in \Gamma_{M}$ to $\widetilde{\gamma}_\epsilon \in \Gamma_M$, such that the vectors corresponding to $\{1,3,6,8\}$ form the standard reference frame for a projective space, as in Equation \eqref{eq: proof irredundancy of V_Ai}. 
As $\widetilde{\gamma}_{\epsilon_{2}} \to \xi_2$, it follows that $z \to 0$. 
However, $z=2$, so from this contradiction, it follows that $\gamma \notin V_M$ and we can conclude that $V_{B}$ is not redundant.
\newline\newline
Now we prove the irredundancy of $V_{\pi_M^i}$. Due to the symmetry of the points, it suffices to consider $i=1$ and $i=8$.
To see for instance that $V_{\pi_M^1}$ is not contained in $V_M$, one can define $\xi \in \Gamma_{\pi_M^1}$ as $$\xi = \begin{pmatrix} 
1 & 0 & 0 & 0 & 0 & 0 & 0 & 0 & 0\\
0 & 1 & 0 & 1 & 1 & 1 & 1 & 1 & 1 \\
0 & 0 & 1 & 1 & 1 & 0 & 3 & 5 &5
\end{pmatrix},$$
where the columns from left to right correspond to $\{1,2,4,3,5,6,7,8,9\}$.
Suppose that $\xi \in V_M$. Then one can find for every $\epsilon > 0$ a collection of vectors $\gamma_\epsilon \in \Gamma_M$, such that $\gamma \to \xi$ if $\epsilon \to 0$. One can perturb $\gamma_\epsilon \in \Gamma_{M}$ to $\widetilde{\gamma}_\epsilon \in \Gamma_M$, such that the vectors corresponding to $\{1,2,4,3\}$ have a fixed form: 
\begin{equation} \label{eq: proof irredundancy of pi_1^M 1}
\widetilde{\gamma}_{\epsilon} = \begin{pmatrix} 
1 & 0 & 0 & 0 & x & y & y & x & \frac{y-xz}{1-z}\\
0 & 1 & 0 & 1 & 1 & 1 & 1 & 1 & 1 \\
0 & 0 & 1 & 1 & 1 & 0 & z & \frac{xz}{y} & \frac{xz}{y}
\end{pmatrix},
\end{equation} where the minors corresponding to the bases are non-zero and the columns from left to right correspond to $\{1,2,4,3,5,6,7,8,9\}$. Moreover, the minor corresponding to \{5,6,9\} is 0, so $$\frac{z(x-y)(-zx+x+y)}{y(z-1)} = 0.$$ Since $\gamma \in \Gamma_M$, $z= \frac{x+y}{x}$. Thus we can rewrite Equation~\eqref{eq: proof irredundancy of pi_1^M 1} as:$$
\widetilde{\gamma}_{\epsilon} = \begin{pmatrix} 
1 & 0 & 0 & 0 & x & y & y & x & \frac{x^2}{y}\\
0 & 1 & 0 & 1 & 1 & 1 & 1 & 1 & 1 \\
0 & 0 & 1 & 1 & 1 & 0 & \frac{x+y}{x} & \frac{x+y}{y} & \frac{x+y}{y}
\end{pmatrix}.$$
Since $\gamma_{\epsilon} \to \xi$ if $\epsilon \to 0$, we can deduce that:
$$
\left\{
    \begin{array}{ll}
       x \to 0\\
       y \to 0 \\
       1+ \frac{y}{x} \to 3 \\ \\
       1+\frac{x}{y} \to 5 \\ \\
       \frac{x^2}{y} \to 0
    \end{array}
\right..
$$
The third and fourth condition are a contradiction, thus $\gamma \notin V_M$. By Strategy \ref{strategy no subset}, this shows that $\pi_M^1$is irredundant in the decomposition.
\newline
\newline
To show the irredundancy of $V_{\pi_M^3}$, one can define $\xi \in \Gamma_{\pi_M^3}$ as $$\xi = \begin{pmatrix} 
1 & 0 & 0 & 0 & 0 & 0 & 0 & 0 & 0\\
0 & 1 & 0 & 1 & 0 & 1 & 1 & 1 & 1 \\
0 & 0 & 1 & 1 & 1 & 0 & 3 & 5 &3 
\end{pmatrix},$$
where the columns from left to right correspond to $\{3,1,2,6,4,5,7,8,9\}$.
Suppose that $\xi \in V_M$. Then one can find for every $\epsilon > 0$ a $\gamma_\epsilon \in \Gamma_M$, such that $\gamma \to \xi$ if $\epsilon \to 0$. One can perturb $\gamma_\epsilon \in \Gamma_{M}$ to $\widetilde{\gamma}_\epsilon \in \Gamma_M$, such that the vectors corresponding to $\{3,1,2,4,6\}$ have a fixed form: 
\begin{equation} \label{eq: proof irredundancy of pi_1^M}
\widetilde{\gamma}_{\epsilon} = \begin{pmatrix} 
1 & 0 & 0 & 0 & x & y & xz & xz & \frac{x^2z^2+x^2z}{xz-y}\\
0 & 1 & 0 & 1 & 0 & 1 & 1 & 1 & 1 \\
0 & 0 & 1 & 1 & 1 & 0 & 1+z & \frac{xz-y}{x} & 1+z
\end{pmatrix},
\end{equation} where the minors corresponding to the bases are non-zero and the columns from left to right correspond to $\{3,1,2,6,4,5,7,8,9\}$. Moreover, the minor corresponding to \{5,6,9\} is 0, so $z(x+y)(xz+x-y) = 0$. Since $\gamma \in \Gamma_M$, $z= \frac{y-x}{x}$. Thus we can rewrite Equation~\eqref{eq: proof irredundancy of pi_1^M} as:$$
\widetilde{\gamma}_{\epsilon} = \begin{pmatrix} 
1 & 0 & 0 & 0 & x & y & y-x & y-x & \frac{x-y}{x}y\\
0 & 1 & 0 & 1 & 0 & 1 & 1 & 1 & 1 \\
0 & 0 & 1 & 1 & 1 & 0 & \frac{y}{x} & -1 & 1+z
\end{pmatrix}.$$
As $\widetilde{\gamma}_{\epsilon_8} \not \to \xi_8$,
it follows that $\widetilde{\gamma}_{\epsilon} \not \to \xi$, thus from this contradiction we conclude that $\gamma \notin V_M$.
\newline
\newline
Now we prove the irredundancy of $V_{M(i)}$. Due to symmetry reasons, we consider $M(1)$ and $M(3)$. To prove the irredundancy of $M(1)$, we only have to prove that $V_{M(1)} \not \subset V_M$, by Strategy \ref{strategy no subset}. Consider:
$$    \gamma = \begin{pmatrix}
        1 & 0 & 0 & 1 & 0 & 1 & 1 & 3 & 0 \\
        0 & 1 & 0 & 1 & 1 & 0 & 5 & 10 & 0  \\
        0 & 0 & 1 & 1 & 1 & 3 & 0 & 3 & 0 \\
    \end{pmatrix}
 \in \Gamma_{M(1)},$$
where the columns from left to right correspond to $\{6,8,1,4,2,5,7,9,3\}$. Notice that $\gamma_8 \gamma_9 \wedge \gamma_2 \gamma_6 = \begin{pmatrix}
    3 \\ 10 \\ -5
\end{pmatrix}$ and $\gamma_3 \gamma_5 \wedge \gamma_2 \gamma_6 = \begin{pmatrix}
    -3 \\ -8 \\ 7
    \end{pmatrix}$. The point 1 is on the lines $\{1,8,9\}, \{1,2,6\},\{1,3,5\}$ in $M$. Since $\wedge$ is a continuous operation, there is no perturbation of $\gamma$ to $\widetilde{\gamma}$ such that $\widetilde{\gamma_3} \widetilde{\gamma_5} \wedge \widetilde{\gamma_2} {\gamma_6} = \widetilde{\gamma_8} \widetilde{\gamma_9} \wedge \widetilde{\gamma_3} \widetilde{\gamma_5}.$ 
    \newline
    \newline
For $M(3)$, consider $$\gamma = \begin{pmatrix}
    1 & 0 & 0 & 1 & 1 & 1 & 1 & 0 & 0 \\
    0 & 1 & 0 & 1 & 0 & 3 & 2 & 3 & 0 \\
    0 & 0 & 1 & 1 & 2 & 1 & 2 & 1 & 0
\end{pmatrix},$$
where the columns from left to right correspond to $\{4,7,8,3,2,5,6,9,1\}$. 
\newline
\newline
Notice that $$\gamma_2\gamma_4 \wedge \gamma_1 \gamma_5 = \begin{pmatrix}
    1 \\ 3 \\ -1
\end{pmatrix}$$ and $$\gamma_2\gamma_4 \wedge \gamma_7 \gamma_9 = \begin{pmatrix}
    2 \\ 1 \\ 3\end{pmatrix}.$$ So similarly as for $M(1)$, this proves that $\gamma \notin V_M$.
    \newline
    \newline

Now we prove the irredundancy of $V_{M(i,k_j^i)}$. Due to symmetry, we only consider $V_{M(1,4)}$ and $V_{M(3,6)}$. We will construct an element $\gamma \in \Gamma_{M(1,4)}$ for which $\gamma \not \in V_M \cup V_{M(1)} \cup V_{M(4)}.$
Consider $$\gamma = \begin{pmatrix}
    1 & 0 & 0 & 1 & 1 & 1 & 5 & 0 & 0 \\
    0 & 1 & 0 & 1 & 2 & 0 & 4 & 0 & 0 \\
    0 & 0 & 1 & 1 & 0 & 3 & 7 & 0 & 0 
\end{pmatrix},$$
where the columns correspond to $\{7,2,3,5,8,9,6,1,4\}$. 
Since $$\gamma_2 \gamma_6 \wedge \gamma_3 \gamma_5 = \begin{pmatrix}
    5 \\ 5 \\ 7
\end{pmatrix}$$ and $$\gamma_8 \gamma_9 \wedge \gamma_3 \gamma_5 = \begin{pmatrix}
    2 \\ 2 \\ 3
\end{pmatrix},$$ one can conclude analogously to the previous proofs that $\gamma \notin V_M \cup V_{M(4)}$. Since $$\gamma_7 \gamma_6 \wedge \gamma_3 \gamma_2 = \begin{pmatrix}
    0 \\ 4 \\ 7
\end{pmatrix}$$ and $$\gamma_8 \gamma_5 \wedge \gamma_3 \gamma_2 = \begin{pmatrix}
    0 \\ 1 \\ -1
\end{pmatrix},$$ one can conclude analogously to the previous proofs that $\gamma \notin V_M \cup V_{M(1)}$. By Strategy \ref{strategy no subset}, this shows the irredundancy of $V_{M(i,k_j^i)}.$
\newline
\newline
The proof of the irredundancy of $V_{M(3,6,8)}$ is completely analogous to the previous cases.
\bibliographystyle{abbrv}
\bibliography{references}

\newpage
\thispagestyle{empty} 
\mbox{}               

\newpage
\thispagestyle{empty} 
\mbox{}               

\newpage              
\includepdf[pages = {-}]{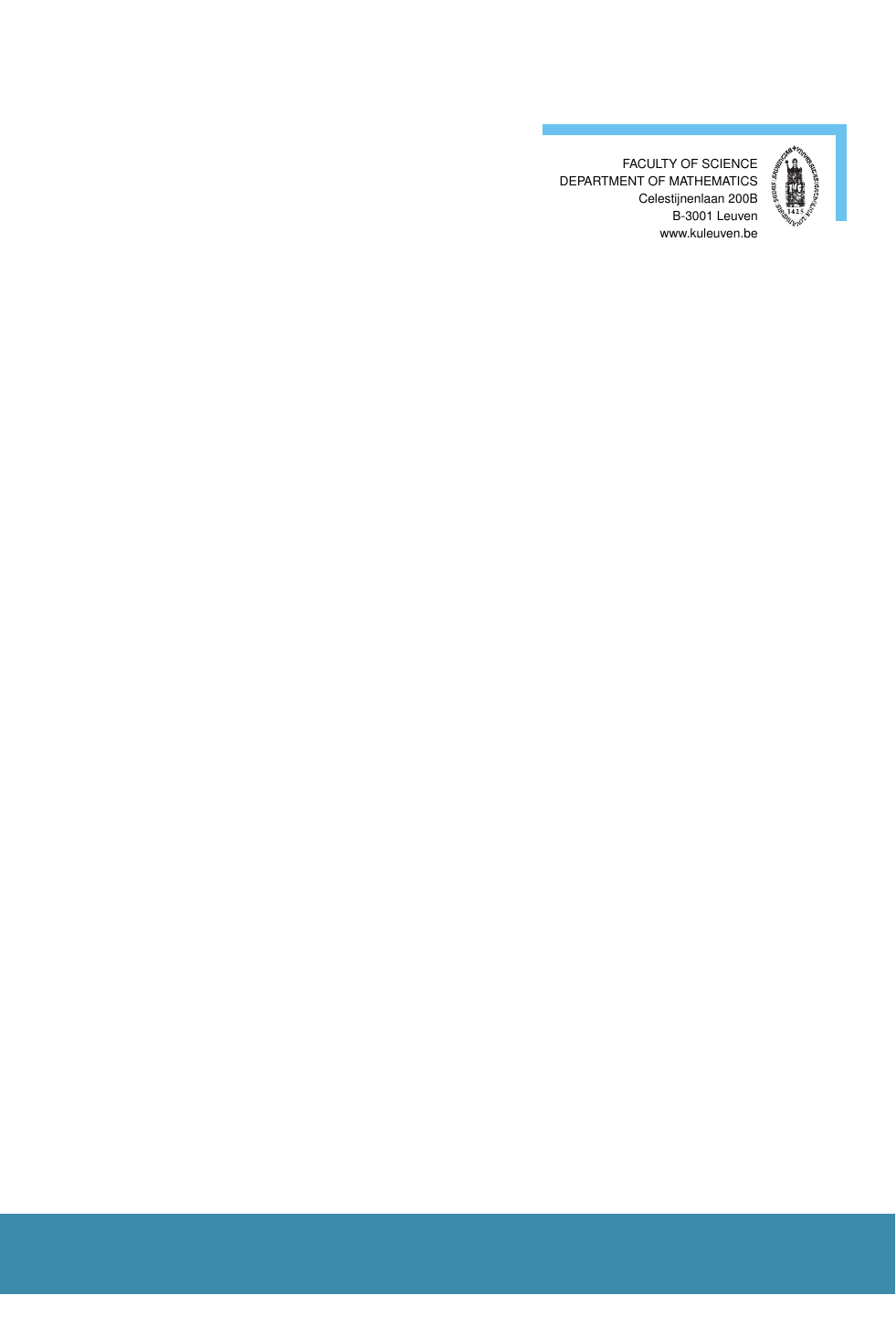}
\end{document}